\documentclass[11pt]{article}
\usepackage[T1]{fontenc}
\usepackage{lmodern}

\usepackage{amsmath,amsthm,amssymb}
\usepackage[dvipsnames]{xcolor}
\usepackage{enumerate}
\usepackage{graphicx}
\usepackage{cite}
\usepackage{oands}
\usepackage{tikz}
\usepackage{changepage}
\usepackage{bbm}
\usepackage{mathtools}
\usepackage[margin=1in]{geometry}
\usepackage[pagewise,mathlines]{lineno}
\usepackage{appendix}
\usepackage{stmaryrd}
\usepackage{multicol}
\usepackage{microtype}
\usepackage[colorinlistoftodos]{todonotes}
\usepackage{dsfont}

\usepackage[pdftitle={Random walk reflected off of infinity},
  pdfauthor={Ewain Gwynne and Jinwoo Sung},
colorlinks=true,linkcolor=NavyBlue,urlcolor=RoyalBlue,citecolor=PineGreen,bookmarks=true,bookmarksopen=true,bookmarksopenlevel=2,unicode=true,linktocpage]{hyperref}

\setcounter{tocdepth}{2}

\theoremstyle{plain}
\newtheorem{thm}{Theorem}[section]

\newtheorem{lem}[thm]{Lemma}
\newtheorem{prop}[thm]{Proposition}

\newtheorem{conj}[thm]{Conjecture}

\def\@rst #1 #2other{#1}
\newcommand\MR[1]{\relax\ifhmode\unskip\spacefactor3000 \space\fi
  \MRhref{\expandafter\@rst #1 other}{#1}}
\newcommand{\MRhref}[2]{\href{http://www.ams.org/mathscinet-getitem?mr=#1}{MR#2}}

\theoremstyle{definition}
\newtheorem{defn}[thm]{Definition}
\newtheorem{remark}[thm]{Remark}

\numberwithin{equation}{section}

\newcommand{\dsb}{\begin{adjustwidth}{2.5em}{0pt}
\begin{footnotesize}}
\newcommand{\dse}{\end{footnotesize}
\end{adjustwidth}}

\newcommand{\ssb}{\begin{adjustwidth}{2.5em}{0pt}}
\newcommand{\sse}{\end{adjustwidth}}

\newcommand{\aryb}{\begin{eqnarray*}}
\newcommand{\arye}{\end{eqnarray*}}
\def\alb#1\ale{\begin{align*}#1\end{align*}}
\def\allb#1\alle{\begin{align}#1\end{align}}
\newcommand{\eqb}{\begin{equation}}
\newcommand{\eqe}{\end{equation}}
\newcommand{\eqbn}{\begin{equation*}}
\newcommand{\eqen}{\end{equation*}}

\newcommand{\BB}{\mathbbm}
\newcommand{\ol}{\overline}

\newcommand{\op}{\operatorname}
\newcommand{\la}{\langle}
\newcommand{\ra}{\rangle}

\newcommand{\frk}{\mathfrak}
\newcommand{\eqD}{\overset{d}{=}}
\newcommand{\ep}{\varepsilon}

\newcommand{\wt}{\widetilde}
\newcommand{\wh}{\widehat}
\newcommand{\mcl}{\mathcal}

\newcommand{\bdy}{\partial}
\newcommand{\rng}{\mathring}

\newcommand{\ora}{\overrightarrow}

\newcommand{\cc}{{\mathbf{c}}}


\let\originalleft\left
\let\originalright\right
\renewcommand{\left}{\mathopen{}\mathclose\bgroup\originalleft}
\renewcommand{\right}{\aftergroup\egroup\originalright}

\title{Random walk reflected off of infinity, with applications to uniform spanning forests and supercritical Liouville quantum gravity}
 \date{ }
 \author{
\begin{tabular}{c} Ewain Gwynne\\[-3pt]\small University of Chicago \end{tabular}
\begin{tabular}{c} Jinwoo Sung\\[-3pt]\small University of Washington \end{tabular}  
}

\begin{document}

\maketitle

\begin{abstract}
Let $\mathcal G$ be an infinite graph --- not necessarily one-ended --- on which the simple random walk is transient. We define a variant of the continuous-time random walk on $\mathcal G$ which reaches $\infty$ in finite time and ``reflects off of $\infty$'' infinitely many times.

We show that the Aldous--Broder algorithm for the random walk reflected off of $\infty$ gives the free uniform spanning forest (FUSF) on $\mathcal G$. Furthermore, Wilson's algorithm for the random walk reflected off of $\infty$ gives the FUSF on $\mathcal G$ on the event that the FUSF is connected, but not in general.

We also apply the theory of random walk reflected off of $\infty$ to study random planar maps in the universality class of supercritical Liouville quantum gravity (LQG), equivalently LQG with central charge in $(1,25)$. Such random planar maps are infinite, with uncountably many ends. We define a version of the Tutte embedding for such maps under which they conjecturally converge to LQG. We also make several conjectures regarding the qualitative behavior of stochastic processes on such maps --- including the FUSF and critical percolation.
\end{abstract}

\tableofcontents

\bigskip
\noindent
\textbf{Acknowledgments.} We thank Leo Bonanno, Zhen-Qing Chen, Peter Kosenko, Greg Lawler, Minjae Park, and Scott Sheffield for helpful discussions. E.G.\ was partially supported by NSF grant DMS-2245832. J.S.\ was partially supported by Kwanjeong Educational Foundation.

\section{Introduction}
\label{sec-intro}

\subsection{Overview}
\label{sec-overview}

Throughout this paper, we let $\mcl G = (\mcl V\mcl G, \mcl E\mcl G , \frk c)$ be a countably infinite connected graph equipped with a conductance function $\frk c : \mcl E\mcl G \to (0,\infty)$ on the (unoriented) edges of $\mcl G$. 
For $x,y\in\mcl V\mcl G$, we write $x\sim y$ if $x$ and $y$ are joined by an edge. We define the stationary measure 
\eqb \label{eqn-stat-measure}
\pi(x) := \sum_{y\sim x} \frk c(x,y) ,\quad\forall x \in\mcl V\mcl G .
\eqe 
For many of our results, we do not require $\mcl G$ is locally finite (i.e., some vertices can have infinite degree) but we always assume that
\eqb  \label{eqn-finite-sum}
\pi(x) < \infty ,\quad \forall x\in \mcl V\mcl G .
\eqe 
When we refer to \textbf{random walk} on $\mcl G$, we mean random walk with the conductances $\frk c$, i.e., the discrete time Markov process which moves from $x$ to $y$ with probability $\frk c(x,y) / \pi(x) = \frk c(y,x) / \pi(x)$. 

For a function $\frk w : \mcl V\mcl G \to (0,\infty)$ on the vertices of $\mcl G$, we can define the continuous time random walk on $\mcl G$ with rate function $\frk w$. For each $x\in\mcl V\mcl G$, this process spends a random, $\op{Exponential}(\frk w(x))$-distributed amount of time at $x$ each time it visits $x$. In other words, the continuous time random walk is the continuous time Markov chain on $\mcl G$ with transition rates $\alpha(x,y) = \frk w(x)\frk c(x,y)  / \pi(x)$. 

If the discrete time random walk on $\mcl G$ is transient and $\frk w(x)$ grows sufficiently quickly as $x\to\infty$, then the continuous time random walk a.s.\ reaches $\infty$ in finite time, i.e., it visits infinitely many vertices before time $T$ for some random, finite $T > 0$.  

In this paper, we will define a canonical version of continuous time random walk on $\mcl G$ which reaches $\infty$ in finite time and then ``reflects off of $\infty$''. This process is defined on all of $[0,\infty)$, and it visits $\infty$ and reflects off infinitely many times. See Theorem~\ref{thm-cont-time-walk} for a precise definition. 

Roughly speaking, the manner in which the random walk reflects off of $\infty$ will depend on which point of the ``boundary at $\infty$'' the walk hits. In particular, the random walk reflected off of $\infty$ is distinct from random interlacements~\cite{sznitman-interlacement,teixeira-interlacement,ct-interlacement-book,drs-interlacement-book}, where one views $\infty$ as a single point. See Remark~\ref{remark-interlacement} for further discussion. 

\begin{remark} \label{remark-dirichlet-form}
Dirichlet form theory provides an analytic approach to constructing reflecting versions of continuous time Markov processes \cite{silverstein-reflected, silverstein-extension1, chen-reflected-dirichlet, kuwae-reflected, schmidt-reflected-form}. The random walk on $\mathcal G$ reflected off of $\infty$ considered in this article realizes the active reflected Dirichlet form on $\mathcal G$ --- see Section~\ref{sec-reflected-dirichlet} --- but our construction is very different. Compared to the Dirichlet form method, our construction has an advantage in giving an elementary description of the path properties of the reflected process.
\end{remark}

Once we have defined the random walk reflected off of $\infty$, we will discuss several of its applications, which we briefly summarize here. 
\begin{itemize}
\item In Theorem~\ref{thm-fsf}, we show that if $\mcl G$ is locally finite, then if one performs the \textbf{Aldous--Broder algorithm}~\cite{broder-algorithm,aldous-algorithm} on $\mcl G$ using random walk reflected off of $\infty$, then one obtains the $\frk c$-weighted free spanning forest of $\mcl G$, abbreviated $\frk c$-FSF (for $\frk c\equiv 1$, this is the free uniform spanning forest). Similarly, we show in Theorem~\ref{thm-wilson} that if one performs \textbf{Wilson's algorithm}~\cite{wilson-algorithm} using random walk reflected off of $\infty$, then one obtains the $\frk c$-FSF if we truncate on the event that the forest consists of a single tree.
The $\frk c$-FSF is usually harder to analyze than the $\frk c$-weighted \emph{wired} spanning forest, in large part because the latter admits nice constructions via the Aldous--Broder and Wilson algorithms~\cite{blps-usf,hutchcroft-wusf}. As such, we expect that our results could give new tools for studying the $\frk c$-FSF. See Section~\ref{sec-fsf0} for further discussion. 
\item Again assuming that $\mcl G$ is locally finite, we define the \textbf{Green's function} and the \textbf{discrete Gaussian free field} with free boundary conditions at $\infty$ on general infinite graphs where the random walk is transient (Definitions~\ref{def-green} and~\ref{def-gff}). 
\item We define a notion of the \textbf{Tutte embedding} for infinite random planar maps, possibly with infinitely many ends, where the vertex set of the embedded map can potentially have infinitely many accumulation points (Section~\ref{sec-tutte0}). We use this to give a precise scaling limit conjecture for embedded random planar maps toward \textbf{supercritical Liouville quantum gravity (LQG)}, i.e., LQG with (Liouville) central charge\footnote{
Throughout this paper, $\cc$ denotes the central charge associated with Liouville conformal field theory, i.e., $\cc = 1+6Q^2$ where $Q = 2/\gamma + \gamma/2$ is the background charge. We note that some papers instead use the central charge of the associated matter field, which is equal to $26-\cc$. 
}
$\cc \in (1,25)$ (Conjecture~\ref{conj-scaling-limit}). This answers~\cite[Problem 4.4]{ag-supercritical-cle4}. 
\item We make several conjectures concerning the qualitative behavior of statistical physics models on random planar maps which approximate supercritical LQG. For example, the free uniform spanning forest on such planar maps should be connected when $\cc > 16$, but not when $\cc < 16$ (Conjecture~\ref{conj-fsf}). Moreover, one should have percolation at criticality on such maps when $\cc < 95/4$, but not when $\cc > 95/4$ (Conjecture~\ref{conj-perc}). 
\end{itemize}

All of the proofs in this paper use only elementary probability theory. The only part of the paper which uses any LQG theory is Section~\ref{sec-supercritical-rpm}, which has no proofs, only conjectures. This section includes preliminary subsections which review the background on LQG and random planar maps necessary to understand the conjectures (Sections~\ref{sec-lqg} and~\ref{sec-rpm}).

\subsection{Energy-minimizing discrete harmonic functions}
\label{sec-harmonic0}

Random walk reflected off of $\infty$ is closely related to energy-minimizing discrete harmonic functions. To explain this, we need to recall some standard definitions. 

\begin{defn} \label{def-harmonic}
We say that $h : \mcl V\mcl G \to \BB R$ is \textbf{discrete harmonic at $x$} if\footnote{If $\mcl G$ is not locally finite, the relation~\eqref{eqn-harmonic} implicitly requires that the sum is convergent, i.e., $\sum_{y\sim x} \frk c(x,y) |h(y) - h(x)|< \infty$.}
\eqb  \label{eqn-harmonic} 
\sum_{y\sim x} \frk c(x,y) (h(y) - h(x)) = 0   .
\eqe  
For $f : \mcl V\mcl G\to\BB R$, we define its \textbf{Dirichlet energy} by
\eqb 
\op{Energy}(f) :=  \sum_{(x,y) \in \mcl E \mcl G} \frk c(x,y)  |f(y) - f(x)|^2  .
\eqe 
\end{defn}

Let $\mcl A\subset \mcl V\mcl G$ be a finite set of vertices and let $\phi : \mcl A\to\BB R$. 
If the random walk on $\mcl G$ is transient, there can be many functions on $\mcl V\mcl G$ which are discrete harmonic on $\mcl V\mcl G\setminus\mcl A$ and agree with $\phi$ on $\mcl A$. For example, the functions 
\eqbn
h_0(x) := 0 \quad \text{and} \quad h_1(x) := \BB P[\text{random walk started from $x$ never hits $\mcl A$}] 
\eqen
are both discrete harmonic and vanish on $\mcl A$. However, as we will show, there is a unique such function which has minimal Dirichlet energy. 
 
\begin{prop} \label{prop-min-harmonic}
Let $\mcl G$ be as in Section~\ref{sec-overview}. 
Let $\mcl A\subset \mcl V\mcl G$ be a non-empty finite set of vertices. 
For each $\phi : \mcl A \to \BB R$, there exists a unique function $h^\phi : \mcl V\mcl G\to \BB R$ such that $h^\phi |_{\mcl A} = \phi$ and $\op{Energy}(h^\phi)$ is minimal among all functions $f : \mcl V\mcl G\to\BB R$ with $f|_{\mcl A} = \phi$. This function $h^\phi$ is discrete harmonic on $\mcl V\mcl G\setminus \mcl A$ and has finite Dirichlet energy. 
Furthermore, the mapping $\phi \mapsto h^\phi$ is linear. 
\end{prop}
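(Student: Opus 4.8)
The plan is to realize $h^\phi$ as an orthogonal projection for the Dirichlet form. Write
\[
  \la f, g\ra := \sum_{\{x,y\} \in \mcl E\mcl G} \frk c(x,y)\,(f(y)-f(x))(g(y)-g(x)),
\]
which is absolutely convergent for any two functions of finite Dirichlet energy (by Cauchy--Schwarz) and satisfies $\la f,f\ra = \op{Energy}(f)$. Let $D_0$ be the set of $f : \mcl V\mcl G \to \BB R$ with $f|_{\mcl A} = 0$ and $\op{Energy}(f) < \infty$. Since $\mcl G$ is connected and $\mcl A \ne \emptyset$, any $f \in D_0$ with $\op{Energy}(f) = 0$ is constant, hence $\equiv 0$, so $\la\cdot,\cdot\ra$ is an inner product on $D_0$. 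The first step is to check that $(D_0,\la\cdot,\cdot\ra)$ is \emph{complete}, hence a real Hilbert space: for a Cauchy sequence $(f_n)$, positivity of the conductances forces the edge-increments $f_n(y)-f_n(x)$ to converge in $\BB R$; fixing $o\in\mcl A$ and summing increments along a path from $o$ (consistently around cycles, since each $f_n$ is an honest function) produces a pointwise limit $f$ with $f|_{\mcl A}=0$ and the prescribed increments, and a Fatou argument over finite edge sets gives $\op{Energy}(f_n-f)\to 0$. I would also record here the two uses of the standing hypothesis $\pi(x)<\infty$: for finite-energy $f$, Cauchy--Schwarz gives $\sum_{y\sim x}\frk c(x,y)|f(y)-f(x)| \le \pi(x)^{1/2}\op{Energy}(f)^{1/2}<\infty$, so~\eqref{eqn-harmonic} is meaningful for such $f$; and the indicator $\delta_z$ of a single vertex $z$ has $\op{Energy}(\delta_z)=\pi(z)<\infty$, so $\delta_z\in D_0$ whenever $z\notin\mcl A$.

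For existence and uniqueness, extend $\phi$ to $\mcl V\mcl G$ by $\phi_0:=\phi$ on $\mcl A$ and $\phi_0:=0$ elsewhere; since $\phi_0$ is supported on the finite set $\mcl A$ and $\pi<\infty$ there, $\op{Energy}(\phi_0)<\infty$. Every finite-energy $f$ with $f|_{\mcl A}=\phi$ equals $\phi_0+g$ for a unique $g\in D_0$, and $g\mapsto\la\phi_0,g\ra$ is a bounded linear functional on $D_0$; by the Riesz representation theorem there is $g^*\in D_0$ with $\la\phi_0,g\ra=\la g^*,g\ra$ for all $g\in D_0$. Expanding the quadratic,
\[
  \op{Energy}(\phi_0+g)=\op{Energy}(\phi_0)-\op{Energy}(g^*)+\op{Energy}(g+g^*),
\]
which over $g\in D_0$ is minimized uniquely at $g=-g^*$. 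Hence $h^\phi:=\phi_0-g^*$ has finite energy, agrees with $\phi$ on $\mcl A$, and minimizes $\op{Energy}$ among finite-energy functions with that boundary data; since $\op{Energy}(h^\phi)<\infty$, a function of infinite energy cannot do better, so $h^\phi$ is \emph{the} minimizer.

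Both the discrete harmonicity off $\mcl A$ and the linearity of $\phi\mapsto h^\phi$ then follow from the Euler--Lagrange condition $\la h^\phi,g\ra=0$ for all $g\in D_0$, which is immediate from $\la\phi_0,g\ra=\la g^*,g\ra$. This condition in fact characterizes $h^\phi$: if $\tilde f$ has finite energy, agrees with $\phi$ on $\mcl A$, and satisfies $\la\tilde f,g\ra=0$ for all $g\in D_0$, then $\op{Energy}(f)=\op{Energy}(\tilde f)+\op{Energy}(f-\tilde f)$ for every finite-energy $f$ with $f|_{\mcl A}=\phi$, forcing $\tilde f=h^\phi$. Taking $g=\delta_z$ for $z\notin\mcl A$ in $\la h^\phi,g\ra=0$ isolates the edges incident to $z$ and yields exactly $\sum_{y\sim z}\frk c(z,y)(h^\phi(y)-h^\phi(z))=0$, i.e.\ harmonicity at $z$. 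For linearity, given $\phi_1,\phi_2$ and $a,b\in\BB R$, the function $ah^{\phi_1}+bh^{\phi_2}$ has finite energy, restricts to $a\phi_1+b\phi_2$ on $\mcl A$, and satisfies $\la ah^{\phi_1}+bh^{\phi_2},g\ra=0$ for all $g\in D_0$ by bilinearity, so by the characterization it equals $h^{a\phi_1+b\phi_2}$.

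The step I expect to need the most care is the completeness of $(D_0,\la\cdot,\cdot\ra)$ --- verifying that the Dirichlet energy genuinely defines a complete norm on functions vanishing on $\mcl A$ --- which is where connectedness enters in a slightly non-obvious way, through reconstructing the limiting function from its edge-increments and checking consistency around cycles. Everything else is soft Hilbert-space theory, and the lack of a local-finiteness assumption is absorbed entirely into the hypothesis~\eqref{eqn-finite-sum}.
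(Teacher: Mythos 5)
Your proof is correct and follows essentially the same orthogonal-projection strategy as the paper: you work in the Hilbert space $D_0$ of finite-energy functions vanishing on $\mcl A$ and invoke Riesz representation, whereas the paper projects $\nabla\wh\phi$ onto the closed subspace $\rng\Theta_{\mcl A}$ of gradients inside $\ell^2(\ora{\mcl E}\mcl G,\frk c^{-1})$; these are isometrically the same picture, and your completeness argument for $D_0$ (reconstruct a limit from the pointwise-limiting edge-increments, noting cycle-consistency survives) is exactly the paper's closedness argument for $\rng\Theta_{\mcl A}$. Your derivation of harmonicity by testing $\la h^\phi,\cdot\ra=0$ against the indicator $\delta_z$ is marginally slicker than the paper's variational perturbation (replace $h^\phi(x)$ by the weighted average of its neighbors and show the energy strictly drops), but both express the same Euler--Lagrange condition.
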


We will prove Proposition~\ref{prop-min-harmonic} in Section~\ref{sec-min-harmonic} via an elementary orthogonal projection argument. 

\begin{remark} \label{remark-harmonic-analogy}
To see why energy-minimizing discrete harmonic functions should be relevant for the study of random walk reflected off of $\infty$, consider by way of analogy a finite graph $G$ with a distinguished boundary $\bdy G\subset \mcl V G$ (in this analogy, $\bdy G$ plays the role of $\infty$). Let $A\subset \mcl V G$ be a non-empty set which is disjoint from $\bdy G$ and let $\phi : A\to\BB R$. There can be many functions on $\mcl V G$ which are discrete harmonic on $\mcl V G\setminus (A\cup \bdy G)$ and which agree with $\phi$ on $A$ (such functions have different values on $\bdy G$). However, there is one such function which is energy-minimizing, namely, the one which is discrete harmonic on all of $\mcl V G\setminus A$. The value of this discrete harmonic function at a vertex $x\in\mcl V G$ is given by the expected value of $\phi$ at the first point of $A$ hit by a random walk on $G$ started from $x$ which reflects off of $\bdy G$. If we think of the boundary of $\mcl G$ as ``the set of points at $\infty$'', then it is natural to guess that the hitting measure for random walk reflected off of $\infty$ should be related to energy-minimizing discrete harmonic functions.
\end{remark}

Using Proposition~\ref{prop-min-harmonic}, we can define the energy-minimizing harmonic measure on a finite set of vertices of $\mcl G$. This harmonic measure will be the hitting measure for the random walk reflected off of $\infty$.  

\begin{defn} \label{def-hm}
Let $\mcl A\subset\mcl V\mcl G$ be a finite set of vertices. 
For $y \in \mcl A$, let $\BB 1_y : \mcl A \to\BB R$ be the function which equals 1 at $y$ and zero elsewhere. 
By Proposition~\ref{prop-min-harmonic}, there exists a unique energy-minimizing discrete harmonic function $h^y   : \mcl V\mcl G \to \BB R$ with $h^y|_{\mcl A} = \BB 1_y$. For $x\in \mcl V \mcl G$, we define the \textbf{(energy-minimizing) harmonic measure} on $\mcl A$ viewed from $x$ by 
\eqb  \label{eqn-hm-def} 
\op{hm}_{\mcl A}^x(y) := h^y(x) , \quad\forall y \in \mcl A .
\eqe 
\end{defn}

We will often omit the adjective ``energy-minimizing'' and just refer to ``harmonic measure''. It is easy to check that $\op{hm}_{\mcl A}^x$ is a probability measure on $\mcl A$ (see Lemma~\ref{lem-hm-linear}).

\subsection{Existence and uniqueness of random walk reflected off of \texorpdfstring{$\infty$}{infinity}}
\label{sec-cont-time-walk}

The first main theorem of this paper is the existence and uniqueness of the random walk reflected off of $\infty$. 
 
\begin{thm} \label{thm-cont-time-walk} 
There exists a function $\frk w_* : \mcl V\mcl G \to (0,\infty)$ such that for each function $\frk w : \mcl V\mcl G \to (0,\infty)$ with $\frk w(x) \geq \frk w_*(x) ,\, \forall x\in\mcl V\mcl G$ and for each possible starting point $z \in \mcl V\mcl G$, the following is true. There is a unique (in law) continuous-time stochastic process $X : [0,\infty) \to \mcl V\mcl G \cup \{\infty\}$ with $X_0 = z$, called the \textbf{continuous time random walk on $\mcl G$ reflected off of infinity}, which satisfies the following properties.
\begin{enumerate}[($i$)]
\item \label{item-ae} \textit{(Almost everywhere defined).} For each $t\geq0$, a.s.\ $X_t \in \mcl V\mcl G$ and there exists $\ep > 0$ such that $X_s = X_t$ for each $s\in (t-\ep,t+\ep)$. 
\item \label{item-cont} \textit{(Right continuity).} Almost surely, for every $t\geq 0$ such that $X_t\in\mcl V\mcl G$, there exists $\ep > 0$ such that $X_s = X_t$ for each $s\in [t,t+\ep]$. 
\item \label{item-rw} \textit{(Continuous time random walk).} If we let $\tau := \min\{t > 0:  X_t \not= z \}$, then $\tau$ and $X_\tau$ are independent, $\tau$ has the exponential distribution with rate $\frk w(z)$, and $X_\tau$ has the law of a step of the random walk on $\mcl G$ started at $z$ (i.e., for each $x \sim z$, $\BB P[X_\tau = x] = \frk c(z,x) / \pi(z)$). 
\item \label{item-markov} \textit{(Markov property).} For every $t\geq 0$ and $x\in\mcl V\mcl G$, on the event $\{X_t = x\}$, the conditional law of $\{X_{s+t} \}_{s\geq 0}$ given $\{X_s\}_{s\leq t}$ is the same as the law of $\{X_s\}_{s\geq 0}$ started from $X_0=x$.  
\item \label{item-recurrence} \textit{(Recurrence).} Almost surely, there exist arbitrarily large values of $t\geq 0$ for which $X_t = z$. 
\item \label{item-harmonic} \textit{(Relation to harmonic functions).} Let $\mcl A\subset \mcl V\mcl G$ be a non-empty finite set and let $\phi : \mcl A\to\BB R$. Let $h^\phi : \mcl V\mcl G\to\BB R$ be the energy-minimizing discrete harmonic function as in Proposition~\ref{prop-min-harmonic}. If we let $\tau := \min\{t\geq 0 : X_t \in\mcl A\}$, then for each choice of starting point $z\in\mcl V\mcl G$, 
\eqbn
h^\phi(z) = \BB E_z\left[ \phi(X_\tau) \right] .
\eqen
\end{enumerate}
\end{thm}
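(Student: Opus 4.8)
The plan is to construct $X$ by a "reflection off of finite boundaries, taking a limit" procedure, and then to verify uniqueness via the Markov property together with the identification of the jump chain. Here is the construction I would carry out. Fix an exhaustion $\mcl V_1 \subset \mcl V_2 \subset \cdots$ of $\mcl V\mcl G$ by finite connected sets with $z \in \mcl V_1$ and $\bigcup_n \mcl V_n = \mcl V\mcl G$. On each $\mcl V_n$, declare the complement $\mcl V\mcl G \setminus \mcl V_n$ to be a single "boundary vertex'' $\delta_n$, and run a continuous-time random walk on $\mcl V_n \cup \{\delta_n\}$ that reflects off $\delta_n$ — i.e., upon hitting $\delta_n$ it instantly re-enters $\mcl V_n$ according to the energy-minimizing harmonic measure $\op{hm}_{\bdy \mcl V_n}^{\,\cdot}$ from Definition~\ref{def-hm} (Remark~\ref{remark-harmonic-analogy} is exactly the heuristic here). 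To make the limit as $n\to\infty$ converge, I need the walk not to "escape to $\infty$ and fail to come back'': this is where the rate function $\frk w$ enters. The idea is to choose $\frk w_*$ growing fast enough that $\sum_{x} \frk w_*(x)^{-1} G(z,x) < \infty$, where $G$ is the Green's function of the (transient) discrete-time walk; this forces the total time spent "near $\infty$'' on each excursion to be a.s.\ finite and summable, so that the excursions away from $z$ accumulate only at the reflection times, yielding a process defined on all of $[0,\infty)$ with $X_t \in \mcl V\mcl G$ for a.e.\ $t$. The coupling of the successive approximations should be done so that the two processes agree until the first time the inner one would hit $\bdy \mcl V_n$, and the key estimate is that the harmonic measures $\op{hm}_{\bdy \mcl V_n}^{z}$, pulled back to hitting distributions of $\mcl A$ for fixed finite $\mcl A$, form a Cauchy/consistent family converging to $\op{hm}_{\mcl A}^{z}$ — this is essentially Proposition~\ref{prop-min-harmonic} plus a monotone-exhaustion argument for energy minimizers.

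With the process in hand, I would verify the six properties in turn. Properties ($i$) and ($ii$) (a.e.-defined, right-continuity) follow from the summability of excursion lengths built into the choice of $\frk w_*$, together with the fact that each visit to a vertex $x$ lasts an $\op{Exponential}(\frk w(x))$ holding time. Property ($iii$) (the first step is a genuine continuous-time random walk step) is immediate from the construction: the holding time at $z$ before the first jump, and the first jump target, are the same in every approximation $\mcl V_n$ (for $n$ large enough that $z$'s neighbors lie in $\mcl V_n$), so they pass to the limit unchanged. Property ($iv$), the Markov property, follows because each approximating reflected walk is Markov and the coupling respects the filtration; one has to be slightly careful at the reflection times but the strong Markov property of the approximations handles this. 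Property ($v$) (recurrence of $z$) is the statement that each excursion away from $z$ a.s.\ terminates — which is exactly what the summability condition on $\frk w_*$ guarantees, since an excursion that reflects off $\infty$ infinitely often in finite time still returns, and the probability of never returning is controlled by the reflected harmonic measure charging $z$'s component.

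Property ($vi$) is the substantive one and I expect it to be the main obstacle. The claim $h^\phi(z) = \BB E_z[\phi(X_\tau)]$, $\tau = \min\{t : X_t \in \mcl A\}$, should be proved by the optional stopping / martingale route: show that $t \mapsto h^\phi(X_{t\wedge\tau})$ is a (bounded, or at least uniformly integrable) martingale, then let $t\to\infty$ and use property ($v$)-type recurrence to conclude $X_{t\wedge\tau}\to X_\tau$ with $\tau<\infty$ a.s. The martingale property on $\mcl V\mcl G \setminus \mcl A$ is just discrete harmonicity of $h^\phi$ plus the continuous-time generator computation; the delicate point is what happens "at $\infty$'': one must show that the energy-minimizing $h^\phi$ — as opposed to any other discrete-harmonic extension of $\phi$ — is the one picked out, and that boils down to showing the martingale has no "loss at $\infty$.'' The cleanest way I see is to run the same optional-stopping argument in each approximation $\mcl V_n$, where it is elementary (finite state space, $h^\phi$ restricted suitably is harmonic including at the boundary vertex by the defining property of energy minimizers in Remark~\ref{remark-harmonic-analogy}), obtaining $h_n^\phi(z) = \BB E_z[\phi(X^{(n)}_{\tau_n})]$ for the reflected walk on $\mcl V_n$, and then pass to the limit using (a) $h_n^\phi \to h^\phi$ pointwise from the energy-minimizer exhaustion, (b) $X^{(n)} \to X$ in the coupling, and (c) uniform integrability of $\phi(X^{(n)}_{\tau_n})$, which holds since $\phi$ is bounded on the finite set $\mcl A$. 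Finiteness of $\tau$ a.s.\ (the walk actually hits $\mcl A$ rather than spending all its time reflecting) is itself a consequence of recurrence of $z \in \mcl A$-adjacent vertices, handled as in property ($v$). Uniqueness in law, finally, follows from ($iii$) and ($iv$): the holding-time-and-jump description ($iii$) at every vertex, together with the Markov property ($iv$) and the a.e.-defined/right-continuous regularity ($i$)–($ii$), determines all finite-dimensional distributions — the reflection behavior at $\infty$ is forced because the only freedom is in how excursions re-enter, and ($vi$) (equivalently, the consistency of hitting distributions with $\op{hm}$) pins that down.
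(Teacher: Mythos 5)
Your high-level plan (exhaust $\mcl G$ by finite subgraphs, reflect off the boundary by harmonic measure, couple the approximations via consistency of energy minimizers, pass to the limit, verify the six properties, pin down uniqueness through the jump-chain description) matches the paper's strategy. But there is a structural error in the construction that you would need to fix before the rest can go through: you collapse the complement $\mcl V\mcl G \setminus \mcl V_n$ to a \emph{single} boundary vertex $\delta_n$ and reflect off it. Once the exit is recorded only as ``reached $\delta_n$,'' you have forgotten where the walk left $\mcl V_n$, and the re-entry measure cannot depend on that information. But the whole point of this process --- and what distinguishes it from random interlacements (see Remark~\ref{remark-interlacement}) --- is that the behavior upon returning from $\infty$ depends on which ``boundary point at infinity'' the walk hit. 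Concretely, the paper runs the approximating chain $Y^n$ on $B_1\mcl G_n$ (keeping the external neighbors distinct as separate states), and when $Y^n$ sits at an external vertex $x \in B_1\mcl G_n \setminus \mcl G_n$ it re-enters $\mcl V\mcl G_n$ according to $\op{hm}_{\mcl G_n}^x$, which genuinely varies with $x$. With your single-$\delta_n$ picture the re-entry law is fixed once $n$ is fixed, the resulting jump chain on $\mcl V\mcl G_n$ does \emph{not} have the transition probabilities $\wt p_n$ from Remark~\ref{remark-tilde-process}, the consistency lemma (Lemma~\ref{lem-rw-consistent}) fails, and Property~\eqref{item-harmonic} cannot hold: the $X$-hitting distribution of a finite $\mcl A$ would not be $\op{hm}_{\mcl A}^z$ on any graph whose boundary at infinity is nontrivial. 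The notation ``$\op{hm}_{\bdy\mcl V_n}^{\,\cdot}$'' papers over this; once you try to make the ``$\cdot$'' precise you will see you need the full exit location, not a collapsed vertex.

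Two secondary remarks. First, your proposed criterion $\sum_x \frk w_*(x)^{-1} G(z,x) < \infty$ controls the expected time for one excursion to $\infty$ started from the fixed $z$; after reflecting, the walk restarts from a harmonic-measure-chosen vertex, and you would need this sum to be finite uniformly over restart points (which it need not be). The paper avoids this by an adaptive Borel--Cantelli choice of $\frk w_*$ in Lemma~\ref{lem-time-finite}: for each annulus $\mcl G_n\setminus\mcl G_{n-1}$ it picks $C_n$ large enough (uniformly over $z\in\mcl G_n$) that the time spent there before returning to $z$ is at most $2^{-n}$ with probability $\geq 1-2^{-n}$, and then sums. Second, for Property~\eqref{item-harmonic} you correctly identify that a direct optional-stopping argument is problematic ``at $\infty$'' and fall back to passing to the limit from the finite approximations; this is exactly Lemma~\ref{lem-rw-harmonic} and Proposition~\ref{prop-harmonic-conv}, and the key step you gesture at (``$h^\phi$ is harmonic at the reflecting boundary'') is made precise there through the consistency of energy minimizers (Lemma~\ref{lem-harmonic-compat}). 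Your uniqueness sketch is in the right direction and is essentially what the paper does: given any candidate $\wt X$, extract finite-state continuous-time chains $\wt X^n$ from the trace on $B_1\mcl G_n$, show $\wt X^n \eqD X^n$ using Properties ($ii$)--($iv$) and ($vi$), and then show $\wt X^n \to \wt X$ in an $L^1_{\op{loc}}$-type metric.
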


As discussed in Section~\ref{sec-overview}, the process $X$ reaches $\infty$ in finite time, then ``reflects off of $\infty$'' and hits additional vertices of $\mcl G$. It can potentially have infinitely many excursions away from $\infty$. 

The choice of $\frk w$ only affects the law of $X$ modulo time change, not the order in which $X$ hits vertices of $\mcl G$. Most of the results and conjectures in this paper will depend only on $X$ viewed modulo time change, so will not depend on the choice of $\frk w$, so long as $\frk w \geq \frk w_*$. 

Properties~\eqref{item-ae} through~\eqref{item-markov} are structural assumptions which justify us in calling $X$ a continuous time random walk. 

If the ordinary random walk on $\mcl G$ (with conductances $\frk c$) is recurrent, then Theorem~\ref{thm-cont-time-walk} is true with $X$ equal to the continuous time random walk on $\mcl G$ with any choice of rate function $\frk w : \mcl V\mcl G\to (0,\infty)$. So, the proposition has non-trivial content only when the ordinary random walk on $\mcl G$ is transient. 
In this case, the recurrence property~\eqref{item-recurrence} is crucial.  
If we did not require this property, we could just take $\frk w$ to be a constant function and take $X$ to be the ordinary continuous time random walk on $\mcl G$. 
The constraint that $\frk w \geq \frk w_*$ is needed for $X$ to be recurrent: we need $\frk w$ to be large enough so that $X$ can reach $\infty$ and come back in a finite amount of time. 
 
Property~\eqref{item-harmonic} tells us that the distribution of the first vertex of $\mcl A$ hit by $X$ is given by the energy-minimizing harmonic measure on $\mcl A$ viewed from $X_0$ (Definition~\ref{def-hm}). See Remark~\ref{remark-harmonic-analogy} for an intuitive discussion of why Property~\eqref{item-harmonic} is natural. 

Property~\eqref{item-harmonic} motivates the key idea in the construction of $X$. We consider an increasing family of finite, connected subgraphs $\{\mcl G_n\}_{n\geq 1}$ whose union is all of $\mcl G$. For each $n$, we define a Markov chain on the 1-neighborhood $B_1 \mcl G_n$ which is the union of $\mcl V\mcl G_n$ and the vertices of $\mcl G\setminus\mcl G_n$ which have a neighbor in $\mcl G_n$. This Markov chain evolves as an ordinary random walk on $\mcl G_n$ until it reaches $B_1\mcl G_n\setminus \mcl G_n$. At this time, it jumps to a point of $\mcl G_n$ chosen according to energy-minimizing harmonic measure on $\mcl G_n$ (defined using Proposition~\ref{prop-min-harmonic}). We show that the transition probabilities for these Markov chains for different values of $n$ are consistent, which allows us to take a limit as $n\to\infty$ to define the random walk reflected off of $\infty$. See the beginning of Section~\ref{sec-rw} for a more detailed outline. 

As noted in Remark~\ref{remark-dirichlet-form}, random walk reflected off of $\infty$ can also be constructed using the theory of Dirichlet forms (as explained in Section~\ref{sec-reflected-dirichlet}). In this paper we give an elementary direct construction.

\begin{remark} \label{remark-end}
In Theorem~\ref{thm-cont-time-walk}, we have set $X_t = \infty$ whenever $X_t$ is not at a vertex of $\mcl G$. However, as we noted in Section~\ref{sec-overview}, the behavior of $X$ after it hits $\infty$ depends on where it hits $\infty$ (roughly speaking, Property~\eqref{item-harmonic} ensures that this is the case). In Proposition~\ref{prop-rw-end} below, we show that (at least if $\mcl G$ is locally finite) we can canonically associate each $t$ such that $X_t = \infty$ with a unique \textbf{end} of $\mcl G$ (see Definition~\ref{def-end} for the definition of an end). Hence, we can view the random walk on $\mcl G$ reflected off of $\infty$ as a function 
\eqb \label{eqn-end-function}
X : [0,\infty) \to  \mcl V\mcl G \cup \{\text{ends of $\mcl G$}\} .
\eqe 
It may be possible to make sense of more refined information about where $X$ hits $\infty$. For example, it may be possible to view $X$ as a function from $[0,\infty)$ to the union of $\mcl V\mcl G$ and the Poisson boundary of $\mcl G$ (see, e.g.,~\cite[Section 14.3]{lyons-peres} for a definition of the Poisson boundary). However, we do not explore this here.
\end{remark}

\begin{remark} \label{remark-reflected-bm}
Random walk reflected off of $\infty$ can be thought of as a discrete analog (on arbitrary graphs) of reflected Brownian motion on an open domain in $\BB R^d$, with the ``boundary at $\infty$'' playing the role of the boundary of the domain. Reflected Brownian motion on an arbitrary bounded domain in $\BB R^d$ was constructed using Dirichlet forms in~\cite{fukushima-reflected-bm,fukushima-dirichlet-forms}. See also~\cite{cf-reflecting-bm} or~\cite[Chapter 7]{cf-book} for some results concerning ``Brownian motion reflected off of $\infty$'' on certain unbounded domains in $\BB R^d$, also obtained using Dirichlet form theory.  
\end{remark}

\begin{remark} \label{remark-interlacement}
Continuous time random walk reflected off of $\infty$ has some similarities to the notion of \textbf{random interlacements}, but the objects are not equivalent. A random interlacement on a transient graph (possibly with conductances) is a Poissonian collection of excursions of random walk away from $\infty$. Random interlacements were first introduced by Sznitman~\cite{sznitman-interlacement} and later generalized to general transient graphs by Teixeira~\cite{teixeira-interlacement}. See the books~\cite{ct-interlacement-book,drs-interlacement-book} for an overview of the subject. In contrast to random interlacements, random walk reflected off of $\infty$ is not a Poissonian collection of excursions away from $\infty$ since, intuitively speaking, if $X_t = \infty$, then the behavior of the process after time $t$ depends on which ``point at $\infty$'' $X_t$ is at (c.f.\ Remark~\ref{remark-end}). In other words, for random interlacements, the ``boundary at $\infty$'' for the random walk is identified to a point, whereas for our random walk reflected off of $\infty$, it is not. 
For graphs where the boundary at $\infty$ is trivial (e.g., graphs where all bounded harmonic functions are constant, c.f.~\cite[Proposition 14.25]{lyons-peres}), we expect that the excursions of $X$ away from $\infty$ are equivalent to a random interlacement. \textit{Update: A version of this statement has subsequently been proven by Yao Yu \cite{yu-reflected-interlacement-equiv}.}
\end{remark}

\subsection{Applications to the free uniform spanning forest}
\label{sec-fsf0}

\noindent
\textit{Throughout this subsection, we assume that $\mcl G$ is locally finite, i.e., each vertex has finite degree. }
\medskip

Using random walk reflected off of $\infty$, one can give two new constructions of the free spanning forest on $\mcl G$ with conductances $\frk c$ (abbreviated $\frk c$-FSF, see Definition~\ref{def-fsf}): one using the Aldous--Broder algorithm (Theorem~\ref{thm-fsf}); and one using Wilson's algorithm, which works only on the event that the $\frk c$-FSF is connected  (Theorem~\ref{thm-wilson}). We also mention a few open problems about the $\frk c$-FSF from the literature for which our results might be relevant in Section~\ref{sec-fsf-problems}.

\subsubsection{Background on spanning forests and loop erasures}
\label{sec-fsf-le}

We review the definition of the $\frk c$-free spanning forest on $\mcl G$ (Definition~\ref{def-fsf}) and the loop erasure of a path in $\mcl G$ (Definition~\ref{def-le}). We also explain how to define the loop erasure of a segment of the random walk reflected off of $\infty$ (Definition~\ref{def-le-infty}).

\begin{defn} \label{def-ust}
Let $G$ be a finite graph equipped with a conductance function $\frk c : \mcl E G \to (0,\infty)$. The \textbf{$\frk c$-spanning tree} of $G$ is the random spanning tree $T\subset  G$ where, for each spanning tree $\frk t$ of $G$,
\eqb  \label{eqn-def-spanning-tree}
\BB P\left[ T = \frk t \right] = \frac{1}{Z} \prod_{e\in \mcl E\frk t} \frk c(e) .
\eqe 
where $Z$ is a proportionality constant chosen to make this a probability measure. 
\end{defn}

For our infinite graph $\mcl G$ (which we recall is equipped with a conductance function $\frk c$), we say that a set $\mcl T\subset \mcl G$ is a \textbf{spanning forest} of $\mcl G$ if $\mcl T$ contains no cycles and $\mcl V\mcl T = \mcl V\mcl G$. 

\begin{defn} \label{def-fsf}
The \textbf{$\frk c$-free spanning forest ($\frk c$-FSF)} of $\mcl G$ is the random spanning forest $\mcl T^{\op{FSF}}$ of $\mcl G$ with the following property. Let $\{\mcl G_n\}_{n\geq 1}$ be an increasing family of finite connected subgraphs of $\mcl G$ whose union is all of $\mcl G$. Let $\mcl T_n$ be the $\frk c$-spanning tree of $\mcl G_n$. Then for any fixed finite set of edges $E\subset\mcl G$, we have 
\eqb \label{eqn-fusf-def}
E \cap \mcl T_n \to E \cap \mcl T^{\op{FSF}} 
\eqe 
in the total variation sense.
\end{defn}

See, e.g.,~\cite[Section 10.1]{lyons-peres} for a proof of the existence and uniqueness of the $\frk c$-FSF. 
We note that if $\frk c(e) = 1$ for each $e\in\mcl E\mcl G$, then $\mcl T^{\op{FSF}}$ is the free uniform spanning forest of $\mcl G$ as defined in~\cite{pemantle-ust}. 
The $\frk c$-free spanning forest is not necessarily connected. Roughly speaking, the reason for this is that for some pairs of vertices $x,y\in\mcl V\mcl G$, the length of the path between $x$ and $y$ in the spanning tree $\mcl T_n$ can go to $\infty$ in distribution as $n\to\infty$.
   
The $\frk c$-FSF is distinct from the \textbf{$\frk c$-wired spanning forest}, which is defined in a similar manner except that we replace the $\frk c$-spanning tree of $\mcl G_n$ by the $\frk c$-spanning tree of the graph $\wh{\mcl G}_n$ obtained by identifying all of the vertices of $\mcl G\setminus \mcl G_n$ to a point. 
The $\frk c$-wired spanning forest is usually more tractable than the $\frk c$-FSF since the $\frk c$-wired spanning forest can be constructed using Wilson's algorithm rooted at $\infty$. See~\cite{blps-usf} for more on free and wired spanning forests. 

We next need to discuss loop erasures, which appear in Wilson's algorithm. 

\begin{defn} \label{def-le}
Let $a,b\in\BB Z$ with $a < b$ and let $Z : [a,b] \cap \BB Z \to \mcl V\mcl G$ be a finite path in $\mcl G$. 
The \textbf{loop erasure} of $Z$ is the path $\op{LE}  : \BB N_0 \to \mcl V\mcl G$ defined inductively as follows. 
Let $\op{LE}(0) = Z_a$. Inductively, suppose $k\geq 1$ and $\op{LE} (k-1)$ has been defined.  
If $\op{LE} (k-1) = Z_b$, we set $\op{LE} (k) = Z_b$. 
Otherwise, let $j_k$ be the last time at which $Z$ visits $\op{LE}(k-1)$ and let $\op{LE}(k) := Z_{j_k + 1}$. 
\end{defn}

Loop erasures still make sense for increments of the random walk reflected off of $\infty$, even if the increment visits $\infty$. 

\begin{defn} \label{def-le-infty}
Let $X$ be the (continuous time) random walk reflected off of $\infty$ on $\mcl G$. Let $a , b \geq 0$ with $a < b$ and $X_a\not=\infty$. 
The \textbf{loop erasure} of $X|_{[a,b]}$ is the path $\op{LE}  : \BB N_0 \to \mcl V\mcl G$ defined inductively as follows. 
Let $\op{LE}(0) = X_a$. Inductively, suppose $k\geq 1$ and $\op{LE} (k-1)$ has been defined.  
If $\op{LE} (k-1) = X_b$, we set $\op{LE} (k) = X_b$. 
Otherwise, let $t_k$ be the supremum of the times $t \in [a,b]$ for which $X_t = \op{LE}(k-1)$ (note that this set is a finite union of left-closed, right-open intervals by Lemma~\ref{lem-to-infty}). Let $\op{LE}(k) :=  X_{t_k}$ be the vertex which $X$ jumps to at time $t_k$.
\end{defn}

In contrast to the loop erasure of a finite path in $\mcl G$, the loop erasure of $X|_{[a,b]}$ might never reach $X_b$. Rather, the loop erasure could instead be an infinite path started from $X_a$. However, it is also possible that the loop erasure of $X|_{[a,b]}$ reaches $b$ in finitely many steps, even if $X$ visits infinitely many vertices during $[a,b]$: indeed, it could be that all of the loops of $X|_{[a,b]}$ which pass through $\infty$ get erased.

\subsubsection{Aldous--Broder algorithm} 
\label{sec-ab0}

The Aldous--Broder algorithm~\cite{broder-algorithm,aldous-algorithm} (see, e.g.,~\cite[Corollary 4.9]{lyons-peres}) gives a way of constructing the $\frk c$-spanning tree of a finite graph using random walk run until its cover time.  
As we now explain, applying the Aldous--Broder algorithm to random walk reflected off of $\infty$ gives the $\frk c$-free spanning forest of $\mcl G$. Let us begin by explaining the construction.

\begin{defn} \label{def-rw-forest}
Fix $z\in\mcl V\mcl G$ and let $X$ be the continuous time random walk on $\mcl G$ reflected off of $\infty$ started from $z$ (Theorem~\ref{thm-cont-time-walk}). For $x\in \mcl V\mcl G$, let $\tau_x := \min\{t\geq 0 : X_t = x\}$ (which is finite a.s.\ since $X$ is recurrent). For $x\in\mcl V\mcl G\setminus \{z\}$, define the \textbf{parent} $a(x)$ of $x$ to be the vertex of $\mcl V\mcl G$ visited by $X$ immediately before time $\tau_x$, i.e., $a(x) = X_{\tau_x -\ep}$ for each small enough $\ep > 0$ (this is well-defined due to~\ref{lem-parent} below). 
The \textbf{Aldous--Broder spanning forest of $\mcl G$} generated by $X$ is the subgraph $\mcl T^{\op{AB}} \subset \mcl G$ with $\mcl V\mcl T^{\op{AB}} = \mcl V\mcl G$ and 
\eqb  \label{eqn-sf-def}
\mcl E\mcl T^{\op{AB}} := \left\{ \{a(x) , x\} : x \in \mcl V\mcl G\setminus \{z\} \right\} \subset \mcl E\mcl G .
\eqe 
\end{defn}

It is clear from the construction that $\mcl T^{\op{AB}}$ is a spanning forest of $\mcl G$. 

\begin{thm}[Aldous--Broder construction of the $\frk c$-FSF] \label{thm-fsf}
For any choice of starting point $z\in\mcl V\mcl G$, the spanning forest of $\mcl G$ generated by $X$ (Definition~\ref{def-rw-forest}) has the same law as the $\frk c$-free spanning forest of $\mcl G$ (Definition~\ref{def-fsf}). 
\end{thm}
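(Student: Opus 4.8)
The plan is to compare the Aldous--Broder forest $\mcl T^{\op{AB}}$ generated by $X$ with the finite-volume Aldous--Broder trees, and push the classical finite-graph statement through the $n\to\infty$ limit that defines both $X$ and the $\frk c$-FSF. First I would recall the construction of $X$ sketched after Theorem~\ref{thm-cont-time-walk}: for an exhausting sequence $\{\mcl G_n\}$, one has Markov chains $X^n$ on the $1$-neighborhoods $B_1\mcl G_n$ which run as ordinary random walk on $\mcl G_n$ and, upon exiting $\mcl G_n$, jump back into $\mcl V\mcl G_n$ according to energy-minimizing harmonic measure $\op{hm}_{\mcl V\mcl G_n}^\cdot$; these chains are consistent and $X$ is their limit. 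The key observation is that $X^n$ is exactly the ordinary random walk on the \emph{finite} graph $\wt{\mcl G}_n$ obtained from $\mcl G_n$ by adjoining, for each vertex $w\in B_1\mcl G_n\setminus\mcl V\mcl G_n$, edges from $w$ back to $\mcl V\mcl G_n$ with conductances proportional to $h^y(w)$ (the energy-minimizing harmonic function from Definition~\ref{def-hm}) — more precisely, one wants $\wt{\mcl G}_n$ to be a finite weighted graph whose random walk, observed on $\mcl V\mcl G_n$, matches $X$ observed on $\mcl V\mcl G_n$. By Property~\eqref{item-harmonic} of Theorem~\ref{thm-cont-time-walk} applied with $\mcl A = \mcl V\mcl G_n$, the walk $X$ restricted to the times it lies in $\mcl V\mcl G_n$ is precisely such a Markov chain, so there is a finite reversible chain realizing it.

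Next I would run the classical Aldous--Broder theorem (e.g.~\cite[Corollary 4.9]{lyons-peres}) on the finite weighted graph $\wt{\mcl G}_n$: the forest formed by the edges $\{a(x),x\}$, where $a(x)$ is the vertex visited just before the first hit of $x$ by the stationary chain on $\wt{\mcl G}_n$, is the $\frk c$-weighted spanning tree of $\wt{\mcl G}_n$. One must be slightly careful about two points: (a) stationarity — Aldous--Broder wants the walk started from the stationary distribution, whereas $X$ is started from a fixed $z$; this is handled in the usual way, since by recurrence (Property~\eqref{item-recurrence}) and the Markov property one can condition on the configuration of first-entry edges and check it does not depend on the starting point, or alternatively invoke the version of Aldous--Broder that allows a fixed root; (b) only the edges incident to the ``real'' vertices $\mcl V\mcl G_n$ and lying \emph{inside} $\mcl G_n$ matter for us, so I would argue that the restriction of the $\frk c$-spanning tree of $\wt{\mcl G}_n$ to edges of $\mcl G_n$ agrees in law with the restriction of the $\frk c$-spanning tree of $\mcl G_n$, plus possibly some structure involving the added boundary edges; the cleanest route is to note that contracting/deleting the auxiliary edges recovers exactly the relation between $\wt{\mcl G}_n$ and $\mcl G_n$ used in the electrical-network interpretation of harmonic measure, so the induced law on edges of $\mcl G_n$ is the $\frk c$-spanning tree of $\mcl G_n$ conditioned appropriately — in fact I expect it to be literally the $\frk c$-spanning tree of $\mcl G_n$.

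Then comes the limiting step. Fix a finite edge set $E\subset\mcl E\mcl G$. For $n$ large enough that $E\subset\mcl E\mcl G_n$, the events $\{e\in\mcl T^{\op{AB}}\}$ for $e\in E$ are determined by the first-entry edges $a(x)$ for $x$ ranging over the (finitely many) endpoints of edges in $E$, and these in turn are determined by $X$ up to the time it has hit all those endpoints. Because $X$ is the consistent limit of the $X^n$ and hits every vertex a.s.\ in finite time, the law of $(a(x))_{x\in V(E)}$ under $X$ equals its law under the finite chain $X^n$ for all large $n$ (here the consistency of the chains is what makes this exact, not merely asymptotic). Combining with the previous step, $E\cap\mcl T^{\op{AB}}$ has the law of $E\cap\mcl T_n$ for all large $n$, and since $E\cap\mcl T_n\to E\cap\mcl T^{\op{FSF}}$ in total variation by Definition~\ref{def-fsf}, we conclude $E\cap\mcl T^{\op{AB}}\eqD E\cap\mcl T^{\op{FSF}}$. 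As $E$ is arbitrary, $\mcl T^{\op{AB}}\eqD\mcl T^{\op{FSF}}$.

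The main obstacle I anticipate is step two: making precise the claim that the random walk $X$ observed on $\mcl V\mcl G_n$ is the random walk on a finite reversible weighted graph whose spanning tree, restricted to $\mcl E\mcl G_n$, is exactly the $\frk c$-spanning tree of $\mcl G_n$. The potential subtlety is that the auxiliary boundary vertices of $\wt{\mcl G}_n$ may be connected to several vertices of $\mcl V\mcl G_n$, so $\wt{\mcl G}_n$ is not simply $\mcl G_n$ with pendant edges, and one has to verify that the extra edges do not contribute to cycles in a way that biases the induced law on $\mcl E\mcl G_n$. I would resolve this either by a direct electrical-network computation (the effective conductances within $\mcl G_n$ are unchanged, and Kirchhoff's formula for spanning-tree probabilities depends only on these) or by appealing to the transfer-current / deletion-contraction characterization of the weighted spanning tree. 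Everything else — finite Aldous--Broder, consistency of the $X^n$, and the total-variation convergence in Definition~\ref{def-fsf} — is either classical or already established in the excerpt.
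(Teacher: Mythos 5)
Your plan breaks down at the step you yourself flag as the ``main obstacle,'' and unfortunately the fix you propose does not work.

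The core issue is the claim that the restriction to $\mcl E\mcl G_n$ of the weighted spanning tree of your auxiliary finite graph $\wt{\mcl G}_n$ is exactly the $\frk c$-spanning tree of $\mcl G_n$. This is false. Whatever $\wt{\mcl G}_n$ is (either $\mcl G_n$ with extra boundary edges, or the weighted graph representing the trace chain on $\mcl V\mcl G_n$), it carries extra ``bypass'' conductances encoding the excursions of $X$ through $\infty$. Your suggested resolution --- ``the effective conductances within $\mcl G_n$ are unchanged'' --- is precisely what fails: adding parallel paths outside $\mcl G_n$ strictly lowers effective resistances between vertices of $\mcl G_n$, and by Kirchhoff's formula this changes the edge-marginal probabilities $\BB P[e \in \mcl T]$ for interior edges $e$. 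The relationship between the tree of $\wt{\mcl G}_n$ and the tree of $\mcl G_n$ is a conditional one (a Markov-field type statement), not a restriction identity. There is also a secondary problem: the Markov chain $Y^n$ on $B_1\mcl G_n$ is \emph{not} reversible, because from a boundary vertex $w$ it can jump to vertices $y \in \mcl V\mcl G_n$ with $y \not\sim w$, while the reverse transition $y \to w$ has probability zero; so $Y^n$ is not a random walk on any weighted graph, and you should work with the trace chain $\wt Y^n$ on $\mcl V\mcl G_n$ throughout if you want reversibility.

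This gap propagates to the final step. You assert that $E\cap\mcl T^{\op{AB}}$ ``has the law of $E\cap\mcl T_n$ for all large $n$,'' with the consistency of the chains making this ``exact, not merely asymptotic.'' The consistency does make the Aldous--Broder parent map of $X$ agree exactly with that of the finite chain, but this does not identify the law of the resulting forest with the $\frk c$-spanning tree of $\mcl G_n$. Indeed, if the asserted exact equality in law held for all large $n$, then the distribution of $E\cap\mcl T_n$ would be eventually constant in $n$, which is generally not the case (the total variation convergence in Definition~\ref{def-fsf} is asymptotic, not eventually exact). The paper's proof of Theorem~\ref{thm-fsf} takes a different route that avoids this entirely: it compares $X$ to the genuine random walk $Z^n$ on $\mcl G_n$ (not to a trace of $X$) and establishes only total variation convergence of the ordered sequences of vertices hit, via Lemma~\ref{lem-walk-tv}, which in turn rests on the convergence $\op{hm}_{\mcl G_k}^{x,n} \to \op{hm}_{\mcl G_k}^x$ from Proposition~\ref{prop-harmonic-conv}. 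This harmonic-measure convergence is precisely the asymptotic ingredient your argument is missing; no exact finite-$n$ identification is available or needed.
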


We remark that Hutchcroft~\cite{hutchcroft-wusf} gave a construction of the $\frk c$-\emph{wired} spanning forest which is similar to the one in Theorem~\ref{thm-fsf}, but with random interlacements instead of random walk reflected off of $\infty$ (c.f.\ Remark~\ref{remark-interlacement}). 

We will prove Theorem~\ref{thm-fsf} in Section~\ref{sec-fsf}. The proof is based on a limiting argument for the FSF on finite subgraphs of $\mcl G$.

\subsubsection{Wilson's algorithm} 
\label{sec-wilson0}

Wilson's algorithm~\cite{wilson-algorithm} generates the $\frk c$-spanning tree of a finite graph using loop-erased random walk. It is the most computationally efficient way to exactly generate the $\frk c$-spanning tree, and is an extremely important tool in the study of random spanning trees. See~\cite{blps-usf} for a version of Wilson's algorithm for the $\frk c$-wired spanning forest. 

We will now explain a version of Wilson's algorithm~\cite{wilson-algorithm} for the $\frk c$-FSF using random walk reflected off of $\infty$. 
Let $\{x_j\}_{j \in \BB N_0}$ be an enumeration of the vertices of $\mcl G$. 
We will perform Wilson's algorithm on $\mcl G$ with random walk reflected off of $\infty$ started at the vertices $\{x_j\}_{j\in\BB N_0}$. 
To do this, we define an increasing sequence of acyclic subgraphs $\mcl T^{\op{Wil}}_j \subset \mcl G$. 

Let $\mcl T^{\op{Wil}}_0 $ consist of the single vertex $x_0$. 
Inductively, suppose that $j \geq 1$ and $\mcl T^{\op{Wil}}_{j-1}$ has been defined. Let $X^j$ be random walk reflected off of $\infty$ on $\mcl G$ started from $x_j$. Let 
\eqbn
\tau_j  := \inf\{t \geq 0 : X^j_t  \in \mcl T^{\op{Wil}}_{j-1} \} .
\eqen
Note that $\tau_j < \infty$ a.s.\ since $X^j$ is recurrent. 
It is possible that $x_j \in \mcl T^{\op{Wil}}_{j-1}$, in which case $\tau_j = 0$. 

Let $\op{LE}^j$ be the loop erasure of $X^j|_{[0,\tau_j]}$ (Definition~\ref{def-le-infty}). 
Let $\mcl T^{\op{Wil}}_j$ be the union of $\mcl T^{\op{Wil}}_{j-1}$, the vertices visited by $\op{LE}^j$, and the edges traversed by $\op{LE}^j$. By induction, $\mcl T^{\op{Wil}}_j$ is the union of the acyclic subgraph $\mcl T^{\op{Wil}}_{j-1}$ and a simple path which hits $\mcl T^{\op{Wil}}_{j-1}$ only at its terminal endpoint (or not at all, if $\op{LE}^j$ is infinite), so $\mcl T^{\op{Wil}}_j$ is acyclic. 

We define the increasing union
\eqb  \label{eqn-wilson}
\mcl T^{\op{Wil}} := \bigcup_{j=0}^\infty \mcl T^{\op{Wil}}_j  \subset \mcl G .
\eqe 
Then $\mcl T^{\op{Wil}}$ is a spanning forest of $\mcl G$ (here we use that $\{x_j\}_{j\geq 0}$ includes all of the vertices of $\mcl G$). We call it the \textbf{Wilson spanning forest} of $\mcl G$. We have the following elementary observation (c.f.\ the discussion just after Definition~\ref{def-le-infty}).

\begin{lem} \label{lem-wilson-connected}
The Wilson spanning forest $\mcl T^{\op{Wil}}$ is connected if and only if each of the paths $\op{LE}^j$ hits only finitely many vertices of $\mcl G$. 
\end{lem}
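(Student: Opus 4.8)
The plan is to argue combinatorially, fixing a realization of the walks $X^j$ (and hence of the paths $\op{LE}^j$ and the subgraphs $\mcl T^{\op{Wil}}_j$), so that the claimed equivalence becomes a deterministic statement about the increasing union $\mcl T^{\op{Wil}} = \bigcup_j \mcl T^{\op{Wil}}_j$. The first step is to reformulate the hypothesis. Each $\op{LE}^j$ is a simple path, and by the discussion following Definition~\ref{def-le-infty} the loop erasure of $X^j|_{[0,\tau_j]}$ is either an infinite simple path or a finite simple path reaching $X^j_{\tau_j}$; since $\tau_j$ is the first time $X^j$ enters $\mcl T^{\op{Wil}}_{j-1}$, and since (as recorded in the construction of $\mcl T^{\op{Wil}}_j$) $\op{LE}^j$ meets $\mcl T^{\op{Wil}}_{j-1}$ only at its terminal endpoint, or nowhere when $\op{LE}^j$ is infinite, I get: ``$\op{LE}^j$ hits finitely many vertices'' $\iff$ ``$\op{LE}^j$ is a finite path whose terminal vertex lies in $\mcl T^{\op{Wil}}_{j-1}$'', while ``$\op{LE}^j$ hits infinitely many vertices'' $\iff$ ``$\op{LE}^j$ is an infinite path disjoint from $\mcl T^{\op{Wil}}_{j-1}$''.

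For the ``if'' direction, assuming every $\op{LE}^j$ is finite, I would show by induction on $j$ that $\mcl T^{\op{Wil}}_j$ is connected: the base case $\mcl T^{\op{Wil}}_0 = \{x_0\}$ is clear, and the inductive step appends to the connected graph $\mcl T^{\op{Wil}}_{j-1}$ a finite simple path each of whose vertices is joined, within that path, to its terminal vertex, which lies in $\mcl T^{\op{Wil}}_{j-1}$. Since $\mcl T^{\op{Wil}}$ is an increasing union of connected graphs all containing $x_0$, and every vertex of $\mcl G$ lies in some $\mcl T^{\op{Wil}}_j$ (namely $x_j\in \mcl T^{\op{Wil}}_j$), any two vertices of $\mcl T^{\op{Wil}}$ lie in a common $\mcl T^{\op{Wil}}_j$ and are therefore joined by a path in $\mcl T^{\op{Wil}}$; hence $\mcl T^{\op{Wil}}$ is connected.

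For the ``only if'' direction I would prove the contrapositive. Suppose some $\op{LE}^j$ is infinite and let $j^*$ be the smallest such index. By the previous step $\mcl T^{\op{Wil}}_{j^*-1}$ is connected, while $\op{LE}^{j^*}$ is an infinite path disjoint from it, so $x_0 \in \mcl T^{\op{Wil}}_{j^*-1}$ and $x_{j^*}\in\op{LE}^{j^*}$ lie in distinct connected components of $\mcl T^{\op{Wil}}_{j^*}$. I would then establish the persistence claim: for every $k\ge 1$ and all $u,v\in\mcl V\mcl T^{\op{Wil}}_{k-1}$, $u$ and $v$ lie in the same component of $\mcl T^{\op{Wil}}_k$ if and only if they do in $\mcl T^{\op{Wil}}_{k-1}$. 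This holds because $\mcl T^{\op{Wil}}_k$ is obtained from $\mcl T^{\op{Wil}}_{k-1}$ by attaching the simple path $\op{LE}^k$ at most at a single vertex, and a path in $\mcl T^{\op{Wil}}_k$ joining two vertices of $\mcl T^{\op{Wil}}_{k-1}$ cannot enter the ``private'' part of $\op{LE}^k$ and leave again, as that would require passing through the unique attaching vertex twice. Iterating this claim, $x_0$ and $x_{j^*}$ remain in distinct components of every $\mcl T^{\op{Wil}}_k$ with $k\ge j^*$, and hence of $\mcl T^{\op{Wil}}$, since any path between them in $\mcl T^{\op{Wil}}$ would use finitely many edges and so lie in some $\mcl T^{\op{Wil}}_k$. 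Therefore $\mcl T^{\op{Wil}}$ is disconnected.

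I expect the only genuinely substantive point to be the persistence claim in the last step — the assertion that a loop erasure added at a later stage can never merge two components created earlier. Its proof is short, but it is exactly where one must invoke the structural fact (already built into the construction of the $\mcl T^{\op{Wil}}_j$) that $\op{LE}^k$ touches $\mcl T^{\op{Wil}}_{k-1}$ in at most one vertex; the remainder is routine bookkeeping about increasing unions of graphs.
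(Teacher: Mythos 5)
Your proof is correct and follows essentially the same approach as the paper's: show each $\mcl T^{\op{Wil}}_j$ is connected (in the easy direction) and, in the converse direction, locate the smallest $J$ with an infinite loop erasure, observe that $\mcl T^{\op{Wil}}_J$ is disconnected, and exploit the fact that each later stage attaches a path at at most one vertex so that distinct components can never merge. The only refinement you add is stating the ``persistence claim'' explicitly where the paper compresses it into the observation that the number of connected components is non-decreasing; both arguments rest on the same structural fact about the single attaching vertex, and the paper, like you, uses that any path witnessing connectivity in $\mcl T^{\op{Wil}}$ lives in some finite stage $\mcl T^{\op{Wil}}_k$. (One tiny point worth tightening: when you say a path ``cannot enter the private part of $\op{LE}^k$ and leave again,'' you should invoke that $\mcl T^{\op{Wil}}_k$ is acyclic, so that the unique simple path between two vertices in the same component visits no vertex twice; an arbitrary walk could in principle revisit the attaching vertex.)
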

\begin{proof}
If each $\op{LE}^j$ hits only finitely many vertices of $\mcl G$, then by construction each $\mcl T_j^{\op{Wil}}$ is a finite subtree of $\mcl G$. From this, it is immediate that their increasing union $\mcl T^{\op{Wil}}$ is connected.

Conversely, assume that there exists $j \in \BB N$ such that $\op{LE}^j$ hits infinitely many vertices of $\mcl G$. Let $J$ be the smallest such $j$. Then $\op{LE}^J$ does not reach $\mcl T_{J-1}^{\op{Wil}}$ in finite time, so $\mcl T_J^{\op{Wil}}$ has two connected components. For each $j \in \BB N$, the set $\mcl T_j^{\op{Wil}}$ is the union of $\mcl T_{j-1}^{\op{Wil}}$ and a path which intersects $\mcl T_{j-1}^{\op{Wil}}$ at at most one vertex. From this, we see that the number of connected components of $\mcl T_j^{\op{Wil}}$ is increasing in $j$, so every $\mcl T_j^{\op{Wil}}$ for $j\geq J$, and hence also $\mcl T^{\op{Wil}}$, is not connected.
\end{proof}

\begin{thm}[Wilson's algorithm for the $\frk c$-FSF] \label{thm-wilson}
The following two random variables agree in law.
\begin{itemize}
\item The random variable which equals the Wilson spanning forest $\mcl T^{\op{Wil}}$ if $\mcl T^{\op{Wil}}$ is connected, and which equals $\emptyset$ otherwise. 
\item The random variable which equals the $\frk c$-free spanning forest $\mcl T^{\op{FSF}}$ if $\mcl T^{\op{FSF}}$ is connected, and which equals $\emptyset$ otherwise. 
\end{itemize} 
\end{thm}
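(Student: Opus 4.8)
The plan is to compare the Wilson spanning forest $\mcl T^{\op{Wil}}$ built from random walk reflected off of $\infty$ with the output of the ordinary Wilson algorithm on the finite subgraphs $\mcl G_n$, and pass to the limit. Fix an enumeration $\{x_j\}_{j\in\BB N_0}$ of $\mcl V\mcl G$. For each $n$, consider the graph $\mcl G_n^* $ obtained from $\mcl G$ by collapsing all vertices outside $\mcl G_n$ to a single vertex $\delta_n$, and run Wilson's algorithm rooted at $\delta_n$ using the vertices $x_0, x_1, \dots$ in order; by the standard theory this produces the $\frk c$-wired spanning forest of $\mcl G_n$ relative to $\mcl G$, equivalently the $\frk c$-spanning tree of $\mcl G_n^*$. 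However, we instead want the \emph{free} spanning forest, so the comparison should be with the $\frk c$-spanning tree $\mcl T_n$ of $\mcl G_n$ itself (no collapsing), produced by Wilson's algorithm on $\mcl G_n$ rooted at $x_0$. The key observation is that the random walk on $\mcl G_n$ which reflects off of the ``boundary'' $B_1\mcl G_n \setminus \mcl G_n$ according to energy-minimizing harmonic measure — which is exactly the $n$-th approximating chain used to construct $X$ in the outline after Theorem~\ref{thm-cont-time-walk} — has loop erasures that agree, \emph{when they stay inside $\mcl G_n$}, with the loop erasures of ordinary random walk on $\mcl G_n$. This is because the reflected chain behaves as ordinary random walk on $\mcl G_n$ until it exits, and on the event that a loop erasure never touches $B_1\mcl G_n\setminus\mcl G_n$, the reflection step is never invoked.

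First I would make this precise: run Wilson's algorithm for $\mcl T_n$ using the reflected-off-the-boundary chains $X^{j,n}$ (rather than plain random walk on $\mcl G_n$); this is legitimate because the reflected chain restricted to excursions in $\mcl G_n$ has the same transition probabilities as random walk on $\mcl G_n$, and Wilson's algorithm is insensitive to the behavior of the driving walk after it hits the current tree — here the current tree always lies in $\mcl G_n$ and the chain is recurrent, so it hits the tree a.s. Thus the Wilson spanning forest of $\mcl G_n$, call it $\mcl T_n^{\op{Wil}}$, built from $\{X^{j,n}\}$, equals $\mcl T_n$ in law. Second, I would couple $X^{j,n}\to X^j$ as $n\to\infty$ using the consistency of the approximating chains (the same coupling used to construct $X$): on any fixed time interval and for $n$ large, $X^{j,n}$ and $X^j$ agree up to a time change, and more usefully, the loop erasure $\op{LE}^{j,n}$ of $X^{j,n}$ up to hitting $\mcl T_{j-1}^{\op{Wil},n}$ converges to $\op{LE}^j$. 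The subtle point is what happens when a loop erasure $\op{LE}^j$ is infinite: then for every $n$ the path $\op{LE}^{j,n}$ eventually leaves $\mcl G_n$, so $\mcl T_n^{\op{Wil}}$ has its components merged through $\delta_n$-like behavior only if we wired — but we did not wire, so in $\mcl T_n$ that infinite strand corresponds to the path in $\mcl T_n$ between $x_j$ and $\mcl T_{j-1}^{\op{Wil},n}$ becoming arbitrarily long, which is exactly the mechanism by which the FSF fails to be connected.

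This dichotomy is what drives the theorem. On the event that $\mcl T^{\op{Wil}}$ is connected — equivalently, by Lemma~\ref{lem-wilson-connected}, every $\op{LE}^j$ is finite — each $\op{LE}^j$ lies in $\mcl G_n$ for $n$ large, so the coupling gives $\mcl T_n^{\op{Wil}} \cap E = \mcl T^{\op{Wil}} \cap E$ for every fixed finite edge set $E$ and all large $n$; combined with $\mcl T_n^{\op{Wil}} \eqD \mcl T_n \to \mcl T^{\op{FSF}}$ (Definition~\ref{def-fsf}), this forces the restriction of $\mcl T^{\op{Wil}}$ to any finite edge set to have the law of the restriction of $\mcl T^{\op{FSF}}$, hence $\mcl T^{\op{Wil}}\eqD\mcl T^{\op{FSF}}$ on the connected event, once we also check that the connected events match in probability. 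To close that last gap: connectedness of $\mcl T^{\op{FSF}}$ is a tail-type event determined by the finite restrictions (it is $\{$for all $i,j$, $x_i$ and $x_j$ are joined in the forest$\}$, and being joined is an increasing limit of events about finite restrictions), so convergence in law of finite restrictions together with the almost-sure agreement on the connected event upgrades to equality of the laws of the pair (forest, indicator of connectedness); then both random variables in the theorem are measurable functions of that pair.

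The main obstacle I anticipate is the convergence $\op{LE}^{j,n}\to\op{LE}^j$ and, relatedly, showing that on the connected event the approximation is \emph{exact} for large $n$ on any finite edge set: one must rule out the scenario where $\op{LE}^j$ is finite but, for infinitely many $n$, the walk $X^{j,n}$ takes an excursion out of $\mcl G_n$ that, after loop erasure, still contributes a spurious edge inside $E$ before the strand that survives to the limit. Handling this requires using property~\eqref{item-ae} and the structure of excursions of $X$ (each visit to $\infty$ is followed by a return, and only finitely many excursions meet any fixed finite vertex set before a fixed stopping time — cf.\ Lemma~\ref{lem-to-infty}) to argue that the ``loops through $\infty$'' in $X^{j}|_{[0,\tau_j]}$ are precisely the excursions that get erased, so that for $n$ large enough the reflected chain's excursions outside $\mcl G_n$ are all part of erased loops and do not perturb $\op{LE}^{j,n}\cap E$. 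I would isolate this as a lemma about stability of loop erasure under the $n\to\infty$ coupling, proved by first running the loop erasure of $X^j$ to completion (finitely many steps on the connected event), noting it only uses finitely many vertices, then choosing $n$ large enough to contain them all plus a neighborhood, and finally checking that any extra excursions of $X^{j,n}$ outside $\mcl G_n$ are swallowed by loop erasure because they begin and end at the same boundary-adjacent vertex of the already-erased portion.
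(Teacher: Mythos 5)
There is a genuine gap at the central step of your proposal. You claim that running Wilson's algorithm on $\mcl G_n$ with the $n$-th approximating reflected chain $X^{j,n}$ (the chain $Y^n$ from Section~\ref{sec-discrete-markov}) produces the $\frk c$-spanning tree $\mcl T_n$ of $\mcl G_n$, on the grounds that ``the reflected chain behaves as ordinary random walk on $\mcl G_n$ until it exits.'' This is false. From a vertex $x\in\mcl V\mcl G_n$, the reflected chain steps to a neighbor $y$ with probability $\frk c(x,y)/\pi(x)$, where $\pi(x)$ sums conductances over \emph{all} neighbors of $x$ in $\mcl G$ (see~\eqref{eqn-transition-walk}), while random walk on $\mcl G_n$ normalizes by $\pi_{\mcl G_n}(x)$, summing only over neighbors in $\mcl G_n$. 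When $x$ has neighbors outside $\mcl G_n$ these probabilities disagree, and the induced chain $\wt Y^n$ on $\mcl V\mcl G_n$ (Remark~\ref{remark-tilde-process}, formula~\eqref{eqn-tilde-transition}) is reversible for an effective conductance strictly different from $\frk c|_{\mcl E\mcl G_n}$, with extra weight concentrated on edges near the boundary. Wilson's algorithm driven by $\wt Y^n$ therefore produces the spanning tree weighted by this modified conductance, not $\mcl T_n$, so the identity $\mcl T_n^{\op{Wil}}\eqD\mcl T_n$ that the rest of your argument relies on fails. A related issue infects your last step: an excursion of $Y^n$ through $B_1\mcl G_n\setminus\mcl G_n$ typically enters at one vertex and exits at a different one, so it is not automatically swallowed by loop erasure; the claim that such excursions ``begin and end at the same boundary-adjacent vertex'' is not correct.

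The paper sidesteps this entirely by never running Wilson's algorithm with a reflected chain on a finite graph. It runs ordinary Wilson's algorithm on $\mcl G_m$ (driven by random walk on $\mcl G_m$), which produces the genuine $\frk c$-spanning tree $\mcl T_m^{\op{FSF}}$ exactly, and then proves a \emph{total variation} approximation (Lemma~\ref{lem-le-tv}, via Lemma~\ref{lem-walk-tv} and the finite-approximation result Proposition~\ref{prop-harmonic-conv}) showing that the loop erasure of random walk on $\mcl G_m$, truncated on the event of staying in a fixed $\mcl G_k$, converges in law to the corresponding truncated loop erasure of $X$. The bookkeeping of connectedness events via $E_n$ and $F_n^N$ in the paper's proof is close in spirit to what you sketch at the end, but the engine is the total variation lemma rather than an exact coupling. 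To repair your proposal you would need to replace the claimed identification with a comparison to ordinary Wilson's algorithm on $\mcl G_m$, driven by Lemma~\ref{lem-walk-tv}-type convergence of the driving walks in total variation.
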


As in the case of Theorem~\ref{thm-fsf}, the proof of Theorem~\ref{thm-wilson} is based on a limiting argument started from Wilson's algorithm on finite subgraphs of $\mcl G$. See Section~\ref{sec-wilson}. 

\begin{remark}  \label{remark-wilson}
The spanning forests $\mcl T^{\op{Wil}}$ and $\mcl T^{\op{FSF}}$ do not agree in law without the truncation on the event that they are connected. The reason is that $\mcl T^{\op{Wil}}$ can have a finite connected component, whereas $\mcl T^{\op{FSF}}$ cannot. Indeed, as explained in the proof of Lemma~\ref{lem-wilson-connected}, on the event that $\mcl T^{\op{Wil}}$ is not connected, there exists $J \geq 1$ such that $\mcl T_J^{\op{Wil}}$ has both a finite connected component and an infinite connected component. Condition on the event that this is the case for some fixed deterministic $J$, and let $A$ be the finite connected component. Assume also that $A$ is such that $\mcl G\setminus A$ is connected. Since the random walk reflected off of $\infty$ on $\mcl G$ follows any finite path in $\mcl G$ with positive probability, we see that there exists $J' \geq J+1$, depending on $A$, with the following property.
With positive conditional probability given $\mcl T_J^{\op{Wil}}$, every vertex of $\mcl G\setminus A$ which has a neighbor in $A$ is part of an infinite connected component of $\mcl T_{J'}^{\op{Wil}}$ which is disjoint from $A$. In this case, $A$ is also a connected component of $\mcl T^{\op{Wil}}$.  

It may be possible to find a new definition of the loop erasure of random walk reflected off of $\infty$ where the loop erasure is allowed to reflect off of $\infty$, possibly infinitely many times (with our current definition, the loop erasure stops upon reaching $\infty$). We expect that if one finds just the right way to reflect the loop erasure off of $\infty$, then Wilson's algorithm with the extended loop erasure gives exactly the $\frk c$-FSF, even on the event when the $\frk c$-FSF is not connected. We will not explore this further in this paper.
\end{remark}

\subsubsection{Open problems}
\label{sec-fsf-problems}

As noted just after Definition~\ref{def-fsf}, the $\frk c$-FSF is typically harder to analyze than the $\frk c$-wired spanning forest ($\frk c$-WSF), in large part because the latter can be constructed via versions of Wilson's algorithm or the Aldous--Broder algorithm. As such, it is natural to wonder whether our results can be used to answer questions about the $\frk c$-FSF which have previously been solved for the $\frk c$-WSF. Numerous open problems about the $\frk c$-FSF can be found, e.g., in~\cite[Section 10]{lyons-peres} and~\cite[Section 15]{blps-usf}. Here are a few examples of problems for which our results might be relevant.
\begin{itemize}
\item Is it the case that for every infinite graph $\mcl G$ and conductance function $\frk c$, the number of connected components of the $\frk c$-FSF is a.s.\ equal to a deterministic constant~\cite[Question 15.7]{blps-usf}? The analogous statement for the $\frk c$-WSF was proved in~\cite[Theorem 9.4]{blps-usf}. 
\item On a vertex-transitive graph $\mcl G$, the expected degree of any (equivalently, every) vertex in the wired uniform spanning forest is 2. For the free uniform spanning forest, this expected degree is related to the first $\ell^2$ Betti number of $\mcl G$, introduced in~\cite{atiyah-discrete-groups}. See~\cite[Proposition 10.10]{lyons-peres} and the discussion just after. Can our results (see in particular Proposition~\ref{prop-fsf-edge} below) be used to compute the expected degree of a given vertex in the $\frk c$-FSF on any particular graphs $\mcl G$? See also~\cite[Question 10.11]{lyons-peres} for an important related question in the case of Cayley graphs. 
\end{itemize}

\subsection{Green's function and Gaussian free field}
\label{sec-green0}

Let $X$ be the random walk reflected off of $\infty$ on $\mcl G$ and for a non-empty finite set $\mcl A\subset\mcl G$, let
\eqbn
\tau_{\mcl A} := \min\left\{t\geq 0 : X_t\in \mcl A\right\} .
\eqen
For $y \in \mcl V\mcl G$, let $N_{\mcl A}(y)$ be the number of distinct visits of $\{X_t\}_{t\geq 0}$ to $y$ which are strictly before time $\tau_{\mcl A}$, i.e.,
\eqb \label{eqn-number-of-visits}
N_{\mcl A}(y) := \#\left\{ \text{connected components of $X^{-1}(y) \cap [0,\tau_{\mcl A}]$} \right\} . 
\eqe
 
\begin{defn} \label{def-green}
We define the \textbf{Green's function for random walk on $\mcl G$ reflected off of $\infty$ and killed upon hitting $\mcl A$} by 
\eqbn  \label{eqn-green-def}
G_{\mcl A}(x,y) := \BB E_x[N_{\mcl A}(y)]  .
\eqen
\end{defn}

We show in Section~\ref{sec-green} that the Green's function of Definition~\ref{def-green} satisfies analogs of the standard properties of the Green's function on a finite graph, e.g., harmonicity (Lemma~\ref{lem-green-finite}) and symmetry in the two variables, up to factors of $\pi(x)$ (Lemma~\ref{lem-green-reversible}). Using the Aldous--Broder construction of the $\frk c$-FSF (Theorem~\ref{thm-fsf}), we also establish an analog of Kirkhoff's effective resistance formula (see, e.g., \cite[Section 4.2]{lyons-peres}) for the $\frk c$-FSF. 
 
\begin{prop} \label{prop-fsf-edge}
Assume that $\mcl G$ is locally finite. 
Let $e = \{x,y\} \in \mcl E\mcl G$. Let $G_y(\cdot,\cdot)$ be the Green's function for random walk on $\mcl G$ reflected off of $\infty$ and killed upon hitting $y$ (Definition~\ref{def-green}). 
Let $\mcl T^{\op{FSF}}$ be the $\frk c$-free spanning forest on $\mcl G$ (Definition~\ref{def-fsf}). Then 
\eqb \label{eqn-edge-contain}
\BB P\left[ e \in \mcl T^{\op{FSF}} \right] = \frac{\frk c(x,y) G_y(x,x)}{\pi(x)} .
\eqe 
\end{prop}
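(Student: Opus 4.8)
The plan is to follow the classical derivation of Kirchhoff's formula via the Aldous–Broder algorithm, but using random walk reflected off of $\infty$ in place of random walk up to cover time on a finite graph. By Theorem~\ref{thm-fsf}, the $\frk c$-FSF on $\mcl G$ has the same law as the Aldous–Broder spanning forest $\mcl T^{\op{AB}}$ generated by $X$, so it suffices to compute $\BB P[e\in\mcl T^{\op{AB}}]$. Since $X$ is recurrent (Theorem~\ref{thm-cont-time-walk}\eqref{item-recurrence}), we may choose the starting point of $X$ freely; the natural choice is $z = y$. With this choice, the parent $a(x)$ of $x$ is the vertex visited by $X$ immediately before the first hitting time $\tau_x$, and $e = \{x,y\} \in \mcl T^{\op{AB}}$ if and only if $a(x) = y$, i.e., the walk started from $y$ traverses the directed edge $(y,x)$ at the very step on which it first reaches $x$.

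The key step is to express the probability of the event $\{a(x) = y\}$ as a Green's-function quantity. Starting from $y$, decompose the trajectory of $X$ before $\tau_x$ into its successive excursions from $y$: each time $X$ is at $y$ (i.e., at the start of one of the $N_y(y)$-many visits to $y$ before $\tau_x$), it makes one step of the random walk, which goes to a given neighbor $x'$ of $y$ with probability $\frk c(y,x')/\pi(y)$ by Theorem~\ref{thm-cont-time-walk}\eqref{item-rw} and the Markov property \eqref{item-markov}. The event $\{a(x)=y\}$ occurs precisely on the (unique, if it happens) step among these on which the walk first jumps from $y$ directly to $x$ without having visited $x$ before. A clean way to organize this: let $N$ be the number of visits of $X$ to $y$ strictly before $\tau_x$ (so $N = N_y(y)$ in the notation of \eqref{eqn-number-of-visits} with $\mcl A = \{x\}$, starting point $y$). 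Conditionally on the trajectory up to each return to $y$, the $k$-th step out of $y$ is an independent step of the random walk; $a(x) = y$ happens iff the walk, on the first step out of $y$ that lands on $x$, had not previously reached $x$ — equivalently, iff at the start of one of the $N$ excursions from $y$, the walk steps directly to $x$. Summing over which excursion this is and using that each step out of $y$ independently lands on $x$ with probability $\frk c(x,y)/\pi(y)$, one gets
\eqb
\BB P_y[a(x) = y] = \BB E_y\!\left[ \sum_{k=1}^{N} \frac{\frk c(x,y)}{\pi(y)} \BB 1\{\text{walk avoids $x$ through the first $k{-}1$ excursions from $y$}\} \right] = \frac{\frk c(x,y)}{\pi(y)}\,\BB E_y[N] .
\eqe
Indeed, summing the indicators that the walk has not yet hit $x$ when it begins its $k$-th excursion from $y$, over $k = 1,\dots,N$, telescopes to exactly $N$ (the number of excursions from $y$ before $x$ is hit), because on the event that the $k$-th out-step is the first to hit $x$, all of the first $k-1$ excursions avoid $x$ and we stop. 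Then $\BB E_y[N] = G_y(y,y)$ by Definition~\ref{def-green}, which would give $\BB P[e\in\mcl T^{\op{FSF}}] = \frk c(x,y)G_y(y,y)/\pi(y)$.

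To match the stated formula $\frk c(x,y) G_y(x,x)/\pi(x)$, one then invokes the reversibility/symmetry of the Green's function up to stationary-measure factors — the analog of $\pi(x) G_y(x,y) = \pi(y) G_y(y,x)$ — which the excerpt promises as Lemma~\ref{lem-green-reversible} in Section~\ref{sec-green}; applied with the diagonal entries it yields $\pi(y) G_y(y,y) = \pi(x) G_y(x,x)$ after accounting for the one step connecting $x$ and $y$, or alternatively one simply runs the identical computation with $z=x$ and records $\BB P_x[a(y)=x] = \frk c(x,y)G_x(x,x)/\pi(x)$, noting $\{e\in\mcl T^{\op{AB}}\} = \{a(x)=y\}\cup\{a(y)=x\}$ reduces to a single case depending on the chosen root, and $G_x(x,x)$ under root $x$ equals $G_y(x,x)$ since killing upon hitting $y$ and starting from $x$ describes the same excursion count from $x$ in both framings. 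I would phrase the final argument as: start $X$ from $x$, observe $e\in\mcl T^{\op{AB}}$ iff $a(y)=x$ (as $y$ is then reached from some parent), apply the excursion-telescoping identity above with the roles of $x,y$ swapped, and identify $\BB E_x[N_y(y)$-analog at $x] = G_y(x,x)$ directly from Definition~\ref{def-green}.

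The main obstacle I anticipate is the bookkeeping in the excursion decomposition: one must be careful that $X$, unlike a finite-graph random walk run to cover time, can make infinitely many excursions through $\infty$, so the "excursions from $y$ before $\tau_x$" need to be shown to be well-defined and a.s.\ finite in number — this is where recurrence of $X$ together with the a.s.\ finiteness of $\tau_x$ (Definition~\ref{def-rw-forest}) and of the visit count $N_y$ are used, and where local finiteness of $\mcl G$ matters so that the parent $a(x)$ is well-defined (the excerpt's Lemma~\ref{lem-parent}). The strong Markov property at the successive return times to $y$, in the form given by Theorem~\ref{thm-cont-time-walk}\eqref{item-markov}, has to be applied at these random times; justifying this for the reflected walk (rather than invoking it for a standard Markov chain) is the one nontrivial ingredient, but it should follow from \eqref{item-markov} combined with the fact that each such return time is a hitting time of the fixed vertex $y$.
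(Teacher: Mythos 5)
Your ``alternative'' at the end --- start $X$ from $x$, decompose into excursions from $x$, observe that $e\in\mcl T^{\op{AB}}$ iff $a(y)=x$ iff the first step of the excursion that first visits $y$ goes directly to $y$, and use the i.i.d.\ structure together with $\BB E_x[N']=G_y(x,x)$ --- is essentially the paper's proof. The paper packages the same computation via the geometric law of the number of excursions before $y$ is hit ($K\sim\mathrm{Geom}(p)$, $1/p=G_y(x,x)$, and then the conditional probability that the first step of the $K$-th excursion is to $y$), while you phrase it as a telescoping expectation; these are equivalent.

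Two things worth flagging. First, your notation for the Green's function is consistently off: the subscript is the kill set, so the quantity $\BB E_y[\#\{\text{visits to }y\text{ before hitting }x\}]$ is $G_x(y,y)$, not $G_y(y,y)$, and similarly $\BB E_x[\#\{\text{visits to }x\text{ before hitting }y\}]$ is $G_y(x,x)$, not $G_x(x,x)$. (As written, $G_y(y,y)=0$, so $\pi(y)G_y(y,y)=\pi(x)G_y(x,x)$ is false.) Second, and more substantively, the first route you pursue (starting from $y$) terminates in $\frk c(x,y)\,G_x(y,y)/\pi(y)$, and the step that converts this to $\frk c(x,y)\,G_y(x,x)/\pi(x)$ requires the identity $\pi(y)\,G_x(y,y)=\pi(x)\,G_y(x,x)$, which does \emph{not} follow from Lemma~\ref{lem-green-reversible}: that lemma symmetrizes $G_{\mcl A}(x,y)$ and $G_{\mcl A}(y,x)$ for a \emph{fixed} kill set $\mcl A$, whereas here the kill sets $\{x\}$ and $\{y\}$ differ. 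The identity is in fact true (it is the reversibility identity for escape probabilities, $\pi(x)\,p_{\mathrm{esc}}(x\to y)=\pi(y)\,p_{\mathrm{esc}}(y\to x)$), but it needs its own argument. Starting from $x$ avoids this entirely, which is why it is the cleaner route.
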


The Green's function also allows us to define a new version of the discrete Gaussian free field on $\mcl G$. 
 
\begin{defn} \label{def-gff}
Let $\mcl A\subset\mcl V\mcl G$ be a non-empty finite set.
The \textbf{discrete Gaussian free field on $\mcl G$ with zero boundary condition on $\mcl A$ and free boundary condition at $\infty$} is the centered Gaussian random function $\Psi : \mcl V\mcl G\to \BB R$ with covariances 
\eqb \label{eqn-gff-cov}
\op{Cov}(\Psi(x) , \Psi(y)) = \frac{G_{\mcl A}(x,y)}{\pi(y)} 
\eqe 
where $G_{\mcl A}(x,y)$ is as in~\eqref{eqn-green-finite}.
\end{defn}

The right side of~\eqref{eqn-gff-cov} defines a valid covariance kernel by Lemma~\ref{lem-green-reversible} below. 
We expect that for random planar maps in the supercritical LQG universality class, the scaling limit of the process $\Psi$ as in Definition~\ref{def-gff} under the Tutte embedding (Section~\ref{sec-tutte0}) is the continuum Gaussian free field. See Conjecture~\ref{conj-scaling-limit} for a more precise statement.

\subsection{Tutte embedding of multi-ended random planar maps}
\label{sec-tutte0}

Various types of random planar maps are conjectured or proven to converge in the scaling limit to \textbf{Liouville quantum gravity (LQG)}, a one-parameter family of random fractal surfaces. We provide some background and references on LQG in Section~\ref{sec-lqg}. One way to formulate the convergence is to first embed the random planar map in the plane in some canonical way, then ask for convergence of objects associated with the embedded map. Examples of possible embeddings include Tutte embedding, circle packing, Smith embedding, and Cardy embedding. We refer to~\cite{hs-cardy-embedding,gms-tutte,gms-poisson-voronoi,bgs-smith} for rigorous convergence results under various embeddings and, e.g.,~\cite{shef-kpz,dkrv-lqg-sphere,shef-zipper,bp-lqg-notes} for precise conjectures. 

Recent papers~\cite{ghpr-central-charge,ag-supercritical-cle4,bgs-supercritical-crt} have defined random planar maps which are believed to converge in the scaling limit to \textbf{supercritical LQG}, i.e., LQG with the central charge parameter $\cc \in (1,25)$, or equivalently with coupling constant $\gamma \in \BB C$ with $|\gamma|  =2$ (see Definition~\ref{def-supercritical-map} for an explicit example of such a random planar map). These random planar maps are infinite, with infinitely many ends, with high probability. Hence, in order to formulate the conjectured convergence to supercritical LQG rigorously, one needs to define embeddings of infinite-ended random planar maps. Constructing such embeddings is~\cite[Problem 4.4]{ag-supercritical-cle4}. 

Using random walk reflected off of $\infty$, we can define a notion of the Tutte embedding~\cite{tutte-embedding} of general, possibly multi-ended random planar maps. This embedding has the property that the coordinates of the embedding function are energy-minimizing discrete harmonic functions (Proposition~\ref{prop-min-harmonic}). 

Let $\mcl G$ be a (possibly infinite) connected planar map whose vertices and faces all have finite (but not necessarily bounded) degree. Let $\mcl G$ be equipped with a conductance function $\frk c : \mcl E\mcl G\to (0,\infty)$ and a distinguished face, which we call the \textbf{external face}. We assume that $\mcl G$ is drawn in the plane in such a way that the external face is the unique unbounded face. We emphasize that the vertex set of $\mcl G$ is allowed to have accumulation points. 

We will now define the Tutte embedding of $\mcl G$ into the closed unit disk $\ol{\BB D}$.
Let $\bdy\mcl G \subset\mcl V\mcl G$ be the set of vertices on the boundary of the external face.  
Fix marked vertices $\BB x\in\mcl V\mcl G \setminus \bdy\mcl G$ and $\BB y\in \bdy \mcl G$.
We trace the boundary of the external face in counterclockwise order starting and ending at $\BB y$. 
Let $y_1,\dots,y_m$ be the distinct vertices of $\bdy\mcl G$, enumerated by the last time that they are visited by this exploration, so that $y_m = \BB y$.\footnote{It could be that some vertices of $\bdy \mcl G$ are visited more than once when we trace the boundary of the external face in counterclockwise order. More precisely, the number of times that $y$ is visited is equal to the number of prime ends of the external face corresponding to the boundary point $y$. However, we only include each vertex once in the enumeration, so that $y_0,\dots,y_m$ are distinct.}
Let $\op{hm}_{\mcl G}^{\BB x}$ be the harmonic measure on $\bdy\mcl G$ as viewed from $\BB x$, as in Definition~\ref{def-hm}. 
For $k = 1,\dots,m$, we define
\eqb  \label{eqn-tutte-bdy}
H(y_k) := \exp\left( 2 \pi i \sum_{j=1}^k \op{hm}_{\bdy\mcl G}^{\BB x} (y_j ) \right) .
\eqe 
That is, $H$ maps $\bdy\mcl G$ to the unit circle in such a way that harmonic measure on $\bdy\mcl G$ viewed $\BB x$ approximates the uniform measure on the unit circle and $H(\BB y) =1$. 

We then define $H(x)$ for $x\in\mcl V\mcl G\setminus \bdy\mcl G$ in such a way that $H$ is the unique energy-minimizing function on $\mcl V\mcl G$ with the given values on $\bdy\mcl G$, as in Proposition~\ref{prop-min-harmonic} (we apply the proposition to the real and imaginary parts of $H$ separately). By Proposition~\ref{prop-min-harmonic}, $H$ is discrete harmonic on $\mcl V\mcl G\setminus \bdy\mcl G$.
We call $H$ the \textbf{Tutte embedding of $(\mcl G,\BB x,\BB y)$}. 

We establish some basic properties of the Tutte embedding --- including the convexity of the image of the faces and an approximation result for Tutte embeddings of finite submaps --- in Section~\ref{sec-tutte}. We give a scaling limit conjecture for random planar maps to supercritical LQG under the Tutte embedding in Section~\ref{sec-tutte-conj}.

\section{Energy-minimizing discrete harmonic functions}
\label{sec-harmonic}

In this section, we will first prove Proposition~\ref{prop-min-harmonic} (the existence and uniqueness of energy-minimizing discrete harmonic functions) in Section~\ref{sec-min-harmonic}. We will prove some elementary properties of energy-minimizing discrete harmonic functions in Section~\ref{sec-harmonic-property}. In Section~\ref{sec-harmonic-conv}, we will establish a useful approximation result for energy-minimizing discrete harmonic functions on $\mcl G$ by discrete harmonic functions on finite subgraphs of $\mcl G$. 

We first review some fairly standard notation. 
Let $\mcl G = (\mcl V\mcl G, \mcl E\mcl G , \frk c)$ be a countably infinite connected graph with conductances as in Section~\ref{sec-overview}. 
We assume that all subgraphs of $\mcl G$ are equipped with conductances given by the restriction of $\frk c$. 

Let $\ora{\mcl E}\mcl G$ denote the set of oriented edges of $\mcl G$.  
The \textbf{discrete gradient} of a function $f : \mcl V\mcl G\to\BB R$ is the function $\nabla f : \ora{\mcl E}\mcl G \to \BB R$ defined by
\eqb 
\nabla f(x,y) = \nabla_{\mcl G} f(x,y) := \frk c(x,y) ( f(y) - f(x) ) . 
\eqe 
The \textbf{energy} of a function $\theta : \ora{\mcl E}\mcl G \to \BB R$ is 
\eqb  \label{eqn-energy-def}
\op{Energy}(\theta) = \op{Energy}_{\mcl G}(\theta) := \sum_{e \in \ora{\mcl E}\mcl G} \frac{ |\theta(e)|^2 }{\frk c(e)} . 
\eqe 
Then, in the notation of Definition~\ref{def-harmonic}, for $f : \mcl V\mcl G \to \BB R$ we have 
\eqbn
\op{Energy}(f)  = \op{Energy}(\nabla f )    .
\eqen 
 
\subsection{Existence and uniqueness}
\label{sec-min-harmonic}

The proof of Proposition~\ref{prop-min-harmonic} is a short and elementary orthogonal projection argument.   

Recall the definition~\eqref{eqn-energy-def} of the energy of a function $\theta$ on oriented edges of $\mcl G$. 
Let $\Theta \subset \ell^2 (\ora{\mcl E}\mcl G , \frk c^{-1})$ be the set of functions $\theta : \ora{\mcl E}\mcl G \to \BB R$ such that $\theta(u,v) = -\theta(v,u)$ for all edges $(u,v) \in \ora{\mcl E}\mcl G$ and $\op{Energy}(\theta)  < \infty$. Then $\Theta$ is a Hilbert space with the $\ell^2$ inner product
\eqbn
\la \theta, \eta \ra_\Theta = \sum_{e \in \ora{\mcl E}\mcl G} \frac{ \theta(e) \eta(e) }{\frk c(e)}  .
\eqen

The following elementary lemma is a more-or-less standard fact about functions on the edges of graphs (see, e.g.,~\cite[Section 4]{gjn-orthodiagonal} for closely related statements on finite graphs). We give a proof since we are unaware of a reference for the precise statement we need. 

\begin{lem} \label{lem-subspace}
Fix a finite, non-empty set $\mcl A\subset\mcl V\mcl G$. 
Let
\eqb \label{eqn-subspace-def}
  \rng\Theta_{\mcl A}  := \left\{ \theta \in \Theta : \text{$\theta = \nabla f$ for some $f : \mcl V\mcl G \to \BB R$ with $f|_{\mcl A} \equiv 0$} \right\} . 
\eqe 
Then $\rng\Theta_{\mcl A}$ is a closed linear subspace of $\Theta$. The orthogonal complement $\rng\Theta_{\mcl A}^\perp$ is described as follows. We say that $\eta \in \Theta$ is an \textbf{cycle modulo $\mcl A$} if there exists a path $P : [0,N]\cap\BB Z \to\mcl V\mcl G$ such that either $P(0) = P(N)$ or $P(0) , P(N) \in \mcl A$ with the following property. For each $e = (x,y) \in \ora{\mcl E}\mcl G$, $\eta(e)$ is equal to the number of times $i$ such that $(P(i-1) ,P(i)) = (x,y)$ minus the number of times $i$ such that $(P(i-1), P(i)) = (y,x)$. Then $\rng\Theta_{\mcl A}^\perp$ is the closure of the linear span of the set of cycles modulo $\mcl A$. 
\end{lem}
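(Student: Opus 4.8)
The plan is to establish the two assertions separately: first that $\rng\Theta_{\mcl A}$ is a closed subspace, and then that its orthogonal complement is the closed span of cycles modulo $\mcl A$. For the first assertion, linearity is immediate from linearity of the gradient operator. Closedness is the only real content here: I would take a sequence $\theta_n = \nabla f_n \in \rng\Theta_{\mcl A}$ converging in $\Theta$ to some $\theta \in \Theta$, and argue that $\theta = \nabla f$ for some $f$ vanishing on $\mcl A$. The natural recipe is to \emph{define} $f$ directly from $\theta$ by integration along paths: fix a vertex $a_0 \in \mcl A$, and for each $x$ choose a path from $a_0$ to $x$ and set $f(x)$ to be the sum of $\theta(e)/\frk c(e)$ over the oriented edges $e$ of that path. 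One must check (i) this is well-defined, i.e.\ independent of the chosen path, and (ii) $f|_{\mcl A} \equiv 0$. Both follow from the fact that $\theta$ is a limit of gradients: for a gradient $\nabla f_n$, the path-integral around any closed loop is zero and the path-integral between any two points of $\mcl A$ (where $f_n$ vanishes) is zero, and these linear constraints pass to the limit because $\ell^2(\ora{\mcl E}\mcl G,\frk c^{-1})$-convergence implies convergence of any fixed finite path-sum. Once $f$ is constructed, $\nabla f = \theta$ holds edge-by-edge (each edge is a length-one path), so $\theta \in \rng\Theta_{\mcl A}$.

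For the description of the orthogonal complement, I would compute directly. Let $\eta_P$ denote the signed-edge-count function associated to a path $P$ as in the statement (a loop, or a path between two points of $\mcl A$). The key computation is that for any $f : \mcl V\mcl G \to \BB R$ with $f|_{\mcl A} \equiv 0$ and $\nabla f \in \Theta$,
\eqbn
\la \nabla f , \eta_P \ra_\Theta = \sum_{e\in\ora{\mcl E}\mcl G} \frac{\nabla f(e) \, \eta_P(e)}{\frk c(e)} = \sum_{i=1}^N \big( f(P(i)) - f(P(i-1)) \big) = f(P(N)) - f(P(0)) = 0 ,
\eqen
where the middle equality regroups the sum over edges as a telescoping sum along the path (using $\nabla f(x,y)/\frk c(x,y) = f(y)-f(x)$ and the definition of $\eta_P$), and the last equality holds because either $P(0)=P(N)$ or both endpoints lie in $\mcl A$ where $f$ vanishes. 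I should be slightly careful that this sum-rearrangement is legitimate when $\mcl G$ is not locally finite: $\eta_P$ is supported on the finitely many edges traversed by $P$, so only finitely many terms are nonzero and there is no convergence issue. This shows every cycle modulo $\mcl A$ is orthogonal to $\rng\Theta_{\mcl A}$, hence $\rng\Theta_{\mcl A}^\perp$ contains the closed span $\mcl C$ of such cycles.

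For the reverse inclusion $\rng\Theta_{\mcl A}^\perp \subseteq \mcl C$, it suffices (taking orthogonal complements, using that $\rng\Theta_{\mcl A}$ is closed) to show $\mcl C^\perp \subseteq \rng\Theta_{\mcl A}$. So let $\theta \in \Theta$ be orthogonal to every $\eta_P$; I want to produce $f$ with $\nabla f = \theta$ and $f|_{\mcl A}\equiv 0$. I use the same path-integral construction as before: orthogonality to all \emph{loop} cycles $\eta_P$ (with $P(0)=P(N)$, in particular all finite cycles in the graph) forces the path-integral of $\theta/\frk c$ to be path-independent, so $f(x) := \sum \theta(e)/\frk c(e)$ along any path from $a_0$ to $x$ is well-defined; orthogonality to the $\mcl A$-to-$\mcl A$ path cycles forces $f \equiv 0$ on $\mcl A$; and $\nabla f = \theta$ edge-by-edge. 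Finally $\nabla f \in \Theta$ since $\theta \in \Theta$. Combining, $\rng\Theta_{\mcl A}^\perp = \mcl C$.

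\textbf{Main obstacle.} The delicate point is the closedness argument and the reverse-inclusion argument, both of which hinge on the same subtlety: recovering a \emph{function} $f$ from an $\ell^2$ edge-function $\theta$ via path integration. I expect the main care to go into verifying well-definedness of $f$ — i.e.\ that $\ell^2(\frk c^{-1})$-convergence of gradients (resp.\ orthogonality of $\theta$ to all loop cycles) really does kill all the ``holonomy'' around loops, including in the non-locally-finite case — and into confirming that the resulting $f$ has $\nabla f \in \Theta$ (which here is automatic since $\nabla f = \theta$ and $\theta$ was assumed in $\Theta$). A minor additional check is that the telescoping rearrangement in the inner-product computation is valid, which reduces to $\eta_P$ having finite support; this is where the hypothesis that paths have finite length $N$ matters.
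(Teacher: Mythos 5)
Your proof is correct and follows essentially the same approach as the paper: closedness via path-integral reconstruction of $f$ from $\theta$ (the paper phrases it as pointwise convergence of $f_n(x)$ using the path-sum representation, which is the same construction), orthogonality of cycles by the telescoping computation, and the reverse inclusion by path-integrating a $\theta$ orthogonal to all cycles modulo $\mcl A$. The care you flag about finiteness of path supports and passing the linear constraints to the $\ell^2$ limit is exactly what makes the argument go through.
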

\begin{proof}
It is clear that $\rng\Theta_{\mcl A}$ is a vector subspace of $\Theta$. To see that $\rng\Theta_{\mcl A}$ is closed, consider a sequence of functions $\theta_n = \nabla f_n$ in $\rng\Theta_{\mcl A}$ which converges in $\ell^2$ to a function $\theta\in \Theta$. For $x\in \mcl V\mcl G$, there is a finite path $P :[0,N]\cap\BB Z\to\mcl V\mcl G$ from $\mcl A$ to $x$. We have 
\eqbn
f_n(x) = \sum_{j=1}^N \frac{ \theta_n(P(i-1) ,P(i)) }{\frk c(P(i-1) ,P(i))}
\eqen
From this representation and the $\ell^2$ convergence $\theta_n\to\theta$, we get that $\{f_n(x)\}_{n\in\BB N}$ converges pointwise to a function $f : \mcl V\mcl G\to\BB R$ with $f|_{\mcl A} = 0$ and $\nabla f = \theta$. Thus, $\theta \in \rng\Theta_{\mcl A}$. 

It remains to prove the description of $\rng\Theta_{\mcl A}^\perp$. If $\eta$ is a cycle modulo $\mcl A$, arising from a path $P$, then for every $\theta = \nabla f \in \rng\Theta_{\mcl A}$, we have $\la \theta, \eta \ra_\Theta = f(P(N)) - f(P(0)) = 0$. Hence the closed linear span of the cycles modulo $\mcl A$ is contained in $\rng\Theta_{\mcl A}^\perp$. 

To show that the closed linear span of the cycles modulo $\mcl A$ is equal to $\rng\Theta_{\mcl A}^\perp$, it suffices to show that if $\theta\in\Theta$ is orthogonal to every cycle modulo $\mcl A$, then $\theta \in \rng\Theta_{\mcl A}$.  
For such a $\theta$, define $f :\mcl V\mcl G \to\BB R$ by $f|_{\mcl A} \equiv 0$ and for $x\in\mcl V\mcl G\setminus\mcl A$, 
\eqbn
f(x) :=  \sum_{j=1}^N \frac{ \theta (P(i-1) ,P(i)) }{\frk c(P(i-1) ,P(i))} 
\eqen
where $P : [0,N]\cap\BB Z\to\mcl V\mcl G$ is any path in $\mcl V\mcl G$ from a point of $\mcl A$ to $x$. This is well-defined (independently of the choice of path) since $\theta$ is orthogonal to every cycle modulo $\mcl A$, and we have $\nabla f = \theta$. Hence $\theta \in \rng\Theta_{\mcl A}$. 
\end{proof}

\begin{proof}[Proof of Proposition~\ref{prop-min-harmonic}]
We first prove the existence of $h^\phi$. 
For a function $\phi : \mcl A\to \BB R$, let $\wh\phi : \mcl V\mcl G\to\BB R$ be the function which agrees with $\phi$ on $\mcl A$ and is identically zero on $\mcl V\mcl G\setminus \mcl A$. Since $\mcl A$ is finite and $\pi(x) <\infty$ for all $x\in \mcl A$, we have $\op{Energy}(\wh\phi) < \infty$, i.e., $\nabla\wh\phi \in \Theta$. 
Define $\rng\Theta_{\mcl A}$ as in Lemma~\ref{lem-subspace} and let $\op{proj}_{\rng\Theta_{\mcl A}}$ denote the orthogonal projection onto $\rng\Theta_{\mcl A}$. 

By the definition~\eqref{eqn-subspace-def} of $\rng\Theta_{\mcl A}$, there exists $f^\phi : \mcl V\mcl G\to\BB R$ with $f^\phi|_{\mcl A} \equiv 0$ such that 
\eqb \label{eqn-min-proj}
\nabla f^\phi = \op{proj}_{\rng\Theta_{\mcl A}} \nabla \wh\phi .
\eqe 
Moreover, the function $f^\phi$ is unique since $\mcl G$ is connected, so a function on $\mcl V\mcl G$ is determined by its gradient and its value at any given vertex.

By the defining property of orthogonal projection, 
\eqbn
\op{Energy}(\nabla \wh\phi - \nabla f^\phi)  \leq \op{Energy}(\nabla\wh\phi - \nabla g) 
\eqen
for every $g : \mcl V\mcl G\to\BB R$ with $g|_{\mcl A} \equiv 0$, and moreover $f^\phi$ is the unique function which vanishes on $\mcl A$ with this property. Consequently, if we define 
\eqb  \label{eqn-min-def}
h^\phi := \wh\phi - f^\phi 
\eqe 
then $h^\phi|_{\mcl A} = \wh\phi|_{\mcl A} = \phi$, and $h^\phi$ is the unique function with this property for which $\op{Energy}(h^\phi)$ is minimal. 

To see that $h^\phi$ is harmonic on $\mcl V\mcl G\setminus\mcl A$, let us first observe that by the Cauchy-Schwarz inequality and~\eqref{eqn-finite-sum}, for every $x\in \mcl V\mcl G$, 
\eqbn
\sum_{y\sim x} \frk c(x,y) |h^\phi(y) - h^\phi(x)| 
\leq \sqrt{\pi(x) \op{Energy}(h^\phi)} < \infty. 
\eqen
Now, suppose by way of contradiction that $h^\phi$ fails to be harmonic at some $x \in \mcl V\mcl G \setminus \mcl A$. Let 
\eqbn
\wt h^\phi(x) := \frac{1}{\pi(x)} \sum_{y\sim x} \frk c(x,y) h^\phi(y)
\eqen
and $\wt h^\phi(z) := h^\phi(z)$ for $z \in \mcl V\mcl G \setminus \{x\}$. By assumption, $\wt h^\phi(x) \not= h^\phi(x)$. Hence
\eqbn
\sum_{y\sim x} \frk c(x,y) |h^\phi(y) - \wt h^\phi(x)|^2 < \sum_{y\sim x} \frk c(x,y) |h^\phi(y) - h^\phi(x)|^2, 
\eqen
which can easily be seen by differentiating. 
Since $h^\phi(z) = \wt h^\phi(z)$ for all $z\in\mcl V\mcl G\setminus \{x\}$, it follows that $\op{Energy}(\wt h^\phi) < \op{Energy}(h^\phi)$, which contradicts the minimality of $\op{Energy}(h^\phi)$. 

Finally, we check the linearity of $\phi\mapsto h^\phi$. Let $\phi,\psi : \mcl V\mcl G\to\BB R$ and $a\in\BB R$. Since orthogonal projection is linear, we have by~\eqref{eqn-min-proj} and~\eqref{eqn-min-def} that
\eqbn
\nabla h^{\phi + a \psi} 
 = \nabla \wh\phi  + a \nabla \wh\psi - \nabla f^\phi - a \nabla f^\psi  
= \nabla h^\phi + a \nabla h^\psi .
\eqen
Since $\mcl G$ is connected, this implies that $h^{\phi  +a \psi} = h^\phi + a  h^\psi + C$ for some constant $C$. Since $h^{\phi  +a \psi}$ and $ h^\phi + a  h^\psi $ both agree with $\phi + a \psi$ on $\mcl A$, we must have $C = 0$. 
\end{proof}

\subsection{Basic properties}
\label{sec-harmonic-property}

We record some basic properties of the energy-minimizing harmonic functions if Proposition~\ref{prop-min-harmonic}. 

\begin{lem} \label{lem-harmonic-compat}
Let $\mcl A\subset \mcl B \subset \mcl V\mcl G$ be non-empty finite vertex sets.  
Let $\phi : \mcl A\to\BB R$ and let $h^\phi$ be the energy-minimizing function on $\mcl G$ with $h^\phi|_{\mcl A} = \phi$, as in Proposition~\ref{prop-min-harmonic}. 
Then $h^\phi $ is also the energy-minimizing function on $\mcl G$ whose restriction to $\mcl B$ is given by $h^\phi|_{\mcl B}$.
\end{lem}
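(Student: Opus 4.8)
The plan is to exploit the characterization of $h^\phi$ as the unique energy minimizer among all functions with prescribed values on a given set, together with the nesting $\mcl A \subset \mcl B$. Write $\psi := h^\phi|_{\mcl B}$, and let $h^\psi$ denote the energy-minimizing function on $\mcl G$ with $h^\psi|_{\mcl B} = \psi$, furnished by Proposition~\ref{prop-min-harmonic}. Our goal is to show $h^\psi = h^\phi$. The key observation is that $h^\phi$ is itself a competitor in the minimization problem defining $h^\psi$: indeed $h^\phi|_{\mcl B} = \psi$ by definition of $\psi$, and $h^\phi$ has finite Dirichlet energy by Proposition~\ref{prop-min-harmonic}. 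Conversely, $h^\psi$ is a competitor in the minimization problem defining $h^\phi$, since $h^\psi|_{\mcl A} = \psi|_{\mcl A} = h^\phi|_{\mcl A} = \phi$ (using $\mcl A \subset \mcl B$ for the first equality), and $h^\psi$ has finite energy.

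From these two facts the argument is immediate: by minimality of $\op{Energy}(h^\psi)$ over functions agreeing with $\psi$ on $\mcl B$, and since $h^\phi$ is such a function, we get $\op{Energy}(h^\psi) \leq \op{Energy}(h^\phi)$; by minimality of $\op{Energy}(h^\phi)$ over functions agreeing with $\phi$ on $\mcl A$, and since $h^\psi$ is such a function, we get $\op{Energy}(h^\phi) \leq \op{Energy}(h^\psi)$. Hence $\op{Energy}(h^\phi) = \op{Energy}(h^\psi)$, so $h^\psi$ is an energy-minimizer among functions agreeing with $\phi$ on $\mcl A$. But Proposition~\ref{prop-min-harmonic} asserts that such a minimizer is \emph{unique}, so $h^\psi = h^\phi$. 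Since $h^\psi$ is by definition the energy-minimizing function on $\mcl G$ with boundary values $\psi = h^\phi|_{\mcl B}$ on $\mcl B$, this is exactly the claim.

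There is no real obstacle here; the only points requiring a word of care are (a) confirming that $h^\phi$ genuinely has finite energy so that it is a legitimate competitor in both problems — but this is explicitly part of the statement of Proposition~\ref{prop-min-harmonic} — and (b) correctly tracking which set plays the role of the constraint set in each of the two minimization problems, so that the two minimality inequalities are applied in the right directions. One could alternatively phrase the proof via the orthogonal-projection picture of Section~\ref{sec-min-harmonic} (noting $\rng\Theta_{\mcl B} \subset \rng\Theta_{\mcl A}$ and that $\nabla h^\phi$ is already the projection of $\nabla\wh\phi$ onto the larger space $\rng\Theta_{\mcl A}$, hence in particular lies in... ), but the energy-comparison argument above is cleaner and uses only the already-proven proposition as a black box.
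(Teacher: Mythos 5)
Your proof is correct and follows essentially the same route as the paper's: both compare energies of the two minimizers using the nesting $\mcl A \subset \mcl B$ and then invoke the uniqueness clause of Proposition~\ref{prop-min-harmonic}. The paper gets away with only the inequality $\op{Energy}(\wt h^\phi) \leq \op{Energy}(h^\phi)$ (uniqueness of the $\mcl A$-minimizer does the rest), whereas you derive the reverse inequality explicitly too, but this is the same argument with one extra line.
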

\begin{proof}
Let $\wt h^\phi$ be the energy-minimizing function on $\mcl G$ whose restriction to $\mcl B$ is given by $h^\phi|_{\mcl B}$. 
Then $\wt h^\phi|_{\mcl A} = h^\phi|_{\mcl A} = \phi$ and (since $\wt h^\phi$ is energy-minimizing), $\op{Energy}(\wt h^\phi) \leq \op{Energy}(h^\phi)$. Since $h^\phi$ is the \emph{unique} energy-minimizing function which agrees with $\phi$ on $\mcl A$ (Proposition~\ref{prop-min-harmonic}), we get that $\wt h^\phi = h^\phi$. 
\end{proof}

\begin{lem}[Maximum principle] \label{lem-max-principle}
Let $\mcl A\subset  \mcl V\mcl G$ be non-empty and finite. 
Let $\phi : \mcl A\to\BB R$ and let $h^\phi$ be the energy-minimizing function on $\mcl G$ with $h^\phi|_{\mcl A} = \phi$, as in Proposition~\ref{prop-min-harmonic}.  
Then 
\eqb \label{eqn-max-principle}
\max_{x\in\mcl V\mcl G} h^\phi(x) = \max_{y\in\mcl A}  \phi(y)  
\eqe 
and the same holds with ``min'' in place of ``max''. 
\end{lem}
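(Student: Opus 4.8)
The plan is to reduce everything to the uniqueness clause of Proposition~\ref{prop-min-harmonic}, exploiting the fact that truncating a function never increases its Dirichlet energy. I will prove the ``max'' statement in detail; the ``min'' statement then follows by applying it to $-\phi$, since $\phi\mapsto h^\phi$ is linear (Proposition~\ref{prop-min-harmonic}), so that $h^{-\phi} = -h^\phi$ and hence $\min_x h^\phi(x) = -\max_x h^{-\phi}(x) = -\max_{y\in\mcl A}(-\phi(y)) = \min_{y\in\mcl A}\phi(y)$.

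For the ``max'' statement, one inequality is immediate: since $h^\phi|_{\mcl A} = \phi$ and $\mcl A\subset\mcl V\mcl G$, we have $\sup_{x\in\mcl V\mcl G} h^\phi(x) \geq \max_{y\in\mcl A}\phi(y)$. For the reverse inequality, set $M := \max_{y\in\mcl A}\phi(y)$ and define $g : \mcl V\mcl G\to\BB R$ by $g(x) := \min\{h^\phi(x),\, M\}$. Since $\phi(y)\leq M$ for every $y\in\mcl A$, the function $g$ still agrees with $\phi$ on $\mcl A$, so $g$ lies in the class of functions over which $\op{Energy}$ is minimized in Proposition~\ref{prop-min-harmonic}. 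Because $t\mapsto\min\{t,M\}$ is $1$-Lipschitz, $|g(y)-g(x)|\leq |h^\phi(y)-h^\phi(x)|$ for every oriented edge $(x,y)$, and therefore $\op{Energy}(g)\leq\op{Energy}(h^\phi) < \infty$. But $h^\phi$ has minimal Dirichlet energy among all functions equal to $\phi$ on $\mcl A$, so $\op{Energy}(g) = \op{Energy}(h^\phi)$, and then the uniqueness part of Proposition~\ref{prop-min-harmonic} forces $g = h^\phi$. Hence $h^\phi(x) = \min\{h^\phi(x), M\}\leq M$ for all $x\in\mcl V\mcl G$, so $\sup_x h^\phi(x)\leq M$. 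Combined with the first inequality, the supremum equals $M$ and is attained at a point of $\mcl A$, which is precisely~\eqref{eqn-max-principle}.

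I do not expect a genuine obstacle here. The only points that require a word of care are that the competitor class in Proposition~\ref{prop-min-harmonic} consists of \emph{all} real-valued functions with the prescribed values on $\mcl A$ (not merely the discrete-harmonic ones), so that the truncation $g$ is an admissible competitor, and that Dirichlet energy is non-increasing under post-composition with a $1$-Lipschitz map. Note that an attempted proof relying solely on the mean-value property~\eqref{eqn-harmonic} would not work on an infinite transient graph, where harmonicity alone does not imply a maximum principle; it is the energy-minimality of $h^\phi$ that makes the truncation argument go through.
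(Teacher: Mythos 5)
Your proof is correct and is essentially the same truncation argument the paper uses: replace $h^\phi$ by $\min\{h^\phi,M\}$, note this does not increase Dirichlet energy, and invoke energy-minimality. The only difference is the final step — the paper argues that equality of energies forces the gradient inequality to be an equality on every edge, while you invoke the uniqueness clause of Proposition~\ref{prop-min-harmonic} directly to conclude $g = h^\phi$; your version is slightly cleaner. Your derivation of the ``min'' case via linearity (rather than ``by symmetry'') is also fine.
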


We note that Lemma~\ref{lem-max-principle} is not immediate from the fact that $h^\phi$ is discrete harmonic. Indeed, if we let
\eqbn
\frk h(x) := \BB P\left[ \text{random walk started from $x$ eventually hits $\mcl A$} \right] 
\eqen
then $\frk h$ is discrete harmonic on $\mcl V\mcl G\setminus\mcl A$ and is identically equal to 1 on $\mcl A$. However, if random walk on $\mcl G$ is transient, then $\frk h$ can take values in $(0,1)$.

\begin{proof}[Proof of Lemma~\ref{lem-max-principle}]
Let $M := \max_{x\in \mcl A} \phi(y)$ and define $\wt h^\phi(x) := \min\{M , h^\phi(x)\}$. Then $\wt h^\phi|_{\mcl A} = h^\phi|_{\mcl A} = \phi$. Furthermore, if $e$ is an oriented edge of $\mcl G$, then 
\eqb  \label{eqn-mp-grad}
|\nabla \wt h^\phi(e)| \leq |\nabla h^\phi(e)| , 
\eqe 
with equality if and only if $ h^\phi  \leq M$ at both endpoints of $e$. Since $h^\phi$ minimizes discrete Dirichlet energy among all functions which agree with $\phi$ on $\mcl A$, we get $\op{Energy}(\wt h^\phi) \geq \op{Energy}(h^\phi)$. This is possible only if the inequality in~\eqref{eqn-mp-grad} is always an equality, i.e., $\max_{x\in\mcl V\mcl G} h^\phi(x) \leq M$. The statement for the minimum follows by symmetry.  
\end{proof}

Recall the definition of the (energy-minimizing) harmonic measure $\op{hm}_{\mcl A}^x(\cdot)$ on $\mcl A$ from Definition~\ref{def-hm}. 

\begin{lem} \label{lem-hm-linear}
For each $x\in \mcl V\mcl G$, the harmonic measure $\op{hm}_{\mcl A}^x$ is a probability measure on $\mcl A$. 
Moreover, for each $\phi : \mcl A \to\BB R$, the energy-minimizing discrete harmonic function $h^\phi$ satisfies
\eqb  \label{eqn-hm-linear} 
h^\phi(x)= \sum_{y\in \mcl A} \phi(y) \op{hm}_{\mcl A}^x(y) .
\eqe 
\end{lem}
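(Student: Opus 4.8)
The plan is to derive both claims directly from the linearity of the map $\phi \mapsto h^\phi$ established in Proposition~\ref{prop-min-harmonic}, together with the maximum principle of Lemma~\ref{lem-max-principle}. First I would prove \eqref{eqn-hm-linear}. Any $\phi : \mcl A \to \BB R$ can be written as $\phi = \sum_{y \in \mcl A} \phi(y) \BB 1_y$, a finite linear combination since $\mcl A$ is finite. By the linearity of $\phi \mapsto h^\phi$, we get $h^\phi = \sum_{y \in \mcl A} \phi(y) h^y$, where $h^y$ is the energy-minimizing function with boundary data $\BB 1_y$, as in Definition~\ref{def-hm}. Evaluating at $x$ and using the definition $\op{hm}_{\mcl A}^x(y) = h^y(x)$ yields \eqref{eqn-hm-linear} immediately.

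Next I would verify that $\op{hm}_{\mcl A}^x$ is a probability measure on $\mcl A$. Nonnegativity of each $\op{hm}_{\mcl A}^x(y) = h^y(x)$ follows from the minimum version of Lemma~\ref{lem-max-principle}: since the boundary data $\BB 1_y$ is nonnegative with minimum value $0$ on $\mcl A$, we have $h^y(x) \geq \min_{w \in \mcl A} \BB 1_y(w) = 0$ for all $x \in \mcl V\mcl G$. For the total mass, apply \eqref{eqn-hm-linear} with $\phi \equiv 1$ the constant function on $\mcl A$: the constant function $\bd 1$ on all of $\mcl V\mcl G$ has Dirichlet energy zero and agrees with $\phi$ on $\mcl A$, so by uniqueness of the energy-minimizer it is $h^\phi$, giving $h^\phi(x) = 1$. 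On the other hand, \eqref{eqn-hm-linear} reads $h^\phi(x) = \sum_{y \in \mcl A} 1 \cdot \op{hm}_{\mcl A}^x(y) = \sum_{y \in \mcl A} \op{hm}_{\mcl A}^x(y)$, so the total mass is $1$.

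I expect no serious obstacle here; the only point requiring a little care is making sure the two ingredients (linearity, and the identification of the constant function as its own energy-minimizer) are invoked in the right order, so that nonnegativity and total-mass-one are both in hand before \eqref{eqn-hm-linear} is stated as an identity between a function value and a genuine expectation against a probability measure. Since linearity is already proven and the constant-function observation is immediate from the uniqueness clause of Proposition~\ref{prop-min-harmonic}, the whole argument is a few lines.
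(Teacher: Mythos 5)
Your argument is correct and coincides with the paper's own proof: both deduce \eqref{eqn-hm-linear} and total mass one from the linearity in Proposition~\ref{prop-min-harmonic} (plus the observation that the constant function is its own energy minimizer), and both obtain nonnegativity from Lemma~\ref{lem-max-principle}. You spell out a couple of steps the paper leaves implicit, but there is no substantive difference.
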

\begin{proof}
By the maximum principle (Lemma~\ref{lem-max-principle}), $\op{hm}_{\mcl A}^x(y) \in [0,1]$. By the linearity part of Proposition~\ref{prop-min-harmonic}, $\sum_{y \in \mcl A} h^y(x) = 1$ for each $x\in \mcl G$ and~\eqref{eqn-hm-linear} holds. 
\end{proof}

\subsection{Finite approximation}
\label{sec-harmonic-conv}

We show in this subsection that energy-minimizing discrete harmonic functions on $\mcl G$ arise as pointwise limits of discrete harmonic functions on finite subgraphs of $\mcl G$.

\begin{prop} \label{prop-harmonic-conv}
Assume that we are in the setting of Proposition~\ref{prop-min-harmonic}. 
Let $\{\mcl G_n\}_{n\geq 1}$ be an increasing family of finite, connected subgraphs of $\mcl G$ whose union is all of $\mcl G$ such that $\mcl A\subset \mcl V\mcl G_n$ for every $n\geq 1$. 
For $n\geq 1$, let $h_n^\phi : \mcl V\mcl G_n\to\BB R$ be the unique function which agrees with $\phi$ on $\mcl A$ and is $\mcl G_n$-discrete harmonic\footnote{If $x\in\mcl G_n$ has at least one neighboring vertex in $\mcl G$ which is not in $\mcl G_n$, then the condition that $h_n^\phi$ is $\mcl G_n$-discrete harmonic at $x$ is different from the condition that $h_n^\phi$ is $\mcl G$-discrete harmonic at $x$.}
on $\mcl V\mcl G_n\setminus\mcl A$. 
Let $h^\phi : \mcl V\mcl G \to \BB R$ be the energy-minimizing function with $h^\phi|_{\mcl A} \equiv \phi$, as in Proposition~\ref{prop-min-harmonic}. 
Then $\lim_{n\to\infty} h_n^\phi(x) = h^\phi(x)$ for each $x\in\mcl V\mcl G$.
\end{prop}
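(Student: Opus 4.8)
The plan is to identify the finite-graph harmonic functions $h_n^\phi$ with orthogonal projections of $\nabla\wh\phi$ onto suitable subspaces of $\Theta$, and then show these subspaces increase to $\rng\Theta_{\mcl A}$, so that the projections converge. Concretely, extend each $h_n^\phi$ to all of $\mcl V\mcl G$ by setting it equal to $\phi$ on $\mcl A$ and $0$ outside $\mcl V\mcl G_n$ — wait, more carefully: the natural thing is to work with the \emph{edge} functions. For each $n$, let $\rng\Theta_{\mcl A}^{(n)} \subset \Theta$ be the set of $\theta = \nabla f$ with $f|_{\mcl A}\equiv 0$ and $f$ supported on $\mcl V\mcl G_n$ (equivalently, $f$ vanishes outside $\mcl V\mcl G_n$), or rather the span of gradients of functions vanishing on $\mcl A$ and constant (say zero) off $\mcl G_n$. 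These are finite-dimensional, hence closed, and $\rng\Theta_{\mcl A}^{(n)}$ is increasing in $n$. The first key step is to verify that $\overline{\bigcup_n \rng\Theta_{\mcl A}^{(n)}} = \rng\Theta_{\mcl A}$: the inclusion $\subset$ is clear, and for $\supset$ one takes $\theta = \nabla f \in \rng\Theta_{\mcl A}$ with $f|_{\mcl A}\equiv 0$ and truncates $f$ to $\mcl G_n$ (setting it to $0$ outside), checking via dominated convergence that the truncations' gradients converge to $\theta$ in $\ell^2(\ora{\mcl E}\mcl G,\frk c^{-1})$; finiteness of $\op{Energy}(\theta)$ makes the tail small.

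The second key step is to identify $\nabla h_n^\phi$ (with $h_n^\phi$ extended by $0$ off $\mcl G_n$ and by $\phi$ on $\mcl A$) with $\op{proj}_{\rng\Theta_{\mcl A}^{(n)}}\nabla\wh\phi$. This is exactly the same orthogonal-projection argument as in the proof of Proposition~\ref{prop-min-harmonic}, carried out in the finite-dimensional subspace: $h_n^\phi = \wh\phi - f_n^\phi$ where $\nabla f_n^\phi = \op{proj}_{\rng\Theta_{\mcl A}^{(n)}}\nabla\wh\phi$, and the Euler--Lagrange equations for minimizing $\op{Energy}(\wh\phi - \nabla g)$ over $g$ in the finite subspace are precisely the $\mcl G_n$-discrete harmonicity equations at vertices of $\mcl V\mcl G_n\setminus\mcl A$ — noting that the extension by $0$ outside $\mcl G_n$ exactly encodes the "free" boundary condition, i.e.\ the $\mcl G_n$-harmonicity (not $\mcl G$-harmonicity) at boundary vertices of $\mcl G_n$. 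Then, since $\rng\Theta_{\mcl A}^{(n)}\uparrow$ with dense union in $\rng\Theta_{\mcl A}$, the standard fact that orthogonal projections onto an increasing sequence of closed subspaces converge strongly to the projection onto the closure of the union gives $\nabla f_n^\phi\to\nabla f^\phi$ in $\Theta$, hence $\nabla h_n^\phi\to\nabla h^\phi$.

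Finally, I convert $\ell^2$ convergence of gradients into pointwise convergence of the functions: for any $x\in\mcl V\mcl G$ pick a finite path $P$ from $\mcl A$ to $x$ lying in some $\mcl G_{n_0}$; for $n\geq n_0$ one has $h_n^\phi(x) - \phi(P(0)) = \sum_{i} \frac{\nabla h_n^\phi(P(i-1),P(i))}{\frk c(P(i-1),P(i))}$ (this uses that the path lies inside $\mcl G_n$ and $f_n^\phi$ vanishes on $\mcl A$), and the right side converges to the corresponding sum for $h^\phi$ since $\ell^2$ convergence implies convergence of each coordinate. The main obstacle I anticipate is the bookkeeping in the second step — getting the extension conventions exactly right so that the finite-dimensional minimization genuinely reproduces $\mcl G_n$-discrete harmonicity at the boundary vertices of $\mcl G_n$ rather than $\mcl G$-discrete harmonicity (c.f.\ the footnote in the statement), and making sure the subspaces $\rng\Theta_{\mcl A}^{(n)}$ are chosen so that their closure is all of $\rng\Theta_{\mcl A}$ and not something smaller. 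Everything else is routine Hilbert space theory plus the path-integral representation already used in the proof of Lemma~\ref{lem-subspace}.
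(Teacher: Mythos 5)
Your plan is a genuinely different route from the paper's, but the key identification step fails, and the failure is exactly the one you flagged as a possible obstacle. Extending a function by zero off $\mcl V\mcl G_n$ imposes a Dirichlet (absorbing) boundary condition at $\bdy\mcl G_n$, not a free one. Concretely, $\op{proj}_{\rng\Theta_{\mcl A}^{(n)}}\nabla\wh\phi = \nabla(\wh\phi-g_n^\phi)$ where $g_n^\phi$ is the function that agrees with $\phi$ on $\mcl A$, vanishes off $\mcl V\mcl G_n$, and minimizes $\op{Energy}_{\mcl G}$ under those constraints; the Euler--Lagrange equation at a vertex $x\in\mcl V\mcl G_n\setminus\mcl A$ adjacent to some $y\notin\mcl V\mcl G_n$ is $\mcl G$-discrete harmonicity of $g_n^\phi$ at $x$ with the enforced value $g_n^\phi(y)=0$ contributing to the sum, which is \emph{not} the $\mcl G_n$-discrete harmonicity defining $h_n^\phi$ --- this is precisely the distinction the footnote in the statement warns about, and zero-extension gets it backwards. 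Hence $g_n^\phi\neq h_n^\phi$ in general, and the discrepancy is large: for $\phi\equiv 1$ on $\mcl A$ one has $h_n^\phi\equiv 1 = h^\phi$ trivially, while $g_n^\phi(x)$ is the probability that random walk from $x$ hits $\mcl A$ before exiting $\mcl V\mcl G_n$, whose limit is $\BB P_x[\text{walk ever hits }\mcl A]<1$ on any transient graph.

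The same example shows $\overline{\bigcup_n\rng\Theta_{\mcl A}^{(n)}}$ is a \emph{proper} subspace of $\rng\Theta_{\mcl A}$ when the walk is transient, so your density step fails too: in the hard truncation of $f^\phi$, the boundary term $\sum_{x\sim y,\, x\in\mcl V\mcl G_n,\, y\notin\mcl V\mcl G_n}\frk c(x,y)|f^\phi(y)|^2$ has no reason to vanish, since $h^\phi$ need not decay at infinity. More fundamentally, $h_n^\phi$ minimizes $\op{Energy}_{\mcl G_n}$, a functional on $\ell^2(\ora{\mcl E}\mcl G_n,\frk c^{-1})$, and this is not a closed subspace of $\Theta$; there is no natural ambient Hilbert space in which the $h_n^\phi$ all arise as projections of one fixed vector. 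The paper takes a different route that avoids this: the maximum principle gives uniform boundedness of the $h_n^\phi$, so one can pass to a pointwise subsequential limit $\wt h$; then $\sum_{e\in\mcl E\mcl G_k}\frk c(e)^{-1}|\nabla h_n^\phi(e)|^2\leq\op{Energy}_{\mcl G_n}(h_n^\phi)\leq\op{Energy}_{\mcl G_n}(h^\phi|_{\mcl V\mcl G_n})\leq\op{Energy}_{\mcl G}(h^\phi)$, sending $n\to\infty$ and then $k\to\infty$ by monotone convergence, gives $\op{Energy}_{\mcl G}(\wt h)\leq\op{Energy}_{\mcl G}(h^\phi)$, and uniqueness of the minimizer forces $\wt h=h^\phi$. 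Your final path-integral step converting gradient convergence to pointwise convergence is fine as written, but it never receives a correct input.
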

\begin{proof}
By the maximum principle, the functions $h_n^\phi$ are bounded above in absolute value by $\sup_{x\in\mcl A} |\phi(x)|$. By a compactness argument, for any sequence of positive integers tending to infinity, we can find a subsequence $\mcl N$ and a function $\wt h : \mcl V\mcl G \to\BB R$ with $\wt h|_{\mcl A} = \phi$ such that $h_n^\phi \to \wt h$ pointwise on $\mcl V\mcl G$ as $\mcl N\ni n \to\infty$. We need to show that $\wt h = h^\phi$. 

Let $k \in \BB N$. 
Since $h_n^\phi$ minimizes Dirichlet energy on $\mcl G_n$ among all functions on $\mcl V\mcl G_n$ which agree with $\phi$ on $\mcl A$, if $n\geq k$, then 
\alb
\sum_{e\in\mcl E\mcl G_k} \frac{|\nabla h_n^\phi(e)|^2}{\frk c(e)}
 \leq \op{Energy}_{\mcl G_n}(h_n^\phi) 
 \leq \op{Energy}_{\mcl G_n}(h^\phi|_{\mcl V\mcl G_n} ) 
 \leq \op{Energy}_{\mcl G }(h^\phi  )  .
\ale
Sending $n\to \infty$ along the sequence $\mcl N$ gives
\alb
\sum_{e\in\mcl E\mcl G_k} \frac{|\nabla \wt h(e)|^2}{\frk c(e)} 
 \leq \op{Energy}_{\mcl G }(h^\phi  )  .
\ale
By sending $k\to\infty$ and using the monotone convergence theorem, it follows that $\op{Energy}_{\mcl G }(\wt h  )  \leq \op{Energy}_{\mcl G }(h^\phi  )  $. By the uniqueness of the energy minimizer (Proposition~\ref{prop-min-harmonic}), this implies $\wt h = h^\phi$.  
\end{proof}

\section{Construction of random walk reflected off of \texorpdfstring{$\infty$}{infinity}}
\label{sec-rw}

The goal of this section is to prove Theorem~\ref{thm-cont-time-walk}. We first give an outline of the proof. Consider a family of finite, connected subgraphs $\{\mcl G_n\}_{n\geq 1}$ of $\mcl G$ which increase to all of $\mcl G$. 
To construct $X$, we start in Section~\ref{sec-discrete-markov} by defining a discrete-time Markov chain $Y^n$ on the radius-1 graph distance neighborhood
\eqb \label{eqn-1-nbd}
B_1\mcl G_n := \left\{x \in\mcl V\mcl G : \text{either $x \in\mcl V\mcl G_n$ or $x \sim y$ for some $y\in\mcl V\mcl G_n$} \right\} .
\eqe 
 whose transition probabilities started from $x$ are described as follows. If $x\in\mcl V\mcl G_n$, then $Y^n$ takes a step according to random walk on $\mcl G$. If $x\in B_1\mcl G_n\setminus \mcl G_n$, then instead $Y^n$ jumps to a point of $\mcl V\mcl G_n$ sampled from harmonic measure viewed from $x$. This Markov chain will describe the ordered sequence of vertices of $\mcl G_n$ hit by $X$. These Markov chains for different values of $n$ satisfy a consistency condition (Lemma~\ref{lem-rw-consistent}) which allows us to couple them all together. 
 
Using this coupling, we can define a discrete-time version of $X$, which we call $Y$, which is a function from a totally ordered set $\Xi$ to $\mcl V\mcl G$ (Section~\ref{sec-discrete-reflected}). We will then introduce exponential holding times in Section~\ref{sec-cont-time-def} and use these to define the continuous time random walk $X$. In Section~\ref{sec-cont-time-proof} we will check that $X$ satisfies the properties in Theorem~\ref{thm-cont-time-walk} and that the process satisfying these properties is unique. 

Section~\ref{sec-cont} gives the proofs of some basic properties of $X$ which are intuitively obvious but require a small amount of justification. 
Section~\ref{sec-discrete-reflected} discusses connections to Silverstein extensions of Dirichlet forms.

\subsection{Approximating by a discrete time Markov chain}
\label{sec-discrete-markov}

\begin{figure}[ht!]
\begin{center}
\includegraphics[width=\textwidth]{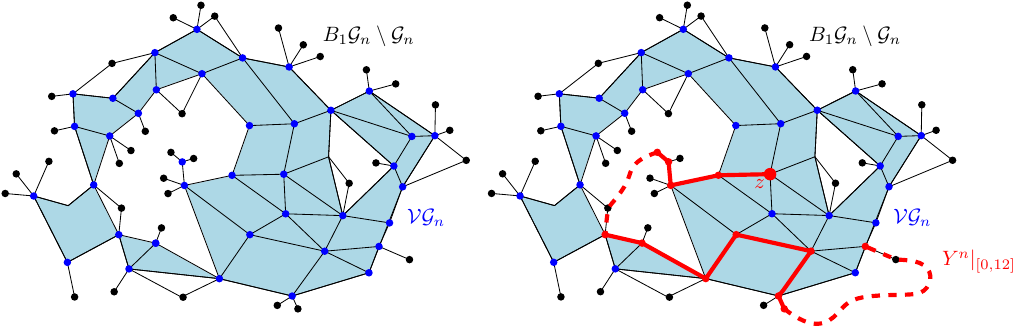}  
\caption{\label{fig-markov-chain-approx} \textbf{Left:} The subgraph $\mcl G_n$ (blue) and the vertex set $B_1\mcl G_n\setminus \mcl G_n$ (black). We have drawn a planar graph for clarity, but of course $\mcl G_n$ is not required to be planar. The vertices of $\mcl G_n \setminus B_1\mcl G_n$ are contained in the three white regions. \textbf{Right:} Twelve steps of the Markov chain $Y^n$ on $B_1\mcl G_n$ defined in~\eqref{eqn-transition-walk} and~\eqref{eqn-transition-comp}. This Markov chain evolves as a random walk on $\mcl G_n$ until it hits a vertex of $B_1\mcl G_n\setminus \mcl G_n$, at which point it jumps to a vertex of $\mcl G_n$ sampled according to harmonic measure. We have drawn a simple trajectory for clarity, but there is nothing stopping $Y^n$ from visiting the same vertex multiple times.
}
\end{center}
\end{figure}

Let $\{\mcl G_n\}_{n\geq 1}$ be an increasing family of finite, connected subgraphs of $\mcl G$ whose union is all of $\mcl G$.  
  
For each $n\geq 1$ and each $x\in\mcl V\mcl G_n$, we define the harmonic measure $\op{hm}_{ \mcl G_n}^x$ on $\mcl V\mcl G_n$ viewed from $x$ as in~\eqref{eqn-hm-def} with $\mcl A = \mcl V\mcl G_n$. 

Fix $n\geq 1$. Let $B_1 \mcl G_n \subset\mcl V\mcl G$ be as in~\eqref{eqn-1-nbd}. 
We define a Markov chain $Y^n = \{Y_j^n\}_{j\geq 0}$ on $B_1\mcl G_n$ with transition probabilities $p_n(x,y)$ specified as follows. If $x \in \mcl V\mcl G_n$, we define 
\eqb  \label{eqn-transition-walk}
p_n(x,y) = \frk c(x,y) / \pi(x) ,\quad \text{if $y\sim x$} \quad \text{and} \quad \text{$p_n(x,y) = 0$ otherwise}. 
\eqe 
If $x\in B_1\mcl G_n \setminus  \mcl G_n$, we define 
\eqb  \label{eqn-transition-comp}
p_n(x,y) = \op{hm}_{\mcl G_n}^x(y) ,\quad \text{if $y \in \mcl V\mcl G_n$} \quad \text{and} \quad \text{$p_n(x,y) = 0$ otherwise}. 
\eqe 
In other words, $Y^n$ moves according to random walk on $\mcl G$ whenever it is in $\mcl G_n$, and whenever it is not in $\mcl G_n$ it jumps to a point of $\mcl G_n$ chosen according to harmonic measure. See Figure~\ref{fig-markov-chain-approx} for an illustration. 

For $x\in\mcl V\mcl G_n$, we write $\BB P_x$ for the probability measure under which $Y^n$ starts from $Y^n_0 = x$, and $\BB E_x$ for the corresponding expectation (we slightly abuse notation by not explicitly including $n$ in this notation). 

\begin{remark} \label{remark-tilde-process}
If $\mcl G$ is locally finite, then $B_1\mcl G_n$ is a finite set of vertices. In general, $B_1\mcl G_n$ could be infinite. In this case, it will occasionally be useful to consider a Markov chain on the finite state space $\mcl V\mcl G_n$. For this purpose, let $\rho_0 = 0$ and let $\rho_k$ for $k\geq 1$ be the $k$th smallest time $j$ for which $Y^n_j \in \mcl V\mcl G_n$.
Then $\wt Y_k^n := Y_{\rho_k}^n$ is a Markov chain on $\mcl V\mcl G_n$, with transition probabilities given by 
\eqb  \label{eqn-tilde-transition}
\wt p_n(x,y) := \frac{\frk c(x,y)}{\pi(x)} + \sum_{\substack{z \in B_1\mcl G_n\setminus \mcl G_n \\ z\sim x}} \frac{\frk c(x,z)}{\pi(x)} \op{hm}_{\mcl G_n}^z(y) .
\eqe  
The process $\wt Y^n$ is an irreducible Markov chain on a finite state space, so is recurrent. It follows that $Y^n$ is also recurrent. 
\end{remark}

We now show that for any finite set $\mcl A\subset\mcl V\mcl G_n$, the distribution of the first hitting time of $\mcl A$ by $Y^n$ is given by harmonic measure on $\mcl A$ (by definition, this is true if $\mcl A = \mcl V\mcl G_n$). 
This is a consequence of the consistency property of energy-minimizing functions on $\mcl G$ (Lemma~\ref{lem-harmonic-compat}). 
  
\begin{lem} \label{lem-rw-harmonic}
Let $\mcl A\subset \mcl V\mcl G$ be a finite set and let $\phi : \mcl A\to\BB R$. Let $h^\phi : \mcl V\mcl G\to\BB R$ be the energy-minimizing function with $h^\phi|_{\mcl A} = \phi$, as in Proposition~\ref{prop-min-harmonic}. Let
\eqbn
\tau_n := \min\{j \geq 0 : Y^n_j \in\mcl A\} . 
\eqen
For each $n\in\BB N$ such that $\mcl A\subset\mcl G_n$ and each choice of starting point $x \in\mcl V\mcl G_n$, 
\eqbn
h^\phi(x) = \BB E_x\left[ \phi(X^n_{\tau_n} ) \right] .
\eqen
\end{lem}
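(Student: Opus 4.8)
The plan is to identify the function $u(x) := \BB E_x[\phi(Y^n_{\tau_n})]$, defined for $x \in B_1\mcl G_n$, as the solution of a discrete boundary value problem for the chain $Y^n$ with boundary data $\phi$ on $\mcl A$, to check that $h^\phi$ restricted to $B_1\mcl G_n$ solves the same problem, and then to conclude by a uniqueness argument. First I would record that $Y^n$ is recurrent (Remark~\ref{remark-tilde-process}) and $\mcl A\subset\mcl V\mcl G_n$, so $\tau_n<\infty$ a.s. and $u$ is well defined and bounded by $\max_{\mcl A}|\phi|$. Conditioning on the first step of $Y^n$ and using the transition probabilities~\eqref{eqn-transition-walk}--\eqref{eqn-transition-comp} shows that $u\equiv\phi$ on $\mcl A$, that $u(x) = \sum_{y\sim x}\frac{\frk c(x,y)}{\pi(x)} u(y)$ for $x\in\mcl V\mcl G_n\setminus\mcl A$ (every neighbor of such an $x$ lies in $B_1\mcl G_n$), and that $u(z) = \sum_{y\in\mcl V\mcl G_n}\op{hm}_{\mcl G_n}^z(y)\,u(y)$ for $z\in B_1\mcl G_n\setminus\mcl G_n$. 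These three relations together say precisely that $u$ has the mean-value property for $Y^n$ at every vertex of $B_1\mcl G_n\setminus\mcl A$.

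Next I would verify that $h^\phi|_{B_1\mcl G_n}$ satisfies the same three relations. The identity $h^\phi\equiv\phi$ on $\mcl A$ is by definition, and $h^\phi$ is $\mcl G$-discrete harmonic on $\mcl V\mcl G_n\setminus\mcl A$ by Proposition~\ref{prop-min-harmonic}, which gives the first relation. The key point is the relation at $z\in B_1\mcl G_n\setminus\mcl G_n$: by Lemma~\ref{lem-harmonic-compat} (applied with $\mcl A\subset\mcl B:=\mcl V\mcl G_n$), $h^\phi$ is the energy-minimizing function on $\mcl G$ with prescribed values $h^\phi|_{\mcl V\mcl G_n}$, so Lemma~\ref{lem-hm-linear} (with $\mcl V\mcl G_n$ in place of $\mcl A$) yields $h^\phi(z) = \sum_{y\in\mcl V\mcl G_n}\op{hm}_{\mcl G_n}^z(y)\,h^\phi(y)$ for every $z\in\mcl V\mcl G$. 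Also $h^\phi$ is bounded on $\mcl V\mcl G_n$ by the maximum principle (Lemma~\ref{lem-max-principle}), hence bounded on $B_1\mcl G_n$ since each value on $B_1\mcl G_n\setminus\mcl G_n$ is a harmonic-measure average of values on $\mcl V\mcl G_n$.

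It then remains to prove the uniqueness statement: if $w : B_1\mcl G_n\to\BB R$ is bounded, vanishes on $\mcl A$, and has the mean-value property for $Y^n$ at every vertex of $B_1\mcl G_n\setminus\mcl A$, then $w\equiv 0$; applying this to $w := u - h^\phi|_{B_1\mcl G_n}$ gives $u = h^\phi$ on $\mcl V\mcl G_n$, which is the lemma. This is a standard optional-stopping argument: $M_k := w(Y^n_{k\wedge\tau_n})$ is a bounded martingale, $\tau_n<\infty$ a.s., and $w\equiv 0$ on $\mcl A$, so dominated convergence gives $w(x) = \BB E_x[M_0] = \BB E_x[M_{\tau_n}] = \BB E_x[w(Y^n_{\tau_n})] = 0$. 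If one prefers to avoid the possibly infinite state space $B_1\mcl G_n$, the same argument runs with the finite, recurrent chain $\wt Y^n$ on $\mcl V\mcl G_n$ from Remark~\ref{remark-tilde-process}, using that $h^\phi|_{\mcl V\mcl G_n}$ is $\wt Y^n$-harmonic on $\mcl V\mcl G_n\setminus\mcl A$, which follows from the two relations above via the transition probabilities~\eqref{eqn-tilde-transition}. The only real subtlety is the non-locally-finite case --- ensuring $u$ is well defined and the martingale argument valid when $B_1\mcl G_n$ is infinite --- and this is handled by the recurrence of $Y^n$ together with the reduction to $\wt Y^n$; the substantive input, and what I expect to be the conceptual heart of the proof, is Lemma~\ref{lem-harmonic-compat}, which is exactly what allows $h^\phi(z)$ for $z\notin\mcl V\mcl G_n$ to be replaced by a harmonic-measure average over $\mcl V\mcl G_n$ and thereby makes $h^\phi$ fit the $Y^n$-boundary value problem.
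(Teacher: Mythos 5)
Your proof is correct and follows essentially the same route as the paper's: both identify the same three relations satisfied by $u(x)=\BB E_x[\phi(Y^n_{\tau_n})]$ and by $h^\phi|_{B_1\mcl G_n}$, with Lemma~\ref{lem-harmonic-compat} and Lemma~\ref{lem-hm-linear} supplying the crucial harmonic-measure relation at vertices of $B_1\mcl G_n\setminus\mcl G_n$, and then conclude by a uniqueness argument. The only difference is cosmetic: the paper's uniqueness step passes to the finite chain $\wt Y^n$ of Remark~\ref{remark-tilde-process} and applies the maximum principle there, whereas your primary argument uses bounded-martingale optional stopping for $Y^n$ directly (and you offer the finite-chain reduction as an equivalent alternative).
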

\begin{proof}
Fix $n\in\BB N$ such that $\mcl A\subset\mcl V\mcl G_n$. Define
\eqb  \label{eqn-rw-exit-phi}
f_n^\phi (x) := \BB E_x\left[ \phi(Y_{\tau_n}^n)   \right] \quad \forall x \in   B_1\mcl G_n .
\eqe 
We need to show that
\eqb \label{eqn-rw-agree-phi}
f_n^\phi(x) = h^\phi(x) ,\quad \forall x \in  B_1\mcl G_n . 
\eqe
  
By the Markov property of $\{Y_j^n\}_{j \geq 0}$ and the definition of the transition probabilities~\eqref{eqn-transition-walk},  
\eqb \label{eqn-rw-bulk}
f_n^\phi(x) = \frac{1}{\pi(x)} \sum_{y\sim x} \frk c(x,y) f_n^\phi(y),\quad\forall x  \in \mcl V\mcl G_n \setminus \mcl A .
\eqe
Furthermore, by the Markov property of $\{Y_j^n\}_{j\geq 0}$ and the definition of the transition probabilities~\eqref{eqn-transition-comp}, 
\eqb  \label{eqn-rw-comp}
f_n^\phi(x) = \sum_{y \in \mcl V\mcl G_n}  \op{hm}_{\mcl G_n}^x(y) f_n^\phi(y) , \quad \forall x\in  B_1\mcl G_n \setminus \mcl G_n  . 
\eqe 

Since the function $h^\phi$ is discrete harmonic on $\mcl V\mcl G\setminus \mcl A$, the relation~\eqref{eqn-rw-bulk} holds with $h^\phi$ in place of $f_n^\phi$. By Lemma~\ref{lem-harmonic-compat} (applied with $\mcl B = \mcl V\mcl G_n$), $h^\phi $ is the energy-minimizing function on $\mcl G$ with values on $\mcl V\mcl G_n$ given by $h^\phi|_{\mcl V\mcl G_n}$. By this and Lemma~\ref{lem-hm-linear} (applied with $\mcl V\mcl G_n$ in place of $\mcl A$), the relation~\eqref{eqn-rw-comp} also holds with $h^\phi$ in place of $f_n^\phi$. 

We have $h^\phi|_{\mcl A}=  f_n^\phi|_{\mcl A} = \phi$. By linearity,~\eqref{eqn-rw-bulk} and~\eqref{eqn-rw-comp} hold with 
\eqbn
g := h^\phi - f_n^\phi
\eqen
in place of $f_n^\phi$. Therefore, for the transition probabilities $\wt p_n$ as in Remark~\ref{remark-tilde-process}, we have for each $x \in\mcl V\mcl G_n \setminus \mcl A$,
\alb
\sum_{y \in \mcl V\mcl G_n} \tilde p_n(x,y) g(y) 
&= \sum_{y\in\mcl V\mcl G_n} \frac{\frk c(x,y)}{\pi(x)} g(y) 
+ \sum_{\substack{z \in B_1\mcl G_n\setminus \mcl G_n \\ z\sim x}} \frac{\frk c(x,z)}{\pi(x)} \sum_{y\in\mcl V\mcl G_n} \op{hm}_{\mcl G_n}^z(y) g(y) \\
&= \sum_{y\in\mcl V\mcl G_n} \frac{\frk c(x,y)}{\pi(x)} g(y) 
+ \sum_{\substack{z \in B_1\mcl G_n\setminus \mcl G_n \\ z\sim x}} \frac{\frk c(x,z)}{\pi(x)} g(z) \quad \text{(by~\eqref{eqn-rw-comp} for $g$)} \\
&= g(x) \quad \text{(by~\eqref{eqn-rw-bulk} for $g$)} .
\ale 
From this, we see that $g = h^\phi - f_n^\phi$ must attain its maximum and minimum values on the (finite) set $\mcl V\mcl G_n$ at points of $ \mcl A$. Since $g \equiv 0$ on $\mcl A$, it follows that $g\equiv 0$ on $\mcl V\mcl G_n$. By~\eqref{eqn-rw-comp} for $g$, this implies that $g\equiv 0$ on all of $B_1\mcl G_n$, i.e., $h^\phi \equiv f_n^\phi$ on $ B_1\mcl G_n$.  
\end{proof}

We can now prove the following consistency relation for the Markov chains $Y^n$, which will allow us to couple all of them together in such a way that for $m\leq n$, $Y^m$ and $Y^n$ hit vertices of $\mcl V\mcl G_k$ in the same order (see Lemma~\ref{lem-rw-coupling}).

\begin{lem} \label{lem-rw-consistent}
The Markov chains $\{Y_j^n\}_{n\geq 0}$ defined above satisfy the following consistency condition. 
Let $1 \leq m\leq n $ and assume that $\{Y_j^m\}_{j \geq 0}$ and $\{Y_j^n\}_{j\geq 0}$ both start at the same vertex $z\in \mcl V\mcl G_m$. Let $J_0^{m,n}  = 0$ and inductively let
\eqb \label{eqn-rw-consistent-def}
J_k^{m,n}  := 
\begin{cases}
J_{k-1}^{m,n} + 1 \quad &\text{if $Y_{J_{k-1}^{m,n}}^n \in \mcl V\mcl G_m$} \\
\min\{j >  J_{k-1}^{m,n} : Y^n_j \in \mcl V\mcl G_m \} \quad &\text{if $Y_{J_{k-1}^{m,n}}^n \notin \mcl V\mcl G_m$} .
\end{cases}
\eqe
Then $\{Y^n_{J_k^{m,n}}\}_{k\geq 0} \eqD \{Y^m_k\}_{k\geq 0}$.
\end{lem}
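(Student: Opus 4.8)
The plan is to show directly that the skeleton process $Z_k := Y^n_{J_k^{m,n}}$ is a discrete-time Markov chain on $B_1\mcl G_m$, started from $z$, whose transition kernel is exactly the kernel $p_m(\cdot,\cdot)$ defining $Y^m$ in~\eqref{eqn-transition-walk}--\eqref{eqn-transition-comp}. Since the law of a Markov chain is determined by its initial state and transition kernel, this will give $\{Z_k\}_{k\geq0}\eqD\{Y^m_k\}_{k\geq0}$.

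Before that I would record two preliminary facts. First, by Remark~\ref{remark-tilde-process} the chain $Y^n$ is recurrent, and since $\mcl V\mcl G_m$ is a non-empty subset of the state space $\mcl V\mcl G_n$ of the irreducible recurrent induced chain $\wt Y^n$, the chain $Y^n$ a.s.\ visits $\mcl V\mcl G_m$ infinitely often; hence each $J_k^{m,n}$ is a.s.\ finite (an immediate induction: in the second case $J_k^{m,n}$ is a first hitting time of $\mcl V\mcl G_m$ strictly after the finite time $J_{k-1}^{m,n}$), and since $J_k^{m,n}>J_{k-1}^{m,n}$ for all $k$ the process $\{Z_k\}_{k\geq0}$ is well-defined. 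A further induction shows each $J_k^{m,n}$ is a stopping time for the natural filtration of $Y^n$: given that $J_{k-1}^{m,n}$ is a stopping time, $J_k^{m,n}$ is either $J_{k-1}^{m,n}+1$ or the first hitting time of $\mcl V\mcl G_m$ after $J_{k-1}^{m,n}$, according to whether $Y^n_{J_{k-1}^{m,n}}\in\mcl V\mcl G_m$. Second, another induction using the transition rules shows $Z_k\in B_1\mcl G_m$ for all $k$: a random-walk step out of a vertex of $\mcl V\mcl G_m$ stays in $B_1\mcl G_m$, and $\mcl V\mcl G_m\subset B_1\mcl G_m$.

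Now I would apply the strong Markov property of $Y^n$ at the stopping time $J_{k-1}^{m,n}$: conditionally on the stopped $\sigma$-algebra $\mcl F_{J_{k-1}^{m,n}}$, the shifted chain $\{Y^n_{J_{k-1}^{m,n}+i}\}_{i\geq0}$ is a copy of $Y^n$ started from $x:=Z_{k-1}$, and both $J_k^{m,n}-J_{k-1}^{m,n}$ and $Z_k$ are measurable functions of this shifted chain. So it remains to compute, for $Y^n$ started from a vertex $x\in B_1\mcl G_m$, the law of $Y^n_J$ where $J=1$ if $x\in\mcl V\mcl G_m$ and $J=\min\{j\geq1:Y^n_j\in\mcl V\mcl G_m\}$ otherwise. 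If $x\in\mcl V\mcl G_m\subset\mcl V\mcl G_n$, then by~\eqref{eqn-transition-walk} the law of $Y^n_1$ is $\frk c(x,\cdot)/\pi(x)=p_m(x,\cdot)$. If $x\in B_1\mcl G_m\setminus\mcl G_m$, then $J$ coincides with the first hitting time $\tau_n$ of $\mcl A:=\mcl V\mcl G_m$ by $Y^n$ (including time $0$ changes nothing, since $x\notin\mcl A$), and Lemma~\ref{lem-rw-harmonic} applied with $\phi=\BB 1_y$ for each $y\in\mcl V\mcl G_m$ identifies this hitting distribution as $h^{\BB 1_y}(x)=\op{hm}_{\mcl G_m}^x(y)$, which by~\eqref{eqn-transition-comp} and Definition~\ref{def-hm} is $p_m(x,\cdot)$. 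In both cases the conditional law of $Z_k$ given the past depends only on $Z_{k-1}$ and equals $p_m(Z_{k-1},\cdot)$, which is exactly what we need.

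The one delicate point --- and the main, if modest, obstacle --- is that Lemma~\ref{lem-rw-harmonic} is stated for starting points in $\mcl V\mcl G_n$, whereas here $x\in B_1\mcl G_m\setminus\mcl G_m$ may lie in $B_1\mcl G_n\setminus\mcl G_n$ (a $\mcl G$-neighbor of a vertex of $\mcl G_m$ need not belong to $\mcl G_n$). This is not actually a problem: the proof of Lemma~\ref{lem-rw-harmonic} establishes $\BB E_x[\phi(Y^n_{\tau_n})]=h^\phi(x)$ for every $x\in B_1\mcl G_n$ (its last line verifies $g\equiv0$ on all of $B_1\mcl G_n$), so it applies verbatim. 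Alternatively, one can take one extra step of $Y^n$ from such an $x$ --- a jump to $w\in\mcl V\mcl G_n$ with probability $\op{hm}_{\mcl G_n}^x(w)$ --- and then use the identity $\sum_{w}\op{hm}_{\mcl G_n}^x(w)\,\op{hm}_{\mcl G_m}^w(y)=\op{hm}_{\mcl G_m}^x(y)$, which follows from the compatibility of energy-minimizing functions (Lemma~\ref{lem-harmonic-compat} with $\mcl A=\mcl V\mcl G_m$, $\mcl B=\mcl V\mcl G_n$) together with Lemma~\ref{lem-hm-linear}.
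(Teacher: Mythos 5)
Your proof is correct and takes essentially the same route as the paper: apply the strong Markov property of $Y^n$ at the hitting times $J_k^{m,n}$ and use Lemma~\ref{lem-rw-harmonic} (with $\mcl A=\mcl V\mcl G_m$) to identify the resulting transition kernel of the induced chain with $p_m$. Your observation that Lemma~\ref{lem-rw-harmonic} is stated for $x\in\mcl V\mcl G_n$ but is needed (and in fact proved) for $x\in B_1\mcl G_n$ is well taken — the paper's own proof of the consistency lemma invokes it in exactly that generality.
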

\begin{proof}
By the definition of $J_k^{m,n}$, we always have $Y^n_{J_k^{m,n}} \in B_1\mcl G_m$. 
By the definition of $Y^n$, if $n = m$, then $J_k^{m,m} = k$ for all $k\geq 0$ and the lemma statement is vacuous. So, we can assume without loss of generality that $n > m$. 
 
Let $\tau_n = \tau_n(m)$ be the smallest $ j\geq 0$ for which $Y_j^n \in \mcl V\mcl G_m$.
By Lemma~\ref{lem-rw-harmonic} (applied with $\mcl A = \mcl V\mcl G_m$ and $\phi = \BB 1_y$) together with Definition~\ref{def-hm}, 
\eqb \label{eqn-rw-agree}
\BB P_x \left[ Y_{\tau_n}^n = y   \right]  =\op{hm}_{\mcl G_m}^x(y) ,\quad \forall x \in B_1\mcl G_n ,\quad \forall y \in \mcl V\mcl G_m . 
\eqe
 
To lighten notation, write
\eqbn
J_k = J_k^{m,n} . 
\eqen
Let $k\geq 0$. If $Y_{J_k}^n \in \mcl G_m$, then by definition~\eqref{eqn-rw-consistent-def} we have $J_{k+1} = J_k+1$. By the strong Markov property, for each $x\in \mcl G_m$ and $y\in B_1\mcl G_m$, 
\eqbn
\BB P_z\left[ Y_{J_{k+1}}^n = y \,|\, Y_{J_k}^n = x\right] = \frac{\frk c(x,y)}{\pi(x)} = p_m(x,y) ,
\eqen
where here we recall~\eqref{eqn-transition-walk}. By the strong Markov property and~\eqref{eqn-rw-agree}, for each $x\in   B_1\mcl G_m \setminus \mcl G_m$ and each $y \in \mcl V\mcl G_m$, 
\eqbn
\BB P_z\left[ Y_{J_{k+1}}^n = y \,|\, Y_{J_k}^n = x\right] = \op{hm}_{\mcl G_m}^x(y)  = p_m(x,y)
\eqen
where in the last equality we use the definition~\eqref{eqn-transition-comp}. Thus, $\{Y^n_{ J_k} \}_{k\geq 0}$ has the same transition probabilities as $\{Y_k^m\}_{k\geq 0} $. By the strong Markov property both processes are Markovian, so if we start both processes at $z$, then they agree in law.  
\end{proof}

\subsection{Discrete time random walk reflected off of \texorpdfstring{$\infty$}{infinity}}
\label{sec-discrete-reflected}

Fix $z\in\mcl V\mcl G$ and let 
\eqb  \label{eqn-vertex-in}
n_z := \min\{n\geq 1 : z \in \mcl V\mcl G_n\} .
\eqe 
For $n \geq m \geq n_z$, define the times $\{J_k^{m,n}\}_{k \geq 0}$ as in Lemma~\ref{lem-rw-consistent}. We recall that
\eqbn
J_k^{m,m} = k .
\eqen

\begin{lem} \label{lem-rw-coupling} 
For each $z\in\mcl V\mcl G$, there is a coupling of $\{Y^n\}_{n \geq n_z}$, each started from $Y^n_0 = z$, such that
\eqb  \label{eqn-rw-coupling} 
Y_{J_k^{m,n}}^n = Y_k^m \quad \forall n \geq m \geq n_z, \quad \forall k  \in \BB N_0 
\eqe
and the following Markov property is satisfied. For each $m\geq n_z$ and each $k  \in \BB N_0$, the processes $\{Y_j^n\}_{ j \leq J_k^{m,n}}$ for $n \geq m+1$ are conditionally independent from $Y^m$ given $\{Y^m_j\}_{ j\leq k}$. 
\end{lem}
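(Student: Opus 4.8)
The key input is the consistency relation of Lemma~\ref{lem-rw-consistent}. For $m \geq n_z$, write $\Sigma_m$ for the \emph{$\mcl G_m$-skeleton} operation, sending a trajectory $w = (w_j)_{j\geq0}$ with $w_0 \in \mcl V\mcl G_m$ to the sequence $(w_{J_k})_{k\geq0}$, where $(J_k)_{k\geq0}$ is produced by the recursion \eqref{eqn-rw-consistent-def} applied to $w$ with $\mcl V\mcl G_m$ as the distinguished vertex set; also write $\mu_n$ for the law of the chain $Y^n$ of Section~\ref{sec-discrete-markov} started from $z$. In this language Lemma~\ref{lem-rw-consistent} says $\Sigma_m(Y^n) \eqD Y^m$ for $n \geq m \geq n_z$, and the identity \eqref{eqn-rw-coupling} we must prove is exactly the assertion that $\Sigma_m(Y^n) = Y^m$ \emph{almost surely} for all such $m,n$. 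The plan is to build all the chains $\{Y^n\}_{n\geq n_z}$ on a single probability space, one at a time in increasing order of $n$, by splicing independent excursions into the previously built chain, and then to read off both \eqref{eqn-rw-coupling} and the Markov property. A purely deterministic fact about the maps $\Sigma_m$ is needed first, and this is where I expect the main work to lie.

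\textbf{Step 1 (composition identity for skeletons --- the main obstacle).} First I would show that for $n_z \leq \ell \leq m \leq n$ and \emph{every} trajectory $w$ of the chain $Y^n$ with $w_0\in\mcl V\mcl G_\ell$, one has $\Sigma_\ell(\Sigma_m(w)) = \Sigma_\ell(w)$; in particular the index set selected by $\Sigma_\ell$ from $w$ is contained in that selected by $\Sigma_m$, and $\Sigma_\ell$ is unchanged by pre-composing with $\Sigma_m$. The mechanism is the following description of $\Sigma_m$: the times $J_k(w)$ are precisely the \emph{$m$-relevant times} of $w$, i.e.\ the elements of $\{j\geq0 : w_j \in \mcl V\mcl G_m\} \cup \{j\geq1 : w_{j-1}\in\mcl V\mcl G_m \text{ and } w_j\notin\mcl V\mcl G_m\}$ in increasing order --- this follows by unwinding \eqref{eqn-rw-consistent-def}, using that a random-walk step out of a vertex of $\mcl V\mcl G_m$ lands in $B_1\mcl G_m$ and that such a step advances $\Sigma_m$ by exactly one. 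Since $\mcl V\mcl G_\ell \subseteq \mcl V\mcl G_m$, every $\ell$-relevant time of $w$ is $m$-relevant, so $\Sigma_\ell(w)$ is a sub-sampling of $\Sigma_m(w)$; a short case analysis --- according to whether a given skeleton vertex lies in $\mcl V\mcl G_\ell$, in $B_1\mcl G_\ell\setminus\mcl G_\ell$, or in neither --- then shows that re-applying $\Sigma_\ell$ to $\Sigma_m(w)$ retains exactly the $\ell$-relevant terms. I expect this bookkeeping to be the delicate point, since the skeleton is defined by a slightly intricate recursion and one must track carefully that coarsening it twice is consistent. A consequence I will use below is that $J_k^{m,n} = J_{J_k^{m,m+1}}^{m+1,n}$.

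\textbf{Step 2 (the inductive construction).} Let $Y^{n_z}$ have law $\mu_{n_z}$. Suppose $(Y^{n_z},\dots,Y^m)$ have been constructed on a common space. The $\mcl G_m$-skeleton times of a $p_{m+1}$-chain $W$ from $z$ are stopping times (by \eqref{eqn-rw-consistent-def}), so by the strong Markov property $W$ is the concatenation of a Markov chain, which is $\eqD Y^m$ by Lemma~\ref{lem-rw-consistent}, with conditionally independent ``fill-in'' pieces: the empty piece across a skeleton step $y_i\to y_{i+1}$ with $y_i\in\mcl V\mcl G_m$ (such a step is itself a single step of $W$), and across a step $y_i\to y_{i+1}$ with $y_i\in B_1\mcl G_m\setminus\mcl G_m$ an excursion of the $p_{m+1}$-chain from $y_i$ run to its first visit to $\mcl V\mcl G_m$, conditioned to end at $y_{i+1}$ --- a conditioning on a positive-probability event by Lemma~\ref{lem-rw-harmonic} and \eqref{eqn-transition-comp}, and an a.s.\ finite excursion by recurrence of $W$ (Remark~\ref{remark-tilde-process}). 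Accordingly, choosing i.i.d.\ uniform seeds $(\omega_{m+1,i})_{i\geq0}$ independent of everything so far, write each fill-in as $\xi^{m+1,i} = F_{m+1}(Y^m_i, Y^m_{i+1}, \omega_{m+1,i})$ and let $Y^{m+1}$ be the resulting concatenation. Then $Y^{m+1}\sim\mu_{m+1}$, $\Sigma_m(Y^{m+1}) = Y^m$, and --- crucially --- the initial segment $\{Y^{m+1}_j\}_{j\leq J_k^{m,m+1}}$ is a deterministic function of $\{Y^m_j\}_{j\leq k}$ and $(\omega_{m+1,i})_{i<k}$ alone. Iterating over $m$, with mutually independent seed families, places all the $Y^n$ on one probability space.

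\textbf{Step 3 (conclusion).} An immediate induction using the disintegration in Step 2 gives $Y^n\sim\mu_n$ for every $n$; in particular each $Y^n$ is a genuine Markov chain with the transition probabilities $p_n$. Combining $\Sigma_m(Y^{m+1}) = Y^m$ with the composition identity of Step 1 and an induction on $n$ yields $\Sigma_m(Y^n) = \Sigma_m(\Sigma_{n-1}(Y^n)) = \Sigma_m(Y^{n-1}) = \cdots = \Sigma_m(Y^{m+1}) = Y^m$ a.s., which is precisely \eqref{eqn-rw-coupling}. For the Markov property, fix $m\geq n_z$, $k\in\BB N_0$ and $n\geq m+1$. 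Unwinding the construction level by level, and using the composition identity of Step 1 (which yields $J_k^{m,n} = J_{J_k^{m,m+1}}^{m+1,n}$ and its iterates) together with the localization property of Step 2, one sees that $\{Y^n_j\}_{j\leq J_k^{m,n}}$ is a deterministic function of $\{Y^m_j\}_{j\leq k}$ and the seed families $(\omega_{m',i})_{m+1\leq m'\leq n,\, i\geq0}$; hence the whole collection $(\{Y^n_j\}_{j\leq J_k^{m,n}})_{n\geq m+1}$ is a function of $\{Y^m_j\}_{j\leq k}$ and $(\omega_{m',i})_{m'\geq m+1,\, i\geq0}$. These seed families are independent of $Y^m$, hence independent of $Y^m$ given $\{Y^m_j\}_{j\leq k}$, so $(\{Y^n_j\}_{j\leq J_k^{m,n}})_{n\geq m+1}$ is conditionally independent of $Y^m$ given $\{Y^m_j\}_{j\leq k}$ --- the asserted Markov property.
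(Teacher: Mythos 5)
Your proof is correct and follows essentially the same route as the paper's: an inductive construction of the coupling level by level, the consistency in law from Lemma~\ref{lem-rw-consistent}, and the composition identity for the skeleton times (which the paper asserts in a single sentence and you spell out via the "relevant times" characterization). Your Steps~2 and~3 make explicit, via uniform seeds and the excursion decomposition, what the paper compresses into the phrase "conditional on $\{Y^n\}_{n_z\leq n\leq N-1}$, sample $Y^N$ from its conditional law given its $\mcl G_{N-1}$-skeleton," and this added detail makes the verification of the Markov property cleaner.
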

\begin{proof}
We construct a coupling of $\{Y^n\}_{n_z \leq n \leq N}$ satisfying the conditions in the lemma statement with $m,n$ restricted to lie in $[n_z,N]\cap \BB Z$ by induction on $N$. In the base case $N =n_z$, there is nothing to prove. Inductively, suppose $N\geq n_z+1$ and that we are given a coupling of $\{Y^n\}_{n_z \leq n \leq N-1}$ satisfying the desired properties. By Lemma~\ref{lem-rw-consistent}, we have $\{Y^N_{J_k^{N-1,N}}\}_{k\geq 0} \eqD \{Y^{N-1}_k\}_{k\geq 0}$. So, we can extend our given coupling to a coupling of $\{Y^n\}_{n_z \leq n \leq N}$ as follows. We first set $Y^N_{J_k^{N-1,N}} = Y^{N-1}_k$ for each $k\geq 0$. Then, conditional on $\{Y^n\}_{n_z \leq n \leq N-1}$, we sample the whole process $Y^N$ from its conditional law given  $\{Y^N_{J_k^{N-1,N}}\}_{k\geq 0}$. 

By the definition~\eqref{eqn-rw-consistent-def}, for each $n \geq m' \geq m \geq n_z$, we have that $\{J_k^{m,n}\}_{m\geq 0}$ is given by the formula~\eqref{eqn-rw-consistent-def} with $\{Y^n_{ J_j^{m' ,n}}\}_{j \geq 0 } $ in place of $\{Y^n_j\}_{j\geq 0}$. Using this fact and the definition of our coupling, we can complete the induction and get, for every $N\geq n_z$, a coupling satisfying the conditions in the lemma statement with $m,n$ restricted to lie in $[n_z,N]\cap \BB Z$. Sending $N\to\infty$ concludes the proof. 
\end{proof}  

In the rest of this section, we fix a coupling as in Lemma~\ref{lem-rw-coupling}. We denote the joint law of  $\{Y^n\}_{n \geq n_z}$ under this coupling by $\BB P_z$, and we write $\BB E_z$ for the corresponding expectation. 

We will now define a discrete time random walk on $\mcl G$ reflected off of $\infty$. Since this random walk can reach $\infty$ and come back infinitely many times, it cannot be viewed as a function from $\BB N_0$ to $\mcl V\mcl G$. Instead, we will define this process as a function $Y : \Xi \to \mcl V\mcl G$, where $\Xi$ is a random countable totally ordered set which we now define.  

We define an equivalence relation on pairs of integers $(n,j)$ with $n\geq n_z$ and $j\geq 0$ by setting $(n,j) \sim (m,k)$ if either $m = n$ and $k = j$; $m < n$ and $j = J_k^{m,n}$; or the same is true with the roles of $(n,j)$ and $(m , k)$ reversed. We write $\Xi$ for the set of equivalence classes and we denote the equivalence class of $(n,j)$ by $\xi = [(n,j)]$.

If $\xi$ is an equivalence class, then there exists a unique element $(m,k) \in \xi$ whose first coordinate is minimal; and for each $n\geq m$, $\xi$ contains exactly one element of $\{n\} \times \BB N_0$. 
We define a total ordering on $\Xi$ as follows.  We say that $\xi \leq \wt \xi$ if and only if for each $n \geq n_z$ such that both $\xi$ and $\wt \xi$ contain elements of $\{n\}\times \BB N_0$, say $(n,j)\in\xi$ and $(n,\wt j)\in\xi$, we have $j\leq \wt j$. Since each sequence of times $\{J_k^{m,n}\}_{k\geq 0}$ is monotone increasing, our choice of coupling from Lemma~\ref{lem-rw-coupling} implies that $\xi \leq \wt \xi$ if and only if there exists $n\geq n_z$ and $(n,j) \in \xi$ and $(n,\wt j) \in \wt\xi$ such that $j\leq \wt j$. 

If $\xi\in\Xi$ is an equivalence class, we define 
\eqb  \label{eqn-process-on-xi}
Y_\xi := Y^n_j ,\quad \text{for any $(n,j) \in \xi$} .
\eqe 
This definition does not depend on the choice of equivalence class representation by our choice of coupling in Lemma~\ref{lem-rw-coupling}. We call the process $\{Y_\xi\}_{x\in\Xi}$ the \textbf{discrete time random walk reflected off of $\infty$}.

For future reference, we note that the definition of the times $J_k^{m,n}$ from~\eqref{eqn-rw-consistent-def} together with the definition of $\Xi$ implies that
\eqb \label{eqn-xi-exists}
\text{If $Y_\xi \in \mcl V\mcl G_n$, then $\exists j \in \BB N_0$ such that $(n,j) \in \xi$.}
\eqe 

\subsection{Continuous time random walk reflected off of \texorpdfstring{$\infty$}{infinity}}
\label{sec-cont-time-def}

Fix a function $\frk w : \mcl V\mcl G \to (0,\infty)$. We aim to construct the continuous time random walk on $\mcl G$ reflected off of $\infty$ which spends an exponential$(\frk w(x))$ amount of time at each vertex $x\in\mcl V\mcl G$ whenever it visits $x$.

For $z\in\mcl V\mcl G$, let the Markov chains $\{Y^n\}_{n\geq n_z}$ on $B_1\mcl G_n$ started from $z$ be coupled together as in Lemma~\ref{lem-rw-coupling}. 
Recall the ordered set $\Xi$ used in the definition of the discrete time random walk reflected off of $\infty$, as defined just above~\eqref{eqn-process-on-xi}. 
Conditional on $\{Y^n\}_{n\geq n_z}$, let $\{T_\xi\}_{\xi\in \Xi}$ be conditionally independent random variables indexed by $\Xi$ where the conditional distribution of $T_\xi$ is exponential with parameter $\frk w(Y_\xi)$. 
For $n\geq n_z$, we define the process $X^n : [0,\infty) \to B_1\mcl G_n$ by 
\eqb  \label{eqn-cont-time-def}
X^n_t = Y^n_k  = Y_{[(n,k)]} ,\quad \forall k \in\BB N_0 ,\quad \forall t \in \left[ \sum_{j=1}^{k-1}  T_{[(n,j)]} , \sum_{j=1}^k T_{[(n,j)]} \right) .
\eqe 
Then $X^n$ is a continuous time version of $Y^n$ which spends an exponential$(\frk w(x))$ amount of time at each state $x\in B_1\mcl G_n$. Equivalently, the transition rate for $X^n$ from $x$ to $y$ is $\frk w(x) p_n(x,y)$, where $p_n(x,y)$ is the transition probability from~\eqref{eqn-transition-walk} or~\eqref{eqn-transition-comp}.  

We want to take an appropriate limit of $X^n$ as $n\to\infty$. For this purpose we need to choose the function $\frk w$ in such a way that when $n \geq m$ are large, the total amount of time that $X^n$ spends outside of $\mcl V\mcl G_m$ is small. 
We do this in the following lemma.

\begin{lem} \label{lem-time-finite}
There exists a function $\frk w_* : \mcl V\mcl G\to (0,\infty)$ such that if $\frk w(x) \geq \frk w_*(x)$ for all but finitely many $x \in \mcl V\mcl G$, then the following is true. 
For each choice of starting point $z\in\mcl V\mcl G$, it holds $\BB P_z$-a.s.\ that 
\eqb  \label{eqn-time-finite}
\sum_{\xi \in \Xi} T_\xi= \infty  \quad \text{and} \quad \sum_{\xi \in\Xi , \xi\leq \eta} T_\xi < \infty ,\quad \forall \eta\in\Xi  .
\eqe  
\end{lem}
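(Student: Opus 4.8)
\textbf{Proof plan for Lemma~\ref{lem-time-finite}.}
The plan is to define $\frk w_*$ so that the \emph{expected} time $X^n$ spends in the shell $B_1\mcl G_n\setminus\mcl V\mcl G_{n-1}$, summed over $n$, is finite. Concretely, for each $n$ I would first bound $\BB E_z\!\left[\#\{j : Y^n_j \in B_1\mcl G_n\setminus\mcl V\mcl G_{n-1}\}\right]$, i.e.\ the expected number of visits of the Markov chain $Y^n$ to vertices outside $\mcl V\mcl G_{n-1}$, started from a fixed $z$. The cleanest way to do this is via the reduced chain $\wt Y^n$ on the finite state space $\mcl V\mcl G_n$ from Remark~\ref{remark-tilde-process}: each excursion of $Y^n$ outside $\mcl V\mcl G_n$ contributes at most one extra vertex visit (it jumps straight back to $\mcl V\mcl G_n$ via harmonic measure), and each visit of $\wt Y^n$ to the ``shell'' $\mcl V\mcl G_n\setminus\mcl V\mcl G_{n-1}$ corresponds to $O(1)$ visits of $Y^n$ to $B_1\mcl G_n\setminus\mcl V\mcl G_{n-1}$. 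So it suffices to control the expected number of visits of the finite recurrent chain $\wt Y^n$ to the shell before, say, its first return to $z$. Call this quantity $M_n = M_n(z)$; it is finite for each $n$ since $\wt Y^n$ is an irreducible recurrent chain on a finite state space. I would then set
\eqb
\frk w_*(x) := M_{n(x)} \cdot 2^{n(x)} ,
\eqe
where $n(x)$ is the smallest $n$ with $x\in\mcl V\mcl G_n$, so that $\frk w_*$ is a finite, strictly positive function on $\mcl V\mcl G$. (Strictly, one should fix a reference starting vertex $z_0$, but changing $z$ only changes each $M_n$ by a bounded factor via the standard cycle/excursion decomposition of a recurrent chain, so this does not affect the construction.)

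Given this choice, the second (convergence) claim in~\eqref{eqn-time-finite} follows by a Borel--Cantelli / monotone summation argument. Fix $\eta\in\Xi$; then $\eta$ is represented by some $(m,k)$, and $\{\xi\in\Xi : \xi\leq\eta\}$ is contained in the set of equivalence classes of pairs $(n,j)$ with $j \le J^{m,n}_k$ for all $n\geq m$. Using the coupling of Lemma~\ref{lem-rw-coupling}, the associated sequence of vertices $Y_\xi$ visits each shell $\mcl V\mcl G_n\setminus\mcl V\mcl G_{n-1}$ only finitely often a.s., and in expectation $O(M_n)$-many times on each excursion of the walk away from $z$; since up to time $\eta$ the walk makes only finitely many such excursions, the total $\BB E_z$-expected number of visits to that shell is $O(M_n)$. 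Hence
\eqb
\BB E_z\!\left[ \sum_{\xi\leq\eta} T_\xi \right]
\;\lesssim\; \sum_{n\geq n_z} \frac{M_n}{\frk w_*\text{-value on shell }n}
\;\lesssim\; \sum_{n\geq n_z} \frac{M_n}{M_n 2^n} < \infty ,
\eqe
so $\sum_{\xi\leq\eta}T_\xi<\infty$ a.s. For the first claim in~\eqref{eqn-time-finite}, $\sum_{\xi\in\Xi}T_\xi=\infty$, I would use the recurrence of each $Y^n$ (Remark~\ref{remark-tilde-process}): the chain $Y^{n_z}$ returns to $z$ infinitely often, and each return contributes an independent $\op{Exponential}(\frk w(z))$ holding time $T_\xi$ with $Y_\xi=z$; an infinite sum of i.i.d.\ positive random variables is a.s.\ infinite, and these holding times all appear as summands in $\sum_{\xi\in\Xi}T_\xi$.

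The main obstacle is the bookkeeping in the second step: one must carefully relate the ``shell'' visits of the infinite index set $\Xi$ (equivalently of $Y^n$ for large $n$) back to the finite chains $\wt Y^n$, making sure the coupling of Lemma~\ref{lem-rw-coupling} is used correctly so that visits to $\mcl V\mcl G_n\setminus\mcl V\mcl G_{n-1}$ are counted consistently across all scales and not overcounted. A secondary subtlety is the dependence of $M_n$ on the starting vertex $z$; handling this requires the observation that for a recurrent finite chain the expected number of visits to a fixed set between successive visits to $z$ changes by at most a bounded multiplicative factor as $z$ varies over $\mcl V\mcl G_{n_0}$ for fixed $n_0$, which lets one absorb the dependence into the $2^{n}$ factor. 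Once these points are pinned down, the summability is immediate from the geometric weights and the finiteness of each $M_n$.
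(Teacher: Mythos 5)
Your plan is correct and matches the paper's strategy at its core: decompose $\sum_{\xi\leq\eta}T_\xi$ by shells $\mcl V\mcl G_n\setminus\mcl V\mcl G_{n-1}$, reduce via the i.i.d.\ excursion decomposition to bounding the time spent in each shell before the first return $\eta_1$ to $z$, choose the rates on shell $n$ large enough that this time is geometrically small, and sum; the divergence claim follows from recurrence exactly as you say. The only substantive difference is that you propose a concrete formula $\frk w_*(x)=M_{n(x)}\,2^{n(x)}$ built from the expected shell visit counts $M_n$ and close by bounding $\BB E_z\bigl[\sum_n S_n\bigr]<\infty$, whereas the paper never computes $M_n$: it observes abstractly that $S_n\to 0$ in distribution as the shell-$n$ rates tend to infinity, picks $C_n(z)$ so that $\BB P_z[S_n\leq 2^{-n}]\geq 1-2^{-n}$, and finishes with Borel--Cantelli. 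Your version is more explicit and gives a quantitative handle on $\frk w_*$; the paper's version is lighter because it sidesteps any estimate of visit counts. Two points to tighten in your write-up: (i) $M_n$ depends on $z$, and the clean fix --- used in the paper --- is to replace it by $\max_{z\in\mcl V\mcl G_n}M_n(z)$ rather than to invoke a ``bounded factor'' argument, since that ratio is controlled only for $z$ in a fixed finite set; (ii) the shell whose holding times you are estimating at scale $n$ should be $\mcl V\mcl G_n\setminus\mcl V\mcl G_{n-1}$, not $B_1\mcl G_n\setminus\mcl V\mcl G_{n-1}$ --- the vertices in $B_1\mcl G_n\setminus\mcl V\mcl G_n$ carry holding times that are accounted for at later scales, so they should not be folded into the scale-$n$ count; once one restricts to $\mcl V\mcl G_n\setminus\mcl V\mcl G_{n-1}$, the visits of $Y^n$ and $\wt Y^n$ agree exactly and your ``$O(1)$'' remark is unnecessary.
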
 
\begin{proof}  
\noindent
\textit{Step 0: proof that $\sum_{\xi \in \Xi} T_\xi= \infty$.}
As explained in Remark~\ref{remark-tilde-process}, for each $n\geq n_z$, the discrete time Markov chain $Y^n$ a.s.\ visits each $x\in \mcl V\mcl G_n$ infinitely often. Therefore, $\sum_{j=0}^\infty T_{[(n,j)]}$ stochastically dominates a countable sum of $\op{exponential}(\frk w(x))$ random variables. Hence a.s.\ $\sum_{j=0}^\infty T_{[(n,j)]} = \infty$. 
It remains to prove that the sum up to a fixed $\eta\in\Xi$ is finite.
\medskip

\noindent
\textit{Step 1: reducing to a particular choice of $\eta$.}
For $\ell \in \BB N_0$, let $\eta_\ell$ be the $(\ell+1)$st smallest $\eta \in \Xi$ for which $Y_{\eta_\ell} =z$. Equivalently, for any $n\geq n_z$, we have that $\eta_\ell$ is the equivalence class of $(n , j_\ell )$, where $j_0 = 0$ and $j_\ell$ is the $\ell$th smallest time $j \geq 1$ for which $Y^n_j  = z$. 
As noted just above, Remark~\ref{remark-tilde-process} implies that each $Y^n$ a.s.\ it returns to its starting point infinitely often. Hence the equivalence classes $\{\eta_\ell\}_{\ell\in\BB N_0}$ are well-defined and each $\eta \in\Xi$ satisfies $\eta < \eta_\ell$ for some $\ell \in \BB N_0$.  

It therefore suffices to find $\frk w_* : \mcl V\mcl G\to (0,\infty)$ such that for each $\frk w$ as in the lemma statement and each $\ell \in \BB N$, the relation~\eqref{eqn-time-finite} holds for $\eta = \eta_\ell$. By the strong Markov property (recall our choice of coupling in Lemma~\ref{lem-rw-coupling}), the random variables  
\eqbn
\sum_{\xi \in\Xi , \eta_{\ell-1} \leq \xi < \eta_\ell} T_\xi  
\eqen
are i.i.d.\ Therefore, it suffices to find $\frk w_* : \mcl V\mcl G\to (0,\infty)$ such that for each $\frk w$ as in the lemma statement
\eqb  \label{eqn-time-finite-show} 
\sum_{\xi \in\Xi ,   \xi < \eta_1} T_\xi   < \infty. 
\eqe 
  
\noindent
\textit{Step 2: choosing the rates.}
For $z\in \mcl V\mcl G$ and $n\geq n_z+1$, consider the random variable
\eqb  \label{eqn-out-time}
S_n := \sum \left\{  T_\xi  : \xi \in \Xi , \, \xi < \eta_1 ,\,   Y_\xi \in    \mcl G_n \setminus \mcl G_{n-1} \right\} .
\eqe 
By~\eqref{eqn-cont-time-def} and~\eqref{eqn-xi-exists}, under $\BB P_z$, the random variable $S_n$ is equal to the amount of time that the continuous-time random walk $X^n$ spends in $ \mcl G_n\setminus \mcl G_{n-1}$ before the first time that it returns to its starting point $z$. 
From this description, it is clear that under $\BB P_z$, the random variable $S_n$ tends to zero in distribution if we fix $z$ and $n$ and choose the rates $\frk w(x)$ in such a way that
\eqbn
\min\left\{\frk w(x) : x\in \mcl G_n\setminus \mcl G_{n-1} \right\} \to \infty .
\eqen

Hence, for each $n\geq n_z+1$, there exists $C_n(z) > 0 $ such that if 
\eqb  \label{eqn-rate-condition0}
\frk w(x) \geq C_n(z) ,\quad \forall x \in   \mcl G_n\setminus \mcl G_{n-1} 
\eqe 
then
\eqb  \label{eqn-time-small} 
\BB P_z\left[ S_n \leq 2^{-n}  \right] \geq 1 - 2^{-n}   .
\eqe  
Let $C_n := \max_{z \in \mcl G_n} C_n(z)$, so that if
\eqb  \label{eqn-rate-condition}
\frk w(x) \geq C_n  ,\quad \forall x \in \mcl G_n\setminus \mcl G_{n-1} 
\eqe 
then~\eqref{eqn-time-small} holds for all $z\in \mcl G_n$. 
 
Henceforth assume that the function $\frk w$ has been chosen so that~\eqref{eqn-rate-condition} holds for all but finitely many values of $n$.  
\medskip

\noindent
\textit{Step 3: proof of~\eqref{eqn-time-finite-show}.}
Fix $z\in \mcl V\mcl G$. 
By~\eqref{eqn-time-small} and the Borel-Cantelli lemma, $\BB P_z$-a.s.\ 
\eqb  \label{eqn-use-bc}
\sum_{n = n_z + 1}^\infty  S_n   < \infty. 
\eqe 
By the definition~\eqref{eqn-out-time} of $S_n$,  
\eqbn
\sum_{\xi \in\Xi , \xi <  \eta_1} T_\xi 
= \sum \left\{ T_\xi : \xi \in \Xi ,\,  \xi < \eta_1 , \,  Y_\xi \in \mcl G_{n_z} \right\} + \sum_{n=n_z+1}^\infty S_n .
\eqen
The first sum has only finitely many terms since a.s.\ $Y^{n_z}$ returns to its starting point in finitely many steps. The second term is finite by~\eqref{eqn-use-bc}.  
\end{proof}

We have the following useful corollary of Lemma~\ref{lem-time-finite}.

\begin{lem} \label{lem-time-tail}
Almost surely, for each $\eta\in\Xi$,  
\eqb  \label{eqn-time-tail}
\lim_{n\to\infty} \sum \left\{ T_\xi :  \xi \in\Xi , \,  \xi\leq \eta , \, Y_\xi \notin \mcl G_n \right\}  = 0  .
\eqe  
\end{lem}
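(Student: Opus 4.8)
The plan is to deduce this from Lemma~\ref{lem-time-finite} by a monotone/dominated convergence argument. First I would fix $\eta\in\Xi$ and observe that, by the definition of $\Xi$ and the coupling of Lemma~\ref{lem-rw-coupling}, the collection $\{\xi\in\Xi:\xi\leq\eta\}$ is a countable set and $\sum_{\xi\in\Xi,\,\xi\leq\eta}T_\xi<\infty$ almost surely by Lemma~\ref{lem-time-finite}. For each $n$, write
\eqbn
R_n(\eta) := \sum\left\{T_\xi : \xi\in\Xi,\ \xi\leq\eta,\ Y_\xi\notin\mcl G_n\right\} .
\eqen
Since $\{\mcl G_n\}$ is increasing with union all of $\mcl G$, for each fixed $\xi$ with $Y_\xi\in\mcl V\mcl G$ there exists $n$ (namely $n\geq n_{Y_\xi}$ in the notation of~\eqref{eqn-vertex-in}) beyond which $Y_\xi\in\mcl G_n$, so the indicator $\BB 1[Y_\xi\notin\mcl G_n]$ is eventually $0$. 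Hence $R_n(\eta)$ is a non-increasing sequence in $n$ (each term $T_\xi\BB 1[Y_\xi\notin\mcl G_n]$ is non-increasing in $n$), and it is dominated by the almost surely finite quantity $\sum_{\xi\leq\eta}T_\xi$. By the dominated convergence theorem for sums — or simply monotone convergence applied to the non-increasing, non-negative sequence with finite first term — we get $R_n(\eta)\to\sum_{\xi\leq\eta}T_\xi\BB 1[\forall n:\,Y_\xi\notin\mcl G_n]=0$ almost surely, since every $\xi$ with $Y_\xi\in\mcl V\mcl G$ lies in some $\mcl G_n$.

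The only subtlety is to get the conclusion \emph{simultaneously for all $\eta\in\Xi$} on a single almost sure event. Here I would use that $\Xi$ is countable, so the almost sure convergence for each fixed $\eta$ can be intersected over a countable family. Alternatively — and more cleanly — one can observe that $R_n(\eta)$ is monotone in $\eta$ (larger $\eta$ means more terms), so it suffices to prove~\eqref{eqn-time-tail} along the cofinal sequence $\{\eta_\ell\}_{\ell\in\BB N_0}$ of return times to $z$ introduced in the proof of Lemma~\ref{lem-time-finite} (recall every $\eta\in\Xi$ satisfies $\eta<\eta_\ell$ for some $\ell$, hence $R_n(\eta)\leq R_n(\eta_\ell)$). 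Intersecting the almost sure events for $\eta=\eta_\ell$ over $\ell\in\BB N_0$ gives a single almost sure event on which~\eqref{eqn-time-tail} holds for every $\eta$.

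I do not expect any real obstacle here: the lemma is essentially a tail-of-a-convergent-series statement, and all the work has already been done in Lemma~\ref{lem-time-finite} (finiteness of the partial sums) and in the structure of $\Xi$ (each vertex of $\mcl G$ enters $\mcl G_n$ eventually). The mildest point of care is the interchange of limit and sum, which is justified by monotonicity plus the almost sure finiteness of $\sum_{\xi\leq\eta}T_\xi$; and the "for all $\eta$'' uniformity, handled by the countability of $\Xi$ (or by reducing to the cofinal sequence $\eta_\ell$).
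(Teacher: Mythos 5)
Your proposal is correct and follows essentially the same route as the paper: the paper's two-line proof also invokes the fact that each $\xi\in\Xi$ has $Y_\xi\in\mcl G_n$ for large enough $n$, combined with Lemma~\ref{lem-time-finite} and the dominated convergence theorem. You add a careful treatment of the ``simultaneously for all $\eta$'' uniformity (via countability of $\Xi$ or the cofinal sequence $\eta_\ell$), which the paper leaves implicit but which is a sound and welcome elaboration.
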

\begin{proof}
For each $\xi \in \Xi$, there exists $n \geq n_z$ such that $Y_\xi \in \mcl G_n$. The lemma therefore follows from Lemma~\ref{lem-time-finite} and the dominated convergence theorem.  
\end{proof}
 
We will now define the continuous time random walk reflected off of $\infty$, which we call $X : [0,\infty) \to \mcl V\mcl G\cup\{\infty\}$, by a formula analogous to~\eqref{eqn-cont-time-def}. We first need to define the endpoints of the intervals on which $X$ will be constant. 

Let $\xi \in \Xi$ and let $m \geq n_z$ be chosen so that $Y_\xi \in \mcl G_m$. By~\eqref{eqn-xi-exists}, there exists $k \in \BB N_0$ such that $(m,k) \in \xi$. Furthermore, by~\eqref{eqn-rw-consistent-def}, $J_{k+1}^{m,n} = J_k^{m,n} +1$ for each $n \geq m$. Therefore, there does not exist any $\eta \in \Xi$ with $\xi < \eta < [(m,k+1)]$. We write 
\eqb \label{eqn-successor}
\wh\xi := [(m,k+1)] 
\eqe 
and call it the \textbf{successor} of $\xi$. 

For $\eta \in \Xi$, we define 
\eqb \label{eqn-xi-interval}
\tau_\eta := \sum_{\xi\in \Xi , \, \xi < \eta} T_\xi .
\eqe 
For $t \in [0,\infty)$, we define
\eqb \label{eqn-cont-time-def-infty}
X_t := Y_\eta ,\quad \forall \eta \in \Xi ,\quad\forall t \in \left[\tau_\eta, \tau_{\wh\eta}  \right) .
\eqe 
If $X_t$ does not belong to any interval of the form $[\tau_\eta , \tau_{\wh\eta})$, we instead define $X_t := \infty$.

We now prove some basic lemmas about $X$ in preparation for showing that it satisfies the properties of Theorem~\ref{thm-cont-time-walk}. The first lemma shows that for each $t \geq 0$, a.s.\ $X_t \not= \infty$.

\begin{lem} \label{lem-pt-defined}
For each fixed $t > 0$, a.s.\ there exists $\xi \in \Xi$ such that $t \in (\tau_\xi , \tau_{\wh\xi})$. 
\end{lem}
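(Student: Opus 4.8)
The plan is to prove two things about a fixed $t>0$: (a) almost surely $t\notin\{\tau_\eta:\eta\in\Xi\}$; and (b) almost surely $t\in\bigcup_{\xi\in\Xi}[\tau_\xi,\tau_{\wh\xi})$. Since the intervals $[\tau_\xi,\tau_{\wh\xi})$ are pairwise disjoint, combining (a) and (b) produces a (unique) $\xi$ with $t\in(\tau_\xi,\tau_{\wh\xi})$, which is the claim.

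For (a), the set $\{\tau_\eta:\eta\in\Xi\}$ is countable, so it suffices to show $\BB P[\tau_\eta=t]=0$ for each fixed $\eta\in\Xi$. Write $\xi_*=[(n_z,0)]$; since $J_0^{m,n}=0$ for all $m\leq n$, the class $\xi_*$ contains $(n,0)$ for every $n\geq n_z$, so $\xi_*$ is the minimum of $\Xi$ and $\tau_{\xi_*}=0\neq t$. For $\eta\neq\xi_*$ we have $\xi_*<\eta$, so $\tau_\eta=T_{\xi_*}+\sum_{\xi_*<\xi<\eta}T_\xi$, where the sum is a.s.\ finite by Lemma~\ref{lem-time-finite}; conditionally on $\{Y^n\}_{n\geq n_z}$ and on $\{T_\zeta\}_{\zeta\in\Xi\setminus\{\xi_*\}}$ this is $T_{\xi_*}$ plus a constant, and $T_{\xi_*}$ is conditionally $\op{Exponential}(\frk w(z))$, hence atomless, so $\BB P[\tau_\eta=t]=0$. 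Countable subadditivity gives (a).

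For (b), work on the full-probability event on which (a) and the conclusions of Lemma~\ref{lem-time-finite} hold. Since $\Xi$ has no maximum (every $\xi$ has a successor $\wh\xi>\xi$) and $\eta\mapsto\tau_\eta$ is strictly increasing (each $T_\xi>0$), the identity $\sum_{\xi\in\Xi}T_\xi=\infty$ forces $\sup_\xi\tau_\xi=\infty$; hence, using (a), $D:=\{\xi:\tau_\xi<t\}$ and $U:=\{\xi:\tau_\xi>t\}$ are both non-empty, partition $\Xi$, and satisfy $D<U$. If $D$ has a maximum $\xi_0$, then $\wh{\xi_0}>\xi_0$ lies in $U$, so $\tau_{\xi_0}<t<\tau_{\wh{\xi_0}}$ and we are done. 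So it remains to show that a.s.\ $D$ has a maximum, equivalently that $\BB P[X_t=\infty]=0$. Suppose $D$ has no maximum. Iterating the successor map from $\zeta_0:=\xi_*$, put $\zeta_{k+1}:=\wh{\zeta_k}$; since no element of $\Xi$ lies strictly between $\zeta_k$ and $\wh{\zeta_k}$, we have $\{\xi\in\Xi:\xi<\zeta_k\}=\{\zeta_0,\dots,\zeta_{k-1}\}$ and hence $\tau_{\zeta_k}=\sum_{j<k}T_{\zeta_j}$. If some $\zeta_{k+1}$ lay in $U$ we would have $t\in(\tau_{\zeta_k},\tau_{\zeta_{k+1}})$ and be done, so we may assume all $\zeta_k\in D$, whence $\sum_{j\geq 0}T_{\zeta_j}=\sup_k\tau_{\zeta_k}\leq t<\infty$. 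By~\eqref{eqn-successor} and~\eqref{eqn-transition-walk} (and the strong Markov property of the chains $Y^n$), $\{Y_{\zeta_k}\}_{k\geq 0}$ is a simple random walk on $\mcl G$ started from $z$, and conditionally on it the $T_{\zeta_k}$ are independent with $T_{\zeta_k}\sim\op{Exponential}(\frk w(Y_{\zeta_k}))$; thus $r:=\sum_j T_{\zeta_j}$, the first time $X$ reaches $\infty$, has an atomless law, so $\BB P[r=t]=0$.

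The remaining (and hard) part is to propagate this local picture to a global statement. I would show that $X_t=\infty$ forces $t$ to equal some ``passage-to-$\infty$ time'' $\tau_{\xi_0}+\sum_{j\geq 0}T_{\xi_0^{(j)}}$ (where $\xi_0\in\Xi$, $\xi_0^{(j)}$ denotes the $j$-fold successor of $\xi_0$, and the sum converges), using the identity $\inf\{\tau_\eta:\eta>\xi_0^{(j)}\text{ for all }j\}=\tau_{\xi_0}+\sum_j T_{\xi_0^{(j)}}$ to control how $X$ re-emerges from $\infty$; since each such sum is an atomlessly distributed sum of conditionally independent exponential holding times and there are only countably many $\xi_0\in\Xi$, countable subadditivity would give $\BB P[X_t=\infty]=0$, the desired contradiction. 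The main obstacle is precisely this: $X$ genuinely does visit $\infty$, so $\{s\geq 0:X_s=\infty\}$ is non-empty (indeed typically uncountable), and a soft argument — e.g.\ bounding $\BB E[\op{Leb}\{s\in[0,T]:X_s=\infty\}]$ via Lemma~\ref{lem-time-tail} and Fubini — only yields $X_t\neq\infty$ for Lebesgue-a.e.\ $t$, not for every fixed $t$; upgrading to every $t$ requires a careful analysis of the random linear order $\Xi$ and the exponential clocks attached to it, showing that no deterministic time is an $\infty$-time with positive probability.
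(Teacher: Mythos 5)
Your plan correctly reduces the statement to showing $\BB P[X_t=\infty]=0$ for fixed $t$: part~(a) (that $t$ is a.s.\ not a left endpoint $\tau_\eta$) is handled cleanly via the observation that each $\tau_\eta$ is an atomless sum involving at least the exponential clock $T_{\xi_*}$. You also correctly identify the key difficulty: the set $\{s:X_s=\infty\}$ is typically uncountable, so a soft Fubini bound only gives $X_t\neq\infty$ for Lebesgue-a.e.\ $t$, not for a \emph{given} $t$.

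The gap is in the proposed fix. Your strategy is to argue that $X_t=\infty$ forces $t$ to equal one of countably many ``passage times'' $\tau_{\xi_0}+\sum_j T_{\xi_0^{(j)}}$ and to conclude by countable subadditivity. That characterization is not correct: a point of $\{s:X_s=\infty\}$ need only be a left accumulation point of $\{\tau_\xi:\xi\in\Xi\}$, and such a point can be a limit of such limits (and so on transfinitely), not merely the supremum of the first successor chain out of some $\xi_0$. In other words, the set of $\infty$-times has a Cantor-set-like structure, and the subset of passage times of the form you describe is a strict (indeed, meager) countable subset of it. So the countable union you want to bound does not cover the event $\{X_t=\infty\}$, and the argument breaks down.

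The paper's proof closes the gap by an entirely different device: it first shows (as you anticipate, via Lemma~\ref{lem-time-tail}) that a.s.\ the $\infty$-set has Lebesgue measure zero, and then \emph{transfers} this a.e.\ statement to a fixed-$t$ statement by absolute continuity. Concretely, one adds an independent uniform $U\in[0,1]$ to the first holding time $T_{\xi_0}$; since the laws of $T$ and $T+U$ are mutually absolutely continuous for $T$ exponential, the laws of $\{\tau_\xi\}_\xi$ and $\{\tau_\xi+U\}_\xi$ are mutually absolutely continuous, and independence of $U$ plus Fubini shows that $\BB P[t\notin\bigcup_\xi(\tau_\xi+U,\tau_{\wh\xi}+U)]=\BB E[\op{Leb}(\dots)]=0$. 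This sidesteps any structural analysis of the random order $\Xi$ or its accumulation points, which is precisely what your proposal would need and does not supply.
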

\begin{proof}
Fix $\eta \in\Xi$.   
If $\xi\in\Xi$ with $Y_\xi \in \mcl V\mcl G_n$, then by~\eqref{eqn-xi-exists}, we have $\xi = [(n,j)]$ for some $j\in\BB N_0$ and $\wh\xi = [(n,j+1)]$. 
By this and Lemma~\ref{lem-time-tail}, the portion of $[0, \tau_\eta]$ covered by open intervals of the form $(\tau_{[(n,j)]} , \tau_{[(n,j+1)]})$ tends to 1 as $n\to\infty$. In particular, Lebesgue-a.e.\ point of $[0,\tau_\eta]$ is contained in an interval of the form $(\tau_\xi,\tau_{\wh\xi})$. Since $\eta$ is arbitrary, we get that Lebesgue-a.e.\ time $t\in [0,\infty)$ is contained in such an interval.

It remains to transfer from a statement for Lebesgue-a.e.\ time $t$ to a statement for a given deterministic time $t$. 
To this end, let $U$ be sampled uniformly from $[0,1]$, independently from everything else. If $T$ is an exponential random variable independent from $U$, then the joint laws of $T$ and $T+U$ are mutually absolutely continuous. Note that the first element of $\Xi$ (with respect to its total order) is the equivalence class $\xi_0$ which contains $ (n,0) $ for each $n\geq n_z$. From the above absolute continuity statement, we get that the laws of $\{T_\xi\}_{\xi\in\Xi}$ and $\{T_{\xi_0} + U\} \cup \{T_\xi\}_{\xi >  \xi_0}$ are mutually absolutely continuous. By the definition~\eqref{eqn-xi-interval} of $\tau_\xi$, it follows that the laws of $\{\tau_\xi \}_{\xi\in\Xi}$ and $\{\tau_\xi + U\}_{\xi\in\Xi}$ are mutually absolutely continuous. 

Since $U$ is independent from $\{\tau_\xi\}_{\xi\in\Xi}$, 
\allb \label{eqn-uniform-leb}
\BB P\left[ t  \in [0,\infty) \setminus \bigcup_{\xi\in\Xi} (\tau_\xi + U , \tau_{\wh\xi} + U) \right] 
&=  \BB P\left[ U \in [0 , t] \setminus \bigcup_{\xi\in\Xi} (t - \tau_{\wh\xi}  , t -  \tau_\xi) \right]  \notag\\
&=  \BB E\left[ \op{Leb}\left(   [0 , 1 \wedge t] \setminus \bigcup_{\xi\in\Xi} (t - \tau_{\wh\xi}  , t -  \tau_\xi) \right)  \right]     
\alle
where $\op{Leb}$ denotes one-dimensional Lebesgue measure. 
The Lebesgue measure of the set inside the expectation of the last line of~\eqref{eqn-uniform-leb} is zero a.s.\ by the conclusion of the first paragraph. So, we get that a.s.\ there exists $\xi\in\Xi$ such that $t\in (\tau_\xi + U , \tau_{\wh\xi} + U)$. By the absolute continuity in the second paragraph, this implies the lemma statement. 
\end{proof}

We also have the following pointwise convergence statement for the continuous time Markov processes $X^n$ of~\eqref{eqn-cont-time-def} to $X$. 

\begin{lem} \label{lem-ptwise-conv} 
Almost surely, the following is true. Let $\eta\in\Xi$, let $\wh\eta$ be its successor, and let $K$ be a compact subset of $[ \tau_\eta, \tau_{\wh\eta})$. 
For each large enough $n \geq n_z$ (how large is random and depends on $\eta$), we have 
\eqb \label{eqn-ptwise-conv} 
X^n_t = X_t ,\quad\forall t \in K . 
\eqe 
In particular, for each fixed $t\geq 0$, a.s.\ $X^n_t = X_t$ for each sufficiently large $n\geq n_z$. 
\end{lem}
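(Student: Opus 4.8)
The plan is to fix the almost-sure event on which the conclusions of Lemmas~\ref{lem-time-finite} and~\ref{lem-time-tail} hold (so in particular $\sum_{j\ge 0} T_{[(n,j)]} = \infty$ for every $n\ge n_z$, whence $X^n$ is defined on all of $[0,\infty)$), and to argue deterministically on it. Fix $\eta\in\Xi$ with successor $\wh\eta$. Since $Y_\eta\in\mcl V\mcl G$ and the $\mcl G_n$ exhaust $\mcl G$, we have $Y_\eta\in\mcl V\mcl G_n$ for all large $n$; for such $n$ let $k_n$ be the unique integer with $(n,k_n)\in\eta$ (uniqueness by \eqref{eqn-xi-exists}). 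By \eqref{eqn-cont-time-def}, the process $X^n$ visits $Y^n_0,Y^n_1,\dots$ in order, spending time $T_{[(n,j)]}$ at $Y^n_j$; hence $X^n$ is constant equal to $Y^n_{k_n}=Y_\eta$ on the interval $[s_n,s_n+T_{[(n,k_n)]})$, where $s_n:=\sum_{j<k_n}T_{[(n,j)]}$ is the total time accumulated before the visit to $Y_\eta$. The crux is to show $s_n\to\tau_\eta$ and $s_n+T_{[(n,k_n)]}\to\tau_{\wh\eta}$ as $n\to\infty$.

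For this I use some bookkeeping about $\Xi$. For any class $\xi$ and any $n$, either $\xi$ contains exactly one element of $\{n\}\times\BB N_0$ or none, and by \eqref{eqn-xi-exists} the latter forces $Y_\xi\notin\mcl V\mcl G_n$. Moreover, by the definition of the order on $\Xi$, the classes $\xi<\eta$ that contain a level-$n$ element are exactly $[(n,0)],\dots,[(n,k_n-1)]$. Therefore $s_n = \sum\{T_\xi : \xi<\eta,\ \xi\cap(\{n\}\times\BB N_0)\ne\emptyset\}$, so
\[
0 \le \tau_\eta - s_n = \sum\{T_\xi : \xi<\eta,\ \xi\cap(\{n\}\times\BB N_0)=\emptyset\} \le \sum\{T_\xi : \xi<\eta,\ Y_\xi\notin\mcl V\mcl G_n\},
\]
and the right-hand side tends to $0$ by Lemma~\ref{lem-time-tail}. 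For the other endpoint, note that for $n$ with $Y_\eta\in\mcl V\mcl G_n$ one has $\wh\eta = [(n,k_n+1)]$: if $(m,k)\in\eta$ with $Y_\eta\in\mcl V\mcl G_m$, then \eqref{eqn-successor} gives $\wh\eta=[(m,k+1)]$ while \eqref{eqn-rw-consistent-def} gives $J^{m,n}_{k+1} = J^{m,n}_k+1 = k_n+1$. Since $s_n+T_{[(n,k_n)]} = \sum_{j<k_n+1}T_{[(n,j)]}$, applying the displayed estimate with $\wh\eta$ in place of $\eta$ yields $s_n+T_{[(n,k_n)]}\to\tau_{\wh\eta}$.

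It remains to conclude. Given a compact $K\subset[\tau_\eta,\tau_{\wh\eta})$ we have $\tau_\eta\le\min K$ and $\max K<\tau_{\wh\eta}$, so for all large $n$ the interval $[s_n,s_n+T_{[(n,k_n)]})$ on which $X^n\equiv Y_\eta$ satisfies $s_n\le\tau_\eta\le\min K$ and $s_n+T_{[(n,k_n)]}>\max K$, hence contains $K$; thus $X^n_t=Y_\eta$ for all $t\in K$. Since $X_t = Y_\eta$ for $t\in[\tau_\eta,\tau_{\wh\eta})\supseteq K$ by \eqref{eqn-cont-time-def-infty}, this gives \eqref{eqn-ptwise-conv}. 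For the final assertion, fix $t\ge0$; by Lemma~\ref{lem-pt-defined} there is a.s.\ some $\xi\in\Xi$ with $t\in(\tau_\xi,\tau_{\wh\xi})$, and applying the above with $\eta=\xi$ and $K=\{t\}$ gives $X^n_t=X_t$ for all large $n$. The one step requiring genuine care is the bookkeeping identity of the second paragraph --- recognizing that the time $X^n$ has accumulated by its visit to $Y_\eta$ equals $\tau_\eta$ minus the total $T$-time of the steps of the reflected walk that are invisible at level $n$, a defect controlled by Lemma~\ref{lem-time-tail}; everything else is unwinding definitions.
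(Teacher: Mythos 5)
Your proof is correct and takes essentially the same approach as the paper's: you identify the interval $[s_n, s_n+T_{[(n,k_n)]})$ on which $X^n\equiv Y_\eta$ (the paper packages this as a time shift $X_t = X^n_{t-R_n}$ with $R_n = \tau_\eta - s_n$), bound the defect $\tau_\eta - s_n$ by $\sum\{T_\xi : \xi \le \eta,\ Y_\xi\notin\mcl V\mcl G_n\}$ via the contrapositive of~\eqref{eqn-xi-exists}, and invoke Lemma~\ref{lem-time-tail} to send it to zero. Two cosmetic notes: uniqueness of $k_n$ comes from the structure of the equivalence classes of $\Xi$ (each class contains at most one element of $\{n\}\times\BB N_0$, as noted in the paper), not from~\eqref{eqn-xi-exists}, which gives only existence; and the fact that $\sum_j T_{[(n,j)]}=\infty$ for each $n$ is really a consequence of Remark~\ref{remark-tilde-process} (each $Y^n$ is recurrent), not of the literal statement of Lemma~\ref{lem-time-finite}, though it does appear in that lemma's proof.
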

\begin{proof}
Let $n\geq n_z$ be large enough so that $Y_\eta \in \mcl G_n$. Then $\eta = [(n,j)]$ for some $j\in\BB N_0$ and $\wh\eta = [(n,j+1)]$.  
By the definitions~\eqref{eqn-cont-time-def} and~\eqref{eqn-cont-time-def-infty}, we have
\eqb  \label{eqn-ptwise-shift}
X_t = X^n_{t - R_n} ,\quad\forall t \in [\tau_\eta, \tau_{\wh\eta}) 
\eqe 
where 
\eqbn
0 \leq R_n \leq \sum \left\{ T_\xi : \xi\in\Xi , \, \xi\leq \eta, \, Y_\xi \notin \mcl G_n \right\}  
\eqen
(the second inequality is not an equality since $X^n$ spends some time in $B_1\mcl G_n\setminus \mcl G_n$).
By Lemma~\ref{lem-time-tail}, we have $R_n\to 0$ as $n\to\infty$.
Furthermore, by the definition of $X$~\eqref{eqn-cont-time-def-infty}, $X$ is constant on $[\tau_\eta,\tau_{\wh\eta})$ for each $\eta\in\Xi$. Hence~\eqref{eqn-ptwise-conv} for large enough $n$ follows from~\eqref{eqn-ptwise-shift}. 
The statement for a fixed $t\geq 0$ follows from~\eqref{eqn-ptwise-conv} and Lemma~\ref{lem-pt-defined}.  
\end{proof}

\subsection{Proof of \texorpdfstring{Theorem~\ref{thm-cont-time-walk}}{Theorem 1.5}}
\label{sec-cont-time-proof}

Define the process $X : [0,\infty) \to \mcl V\mcl G \cup \{\infty\}$ as in~\eqref{eqn-cont-time-def-infty}. 
We first check that $X$ satisfies the properties in the theorem statement, then we prove that $X$ is the unique process satisfying these properties. We will need to talk about the convergence of the functions $X^n$ of~\eqref{eqn-cont-time-def} to $X$. For technical reasons it is convenient to work with a topology (instead of just considering pointwise convergence as in Lemma~\ref{lem-ptwise-conv}). There are many choices of topology which would work equally well. We define one such choice here.

\begin{defn} \label{def-metric} 
Let $S$ be a countable set with the discrete topology (we will eventually take $S = \mcl V\mcl G \cup \{\infty\}$). 
We define $L^1_{\op{loc}}([0,\infty) ,S)$ to be the metric space of Lebesgue measurable functions $f : [0,\infty) \to S$, with functions identified if they agree Lebesgue a.e., with the metric
\eqb  \label{eqn-metric}
\BB d(f,g) := \int_0^\infty e^{-t} \BB 1_{f(t) \not= g(t)} \,dt .
\eqe 
It is easy to check that with this metric, $L^1_{\op{loc}}([0,\infty) ,S)$ is a complete, separable metric space. 
\end{defn}

\begin{proof}[Proof of Theorem~\ref{thm-cont-time-walk}, existence]
Let $\frk w_*$ be as in Lemma~\ref{lem-time-finite} and assume that $\frk w(x) \geq \frk w_*(x)$ for all but finitely many $x\in\mcl V\mcl G$. 
Define $X = \{X_t\}_{t\geq 0} $ as in~\eqref{eqn-cont-time-def-infty}. We will now check that $X$ satisfies the conditions in the theorem statement. 
\medskip

\noindent\textit{Property~\eqref{item-ae}: Almost everywhere defined.} 
This follows from Lemma~\ref{lem-pt-defined} and the fact that by~\eqref{eqn-cont-time-def-infty}, the process $X$ is constant on each interval of the form $[\tau_\xi , \tau_{\wh\xi})$.  
\medskip

\noindent\textit{Property~\eqref{item-cont}: Right continuity.} 
By the definition~\eqref{eqn-cont-time-def-infty} of $X$, if $t\geq 0$ such that $X_t\in \mcl V\mcl G$, then there exists $\xi\in \Xi$ such that $t\in [\tau_\xi , \tau_{\wh\xi})$ and $X_s = X_t$ for all $s\in [\tau_\xi , \tau_{\wh\xi})$. From this, the right continuity is immediate. 
\medskip

\noindent\textit{Property~\eqref{item-rw}: Continuous time random walk.}
Recall that $(n,0)$ for each $n\geq n_z$ belongs to the same equivalence class in $\Xi$, which is the first element of $\Xi$. Call this equivalence class $\xi_0$. By the definitions~\eqref{eqn-cont-time-def} and~\eqref{eqn-cont-time-def-infty} of $X^n$ and $X$, we have $X^n_t = X_t$ for each $n\geq n_z$ and each $t\in [0,T_{\xi_0}]$. Since each $X^n$ is a continuous time random walk, Property~\eqref{item-rw} is immediate.  
\medskip

\noindent\textit{Property~\eqref{item-markov}: Markov property.} 
Each $X^n$ is a continuous time random walk. So, the Markov property of continuous time random walk implies that for each $x\in\mcl V\mcl G$ and each $n\geq n_z$, on the event $\{X^n_t = x\}$, the $\BB P_z$-conditional law of $\{X^n_{s+t}\}_{s\geq 0}$ given $\{X^n_s\}_{s\leq t}$ is the same as the $\BB P_x$-law of $\{X^n_s\}_{s\geq 0}$. 
By Lemma~\ref{lem-ptwise-conv}, we have $X^n \to X$ in law with respect to the metric~\eqref{eqn-metric}. Furthermore, a.s.\ $X_t^n = X_t$ for each sufficiently large $n\in\BB N$, and for each fixed $x\in\BB N$, the $\BB P_x$-law of $\{X^n_s\}_{s\geq 0}$ converges to the $\BB P_x$-law of $\{X_s\}_{s\geq 0}$. Combining these statements with a standard convergence result for conditional laws (see, e.g.,~\cite[Lemma 4.3]{gp-sle-bubbles}) gives Property~\eqref{item-markov}. 
\medskip

\noindent\textit{Property~\eqref{item-recurrence}: Recurrence.} 
By Remark~\ref{remark-tilde-process}, for each $n\in\BB N_0$ the discrete time Markov process $Y^n$ a.s.\ returns to its starting vertex infinitely many times. 
Consequently, if $Y$ is the discrete time random walk reflected off of $\infty$ as in~\eqref{eqn-process-on-xi}, started from $z\in\mcl V\mcl G$, then a.s.\ there exist infinitely many $\xi\in \Xi$ for which $Y_\xi = z$. 
By the definition~\eqref{eqn-cont-time-def-infty} of $X$, this shows that a.s.\ there exist arbitrarily large times $t$ for which $X_t = z$.  
\medskip
 
\noindent\textit{Property~\eqref{item-harmonic}: Relationship to harmonic functions.} 
If $n\geq n_z$ is sufficiently large so that $\mcl A\subset \mcl V\mcl G_n$, then by the definitions~\eqref{eqn-cont-time-def} and~\eqref{eqn-cont-time-def-infty} of $X$ and $X^n$, the first point $X_\tau$ of $\mcl A$ hit by $X$ is the same as the first point of $\mcl A$ hit by $X^n$. Since $X^n$ is a continuous-time version of the discrete-time random walk $Y^n$, the first point of $\mcl A$ hit by $X^n$ is the same as the first point of $\mcl A$ hit by $Y^n$. 
Property~\eqref{item-harmonic} therefore follows from Lemma~\ref{lem-rw-harmonic}.
\end{proof}

We now turn our attention toward proving the uniqueness part of Theorem~\ref{thm-cont-time-walk}. 
We need the following strong Markov property.
We emphasize that the property holds for any process satisfying the properties in Theorem~\ref{thm-cont-time-walk}, not just the one we constructed in~\eqref{eqn-cont-time-def-infty}. 

\begin{lem} \label{lem-wt-strong-markov}
Let $ X : [0,\infty) \to \mcl V\mcl G \cup\{\infty\}$ be a process satisfying the properties in Theorem~\ref{thm-cont-time-walk} and for $z\in\mcl V\mcl G$, write $\BB P_z$ for the law of $X$ started at $z$. 
Let $\tau$ be a stopping time for $\{ X_t\}_{t\geq 0}$ and let $x\in \mcl V\mcl G$. For $x\in\mcl V\mcl G$, on the event $\{\tau < \infty ,  X_\tau = x\}$, the $\BB P_z$-conditional law of $\{  X_{s+\tau}\}_{s\geq 0}$ given $\{  X_s\}_{s\leq \tau}$ is the same as the $\BB P_x$-law of $\{  X_s\}_{s\geq 0}$. 
\end{lem}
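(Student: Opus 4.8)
\textbf{Proof strategy for Lemma~\ref{lem-wt-strong-markov}.}

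The plan is to deduce the strong Markov property from the ordinary Markov property (Theorem~\ref{thm-cont-time-walk}\eqref{item-markov}) by the standard discretization argument for càdlàg Markov processes, using Properties~\eqref{item-ae} and~\eqref{item-cont} to control the behavior at the (random) stopping time. First I would reduce to the case of a stopping time taking countably many values: given a general stopping time $\tau$ with $\tau < \infty$, set $\tau_k := 2^{-k}\lceil 2^k \tau\rceil$, so that $\tau_k$ is a stopping time taking values in the countable set $2^{-k}\BB N_0$ and $\tau_k \downarrow \tau$. For a discrete stopping time $\tau_k$, the strong Markov property follows from the ordinary Markov property by the usual decomposition over the values of $\tau_k$: on each event $\{\tau_k = j 2^{-k}, X_{j2^{-k}} = x\}$ we apply Property~\eqref{item-markov} at the deterministic time $j 2^{-k}$, and the event is $\{X_s\}_{s\leq \tau_k}$-measurable, so summing over $j$ gives that on $\{\tau_k < \infty, X_{\tau_k} = x\}$ the conditional law of $\{X_{s+\tau_k}\}_{s\geq 0}$ given $\{X_s\}_{s\leq \tau_k}$ is the $\BB P_x$-law of $X$.

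Next I would pass to the limit $k\to\infty$. The key point is that, by Property~\eqref{item-cont} (right continuity at vertices), on the event $\{X_\tau = x \in\mcl V\mcl G\}$ there is a.s.\ an $\ep > 0$ with $X_s = x$ for all $s\in[\tau,\tau+\ep]$; since $\tau_k \downarrow \tau$, for all large $k$ we have $\tau \leq \tau_k \leq \tau + \ep$, hence $X_{\tau_k} = x$ as well, and moreover $X_{s+\tau_k} = X_{s+\tau}$ for Lebesgue-a.e.\ $s\geq 0$ (the two processes differ only by the time-shift $\tau_k - \tau \to 0$, and by Property~\eqref{item-ae} the process is a.s.\ locally constant around each fixed time, so a right-shift by a vanishing amount converges to the identity in the metric $\BB d$ of Definition~\ref{def-metric}). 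Therefore $\{X_{s+\tau_k}\}_{s\geq 0} \to \{X_{s+\tau}\}_{s\geq 0}$ a.s.\ in $L^1_{\op{loc}}([0,\infty),\mcl V\mcl G\cup\{\infty\})$. Similarly $\{X_s\}_{s\leq \tau_k} \to \{X_s\}_{s\leq\tau}$ in an appropriate sense, and the $\sigma$-algebra generated by the latter contains the information we condition on. Combining the convergence of the conditioned processes with the convergence of the conditioning data via the same convergence-of-conditional-laws lemma used in the proof of Property~\eqref{item-markov} (e.g.~\cite[Lemma 4.3]{gp-sle-bubbles}), we conclude that on $\{\tau<\infty, X_\tau = x\}$, the $\BB P_z$-conditional law of $\{X_{s+\tau}\}_{s\geq 0}$ given $\{X_s\}_{s\leq\tau}$ equals the $\BB P_x$-law of $X$.

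The main obstacle is the measure-theoretic bookkeeping at the limiting step: one must be careful that conditioning on $\{X_s\}_{s\leq\tau_k}$ is compatible with conditioning on $\{X_s\}_{s\leq\tau}$ (the filtrations are nested in the right direction since $\tau \leq \tau_k$, but we are shrinking $k$, so it is $\{X_s\}_{s\leq \tau}$ that sits inside all the $\{X_s\}_{s\leq\tau_k}$), and that the a.s.\ convergence $\{X_{s+\tau_k}\}\to\{X_{s+\tau}\}$ in the metric $\BB d$ genuinely holds rather than merely in a weaker sense — this is where Properties~\eqref{item-ae} and~\eqref{item-cont} are essential, since without local constancy a vanishing time-shift need not converge. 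A clean way to handle the filtration issue is to note that for bounded continuous $F$ on $L^1_{\op{loc}}$ and bounded $\{X_s\}_{s\leq\tau}$-measurable $H$, we have $\BB E_z[F(\{X_{s+\tau_k}\}_s) H] = \BB E_z[H\, \BB E_{X_{\tau_k}}[F]]$ by the discrete case (legitimate since $H$ is also $\{X_s\}_{s\leq\tau_k}$-measurable), and then pass to the limit on both sides using dominated convergence together with continuity of $x\mapsto \BB E_x[F]$ restricted to the a.s.\ eventually-constant sequence $X_{\tau_k}$.
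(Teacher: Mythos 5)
Your proof is correct and follows essentially the same route as the paper: dyadic discretization $\tau_k = 2^{-k}\lceil 2^k\tau\rceil$, the strong Markov property for countable-valued stopping times via Property~(iv), and then passage to the limit using Property~(ii) to guarantee $X_{\tau_k} = X_\tau$ for all large $k$. The paper simply compresses the limiting step into ``a standard limiting argument,'' whereas you spell it out (via the $L^1_{\operatorname{loc}}$ metric and the convergence-of-conditional-laws lemma); the underlying ideas are the same.
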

\begin{proof}
If $\tau$ takes values in a deterministic countable subset of $[0,\infty)$, the result follows easily from the Markov property of Property~\eqref{item-markov}. In general, let $\tau_k := 2^{-k} \lceil 2^k \tau \rceil$ be the smallest element of $[\tau,\infty)\cap (2^{-k} \BB Z)$. Then $\tau_k$ decreases to $\tau$ a.s.\ on the event $\{\tau < \infty\}$. By right continuity (Property~(\ref{item-cont})), on the event $\{\tau < \infty ,   X_\tau = x\}$, a.s.\ $ X_{\tau_k} =  X_\tau$ for each sufficiently large $k$. The lemma statement now follows from a standard limiting argument. 
\end{proof}

\begin{proof}[Proof of Theorem~\ref{thm-cont-time-walk}, uniqueness]
Let $X$ be the process we constructed in~\eqref{eqn-cont-time-def-infty}, so that (as we showed just above) $X$ satisfies the properties in the theorem statement. 
Let $\wt X : [0,\infty) \to \mcl V\mcl G \cup\{\infty\}$ be another process satisfying the properties in the theorem statement. We write $\BB P_z$ for the probability measure where $\wt X$ starts at $z\in \mcl V\mcl G$. 
We want to show that $\wt X \eqD X$.  

Assume that $\wt X$ starts at $z\in\mcl V\mcl G$ and let $n_z$ be as in~\eqref{eqn-vertex-in}. 
We will construct a family of processes $\{\wt X^n\}_{n\geq n_z}$ coupled with $\wt X$ such that $\wt X^n$ has the same law as the continuous time Markov process $X^n$ from~\eqref{eqn-cont-time-def} used in the construction of $X$. We will then show that $\wt X^n \to \wt X$ in law (with respect to the metric from~\eqref{eqn-metric}) to conclude.
\medskip

\noindent\textit{Step 1: definition of $\wt X^n$.}
For $n\geq n_z$, we define a sequence of stopping times $\{t_j^n\}_{j \geq 0}$ as follows. Let $t_0^n = 0$. Now suppose $j \geq 0$ and $t_j^n$ has been defined. If $\wt X_{t_j^n} \in \mcl G_n$, let $t_{j+1}^n$ be the first time $t\geq t_j^n$ for which $\wt X_t \not= \wt X_{t_j^n}$. By Property~\eqref{item-rw} and the strong Markov property (Lemma~\ref{lem-wt-strong-markov}), necessarily $\wt X_{t_{j+1}^n} \in B_1\mcl G_n$. 
Otherwise, if $\wt X_{t_j^n} \notin \mcl G_n$, let $t_{j+1}^n$ be the smallest $t\geq t_j^n$ for which $\wt X_{t} \in \mcl G_n$. We also define
\eqb  \label{eqn-wt-holding}
T_j^n :=   \min\left\{ t \geq t_j^n : \wt X_t \not= \wt X_{t_j^n}\right\} - t_j^n  .
\eqe 
Note that $T_j^n = t_{j+1}^n - t_j^n$ if $\wt X_{t_j^n} \in\mcl G_n$, but $T_j^n$ could be strictly smaller than $t_{j+1}^n -t_j^n$ if $\wt X_{t_j^n} \in B_1\mcl G_n\setminus \mcl G_n$. 

By the strong Markov property (Lemma~\ref{lem-wt-strong-markov}), for each $ j\in\BB N_0$ and each $x\in B_1\mcl G_n$, on the event $\{\wt X_{t_j^n} =x\}$, the conditional distribution of $\{\wt X_{s+t_j^n}\}_{s\geq 0}$ given $\{\wt X_s\}_{s\leq t_j^n}$ is the same as the law of $\wt X$ under $\BB P_x$. 

By this and Property~\eqref{item-rw}, on the event $\{\wt X_{t_j^n} =x\}$, the random variables $\wt X_{t_{j+1}^n}$ and $T_j^n$ are conditionally independent given $\{\wt X_s\}_{s\leq t_j^n}$, and the conditional law of $T_j^n$ is exponential with parameter $\frk w(x)$. Furthermore, if $x\in \mcl G_n$, then the conditional law of $\wt X_{t_{j+1}^n}$ is given by a step of the random walk on $\mcl G$ started from $x$. If $x\in B_1\mcl G_n\setminus \mcl G_n$, then by Property~\eqref{item-harmonic} (applied with $\mcl A = \mcl V\mcl G_n$), the conditional law of $\wt X_{t_{j+1}^n}$ is instead given by harmonic measure on $\mcl V\mcl G_n$ viewed from $x$ (Definition~\ref{def-hm}). 

By Property~\eqref{item-recurrence}, a.s.\ $\wt X$ returns to its starting point infinitely many times. This combined with the description of the law of $T_j^n$ in the previous paragraph implies that a.s.\ $\lim_{k \to\infty} \sum_{j=1}^k T_j^n  = \infty$. 
Hence we can define $\wt X^n : [0,\infty) \to B_1\mcl G_n$ by 
\eqb  \label{eqn-wt-cont-time}
\wt X^n_t := \wt X_{t_k^n} ,\quad \forall t \in \left[ \sum_{j=0}^{k-1} T_j^n ,  \sum_{j=0}^k T_j^n  \right) , \quad \forall k \geq 1. 
\eqe 
By the preceding paragraph, $\wt X^n$ is the continuous time Markov chain on $B_1\mcl G_n$ with transition rates $\frk w(x) p_n(x,y)$ for all $x,y\in B_1\mcl G_n$, where $p_n(x,y)$ is as in~\eqref{eqn-transition-walk} and~\eqref{eqn-transition-comp}. In other words, $\wt X^n$ has the same law as the process $X^n$ defined in~\eqref{eqn-cont-time-def}. 
\medskip

\noindent\textit{Step 2: convergence of $\wt X^n$ to $\wt X$.}
By the definition of the times $t_j^n$, each time $t \geq 0$ for which $\wt X_t \in \mcl V\mcl G_n$ is contained in $[t_j^n, t_{j+1}^n)$ for some $j\geq 0$. By this and~\eqref{eqn-wt-cont-time}, if $\wt X_t \in \mcl V\mcl G_n$ then
\eqb  \label{eqn-wt-shift} 
\wt X^n_t = \wt X_{t - R_n} 
\eqe 
where 
\eqb \label{eqn-wt-shift-int} 
0 \leq R_n \leq  \int_0^t \BB 1_{(\wt X_s \notin\mcl G_n)}\,ds  .
\eqe 
By Property~\eqref{item-ae}, we have $\wt X_s \in \mcl V\mcl G $ for Lebesgue-a.e.\ $s \geq 0$. 
The dominated convergence theorem therefore implies that for each $t\geq 0$, the right side of~\eqref{eqn-wt-shift-int} goes to zero as $n\to\infty$. 
Hence~\eqref{eqn-wt-shift} together with Property~\eqref{item-ae} implies that for each fixed $t \geq 0$, a.s.\ $\wt X^n_t =\wt X_t$ for each sufficiently large $n\geq n_z$. In particular, $\wt X^n \to \wt X$ in law with respect to the metric~\eqref{eqn-metric}. Since $\wt X^n \eqD X^n$ and $X^n \to X$ in law with respect to the metric~\eqref{eqn-metric} (see the proof of existence), we get that $\wt X \eqD X$. 
\end{proof}

\subsection{Continuity properties}
\label{sec-cont}

Let $X : [0,\infty) \to \mcl V\mcl G\cup \{\infty\}$ be the continuous time random walk on $\mcl G$ reflected off of $\infty$ as in Theorem~\ref{thm-cont-time-walk}. In this subsection, we prove some intuitively obvious statements about the a.s.\ behavior of $X$.

\begin{lem} \label{lem-to-infty}
Almost surely, the following is true for each finite set of vertices $\mcl A\subset \mcl V\mcl G$. 
\begin{enumerate}
\item \label{item-finite-visit} For each $t > 0$, the set $X^{-1}(\mcl A) \cap [0,t)$ is a finite union of left-closed, right-open intervals on which $X$ is constant.
\item \label{item-to-infty} For each $t >0$ such that $X_t = \infty$, there exists $\ep > 0$ such that $X_s \notin \mcl A$ for each $s\in [t-\ep , t ]$.
\end{enumerate} 
\end{lem}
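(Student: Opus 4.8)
The plan is to reduce both statements to the convergence $X^n \to X$ established in Lemma~\ref{lem-ptwise-conv} together with the structural description of $X$ via the ordered set $\Xi$ from Section~\ref{sec-cont-time-def}. Fix a finite set $\mcl A \subset \mcl V\mcl G$ and choose $n$ large enough that $\mcl A \subset \mcl V\mcl G_n$. The key observation is that, because of the identity $J_{k+1}^{m,n} = J_k^{m,n}+1$ whenever $Y^m_k \in \mcl V\mcl G_m$ (recall~\eqref{eqn-rw-consistent-def} and~\eqref{eqn-successor}), every vertex $x \in \mcl A$ has the property that each visit of $X$ to $x$ is an interval of the form $[\tau_\xi, \tau_{\wh\xi})$ with $Y_\xi = x$, i.e.\ $X$ is constant and equal to $x$ on a nondegenerate left-closed right-open interval around any time it spends at $x$. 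This already gives the ``left-closed, right-open intervals on which $X$ is constant'' part of~\eqref{item-finite-visit}.

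For the finiteness in~\eqref{item-finite-visit}: fix $t>0$. By Lemma~\ref{lem-pt-defined} applied at time $t$ (or rather, we want a slightly stronger statement — see below), a.s.\ there is $\eta \in \Xi$ with $t \in (\tau_\eta, \tau_{\wh\eta})$, so $\{\xi \in \Xi : \tau_\xi < t\}$ is an initial segment of $\Xi$ of the form $\{\xi : \xi \le \eta\}$ up to the boundary case. I would argue that this initial segment contains only finitely many $\xi$ with $Y_\xi \in \mcl A$: indeed, picking $n$ with $\mcl A \subset \mcl V\mcl G_n$, any such $\xi$ is of the form $[(n,j)]$ with $Y^n_j \in \mcl A$, and the number of visits of the discrete chain $Y^n$ to $\mcl A$ before the index $j_0$ with $\tau_{[(n,j_0)]} \ge t$ is finite because $\sum_{\xi \le \eta} T_\xi < \infty$ by Lemma~\ref{lem-time-finite} — if $Y^n$ visited $\mcl A$ infinitely often before that index, the corresponding holding times (which are i.i.d.\ exponentials with rates bounded above by $\max_{x\in\mcl A}\frk w(x)$, hence whose sum is a.s.\ infinite, being a countable sum of i.i.d.\ positive random variables) would make $\sum_{\xi \le \eta} T_\xi = \infty$, a contradiction. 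This gives finiteness of $X^{-1}(\mcl A) \cap [0,t)$ as a union of intervals on which $X$ is constant.

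For~\eqref{item-to-infty}: suppose $X_t = \infty$ for some $t > 0$. By definition~\eqref{eqn-cont-time-def-infty}, $t$ lies in no interval $[\tau_\xi, \tau_{\wh\xi})$; since each of the finitely many intervals $[\tau_\xi,\tau_{\wh\xi})$ with $Y_\xi \in \mcl A$ and $\tau_\xi < t+1$ (finite by~\eqref{item-finite-visit} applied at time $t+1$) is a closed-open interval not containing $t$, and $t$ is not a left endpoint of any of them, $t$ has positive distance from their union; taking $\ep$ smaller than this distance (and smaller than $1$) gives $X_s \notin \mcl A$ for $s \in [t-\ep, t]$. Here one must be slightly careful that $t$ is not the right endpoint $\tau_{\wh\xi}$ of an $\mcl A$-interval with the next interval starting strictly later, but that is exactly the situation allowed, and $[t-\ep,t]$ avoids $\mcl A$ for $\ep$ small in that case too.

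The main obstacle I anticipate is the transfer from ``Lebesgue-a.e.\ time'' or ``fixed deterministic time'' statements (which is all Lemma~\ref{lem-pt-defined} directly gives) to a statement holding \emph{simultaneously for all finite $\mcl A$ and all $t$} a.s., as required by the phrasing ``Almost surely, the following is true for each finite set of vertices $\mcl A$.'' Since there are only countably many finite subsets $\mcl A$ of the countable set $\mcl V\mcl G$, a countable union bound handles the quantifier over $\mcl A$. The quantifier over $t$ is the real subtlety: I would handle it by noting that the relevant bad event for a given $\mcl A$ — namely that $X^{-1}(\mcl A) \cap [0,N)$ fails to be a finite union of intervals for some integer $N$ — is, by the argument above, contained in the event that $\sum_{\xi \le \eta} T_\xi = \infty$ for some $\eta$ with $\tau_\eta < N$, which has probability zero by Lemma~\ref{lem-time-finite}; this is a statement about countably many $N$ and needs no per-$t$ analysis. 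Statement~\eqref{item-to-infty} then follows deterministically on the full-measure event where~\eqref{item-finite-visit} holds for all $\mcl A$ and all integer times, as sketched above.
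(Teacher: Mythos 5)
Your proof takes a genuinely different route from the paper's. The paper works axiomatically: it uses the strong Markov property (Lemma~\ref{lem-wt-strong-markov}) together with Property~\eqref{item-rw} to show that, for each fixed $x\in\mcl A$, the lengths $\sigma_k(x)-\tau_k(x)$ of the successive intervals of constancy at $x$ are i.i.d.\ exponential of rate $\frk w(x)$, hence $\tau_k(x)\to\infty$ a.s.; Assertion~\ref{item-finite-visit} follows for all $t$ at once, and Assertion~\ref{item-to-infty} follows because the last such interval before a time with $X_t=\infty$ must end strictly before $t$. You instead argue directly from the explicit construction via $\Xi$, $\tau_\xi$, and $T_\xi$, which works but makes the argument longer and slightly less self-contained.

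Two steps in your write-up are muddled and should be tightened. First, in the finiteness step the sub-argument ``if $Y^n$ visited $\mcl A$ infinitely often before the index $j_0$ \ldots would make $\sum_{\xi\le\eta}T_\xi=\infty$'' is circular as written: once $j_0<\infty$ there are trivially only finitely many indices $j<j_0$, and the discussion of i.i.d.\ exponentials with bounded rates is not needed. What you actually need is that $j_0<\infty$ a.s., which is immediate from $\lim_{j\to\infty}\tau_{[(n,j)]}=\sum_{\xi\in\Xi}T_\xi=\infty$ (the first part of Lemma~\ref{lem-time-finite}); alternatively you can show directly, as the paper does, that the partial sums of holding times over visits to any fixed $x\in\mcl A$ diverge. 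Second, your ``one must be slightly careful'' remark about the case $t=\tau_{\wh\xi}$ is confused: by construction $\tau_{\wh\xi}$ is the left endpoint of the interval $[\tau_{\wh\xi},\tau_{\wh{\wh\xi}})$, so if $t=\tau_{\wh\xi}$ then $X_t=Y_{\wh\xi}$ is a vertex, contradicting $X_t=\infty$. The right thing to say is that $X_t=\infty$ rules out $t$ being a left endpoint $\tau_\eta$ of any interval, and in particular $t\ne\tau_{\wh\xi}$; then your ``positive distance from finitely many closed intervals'' argument goes through cleanly. Finally, the worry about transferring from ``a.e.\ $t$'' to ``all $t$'' is not a genuine obstacle here: the statement $\lim_j\tau_{[(n,j)]}=\infty$ (or $\tau_k(x)\to\infty$ in the paper's version) is a single a.s.\ event from which the conclusion holds deterministically for every $t>0$ simultaneously, with no per-$t$ argument needed.
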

\begin{proof}
For $x\in\mcl V\mcl G$, let $\tau_0(x)$ be the smallest $s \geq 0$ for which $X_s = x$.
Inductively, for $k\geq 0$, let $\sigma_k(x)$ be the smallest $s\geq \tau_k(x)$ for which $X_s \not= x$ and let $\tau_{k+1}(x)$ be the smallest $s\geq \sigma_k(x)$ for which $X_s = x$.  
That is, $\{[\tau_k(x) , \sigma_k(x))\}_{k\geq 0}$ is the ordered sequence of time intervals during which $X$ is at $x$.
 
By the strong Markov property (Lemma~\ref{lem-wt-strong-markov}) and Property~\eqref{item-rw} of $X$, the random variables $\sigma_k(x) - \tau_k(x)$ are i.i.d.\ exponential random variables of parameter $\frk w(x)$. 
Consequently, a.s.\ $\tau_k(x) \to \infty$ as $k\to\infty$. In particular, for any given $t  > 0$, a.s.\ only finitely many of the intervals $[\tau_k(x) , \sigma_k(x))$ intersect $[0,t)$. By applying this for each $x\in\mcl A$, we get Assertion~\ref{item-finite-visit}. 

Again by Property~\eqref{item-rw}, for each time $\sigma_k(x)$, we have that $X_{\sigma_k(x)}$ is a vertex of $\mcl G$ which is joined to $x$ by an edge and there exists a random $\delta = \delta(x,k) > 0$ such that $X_u = X_{\sigma_k(x)}$ for each $u \in [\sigma_k(x) , \sigma_k(x) + \delta]$. 
By combining this with the previous paragraph and taking the minimum of finitely many values of $\delta$, we get that if $t > 0$ such that $X_t = \infty$, then there exists a random $ \ep_x > 0$ such that $X_s \not= x$ for each $s \in [t-\ep_x ,t]$. This implies Assertion~\ref{item-to-infty} in the lemma statement with $\ep = \min_{x\in \mcl A} \ep_x$. 
\end{proof}
 
The following lemma tells us that the visits of $X$ to $\infty$ cannot cause it to ``jump'' over a set of vertices which disconnects $\mcl V\mcl G$. 
 
\begin{lem} \label{lem-no-jump}
Almost surely, the following is true for every $0 \leq s  <  t$ such that $X_s , X_t \in \mcl V\mcl G$.
Let $V$ be a finite set of vertices of $\mcl G$ such that every finite path in $\mcl G$ from $X_s$ to $X_t$ visits a vertex of $V$. Then there exists $u \in [s,t]$ such that $X_u \in V$.
\end{lem}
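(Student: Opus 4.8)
The plan is to reduce the statement to the discrete-time random walk $Y$ on the ordered set $\Xi$, where "no jump over a disconnecting set" becomes an essentially combinatorial fact about paths. First I would fix a deterministic finite vertex set $V$ and a deterministic pair $x_0, x_1 \in \mcl V\mcl G$; since there are only countably many such triples $(V, x_0, x_1)$, it suffices to prove the statement a.s.\ for each fixed triple and then take a countable intersection, restricting attention on the event $\{X_s = x_0, X_t = x_1\}$. Next, recall from Section~\ref{sec-discrete-reflected} that the continuous time walk $X$ on $[s,t]$ traces out, in order, a sequence of vertices $Y_{\eta}$ for $\eta$ ranging over an interval of $\Xi$ (together with the intervening visits to $\infty$, which are limits of the $Y_\eta$'s and carry no new vertices). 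Concretely, by Lemma~\ref{lem-pt-defined} there are $\xi_s \leq \xi_t$ in $\Xi$ with $s \in [\tau_{\xi_s}, \tau_{\wh{\xi_s}})$, $Y_{\xi_s} = x_0$, and $t \in [\tau_{\xi_t}, \tau_{\wh{\xi_t}})$, $Y_{\xi_t} = x_1$, and the set of vertices visited by $X$ during $[s,t]$ is exactly $\{Y_\eta : \xi_s \leq \eta \leq \xi_t\}$. So it is enough to show that this latter set meets $V$.

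The key combinatorial step: I claim that whenever $\xi \in \Xi$ is such that $Y_\xi \notin V$ and its successor $\wh\xi$ satisfies $Y_{\wh\xi} \notin V$, then $Y_\xi$ and $Y_{\wh\xi}$ lie in the same connected component of $\mcl G \setminus V$. Granting this, suppose for contradiction that none of $\{Y_\eta : \xi_s \leq \eta \leq \xi_t\}$ lies in $V$. Fix $n$ large enough that $V \cup \{x_0, x_1\} \subset \mcl V\mcl G_n$ and $Y_{\xi_s}, Y_{\xi_t}$ both appear in the $Y^n$-trajectory, i.e.\ $\xi_s = [(n, j_s)]$ and $\xi_t = [(n, j_t)]$ with $j_s \leq j_t$; then the claim, applied along the finitely many successor steps of $Y^n$ from index $j_s$ to $j_t$, forces $Y^n_{j_s} = x_0$ and $Y^n_{j_t} = x_1$ to lie in the same component of $\mcl G \setminus V$. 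But that contradicts the hypothesis that every finite path in $\mcl G$ from $x_0$ to $x_1$ meets $V$ — a path in $\mcl G$ avoiding $V$ joining $x_0$ and $x_1$ would exist (one can extract it from the $Y^n$-trajectory, whose successor steps are either single edges of $\mcl G$, which stay within $\mcl V\mcl G_n$, or harmonic-measure jumps; this is exactly where the claim needs care).

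To prove the claim, consider the two cases in the definition~\eqref{eqn-rw-consistent-def}/\eqref{eqn-successor} of the successor. If $Y_\xi \in \mcl V\mcl G_m$ with $\wh\xi = [(m, k+1)]$ for the appropriate $m$, then the step $Y^m_k \to Y^m_{k+1}$ is governed by the transition probabilities $p_m$ of~\eqref{eqn-transition-walk}–\eqref{eqn-transition-comp}: either $Y_\xi \in \mcl G_m$, in which case $Y_{\wh\xi} \sim Y_\xi$ in $\mcl G$ (a genuine edge), so if neither endpoint is in $V$ they are trivially in the same component of $\mcl G \setminus V$; or $Y_\xi \in B_1\mcl G_m \setminus \mcl G_m$, in which case $Y_{\wh\xi}$ is a vertex of $\mcl G_m$ reached by harmonic measure, and here I need to choose $m$ large enough — specifically $m$ such that $\mcl G_m \supset V$ and $\mcl G_m \setminus V$ has the same component structure near $Y_\xi$ as $\mcl G \setminus V$ — so that the harmonic-measure jump within $\mcl G_m$ does not skip over $V$. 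This is the main obstacle: harmonic-measure jumps are the only non-local moves, and I must argue that they stay within one component of $\mcl G \setminus V$. The resolution is that for $m$ large, the jump occurs entirely inside $\mcl V\mcl G_m$, and crucially, by Lemma~\ref{lem-to-infty}\ref{item-to-infty}, a visit of $X$ to $\infty$ is preceded by a stretch avoiding any fixed finite set — in particular, to escape to $\infty$ and return, $X$ must leave $\mcl G_m$, so between two consecutive visits to vertices of $V$ the walk $X$ in fact stays within a fixed finite region for all $m$ large, and within that finite region it is an ordinary random walk, which obviously cannot jump over $V$. Assembling: a.s.\ on $\{X_s = x_0, X_t = x_1\}$, if $X$ avoided $V$ throughout $[s,t]$ then for all large $m$ the finite random walk $Y^m$ restricted to the relevant index range avoids $V$ and uses only edges of $\mcl G$, yielding a $V$-avoiding $\mcl G$-path from $x_0$ to $x_1$, contradicting the hypothesis on $V$. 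Taking a countable union over $(V, x_0, x_1)$ finishes the proof.
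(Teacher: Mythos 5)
Your overall strategy---reduce to the consistency family $\{Y^n\}$ and show that no step of $Y^n$ can cross $V$---is workable in principle, but there are two genuine problems in the write-up, and the second one is a real gap.

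First, a terminological/structural confusion: you state the key claim in terms of $\Xi$-successors, saying that for $\xi$ and its successor $\wh\xi$, if $Y_\xi,Y_{\wh\xi}\notin V$ then they are in the same component of $\mcl G\setminus V$. But by~\eqref{eqn-successor}, the successor of $\xi$ is taken with $m$ chosen so that $Y_\xi\in\mcl V\mcl G_m\subset\mcl G_m$, so $Y_{\wh\xi}$ is always obtained by a random-walk step in~\eqref{eqn-transition-walk}, i.e.\ $Y_{\wh\xi}\sim Y_\xi$ always. Your claim is thus trivially true but useless: the $\Xi$-successor never encodes a harmonic-measure jump. The harmonic-measure jump arises between consecutive indices of $Y^n$ (when $Y^n_j\in B_1\mcl G_n\setminus\mcl G_n$), and in that case $[(n,j+1)]$ is not the successor of $[(n,j)]$ --- there can be infinitely many elements of $\Xi$ strictly between them. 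What you actually need is the analogous statement for consecutive $Y^n$-steps, with $V\subset\mcl V\mcl G_n$ fixed, and you appear to intend that in the next paragraph, but the claim as written doesn't say it.

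Second, and more seriously, your justification for why a harmonic-measure jump cannot cross $V$ does not hold up. You invoke Lemma~\ref{lem-to-infty} to argue that ``between two consecutive visits to vertices of $V$ the walk $X$ in fact stays within a fixed finite region for all $m$ large, and within that finite region it is an ordinary random walk.'' That is false: $X$ can escape to $\infty$ and return strictly between two visits to $V$, and indeed handling exactly this possibility is the entire content of the lemma. The correct reason the jump cannot cross $V$ has nothing to do with trajectory localization; it is an energy-minimization fact about harmonic measure. If $V\subset\mcl V\mcl G_n$, $x\in B_1\mcl G_n\setminus\mcl G_n$, $y\in\mcl V\mcl G_n\setminus V$, and every path in $\mcl G$ from $x$ to $y$ meets $V$, then $\op{hm}_{\mcl G_n}^x(y)=h^y(x)=0$: letting $C$ be the component of $\mcl V\mcl G\setminus V$ containing $x$, one checks that $h^y$ vanishes on $C\cap\mcl V\mcl G_n$ (since $y\notin C$), and that replacing $h^y$ by $h^y\BB 1_{\mcl V\mcl G\setminus C}$ keeps the boundary data on $\mcl V\mcl G_n$ and strictly decreases the Dirichlet energy unless $h^y|_C\equiv0$, since every edge leaving $C$ lands in $V$ where $h^y=0$. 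By uniqueness of the minimizer, $h^y|_C\equiv0$. This is precisely the argument the paper uses, except the paper applies it directly to $X$ via Property~\eqref{item-harmonic} with $\mcl A=V\cup\{y\}$: the first vertex of $V\cup\{y\}$ hit by $X$ after time $s$ is distributed according to energy-minimizing harmonic measure, which assigns mass zero to $y$, hence a.s.\ lies in $V$. That route sidesteps the $Y^n$-machinery entirely and is substantially shorter; your route through the $Y^n$ would work once you plug in the same energy argument in place of the incorrect locality heuristic, but at that point you have reproduced the paper's core step in a more elaborate wrapper.

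Finally, a minor remark on the reduction: the paper reduces to \emph{fixed deterministic times} $s<t$ and then upgrades to all times via right continuity and rational approximation; you instead fix $(V,x_0,x_1)$ and keep $s,t$ arbitrary. Your reduction is also valid (the argument you sketch is purely pathwise once the harmonic-measure input is in hand), but be aware that it is a different reduction from the paper's and it relies on the a.s.\ existence of $\xi_s,\xi_t$ simultaneously for all $s,t$ with $X_s,X_t\in\mcl V\mcl G$, which follows from the construction~\eqref{eqn-cont-time-def-infty} rather than from Lemma~\ref{lem-pt-defined} (which is for fixed $t$).
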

\begin{proof}
We first prove that the conclusion of the lemma holds a.s.\ for fixed deterministic times $0\leq s <t$. 
Since $\mcl V\mcl G$ is a countable set, it suffices to prove the following. 
Let $x,y\in\mcl V\mcl G$ and let $V$ be a finite set of vertices of $\mcl G$ such that every finite path in $\mcl G$ from $x$ to $y$ visits a vertex in $V$.
Then on the event $\{X_s =x ,\, X_t =y\}$, a.s.\ there exists $u\in [s,t]$ such that $X_u \in V$. 

We can assume without loss of generality that $x,y\notin V$ (otherwise, the desired statement is vacuous). 
Let $\tau$ be the first time $r \geq s$ for which $X_r \in V \cup \{y\}$. 
By the Markov Property~\eqref{item-markov} together with Property~\eqref{item-harmonic}, on the event $\{X_s = x\}$, the conditional distribution of $X_\tau$ given $X|_{[0,s]}$ is given by harmonic measure on $V\cup \{y\}$ viewed from $x$ (as defined in Definition~\ref{def-hm}). Since every path in $\mcl G$ from $x$ to $y$ hits a vertex of $V$, 
the energy-minimizing function $h^y : \mcl V\mcl G\to \BB R$ with $h^y|_{V\cup \{y\}} = \BB 1_y$ is identically equal to zero on the connected component of $\mcl V\mcl G\setminus (V\cup \{y\})$ which contains $x$. Hence, $\BB P[X_\tau = y] = 0$, and hence $\BB P[X_\tau \in V] = 1$. That is, we can take $u = \tau$. 

To transfer to a statement which holds simultaneously a.s.\ for all times $0 \leq s < t$ such that $X_s,X_t\in \mcl V\mcl G$,  we apply Property~\eqref{item-cont} to get that, a.s., for every such $s,t$ there exists $\ep > 0$ such that $X_r = X_s$ for each $r\in [s,s+\ep)$ and $X_u = X_t$ for each $r\in [t,t+\ep)$. We then apply the statement for deterministic times to rational times which are contained in $[s,s+\ep)$ and $[t,t+\ep)$, respectively. 
\end{proof}

The following lemma tells us that if $X_t \in \mcl V\mcl G$, then we can talk about ``the vertex of $\mcl G$ visited by $X$ immediately before time $t$''. 

\begin{lem}  \label{lem-parent}
Assume that $\mcl G$ is locally finite. 
Almost surely, the following is true. 
Let $t > 0$ such that $X_t \in \mcl V\mcl G$. There exists $y\in\mcl V\mcl G$ and $\ep > 0$ such that $X_s = y$ for each $s\in [t-\ep, t)$. Moreover, we have either $y = X_t$ or $y\sim X_t$.
\end{lem}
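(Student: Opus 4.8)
The plan is to reduce the statement to the discrete-time picture captured by the ordered set $\Xi$, where the analogous fact is essentially built into the construction. Recall from~\eqref{eqn-cont-time-def-infty} that $X_t = Y_\eta$ for $t \in [\tau_\eta, \tau_{\wh\eta})$ and $X_t = \infty$ otherwise. So if $X_t \in \mcl V\mcl G$, the first thing I would do is pick $\eta \in \Xi$ with $t \in [\tau_\eta, \tau_{\wh\eta})$, which exists by the defining formula for $X$ on this event. It then suffices to show that, a.s., for every $\eta \in \Xi$, either $\eta$ is the minimal element $\xi_0$ of $\Xi$, or the predecessor of $\eta$ (the unique $\xi$ with $\wh\xi = \eta$, if it exists) satisfies $Y_\xi = Y_\eta$ or $Y_\xi \sim Y_\eta$, and moreover that $\tau_\eta > \tau_\xi$ (so that $X$ actually equals $Y_\xi$ on a nonempty left-interval $[\tau_\xi, \tau_\eta)$ ending at $\tau_\eta = \tau_{\wh\xi}$, whence we can take $\ep = \tau_\eta - \tau_\xi$ after possibly shrinking). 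If $\eta$ has no predecessor, I would instead argue that $t$ lies in the \emph{interior} $(\tau_\eta, \tau_{\wh\eta})$ with positive probability is not enough — rather, the point is that the set of $t$ with $X_t \in \mcl V\mcl G$ but $t = \tau_\eta$ for an $\eta$ with no predecessor is Lebesgue-null (indeed countable), and one uses Lemma~\ref{lem-pt-defined}-type reasoning; but actually the cleanest route is the following.

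**Here is the cleaner approach.** For $\eta \in \Xi \setminus \{\xi_0\}$, I claim $\eta$ always has a predecessor in $\Xi$. Indeed, fix $n \geq n_z$ large enough that $Y_\eta \in \mcl V\mcl G_n$, so by~\eqref{eqn-xi-exists} we have $(n,j) \in \eta$ for some $j \geq 0$; if $j \geq 1$ then $[(n,j-1)]$ is a predecessor, and if $j = 0$ then $\eta$ contains $(n,0)$, hence contains $(m,0)$ for all $m \geq n_z$ (these are all equivalent), so $\eta = \xi_0$. Thus the only $\eta$ without a predecessor is $\xi_0$ itself. Now, on the event $\{X_t \in \mcl V\mcl G\}$, pick $\eta$ with $t \in [\tau_\eta, \tau_{\wh\eta})$. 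If $\eta = \xi_0$ then $X_s = z = Y_{\xi_0}$ for all $s \in [0, \tau_{\wh{\xi_0}})$, and since $t \in [0, \tau_{\wh{\xi_0}})$ we may take $y = z = X_t$ and $\ep$ small (here $[t-\ep,t)$ may dip below $0$ only if $t$ is small, but then shrink $\ep$; or note $t > 0$ so $[t-\ep, t) \subset [0, \tau_{\wh{\xi_0}})$ for $\ep$ small when $t < \tau_{\wh{\xi_0}}$, and if $t = \tau_{\wh{\xi_0}}$ then $\eta \neq \xi_0$, contradiction). Otherwise $\eta$ has a predecessor $\xi$ with $\wh\xi = \eta$, so $X_s = Y_\xi$ for $s \in [\tau_\xi, \tau_\eta) = [\tau_\xi, \tau_{\wh\xi})$, and since $T_\xi > 0$ a.s.\ we have $\tau_\eta = \tau_\xi + T_\xi > \tau_\xi$, so setting $y := Y_\xi$ and $\ep := \min(\tau_\eta - \tau_\xi, t - (t-\ep))$ — more precisely any $\ep \in (0, \tau_\eta - \tau_\xi]$ small enough that $t - \ep \geq \tau_\xi$ — gives $X_s = y$ for all $s \in [t - \ep, t)$.

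**The remaining point** is to show $y = Y_\xi$ satisfies $y = X_t = Y_\eta$ or $y \sim Y_\eta$. By construction of $\Xi$ and~\eqref{eqn-successor}, $\xi$ and $\wh\xi = \eta$ are represented by consecutive pairs $(m, k)$ and $(m, k+1)$ for the minimal such $m$, and in fact by~\eqref{eqn-rw-consistent-def}, $J^{m,n}_{k+1} = J^{m,n}_k + 1$ for all $n \geq m$, so for every $n \geq m$ with $Y_\eta \in \mcl V\mcl G_n$ we have $(n, \ell) \in \xi$ and $(n, \ell+1) \in \eta$ for consecutive indices $\ell, \ell+1$. Thus $Y_\eta = Y^n_{\ell + 1}$ is one step of the Markov chain $Y^n$ from $Y_\xi = Y^n_\ell$. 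By the transition rule~\eqref{eqn-transition-walk}–\eqref{eqn-transition-comp}: if $Y^n_\ell \in \mcl V\mcl G_n$ then $Y^n_{\ell+1} \sim Y^n_\ell$ (a genuine edge of $\mcl G$); if $Y^n_\ell \in B_1\mcl G_n \setminus \mcl G_n$ then $Y^n_{\ell+1}$ is sampled from harmonic measure on $\mcl V\mcl G_n$. But in the latter case, I need $Y^n_{\ell+1}$ to equal or neighbor $Y^n_\ell$; here is where local finiteness and a refinement of the chain comes in — actually, wait: the issue is that a "jump via harmonic measure" need not land on a neighbor. The resolution, which is the \textbf{main obstacle}, is that such a $\xi$ with $Y_\xi \in B_1\mcl G_n \setminus \mcl G_n$ does \emph{not} correspond to a genuine predecessor in $X$'s timeline in the limiting object: one must check that the $X$-time elapsed is not "jumping in from $\infty$". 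Concretely, I would invoke Lemma~\ref{lem-to-infty}\eqref{item-to-infty} (approaching $\infty$ from vertices near time $t$) together with Lemma~\ref{lem-no-jump}: if the vertex $y$ immediately preceding $X_t$ along the $\Xi$-order were not a neighbor of $X_t$, then choosing $V$ to be (a subset of) the common neighborhood separating them would be violated — more carefully, by~\eqref{eqn-ptwise-conv} and the fact that for \emph{every} sufficiently large $n$ the pair $(\xi, \eta)$ is realized by consecutive $Y^n$-steps \emph{in $\mcl V\mcl G_n$} once $n$ is large enough that $Y_\xi \in \mcl V\mcl G_n$ (not merely $B_1\mcl G_n$), the transition $Y_\xi \to Y_\eta$ is governed by~\eqref{eqn-transition-walk}, i.e., an actual edge. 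That last observation — that $Y_\xi \in \mcl V\mcl G_n$ for all large $n$, so the relevant transition is always of "random walk" type and never of "harmonic measure" type — is the crux, and it follows because $Y_\xi$ is a fixed vertex of $\mcl G$ and the $\mcl G_n$ exhaust $\mcl G$. I would state this as: for $n$ large enough that $Y_\xi \in \mcl V\mcl G_n$, $\xi = [(n,\ell)]$ with $Y^n_\ell = Y_\xi \in \mcl V\mcl G_n$, so $Y_\eta = Y^n_{\ell+1} \sim Y^n_\ell = Y_\xi$ by~\eqref{eqn-transition-walk}, completing the proof with $y \sim X_t$ (and the case $y = X_t$ does not actually arise here, but is harmless to include in the statement).
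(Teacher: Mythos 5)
Your approach works in the discrete skeleton $\Xi$ rather than directly with the process $X$ as the paper does. The crux of your argument — and where the gap is — is the claim that every $\eta \in \Xi \setminus \{\xi_0\}$ has a predecessor. Your justification (that $[(n,j-1)]$ is a predecessor of $\eta = [(n,j)]$ when $j\geq 1$) fails if $Y^n_{j-1} \notin \mcl V\mcl G_n$: in that case the transition $Y^n_{j-1} \to Y^n_j$ is a harmonic-measure jump, which in the finer chain $Y^m$ for $m > n$ is refined into several steps, so there are elements of $\Xi$ strictly between $[(n,j-1)]$ and $[(n,j)]$. A priori these could accumulate at $\eta$ from below, in which case $\eta$ has no predecessor, $X$ is not eventually constant on $[\tau_\eta - \ep, \tau_\eta)$, and the lemma would fail. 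Ruling this out is essentially the content of the lemma, so it cannot be obtained for free from the combinatorics of $\Xi$.

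You are half-aware of the problem (``the main obstacle''), but your resolution conflates two separate points. Your observation that once the predecessor $\xi$ of $\eta$ exists we have $Y_\xi \sim Y_\eta$ (for any $m$ with $Y_\xi \in \mcl V\mcl G_m$, the consecutive representatives $(m,k) \in \xi$ and $(m,k+1) \in \eta$ give a random-walk transition by~\eqref{eqn-transition-walk}) is correct, but it does not establish that the predecessor exists. The paper supplies the missing probabilistic input by working directly with $X$: letting $V$ be the finite set of neighbors of $X_t$ (local finiteness), Lemma~\ref{lem-no-jump} shows $X^{-1}(V) \cap [0,t)$ accumulates at $t$, and Lemma~\ref{lem-to-infty}\eqref{item-finite-visit} shows this set is a finite union of left-closed, right-open intervals on which $X$ is constant — so one such interval must be of the form $[a,t)$, and the constant value there is a neighbor of $X_t$. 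If you want to keep the $\Xi$ picture, you still need to import this content via $X$ and those two lemmas before you may assume the predecessor exists.
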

\begin{proof}
If there exists $\ep > 0$ such that $X_s = X_t$ for each $s\in [t-\ep,t]$, then we are done. So, we can assume without loss of generality that there exists a sequence of positive times $t_n$ increasing to $t$ such that $X_{t_n} \not= X_t$ for each $n \in \BB N$. We first argue that we can choose the times $t_n$ so that $X_{t_n} \in \mcl V\mcl G$ for each $n$. By Assertion~\ref{item-to-infty} of Lemma~\ref{lem-to-infty}, if $X_{t_n} = \infty$, then there exists $\ep > 0$ such that $X_s \not= X_t$ for each $s\in [t_n-\ep , t_n]$. By the almost-everywhere defined property~\eqref{item-cont}, if we choose a rational $t_n' \in [t_n-\ep , t_n]$, then $X_{t_n'} \in \mcl V\mcl G\setminus \{X_t\}$. By replacing $t_n$ with such a $t_n'$, we can choose our sequence so that $X_{t_n} \not=\infty$. 

Now let $V$ be the set of vertices $y\in\mcl V\mcl G$ with $y\sim x$. 
Since we are assuming that $\mcl G$ is locally finite, $V$ is finite. 
By Lemma~\ref{lem-no-jump} with this choice of $V$, for each $n\in\BB N$ such that $X_{t_n} \notin V$, there exists $u_n \in [t_n , t)$ such that $X_{u_n} \in V$. Hence the set $X^{-1}(V) \cap [0,t)$ accumulates at $t$. By Assertion~\ref{item-finite-visit} of Lemma~\ref{lem-to-infty}, the set $X^{-1}(V) \cap [0,t)$ is a finite union of left-closed, right-open intervals on which $X$ is constant. Since this set accumulates at $t$, one of these intervals contains $[t-\ep,t)$ for some $\ep > 0$.
\end{proof}

\begin{lem}  \label{lem-finite-jump}
Assume that $\mcl G$ is locally finite. 
Almost surely, the following is true. 
Let $b > a > 0$ such that $X_t \not=\infty$ for each $t \in [a,b]$.
Then $X$ jumps to a different vertex only finitely many times during the time interval $[a,b]$.
\end{lem}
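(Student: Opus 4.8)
The plan is to use a compactness argument, following the same philosophy as the other continuity lemmas in this subsection (Lemmas~\ref{lem-to-infty} and~\ref{lem-parent}): the set of vertices that $X$ visits during $[a,b]$ is a.s.\ finite (because $X$ stays away from $\infty$), and a random walk on a finite vertex set can only make finitely many jumps in a bounded time interval.

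First I would argue that, on the event $\{X_t \neq \infty \text{ for all } t \in [a,b]\}$, the set $\{X_t : t \in [a,b]\}$ is a.s.\ finite. To see this, suppose not: then $X$ visits infinitely many distinct vertices during $[a,b]$, so there is an increasing sequence of times $t_n \in [a,b]$ with the $X_{t_n}$ pairwise distinct, and $t_n \to t_* \in [a,b]$ along a subsequence. The key input is Lemma~\ref{lem-no-jump}: take a finite vertex set $V$ which disconnects two of the visited vertices $X_{t_n}, X_{t_m}$ from each other (for instance, since $\mcl G$ is locally finite, the sphere $\{y : \text{dist}(y, X_{t_1}) = r\}$ is finite for every $r$, and by taking $r$ large enough — larger than the graph distance of all the finitely many vertices we want to trap — no such finite sphere works directly, so instead one localizes near the accumulation point). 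More carefully: fix $R$ large and let $V_R = \{y \in \mcl V\mcl G : \text{dist}(X_{t_*}, y) = R\}$ where dist is graph distance; $V_R$ is finite by local finiteness. If $X$ visited infinitely many distinct vertices in $[a,b]$, then in particular it would have to leave any fixed finite ball around some point and come back, hence by Lemma~\ref{lem-no-jump} cross $V_R$; but combined with Assertion~\ref{item-finite-visit} of Lemma~\ref{lem-to-infty} applied to $\mcl A = V_R$, the set $X^{-1}(V_R) \cap [a,b]$ is a finite union of intervals, which still does not immediately give finiteness of the vertex set. The cleanest route is: the trajectory $X|_{[a,b]}$, being a.s.\ valued in $\mcl V\mcl G$ for a.e.\ time and never equal to $\infty$ on $[a,b]$, must be contained in a bounded region — here I would invoke that between consecutive visits to a fixed base vertex $z_0 = X_a$, the excursion is a finite path in $\mcl G$ by the structural description of $X$ via the discrete process $Y$ on $\Xi$ (Section~\ref{sec-discrete-reflected}), together with the fact that excursions which reach $\infty$ are exactly the ones corresponding to times where $X = \infty$, which are excluded on $[a,b]$.

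Once we know $W := \{X_t : t \in [a,b]\}$ is a.s.\ a finite set of vertices, I would conclude as follows. For each $x \in W$, by the strong Markov property (Lemma~\ref{lem-wt-strong-markov}) and Property~\eqref{item-rw}, the successive holding times of $X$ at $x$ are i.i.d.\ $\op{Exponential}(\frk w(x))$ random variables, so a.s.\ only finitely many of the corresponding holding intervals intersect $[a,b]$ (this is exactly the argument in the proof of Assertion~\ref{item-finite-visit} of Lemma~\ref{lem-to-infty}). Since $X$ jumps only at the left endpoints of these holding intervals, and since during $[a,b]$ every jump of $X$ lands at a vertex of $W$ (as $X$ avoids $\infty$ on $[a,b]$ and is right-continuous at vertices), the total number of jumps of $X$ during $[a,b]$ is bounded by $\sum_{x \in W}(\text{number of holding intervals of } X \text{ at } x \text{ meeting } [a,b])$, a finite sum of finite quantities. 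Hence $X$ makes only finitely many jumps during $[a,b]$, a.s.

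The main obstacle is the first step --- rigorously establishing that the visited vertex set is a.s.\ finite on the event that $X$ avoids $\infty$ throughout $[a,b]$. The subtlety is that a priori $X$ could, in principle, visit vertices that "escape to infinity" without the value $X_t$ itself ever equalling $\infty$ for $t \in [a,b]$ (e.g.\ visiting vertices farther and farther out in between times, accumulating at a time $t_*$ where nonetheless $X_{t_*}$ is a genuine vertex). Ruling this out is precisely where one must use that such behavior would force $X$ to hit $\infty$: the natural argument is that an infinite sequence of distinct visited vertices accumulating at time $t_*$ forces, via the definition~\eqref{eqn-cont-time-def-infty} of $X$ and Lemma~\ref{lem-time-tail}, that $t_*$ is not in any interval $[\tau_\eta, \tau_{\wh\eta})$, hence $X_{t_*} = \infty$, contradicting our event. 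I would spell out this contradiction carefully using the structure of $\Xi$ and the fact (from Lemma~\ref{lem-to-infty}, Assertion~\ref{item-finite-visit}) that $X$ spends only finitely many time-intervals at each individual vertex before any fixed time.
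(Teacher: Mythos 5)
Your proposal goes by a two-step route --- (1) show that the set $W=\{X_t : t\in[a,b]\}$ of visited vertices is a.s.\ finite, (2) bound the number of jumps by the sum over $x\in W$ of the number of holding intervals at $x$ meeting $[a,b]$, each of which is finite by the argument for Assertion~\ref{item-finite-visit} of Lemma~\ref{lem-to-infty}. Step (2) is sound. The problem is step (1), which you correctly identify as ``the main obstacle'': none of the three sketched approaches actually closes it. The sphere-$V_R$ idea is, as you note, abandoned mid-sentence; the appeal to ``the structure of $\Xi$ and Lemma~\ref{lem-time-tail}'' is not clearly sufficient, since Lemma~\ref{lem-time-tail} controls the total holding time outside $\mcl G_n$ but does not by itself say that an accumulation point $t_*$ of the visit times $t_n$ must lie outside every $[\tau_\eta,\tau_{\wh\eta})$ --- a priori $t_*$ could equal $\tau_\eta$ for an $\eta$ that is approached from below by infinitely many $\xi\in\Xi$, while $X_{t_*}=Y_\eta\in\mcl V\mcl G$. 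Ruling this out is precisely the content of Lemma~\ref{lem-parent} (which requires local finiteness and itself relies on Lemmas~\ref{lem-no-jump} and~\ref{lem-to-infty}); your sketch never actually invokes it at the crucial point.

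The cleaner observation is that once you \emph{do} invoke Lemma~\ref{lem-parent}, the whole two-step structure collapses and one recovers the paper's proof directly: for each $t\in[a,b]$, Lemma~\ref{lem-parent} gives a left interval $[t-\ep,t)$ on which $X$ is constant, and right continuity (Property~\eqref{item-cont}) gives a right interval $[t,t+\ep']$ on which $X$ is constant; hence $X$ changes values at most once on $[t-\ep'',t+\ep'']$ for some $\ep''>0$. Compactness of $[a,b]$ then gives a finite subcover, so only finitely many jumps in total. This does not require first establishing finiteness of $W$ (which instead falls out as a consequence). So, beyond the concrete gap, the structural suggestion is to apply Lemma~\ref{lem-parent} \emph{directly} to the jump count rather than routing through the visited-vertex set.
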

\begin{proof}
By Lemma~\ref{lem-parent} combined with the right continuity property~\eqref{item-cont}, for each $t\in [a,b]$ there exists $\ep  >0$ such that $X$ changes values at most once during the time interval $ [t-\ep , t+\ep]$. By compactness, finitely many of these intervals cover $[a,b]$. 
\end{proof}

\subsection{Relationship to reflected Dirichlet forms}\label{sec-reflected-dirichlet}

In this subsection, we explain that the continuous time version of the random walk reflected off of $\infty$ in Theorem~\ref{thm-cont-time-walk} is associated with the maximal Silverstein extension for the random walk killed upon reaching $\infty$, which is its active reflected Dirichlet form. This subsection is included only for context and is not used elsewhere in the paper. The theory of boundary extensions of Markov processes was developed using extensions of Dirichlet forms by Silverstein \cite{silverstein-extension-discrete,silverstein-extension1,silverstein-extension2}, and in the case of an infinite graph, \cite{klss-dirichlet-discrete} considered the set of all Dirichlet forms associated to it in terms of its Royden compactification. For a detailed exposition on Dirichlet forms, reflected Dirichlet forms, and Silverstein extensions, we refer the reader to the textbook \cite{cf-book}. 

A symmetric Markov process on the countable state space $\mcl V \mcl G$ is described through the following three components of a Dirichlet form: a $\sigma$-finite measure $m$ on $\mcl V \mcl G$ called the \textbf{speed measure}, a dense linear subspace $\mcl F$ of $L^2(\mcl V \mcl G,m)$ called the \textbf{Dirichlet space}, and a non-negative definite symmetric bilinear form $\mcl E$ defined on $\mcl F \times \mcl F$ called the \textbf{energy form}, where $\mcl F$ is complete with respect to the norm $\|f\| = [\mcl E(f,f) + (f,f)_{L^2(m)}]^{1/2}$. In a nutshell, the energy form determines the path of the process while on the graph, the speed measure determines the time spent by the process at each step, and the Dirichlet space determines the behavior of the process at $\infty$. 

If $p_t: \mcl V\mcl G \times \mcl V \mcl G \to [0,1]$ is the transition kernel of a process on $\mcl G$ and $T_t$ is the corresponding transition semigroup on $L^2(\mcl V \mcl G,m)$ given by $T_tf(x) = \sum_{y\in \mcl V \mcl G} p_t(x,y)f(y)$, then the energy form $\mcl E$ is given by the increasing limit
\[ \mcl E(f,f) = \lim_{t\downarrow 0} \frac{1}{t}(f-T_tf,f)_{L^2(m)}. \] On the other hand, the Dirichlet form $(\mcl E,\mcl F)$ on $L^2(\mcl V\mcl G , m)$ gives the generator $L:D(L)\subset \mcl F \to L^2(\mcl V\mcl G,m)$ of the transition semigroup $T_t$ as
\[ (Lf,g)_{L^2(m)} = -\mcl E(f,g).  \]
Hence, for the continuous time random walk on $\mcl G$ with a bounded rate function $\frk w$, the corresponding Dirichlet form is 
\[ \begin{cases} \mcl E(f,f) = \frac{1}{2}\sum_{x\in \mcl V \mcl G} \sum_{y \sim x} \frk c(x,y)(f(x)-f(y))^2 = \textrm{Energy}(f) , \\
\mcl F = L^2(\mcl V\mcl G,m), \\
m = \sum_{x\in \mcl V \mcl G} \frac{\pi(x)}{\frk w(x)}\delta_x.
\end{cases}\]
See, e.g., \cite[Theorem~2.2.2]{cf-book}. In particular, $\mcl E$ is the $\ell^2$ inner product of the discrete gradient as defined in Section \ref{sec-min-harmonic}, and the rate function $\frk w$ does not appear in the energy form.

In Theorem \ref{thm-cont-time-walk}, we consider unbounded rate functions $\frk w$ so that the associated random walks on $\mcl G$ wander off to $\infty$ in finite time. Even with unbounded $\frk w$, the formulas for $\mcl E$ and $m$ stay the same; this is since, up to the time the process reaches $\infty$, it is merely a time change of the random walk with unit jump rate at each vertex. If the random walk is killed once it reaches $\infty$, the corresponding Dirichlet space is $\mcl F = \mcl F_e \cap L^2(\mcl V \mcl G,m)$
\cite[Equation~2.2.7]{cf-book} where 
\begin{equation}\label{extended-dirichlet-space} \mcl F_e = \{ f :\mcl V \mcl G \to \mathbb R | \mathrm{Energy}(f) < \infty, \mcl E(f,h) = 0 \text{ for any finite-energy harmonic function $h$ on } \mcl V \mcl G\} \end{equation}
is the \textbf{extended Dirichlet space} of $(\mcl E,L^2(\mcl V \mcl G,\pi))$. The extended Dirichlet space of a Dirichlet form $(\mcl E,\mcl F)$ is defined, in general, as the space of functions which are pointwise limits of $\mcl E$-Cauchy sequences in $\mcl F$. Since $\mcl E(f,f) \leq 2\|f\|_{L^2(\pi)}^2$, by a diagonal argument, the approximating functions here can be assumed to be finitely supported. Thus, \eqref{extended-dirichlet-space} is equivalent to the standard definition of $\mcl F_e$ by Royden's decomposition theorem on graphs (see, e.g., \cite[Theorem~3.69]{soardi-network-book}). 

Extending the Dirichlet space $\mcl F = \mcl F_e \cap L^2(\mcl V\mcl G,m)$ gives various ways to continue the random walk after the time it reaches $\infty$. More precisely, if we denote the subspace of bounded elements of $\mcl F$ as $\mcl F_b$, a \textbf{Silverstein extension} of $(\mcl E,\mcl F)$ is a Dirichlet form $(\tilde{\mcl E},\tilde{\mcl F})$ on $L^2(\mcl V\mcl G,m)$ with $\mcl F\subset \tilde{\mcl F}$ such that $\mcl F_b$ is an ideal of the function ring $\tilde {\mcl F}_b$ and $\tilde{\mcl E}|_{\tilde{\mcl F} \times \tilde{\mcl F}}=\mcl E$. Using Gelfand duality, Silverstein showed that this condition is equivalent to the existence of a locally compact Polish space $\ol{\mcl V \mcl G}$ containing $\mcl V \mcl G$ as a dense open subset such that, if $\tilde X$ is the process on $\ol{\mcl V \mcl G}$ corresponding to $(\mcl E,\mcl F)$ on $L^2(\ol{\mcl V \mcl G},m)$ under the natural embedding of $\mcl V\mcl G$ in $\ol{\mcl V\mcl G}$, then the process obtained by killing $\tilde X$ upon exiting $\mcl V\mcl G$ corresponds to $(\mcl E,\mcl F)$ \cite[Theorem~20.1]{silverstein-extension1}. The extended state space $\ol{\mcl V \mcl G}$ is unique up to a quasi-homeomorphism; see \cite[Remark~5.1]{klssw-boundary-extension} for a precise formulation. 

Based on the propertes listed in Theorem \ref{thm-cont-time-walk}, it can be checked that the Markov transition kernel $p_t$ on $\mcl V \mcl G$ corresponding to the random walk reflected off of $\infty$ with rate function $\mathfrak w$ induces a strongly continuous contraction semigroup on $L^2(\mcl V \mcl G,m)$, and the corresponding Dirichlet form $(\tilde{\mcl E}, \tilde{\mcl F})$ is a Silverstein extension of $(\mcl E, \mcl F_e \cap L^2(\mcl V \mcl G, m))$ on $L^2(\mcl V \mcl G,m)$. In fact, it is the \textbf{active reflected Dirichlet form}, described in the following proposition.

\begin{prop}
Let $(\tilde{\mcl E}, \tilde{\mcl F})$ be the Dirichlet form on $L^2(\mcl V\mcl G,m)$ corresponding to the random walk reflected off of $\infty$ on $\mcl G$ with rate function $\mathfrak w$, where $m = \pi/\mathfrak w$. Then, $\tilde{\mcl F} = \mcl F^{\mathrm{ref}}\cap L^2(\mcl V\mcl G,m)$ where \[ \mcl F^{\mathrm{ref}} = \{f : \mcl V \mcl G \to \mathbb R| \mathrm{Energy}(f)<\infty\} = \mcl F_e \oplus \{ \text{finite-energy discrete harmonic functions on } \mcl G  \} \]
is known as the \textbf{reflected Dirichlet space} \cite[Equation~6.5.5]{cf-book}, and $\tilde{\mcl E}(f,f) = \mathrm{Energy}(f)$ for $f\in \tilde{\mcl F}$. 
\end{prop}

\begin{proof}
Let us first show that $\mcl F^{\mathrm{ref}} \cap L^2(\mcl V \mcl G,m) \subseteq \tilde{\mcl F}$. Let $X$ be process defined in Theorem \ref{thm-cont-time-walk}. Dirichlet form theory gives that if $\mcl A \subset \mcl V \mcl G$ is a non-empty finite set and $\tau_{\mcl A}$ is the hitting time of $\mcl A$, then, for any real-valued function $\phi$ on $\mcl A$, the function $x\mapsto h^\phi(x)= \mathbb E_x [ \phi(\tilde X_{\tau_{\mcl A}});\tau_{\mcl A}<\infty]$ on $\mcl V\mcl G$ is in $\tilde{\mcl F}_e$, the extended Dirichlet space of $(\tilde{\mcl E}, \tilde{\mcl F})$ \cite[Theorem~3.4.2]{cf-book}. (Moreover, $\tilde{\mcl E}(f,u)=0$ for all $u \in \tilde{\mcl F}_e$ with $u|_{\mcl A}=0$.) From the transition law of $X$ and the fact that $m$ is supported on $\mcl V \mcl G$, the Dirichlet energy of $h^\phi$ must equal $\tilde{\mcl E}(h^\phi,h^\phi)$. Moreover, Theorem \ref{thm-cont-time-walk} states that $h^\phi$ is the energy-minimizing extension of $\phi$. Thus, the collection $\{ h^\phi| \mcl A \subset \mcl \mcl V \mcl G \text{ finite}, h:\mcl A \to \mathbb R \}\subset \tilde{\mcl F}_e$ is a dense subset of $\mcl F^{\mathrm{ref}}$ simultaneously with respect to Dirichlet energy and pointwise convergence: given a sequence of finite sets $\mcl A_n \uparrow \mcl V \mcl G$, we have $h^{f|_{\mcl A_n}} \to f$ in Dirichlet energy and pointwise for any finite energy function $f$ on $\mcl V \mcl G$. This means $\mcl F^{\mathrm{ref}} \subseteq \tilde{\mcl F}_e$ and $\tilde{\mcl E}(f,f) = \mathrm{Energy}(f)$ for all $f\in {\mcl F}^{\mathrm{ref}}$. The claim follows since $\tilde{\mcl F} = \tilde{\mcl F}_e \cap L^2(\mcl V\mcl G,m)$ \cite[Theorem~1.1.5]{cf-book}.

For the converse direction, it suffices to note that the active reflected Dirichlet form is the maximal Silverstein extension of $(\mcl E, \mcl F_e \cap L^2(\mcl V\mcl G,m))$ on $L^2(\mcl V\mcl G,m)$, with respect to the following partial order: $(\mcl E_1, \mcl F_1) \preceq (\mcl E_2, \mcl F_2)$ if $\mcl F_1 \subseteq \mcl F_2$ and $\mcl E_1(u,u) \geq \mcl E_2(u,u)$ for all $u\in \mcl F_1$. This maximality was proved by Silverstein \cite[Theorem~15.2]{silverstein-extension1}; see also \cite[Theorem~6.6.9]{cf-book}.\footnote{Here we use that the energy form has no killing part on $\mcl G$, a hypothesis whose necessity is clarified in \cite{klss-dirichlet-discrete,schmidt-reflected-form}.}
\end{proof}

Returning to the case of bounded $\frk w$, we have $\mcl F^{\mathrm{ref}} \cap L^2(\mcl V\mcl G,m) = \mcl F_e \cap L^2(\mcl V\mcl G,m) = L^2(\mcl V\mcl G,m)$ since $\mcl E(f,f) \leq 2(\sup \frk w) \sum_{x\in \mcl V\mcl G} \frac{\pi(x)}{\frk w(x)}(f(x))^2$. This depicts the fact that the random walk on $\mcl G$ with rate function $\frk w$ almost surely does not reach $\infty$ within finite time, so there is no reflection to speak of on the time interval $[0,\infty)$.

\section{Applications to the free uniform spanning forest}
\label{sec-fsf}

In Section~\ref{sec-ab}, we prove Theorem~\ref{thm-fsf} (Aldous--Broder algorithm for the $\frk c$-FSF). 
In Section~\ref{sec-wilson}, we prove Theorem~\ref{thm-wilson} (Wilson's algorithm for the $\frk c$-FSF). 
In keeping with Section~\ref{sec-fsf0}, throughout this section, we assume that $\mcl G$ is locally finite.

\subsection{Aldous--Broder algorithm} 
\label{sec-ab}
 
The goal of this section is to prove Theorem~\ref{thm-fsf}. After the proof, we will also record a description of the branches of the $\frk c$-FSF in terms of loop erased random walk reflected off of $\infty$ (Lemma~\ref{lem-fsf-lerw}). 

Let $\{\mcl G_n\}_{n\geq 1}$ be an increasing family of finite connected subgraphs of $\mcl G$ whose union is all of $\mcl G$.
The proof of Theorem~\ref{thm-fsf} is based on a local convergence statement for the ordered sequence of vertices hit by random walk on $\mcl G_n$ to the ordered sequence of vertices hit by $X$. To state this precisely, we introduce some notation. 

For $n\in\BB N$, we write $\{Z^n_j\}_{j\geq 0}$ for the random walk on $\mcl G_n$. 
Fix $k \leq n$ such that the starting point of $Z^n$ is in $\mcl V\mcl G_k$. Let $L_0^n = 0$ and let $L_i^n$ for $i\geq 1$ be the $i$th smallest $j \in \BB N$ for which $Z^n_j \in \mcl V\mcl G_k$. 
We define the \textbf{ordered sequence of vertices of $\mcl G_k$ hit by $Z^n$} to be the following infinite sequence of vertices of $\mcl G_k$: 
\eqb  \label{eqn-ordered-seq-n}
(Z^n_{I_0^n} , Z^n_{I_1^n} , Z^n_{I_2^n} ,  \dots ) 
\eqe 
 
Similarly to the above, if $k \in \BB N$ such that the starting point of $X$ is in $\mcl V\mcl G_k$, we we let $\sigma_0 = 0$ and for $i\geq 0$ we inductively let $\sigma_i$ be the smallest time $t\geq \sigma_{i-1}$ at which $X$ jumps to a vertex of $\mcl V\mcl G_k$. We define the \textbf{ordered sequence of vertices of $\mcl G_k$ hit by $X$} to be the following infinite sequence of vertices of $\mcl G_k$: 
\eqb  \label{eqn-ordered-seq}
(X_{\sigma_0} , X_{\sigma_1} , X_{\sigma_2} , \dots   ) 
\eqe 

\begin{lem} \label{lem-walk-tv}
Assume that we are in the setting described just above. 
Fix $k\in\BB N$ and $z\in \mcl V\mcl G_k$, and assume that $X$ and each $Z^n$ start at $z$.
For each fixed $N\in\BB N$, the total variation distance between the following two random $N$-tuples goes to zero as $n\to\infty$: the first $N$ elements of the ordered sequence of vertices of $\mcl G_k$ hit by $Z^n$; and the first $N$ elements of the ordered sequence of vertices of $\mcl G_k$ hit by $X$.
\end{lem}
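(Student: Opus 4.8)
The plan is to realize each of the two ordered sequences as the trajectory of a time-homogeneous Markov chain on the \emph{finite} set $\mcl V\mcl G_k$, obtained by observing the relevant walk only at those times when it lies in $\mcl V\mcl G_k$, and then to show that the two transition matrices converge entrywise as $n\to\infty$. On a finite state space the joint law of the first $N$ states of a Markov chain started from a fixed vertex is a polynomial in the entries of the transition matrix, so entrywise convergence of transition matrices, with a common fixed initial state, forces convergence of these joint laws; and convergence of probability measures on a finite set is precisely convergence in total variation. This last deduction is soft, so the work lies in identifying the two transition matrices and proving their convergence.

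For the identification: since $X$ is unique in law (Theorem~\ref{thm-cont-time-walk}), we may take $X$ to be the process constructed in Section~\ref{sec-rw}. By \eqref{eqn-cont-time-def-infty} together with \eqref{eqn-xi-exists} and the coupling of Lemma~\ref{lem-rw-coupling}, the ordered sequence $(X_{\sigma_0},X_{\sigma_1},\dots)$ is then exactly the subsequence of $(Y^k_0,Y^k_1,\dots)$ consisting of the entries lying in $\mcl V\mcl G_k$; that is, it coincides with the induced chain $\wt Y^k$ of Remark~\ref{remark-tilde-process} started from $z$, whose transition matrix is the matrix $\wt p_k$ of \eqref{eqn-tilde-transition} with $n=k$. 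For $Z^n$, fix $n$ large enough that $\mcl G_n$ contains every edge of $\mcl G$ incident to a vertex of $\mcl V\mcl G_k$ --- possible for all large $n$ since, by local finiteness, there are finitely many such edges and $\bigcup_m\mcl G_m=\mcl G$. For such $n$ we have $B_1\mcl G_k\subset\mcl V\mcl G_n$, and from any $x\in\mcl V\mcl G_k$ the first step of $Z^n$ has the same law as a step of the random walk on $\mcl G$; moreover, since $\mcl G_n$ is finite and connected, $Z^n$ a.s.\ returns to $\mcl V\mcl G_k$ after each excursion. Hence, by the strong Markov property, observing $Z^n$ on $\mcl V\mcl G_k$ yields a Markov chain on $\mcl V\mcl G_k$ started from $z$ with transition matrix
\[
\wt q_n(x,y) \;=\; \frac{\frk c(x,y)}{\pi(x)}\,\BB 1_{y\sim x} \;+\; \sum_{\substack{z'\in B_1\mcl G_k\setminus\mcl G_k\\ z'\sim x}} \frac{\frk c(x,z')}{\pi(x)}\, \mu_n^{z'}(y), \qquad x,y\in\mcl V\mcl G_k,
\]
where $\mu_n^{z'}$ is the law of the first vertex of $\mcl V\mcl G_k$ hit by the random walk on $\mcl G_n$ started from $z'$.

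Comparing this display with \eqref{eqn-tilde-transition}, and using that $B_1\mcl G_k\setminus\mcl G_k$ is finite, it suffices to show $\mu_n^{z'}(y)\to\op{hm}_{\mcl G_k}^{z'}(y)$ as $n\to\infty$ for each fixed $z'\in B_1\mcl G_k\setminus\mcl G_k$ and $y\in\mcl V\mcl G_k$. But for fixed $y$ the function $z'\mapsto\mu_n^{z'}(y)$ is precisely the function $h_n^{\BB 1_y}$ of Proposition~\ref{prop-harmonic-conv} applied with $\mcl A=\mcl V\mcl G_k$ --- namely the $\mcl G_n$-discrete harmonic extension of $\BB 1_y$ from $\mcl V\mcl G_k$ --- so Proposition~\ref{prop-harmonic-conv} gives $\mu_n^{z'}(y)=h_n^{\BB 1_y}(z')\to h^{\BB 1_y}(z')$, which equals $\op{hm}_{\mcl G_k}^{z'}(y)$ by Definition~\ref{def-hm}. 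Therefore $\wt q_n(x,y)\to\wt p_k(x,y)$ for all $x,y\in\mcl V\mcl G_k$, and the lemma follows from the soft argument of the first paragraph.

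The main obstacle I anticipate is the bookkeeping in the identification step: one must carefully unwind the construction of $X$ --- the total order on $\Xi$ and the relation \eqref{eqn-xi-exists} --- to confirm that the ``ordered sequence of vertices of $\mcl G_k$ hit by $X$'' really is (in law) the induced chain $\wt Y^k$, and one must treat the excursions of the two walks outside $\mcl V\mcl G_k$ consistently: for $X$ such an excursion is instantaneous and contributes a single harmonic-measure jump, whereas for $Z^n$ it is a genuine excursion inside $\mcl G_n\setminus\mcl G_k$ whose effect is only \emph{asymptotically} a harmonic-measure jump. The one genuinely analytic ingredient --- the finite-graph approximation $h_n^\phi\to h^\phi$ --- is already supplied by Proposition~\ref{prop-harmonic-conv}, so no new estimate is needed.
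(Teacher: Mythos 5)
Your proposal is correct, and it uses the same key analytic input as the paper --- Proposition~\ref{prop-harmonic-conv} applied with $\mcl A = \mcl V\mcl G_k$ to get pointwise convergence of the exit measures $\mu_n^{z'} \to \op{hm}_{\mcl G_k}^{z'}$. The difference from the paper's proof is purely organizational, but it is a genuine simplification worth noting. The paper works with the auxiliary Markov chains $\{X_{T_j}\}$ and $\{Z^n_{I_j^n}\}$ on the larger finite state space $B_1\mcl G_k$, whose one-step transitions are given directly by~\eqref{eqn-tv-transition} and~\eqref{eqn-tv-transition-n} without any convolution. But because these chains record exits to $B_1\mcl G_k \setminus \mcl G_k$ as well as arrivals in $\mcl V\mcl G_k$, the paper then needs an extra step: it couples the two chains for $M$ steps using uniform convergence of the kernels, and then uses irreducibility of the limit chain on the finite state space to choose $M$ large enough that the first $M$ steps on $B_1\mcl G_k$ already contain $N$ visits to $\mcl V\mcl G_k$ with high probability. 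You instead pass directly to the induced chain on $\mcl V\mcl G_k$ (for $X$, this is exactly the chain $\wt Y^k$ of Remark~\ref{remark-tilde-process}), so the ``first $N$ elements of the ordered sequence'' is literally the first $N$ states of a time-homogeneous chain on a fixed finite set, and the soft ``law is a polynomial in the transition matrix entries'' argument finishes it off with no coupling or coverage estimate. The price you pay is a slightly more elaborate transition formula $\wt q_n$ and the extra observation that $\mcl G_n$ must contain all $\mcl G$-edges incident to $\mcl V\mcl G_k$ (so that the one-step law of $Z^n$ from a vertex of $\mcl G_k$ matches that of the $\mcl G$-random walk, not the $\mcl G_n$-random walk); you handled this correctly, and it is the one point that a careless reader might overlook since $Z^n$ is a walk on the subgraph $\mcl G_n$ rather than on $\mcl G$.
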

\begin{proof} 
For $x \in \mcl V\mcl G$, we write $  \op{hm}_{\mcl G_k}^x$ for the harmonic measure on $\mcl V\mcl G_k$ in $\mcl G$ as viewed from $x$, as in Definition~\ref{def-hm}. For $x\in\mcl V\mcl G_n$, we also write $ \op{hm}_{\mcl G_k}^{x,n}$ for harmonic measure on $\mcl V\mcl G_k$ in $\mcl G_n$ as viewed from $x$, i.e., $\op{hm}_{\mcl G_k}^{x,n}(y)$ is the probability that random walk on $\mcl G_n$ started at $x$ first hits $\mcl V\mcl G_k$ at $y$. The lemma is a straightforward consequence of the convergence $\op{hm}_{\mcl G_k}^{x,n}(y) \to \op{hm}_{\mcl G_k}^x(y)$ for fixed $x \in \mcl V\mcl G $ and $y\in \mcl V\mcl G_k$. Let us now give the details. 

Let $T_j = 0$ and for $j\geq 1$, define $T_j$ inductively as follows. If $X_{T_{j-1}} \in \mcl V\mcl G_k  $, let $T_j$ be the smallest $t\geq T_{j-1}$ for which $X_t \not= X_{T_{j-1}}$. Otherwise, let $T_j$ be the smallest $t\geq T_{j-1}$ for which $X_t\in \mcl V\mcl G_k$. By the strong Markov property (Lemma~\ref{lem-wt-strong-markov}) and Property~\eqref{item-harmonic} from Theorem~\ref{thm-cont-time-walk}, $\{X_{T_j}\}_{j\geq 0}$ is a discrete time Markov process on the 1-neighborhood $B_1\mcl G_k$ with transition probabilities 
\eqb  \label{eqn-tv-transition}
p_k(x,y) = 
\begin{cases} 
\frk c(x,y) / \pi(x) ,\quad &\text{if $x \in \mcl V\mcl G_k  $ and $y\sim x$} \\
\op{hm}_{\mcl G_k}^x(y) , \quad &\text{if $x\in B_1\mcl G_k \setminus \mcl G_k$}   \\
0 , \quad &\text{otherwise}   .
\end{cases}
\eqe  
That is, $\{X_{T_j}\}_{j \geq 0}$ has the same law as the Markov chain $Y^k$ defined in~\eqref{eqn-transition-walk} and~\eqref{eqn-transition-comp}. 

Similarly, for $n\in\BB N$, let $I_0^n = 0$ and for $j \geq 1$, inductively define $I_j^n$ as follows. If $Z^n_{I^n_{j-1}} \in \mcl V\mcl G_k$, let $I_j^n = I_{j-1}^n + 1$. Otherwise, let $I^n_j$ be the smallest $i \geq I^n_{j-1}$ for which $Z^n_i \in \mcl V\mcl G_k$. By the strong Markov property, $\{Z^n_{I_j^n} \}_{j\geq 0}$ is a discrete time Markov process on $B_1\mcl G_k$ with transition probabilities 
\eqb  \label{eqn-tv-transition-n}
p_k^n(x,y) = 
\begin{cases} 
\frk c(x,y) / \pi(x) ,\quad &\text{if $x \in \mcl V\mcl G_k \setminus \mcl A$ and $y\sim x$} \\
\op{hm}_{\mcl G_k}^{x,n}(y) , \quad &\text{if $x\in B_1\mcl G_k \setminus \mcl G_k$} \\
0 , \quad &\text{otherwise}   .
\end{cases}
\eqe  

By Proposition~\ref{prop-harmonic-conv} (applied with $\mcl A = \mcl V\mcl G_k$), for each $x,y\in B_1\mcl G_k$, the transition probability~\eqref{eqn-tv-transition-n} converges to the transition probability~\eqref{eqn-tv-transition} as $n\to\infty$. Since $B_1\mcl G_k$ is a finite set (recall that we are assuming that $\mcl G$ is locally finite throughout this section), the convergence is in fact uniform over all $x,y\in B_1\mcl G_k$. Since both processes are Markovian and start at $z$, for each fixed $M \in\BB N$ and each $\ep > 0$, it holds for sufficiently large $n\in\BB N$ that can couple together $\{Z^n_{I_j^n}\}_{j\geq 0}$ and $\{X_{T_j}\}_{j\geq 0}$ in such a way that
\eqb  \label{eqn-tv-coupling}
\BB P\left[ Z^n_{I_j^n} = X_{T_j} , \,: \forall j = 1,\dots, M\right] \geq 1 - \ep .
\eqe 
Since $\{X_{T_j}\}_{j\geq 0}$ is an irreducible Markov chain on the finite state space $B_1\mcl G_k$, for any given $N\in\BB N$, if we choose $M$ sufficiently large (depending on $N$), then with probability at least $1-\ep$, the process $\{X_{T_j}\}_{j\geq 0}$ spends at least $N$ integer times in $\mcl V\mcl G_k$ before time $M$. Hence, \eqref{eqn-tv-coupling} implies that with probability at least $1-2\ep$, the first $N$ elements of the ordered sequence of vertices of $\mcl G_k$ hit by $\{Z^n_{I_j^n}\}_{j\geq 0}$ are identical to the first $N$ elements of the ordered sequence of vertices of $\mcl G_k$ hit by $\{X_{T_j}\}_{j\geq 0}$.  

By the definition of $\{I_j^n\}_{j\geq 0}$, the first $N$ elements of the ordered sequence of vertices of $\mcl G_k$ hit by $Z^n$ are the same as the first $N$ elements of the ordered sequence of vertices of $\mcl G_k$ hit by $\{Z^n_{I_j^n}\}_{j\geq 0}$. The analogous statement holds for $X$. Since $\ep > 0$ is arbitrary, this concludes the proof. 
\end{proof}

\begin{proof}[Proof of Theorem~\ref{thm-fsf}]
Let $\{\mcl G_n\}_{n\geq 1}$ be an increasing family of finite connected subgraphs of $\mcl G$ whose union is all of $\mcl G$. 
We assume that $z\in \mcl V\mcl G_n$ for every $n\geq 1$. 
 
For $n\geq 1$, let $\mcl T_n$ be the $\frk c$-weighted spanning tree of $\mcl G_n$. 
By the Aldous--Broder algorithm for finite graphs (see, e.g.,~\cite[Corollary 4.9]{lyons-peres}) we can generate $\mcl T_n$ from random walk $Z^n$ on $\mcl G_n$ started from $z$ as follows. 
For $x\in \mcl V\mcl G_n$, let $J_x^n := \min\{j \geq 0 : Z^n_j = x \}$. Then set $\mcl V\mcl T_n = \mcl V\mcl G_n$ and
\eqb  \label{eqn-sf-def-n}
\mcl E\mcl T_n  = \left\{ \{ Z^n_{J_x^n - 1} , x \} : x \in \mcl V\mcl G_n \setminus \{z\} \right\}  .
\eqe 

Let $k \in \BB N$ and let $K \geq k+1$ be such that the 1-neighborhood $B_1\mcl G_k$ is contained in $\mcl G_K$. 
For $n\geq K$, let $\mcl N_K^n$ be the amount of time that $Z^n$ spends in $\mcl V\mcl G_K$ before the first time at which it has hit every vertex of $\mcl G_K$. Similarly, let $\mcl N_K$ be equal to 1 plus the number of times that $X$ jumps to a vertex of $\mcl G_K$ before the first time at which it has visited every vertex of $\mcl G_K$.  

From the definition~\eqref{eqn-sf-def-n}, it is immediate that $\mcl T_n \cap \mcl E\mcl G_k$ is given by a function of the first $\mcl N_K^n$ elements of the ordered sequence of vertices of $\mcl G_K$ hit by $Z^n$ (recall~\eqref{eqn-ordered-seq-n}). Moreover, by~\eqref{eqn-sf-def}, we have that $\mcl T^{\op{AB}} \cap \mcl E\mcl G_k$ is given by the \emph{same} function of the first $\mcl N_K$ elements of the ordered sequence of vertices of $\mcl G_K$ hit by $X$ (recall~\eqref{eqn-ordered-seq}).

By considering the irreducible Markov chain $\{X_{T_j}\}_{j\geq 0}$ on $B_1\mcl G_K$ as defined in~\eqref{eqn-tv-transition} (with $K$ in place of $k$), for every $\ep > 0$, we can find $N\geq 0$ (depending on $K$) such that with probability at least $1-\ep$, we have $\mcl N_K \leq N$. By combining this with the previous paragraph and Lemma~\ref{lem-walk-tv}, we get that for each sufficiently large $n \geq K$, the total variation distance between the laws of $\mcl T_n\cap \mcl E\mcl G_k$ and $\mcl T^{\op{AB}} \cap \mcl E\mcl G_k$ is at most $1-2\ep$. Since $\ep >0$ and $k\in\BB N$ are arbitrary, by the definition of the $\frk c$-FSF on $\mcl G$, this shows that $\mcl T$ is equal to the $\frk c$-FSF. 
\end{proof}

Using Theorem~\ref{thm-fsf}, we can describe the branches of the $\frk c$-FSF in terms of loop-erased random walk. 

Suppose that the continuous time random walk $X$ reflected off of $\infty$ starts at $X_0 = z \in \mcl V\mcl G$. 
For $y \in \mcl V\mcl G$, let $\tau_y$ be the first time that $X$ hits $y$ (as in Definition~\ref{def-rw-forest}) and let $\op{LE}^y : \BB N_0 \to \mcl V\mcl G$ be the loop erasure of the time reversal of $X^y|_{[0,\tau_y]}$, as in Definition~\ref{def-le-infty}. That is, in the notation of Definition~\ref{def-rw-forest}, we have $\op{LE}^y(0) = y$ and
\eqb \label{eqn-le-def}
\op{LE}^y(k) =
\begin{cases}
a(\op{LE}^y(k-1)) ,\quad &\text{if $\op{LE}^y(k-1) \not= z$} \\
z ,\quad &\text{if $\op{LE}^y(k-1) = z$} . 
\end{cases} 
\eqe
From this description and the definition of $\mcl T^{\op{AB}}$ in Definition~\ref{def-rw-forest}, the following lemma is immediate. 

\begin{lem} \label{lem-fsf-lerw}
Suppose that $X$ starts at $z\in\mcl V\mcl G$. Let $\mcl T^{\op{AB}}$ be as in Definition~\ref{def-rw-forest} (see also Theorem~\ref{thm-fsf}). For $y\in\mcl V\mcl G$, we have that $z$ and $y$ are in the same connected component of $\mcl T^{\op{AB}}$ if and only if the loop erasure $\op{LE}^y$ reaches $y$ in finitely many steps. If this is the case, then the unique simple path from $z$ to $y$ in $\mcl T^{\op{AB}}$ is equal to $\op{LE}^y$, stopped when it first reaches $y$. 
\end{lem}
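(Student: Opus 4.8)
The plan is to deduce the lemma from the recursion~\eqref{eqn-le-def} and Definition~\ref{def-rw-forest}; note that the condition at issue is that $\op{LE}^y$ reach $z$ (since $\op{LE}^y(0)=y$). First I would record the basic facts about the parent map $a(\cdot)$. Because $X$ is recurrent (Property~\eqref{item-recurrence} of Theorem~\ref{thm-cont-time-walk}; see also Remark~\ref{remark-tilde-process}), a.s.\ $\tau_x=\min\{t\geq 0:X_t=x\}<\infty$ for every $x\in\mcl V\mcl G$, so by Lemma~\ref{lem-parent} the parent $a(x)$ of each $x\neq z$ is a.s.\ well defined, $\{a(x),x\}\in\mcl E\mcl G$ (in particular $a(x)\neq x$), and --- since $a(x)$ is the vertex occupied by $X$ just before its first visit to $x$ --- $\tau_{a(x)}<\tau_x$. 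Writing $a^{(k)}$ for the $k$-fold composition of $a$, it follows that as long as none of $y,a(y),\dots,a^{(k-1)}(y)$ equals $z$ we have $\tau_y>\tau_{a(y)}>\cdots>\tau_{a^{(k)}(y)}$, so these vertices are pairwise distinct; hence the \emph{parent chain} $y,a(y),a^{(2)}(y),\dots$ is a simple path in $\mcl G$ up to and including the first time it hits $z$, and it is finite if and only if it ever hits $z$.

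Next I would identify this parent chain with $\op{LE}^y$: by~\eqref{eqn-le-def} and induction on $k$, we have $\op{LE}^y(k)=a^{(k)}(y)$ for all $k$ strictly before the parent chain reaches $z$, and $\op{LE}^y$ is constant equal to $z$ from that step onward. (If one wishes to verify~\eqref{eqn-le-def} itself rather than take it from the setup: apply Definition~\ref{def-le-infty} to the time reversal of $X|_{[0,\tau_y]}$ and use a last-exit decomposition --- the vertex following $\op{LE}^y(k-1)$ in the loop erasure of the reversed path is exactly the vertex visited by $X$ immediately before its first visit to $\op{LE}^y(k-1)$, which by definition is $a(\op{LE}^y(k-1))$.)

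Finally I would read off the lemma from the structure of $\mcl T^{\op{AB}}$. By Definition~\ref{def-rw-forest} every vertex $x\neq z$ is incident to the single parent edge $\{a(x),x\}$, while $z$ has no parent. If the parent chain from $y$ reaches $z$, then by the first paragraph it is a simple path from $y$ to $z$ in $\mcl T^{\op{AB}}$, so $y$ and $z$ lie in the same connected component. Conversely, if $y$ and $z$ lie in the same component of the forest $\mcl T^{\op{AB}}$, let $y=v_0,v_1,\dots,v_m=z$ be the unique simple path between them; a short induction on $m$ shows $v_{i+1}=a(v_i)$ for every $i$. Indeed, for $m\geq 2$ the sub-path $v_1,\dots,v_m=z$ forces (by the inductive hypothesis) $v_2=a(v_1)$, so $\{v_1,v_2\}$ is $v_1$'s parent edge; since the path is simple, the other edge $\{v_0,v_1\}$ at $v_1$ is distinct from $\{v_1,v_2\}$, and as it has the form $\{a(x),x\}$ it must equal $\{a(v_0),v_0\}$ (the alternative $\{a(v_1),v_1\}=\{v_1,v_2\}$ being excluded), whence $v_1=a(v_0)$; the base case $m=1$ is immediate since $z$ has no parent. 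Thus this path is precisely the parent chain from $y$ stopped at $z$, so the parent chain reaches $z$. Combining with the previous paragraph, $y$ and $z$ lie in the same component of $\mcl T^{\op{AB}}$ if and only if $\op{LE}^y$ reaches $z$ in finitely many steps, and in that case the unique simple path between $y$ and $z$ in $\mcl T^{\op{AB}}$ is the image of $\op{LE}^y$ run until its first visit to $z$.

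I do not anticipate a genuine obstacle. The only steps requiring a moment's thought are the strict inequality $\tau_{a(x)}<\tau_x$ --- which both makes parent chains automatically simple and upgrades ``finite'' to ``reaches $z$'' --- and, if~\eqref{eqn-le-def} is to be derived rather than assumed, the elementary time-reversal/last-exit identity behind it; everything else is bookkeeping with Definition~\ref{def-rw-forest}.
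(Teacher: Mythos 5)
Your proof is correct and is the natural unfolding of the argument the paper treats as ``immediate from this description and Definition~\ref{def-rw-forest}'': you identify the parent chain $y, a(y), a^{(2)}(y), \ldots$ with $\op{LE}^y$ via~\eqref{eqn-le-def}, use the strict inequality $\tau_{a(x)} < \tau_x$ to guarantee the chain is simple (so ``finite'' is equivalent to ``reaches $z$''), and then match the chain against the edge set~\eqref{eqn-sf-def} of $\mcl T^{\op{AB}}$, with a clean induction for the converse direction. You also correctly flag that the lemma statement should read ``reaches $z$'' rather than ``reaches $y$'' (since $\op{LE}^y(0)=y$), which is a typo in the paper.
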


\subsection{Wilson's algorithm}
\label{sec-wilson}

To prove Theorem~\ref{thm-wilson}, we will, roughly speaking, perform Wilson's algorithm on finite subgraphs of $\mcl G$ and take a limit. We need the following lemma, which is a straightforward consequence of Lemma~\ref{lem-walk-tv}. 
 
\begin{lem} \label{lem-le-tv}
Let $\{\mcl G_n\}_{n\geq 1}$ be an increasing family of connected subgraphs of $\mcl G$ whose union is all of $\mcl G$.
Also let $A\subset \mcl V\mcl G$ be a finite set of vertices and let $z \in \mcl V\mcl G$.
\begin{itemize}
\item Let $X$ be the random walk on $\mcl G$ reflected off of $\infty$ started at $z$ and let $\tau$ be the smallest $t$ for which $X_t \in A$. Let $\op{LE}$ be the loop erasure of $X|_{[0,\tau]}$ (Definition~\ref{def-le-infty}). 
\item For $n\in\BB N$ such that $A \cup \{z\} \subset\mcl G_n$, let $Z^n$ be the random walk on $\mcl G_n$ started at $z$ and let $J_n$ be the smallest $j$ such that $Z^n_j \in A$. Let $\op{LE}^n$ be the loop-erasure of $Z^n|_{[0,J^n]}$ (Definition~\ref{def-le}).
\end{itemize}
Fix $k\in \BB N$ such that $A \cup \{z\} \subset\mcl G_k$.
As $n\to\infty$, the total variation distance between the following two random variables goes to zero. The random variable which equals $\op{LE}^n$ on the event $\op{LE}^n \subset \mcl G_k$ and equals $\emptyset$ otherwise; and the random variable which equals $\op{LE}$ on the event that $\op{LE} \subset \mcl G_k$ and equals $\emptyset$ otherwise. 
\end{lem}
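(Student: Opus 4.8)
The plan is to exhibit a single deterministic map $F$ such that the truncated loop erasure of $X$ is the image under $F$ of the ordered sequence of vertices of a suitable finite subgraph hit by $X$ up to time $\tau$, with the \emph{same} $F$ describing the truncated loop erasure of $Z^n$, and then to deduce the claim from the total variation convergence of these ordered sequences furnished by Lemma~\ref{lem-walk-tv}. Since $\mcl G$ is locally finite, $B_1\mcl G_k$ is a finite set, so I fix $K\geq k$ with $B_1\mcl G_k\subseteq\mcl V\mcl G_K$. For the trajectory $X|_{[0,\tau]}$ (resp.\ $Z^n|_{[0,J^n]}$, with $n\geq K$), let $V$ be the ordered sequence of vertices of $\mcl V\mcl G_K$ visited by the trajectory, stopped right after the first visit to $A$; since $A\subseteq\mcl G_k\subseteq\mcl G_K$, the final entry of $V$ is the first vertex of $A$ hit and is the unique entry of $V$ lying in $A$. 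For $X$ this sequence is a.s.\ finite: $X$ is recurrent, so $\tau<\infty$ a.s., and by Assertion~\ref{item-finite-visit} of Lemma~\ref{lem-to-infty} applied with the finite set $\mcl V\mcl G_K$, the walk $X$ visits $\mcl V\mcl G_K$ only finitely often during $[0,\tau]$.

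Next, define $F(V)$ by running the loop-erasure recursion of Definition~\ref{def-le} directly on the sequence $V$ (start at the first entry; from $\op{LE}(j-1)$, stop if it equals the last entry, otherwise set $\op{LE}(j)$ to be the entry of $V$ immediately following the last occurrence of $\op{LE}(j-1)$), and then output the resulting path if every vertex it visits lies in $\mcl G_k$, and $\emptyset$ otherwise. I claim $F(V)$ equals $\op{LE}^n$ on $\{\op{LE}^n\subseteq\mcl G_k\}$ and $\emptyset$ otherwise, and likewise with $\op{LE}$ in place of $\op{LE}^n$ for the $X$-trajectory. The key observation is that whenever the loop erasure of the trajectory sits at a vertex $v\in\mcl G_k$ which is not the first vertex of $A$ hit, the next vertex of the loop erasure is, by Definitions~\ref{def-le} and~\ref{def-le-infty}, exactly the vertex visited by the walk immediately after its last visit to $v$; because the walk always moves from a vertex to a neighbouring vertex (Property~\eqref{item-rw} of Theorem~\ref{thm-cont-time-walk} and the Markov property for $X$; by definition for $Z^n$), this vertex is a neighbour of $v$ in $\mcl G$, hence lies in $B_1\mcl G_k\subseteq\mcl V\mcl G_K$, hence is recorded in $V$ as the entry immediately after the last occurrence of $v$. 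A short induction on the step index then shows that, as long as the loop erasure of the trajectory stays in $\mcl G_k$, it coincides step by step with the recursion defining $F(V)$; and the first step (if any) at which the $F$-recursion leaves $\mcl G_k$ is exactly the first step at which the trajectory's loop erasure does, so the two truncations match. (When the loop erasure stays in $\mcl G_k$ it is a simple path in a finite graph, hence finite, hence terminates at the first vertex of $A$ hit.) I expect this step — in particular checking that the loop erasure of $X|_{[0,\tau]}$, which may pass through $\infty$ infinitely often, is nevertheless recoverable from the finite sequence $V$ — to be the main point requiring care.

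Finally, write $V$ and $V^n$ for the sequence built from $X$ and from $Z^n$. Since both truncated loop erasures are the image under the common map $F$ of $V$, resp.\ $V^n$, it suffices to prove $V^n\to V$ in total variation. Fix $\ep>0$. Since $X$ a.s.\ hits $A$ in finite time and records this after finitely many visits to $\mcl V\mcl G_K$, there is $N$ such that with probability at least $1-\ep$ the first visit of $X$ to $A$ occurs among the first $N$ vertices of $\mcl V\mcl G_K$ hit by $X$; on this event $V$ is a function of those first $N$ vertices. By Lemma~\ref{lem-walk-tv} with $K$ in place of $k$, for all large $n$ we can couple $Z^n$ and $X$ (both started at $z$) so that their first $N$ vertices of $\mcl V\mcl G_K$ agree with probability at least $1-\ep$; on the intersection with the previous event, of probability at least $1-2\ep$, the first visit to $A$ is among the first $N$ entries for $Z^n$ as well and $V^n=V$, so the truncated loop erasures agree. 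Hence the total variation distance in the statement is at most $2\ep$ for all large $n$, and letting $\ep\to0$ finishes the proof.
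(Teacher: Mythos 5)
Your proof is correct and follows essentially the same approach as the paper's: truncate at $N$ visits, argue that the truncated loop erasure is a measurable function of the first $N$ elements of the ordered sequence of vertices of a suitable finite set hit by the walk, and then invoke Lemma~\ref{lem-walk-tv}. One nice feature of your writeup: you fix $K\geq k$ with $B_1\mcl G_k\subseteq\mcl V\mcl G_K$ and apply Lemma~\ref{lem-walk-tv} with $K$ in place of $k$, which cleanly handles the fact that the next step of the loop erasure (being a neighbour of a $\mcl G_k$-vertex) may lie in $B_1\mcl G_k\setminus\mcl G_k$ and hence would not be recorded by the ordered sequence of vertices of $\mcl G_k$ alone. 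The paper's proof truncates via the event ``$X$ jumps between vertices of $B_1\mcl G_k$ at most $N$ times before reaching $A$'' and then appeals directly to Lemma~\ref{lem-walk-tv} as stated; this implicitly relies on the stronger coupling over $B_1\mcl G_k$ established inside the proof of Lemma~\ref{lem-walk-tv} rather than on its stated conclusion about vertices of $\mcl G_k$. Your version makes this step explicit and only uses the stated form of Lemma~\ref{lem-walk-tv}, which is a genuine (if small) gain in rigor.
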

\begin{proof} 
Fix $\ep > 0$.  
Since a.s.\ $X$ reaches $A$ in finite time (by recurrence) and $X$ visits each vertex only finitely many times during each finite time interval (Lemma~\ref{lem-to-infty}), we can find $N = N(k) \in\BB N$ such that 
\eqbn
\BB P\left[ \{\op{LE} \subset \mcl G_k\} \cap F^c \right] < \ep 
\eqen 
where $F$ is the event that $X$ jumps between vertices of $\mcl B_1\mcl G_k$ at most $N$ times before reaching $A$. 

The event $F$ is determined by the first $N$ elements of the ordered sequence of vertices of $\mcl G_k$ hit by $X$, as defined in~\eqref{eqn-ordered-seq}. Moreover, by the definition of the loop erasure, on $F$ the random variable which equals $\op{LE}$ on the event that $\op{LE} \subset \mcl G_k$ and equals $\emptyset$ otherwise is also determined by these first $N$ elements.  Since $\ep > 0$ is arbitrary, the lemma statement therefore follows from Lemma~\ref{lem-walk-tv}. 
\end{proof}

\begin{proof}[Proof of Theorem~\ref{thm-wilson}] 
Recall the notation used to define Wilson's algorithm in Section~\ref{sec-wilson0}. 
Let $\{\mcl G_n\}_{n\geq 1}$ be an increasing family of connected subgraphs of $\mcl G$ whose union is all of $\mcl G$.
\medskip

\noindent\textit{Step 1: connectivity events.}
Fix $n\geq 1$.  
Let $J = J_n$ be the largest index $j$ such that $x_j \in \mcl G_n$.   
Let
\eqb  \label{eqn-wilson-event}
E_n := \left\{ \text{$\mcl T^{\op{FSF}}$ contains a path between any two points of $\{x_0,\dots,x_J\}$} \right\} .
\eqe   
Then $\bigcap_{n = 1}^\infty E_n$ is the same as the event that $\mcl T^{\op{FSF}}$ is connected. 
 
For $N \geq n$ such that $\{x_0,\dots,x_J\} \subset \mcl G_N$, we also define 
\eqb  \label{eqn-wilson-event-N} 
F_n^N := E_n \cap \left\{ \text{$\forall i,j \in \{0,\dots,J\}  $, the simple path in $\mcl T^{\op{FSF}}$ from $x_i$ to $x_j$ is contained in $\mcl G_N$} \right\} . 
\eqe 
Then $E_n$ is the increasing union of the events $F_n^N$ over all $N \geq n$. 
\medskip

\noindent\textit{Step 2: Wilson's algorithm on finite subgraphs.}
For $m\in\BB N$, let $\mcl T_m^{\op{FSF}}$ be the $\frk c$-spanning tree of $\mcl G_m$ (Definition~\ref{def-ust}). 
For $m\geq n$, we perform $J$ steps of Wilson's algorithm on $\mcl G_m$, with loop-erased random walks started from the vertices $x_1,\dots,x_J$. For $j=0,\dots,J$, let $\mcl T_{m,j}^{\op{Wil}}$ be the subtree of $\mcl G_m$ obtained after $j$ steps of Wilson's algorithm. That is, $\mcl T_{m,j}^{\op{Wil}}$ is obtained in the same manner as $\mcl T_j^{\op{Wil}}$ but with random walk on $\mcl G_m$ in place of random walk reflected off of $\infty$ on $\mcl G$. By Wilson's algorithm for $\mcl G_m$, we can couple the trees $\mcl T_{m,j}^{\op{Wil}}$ with $\mcl T_m^{\op{FSF}}$ so that $\mcl T_{m,j}^{\op{Wil}} \subset \mcl T_m^{\op{FSF}}$ for each $j=0,\dots,J$. 

By $J$ applications of Lemma~\ref{lem-le-tv}, we get that for each fixed $N\geq n$, the total variation distance between the following two random variables tends to 0 as $m \to\infty$. 
\begin{enumerate}[($A$)] 
\item \label{eqn-wilson-rv} The random variable with equals $\mcl T_{m,J}^{\op{Wil}}$ if $ \mcl T_{m,J}^{\op{Wil}} \subset \mcl G_N$ and $\emptyset$ otherwise; and the random variable which equals $\mcl T_J^{\op{Wil}}$ if $ \mcl T_{ J}^{\op{Wil}} \subset \mcl G_N$ and $\emptyset$ otherwise. 
\end{enumerate}
\medskip

\noindent\textit{Step 3: Total variation convergence.}
Since $\mcl V\mcl G_n \subset \{x_0 ,\dots, x_J\}$, we have 
\eqb  \label{eqn-wilson-agree}
\mcl T_m^{\op{FSF}} \cap \mcl G_n = \mcl T_{m,J}^{\op{Wil}} \cap \mcl G_n  \quad \text{and} \quad
\mcl T^{\op{Wil}} \cap \mcl G_n = \mcl T_J^{\op{Wil}} \cap \mcl G_n   .
\eqe   
Furthermore, by construction, the tree $\mcl T_{m,J}^{\op{Wil}}$ is the union of the simple paths in $\mcl T_m^{\op{FSF}}$ between pairs of vertices in $\{x_0,\dots,x_J\}$. That is, the event $\{\mcl T_{m,J}^{\op{Wil}} \subset \mcl G_N\}$ is defined in the same way as the event $F_n^N$ of~\eqref{eqn-wilson-event-N}, but with $\mcl T_m^{\op{FSF}}$ instead of $\mcl T^{\op{FSF}}$. 

We have $\mcl T_m^{\op{FSF}} \cap \mcl G_n \to \mcl T^{\op{FSF}} \cap \mcl G_n$ in the total variation sense as $m\to\infty$ (Definition~\ref{def-fsf}) and the total variation distance between the random variables~\eqref{eqn-wilson-rv} goes to zero as $m\to\infty$. By combining this with the preceding paragraph, we get that the following two random variables agree in law. 
The random variable with equals $\mcl T^{\op{FSF}} \cap \mcl G_n$ if $F_n^N$ occurs and $\emptyset$ otherwise; and the random variable which equals $\mcl T^{\op{Wil}} \cap \mcl G_n = \mcl T_J^{\op{Wil}} \cap \mcl G_n$ if $ \mcl T_{ J}^{\op{Wil}} \subset \mcl G_N$ and $\emptyset$ otherwise. 

By sending $N\to\infty$, we get that the following two random variables agree in law. 
The random variable with equals $\mcl T^{\op{FSF}} \cap \mcl G_n$ if the event $E_n$ of~\eqref{eqn-wilson-event} occurs and $\emptyset$ otherwise; and the random variable which equals $\mcl T^{\op{Wil}} \cap \mcl G_n$ if $ \mcl T_{ J}^{\op{Wil}}$ is finite and $\emptyset$ otherwise. 
Sending $n\to\infty$ and recalling Lemma~\ref{lem-wilson-connected} now concludes the proof. 
\end{proof}

\section{Additional results}
\label{sec-additional}

\subsection{Ends}
\label{sec-ends}

Assume that we are in the setting of Theorem~\ref{thm-cont-time-walk}. In this section, we show that if $\mcl G$ is locally finite, then we can canonically associate each $t \geq 0$ such that $X_t = \infty$ with a unique end of $\mcl G$. Hence, we can view the random walk on $\mcl G$ reflected off of $\infty$ as a function from $[0,\infty)$ to $\mcl V\mcl G \cup \{\text{ends of $\mcl G$}\}$.  
We start by giving a precise definition of the notion of an end. 

\begin{defn}[End of $\mcl G$] \label{def-end}
Let $\mcl P$ be the set of one-sided infinite paths $P : \BB N_0 \to \mcl V\mcl G$ which are simple (i.e., they visit each vertex of $\mcl V\mcl G$ at most once). We define an equivalence relation $\sim$ on $\mcl P$ by $P\sim Q$ if and only if the following is true. For each finite set $A\subset \mcl V\mcl G$, there is exactly one connected component of $\mcl V\mcl G\setminus A$ which contains both infinitely many vertices of $P$ and infinitely many vertices of $Q$. An equivalence class $\omega$ under this equivalence relation is called an \textbf{end} of $\mcl G$. 
\end{defn}

The following lemma is easily verified from the definition of an end. 

\begin{lem} \label{lem-end-comp}
Let $\{V_n\}_{n\geq 1}$ be an increasing family of finite subsets of $\mcl V\mcl G$ whose union is all of $\mcl V\mcl G$. 
There is a bijection $\omega \mapsto \{C_n(\omega)\}_{n\geq 1}$ between ends of $\mcl G$ and sequences $\{C_n\}_{n \geq 1} $ of connected components of $\mcl V\mcl G\setminus V_n$ such that $C_n \subset C_{n-1}$ for each $n\geq 1$. 
Under this bijection, for each end $\omega$ and each equivalence class representative $P\in\omega$, the set $C_n(\omega)$ is the unique connected component of $\mcl V\mcl G\setminus V_n$ which contains infinitely many vertices of $P$. 
\end{lem}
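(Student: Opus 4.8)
The plan is to construct the map $\omega \mapsto \{C_n(\omega)\}_{n\geq 1}$ directly, check it is well-defined, and then exhibit an explicit inverse. First I would fix an end $\omega$ and a representative path $P \in \omega$. For each $n$, since $V_n$ is finite and $P$ is an infinite simple path, $P$ visits $V_n$ only finitely often, so all but finitely many vertices of $P$ lie in $\mcl V\mcl G \setminus V_n$; since $P$ is connected, its tail (after the last visit to $V_n$) is a connected infinite subset of $\mcl V\mcl G \setminus V_n$, hence contained in a single connected component. Call that component $C_n(\omega)$; it contains infinitely many vertices of $P$. To see this does not depend on the choice of representative $P' \sim P$: by the definition of $\sim$ applied with $A = V_n$, there is exactly one connected component of $\mcl V\mcl G \setminus V_n$ containing infinitely many vertices of both $P$ and $P'$, and since $C_n(\omega)$ contains infinitely many vertices of $P$, it must be that component, which therefore also contains infinitely many vertices of $P'$. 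Uniqueness (``the unique connected component $\dots$ which contains infinitely many vertices of $P$'') is immediate since distinct components are disjoint and $P$'s tail lies in just one of them. Finally $C_n(\omega) \subset C_{n-1}(\omega)$ because $V_{n-1} \subset V_n$ implies every component of $\mcl V\mcl G \setminus V_n$ is contained in a unique component of $\mcl V\mcl G \setminus V_{n-1}$, and the nesting is forced by the fact that both contain infinitely many vertices of $P$.

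Next I would check injectivity. Suppose $\omega \neq \omega'$ with representatives $P, P'$. By definition of the equivalence relation, there is a finite set $A$ such that no connected component of $\mcl V\mcl G \setminus A$ contains infinitely many vertices of both $P$ and $P'$. Choose $n$ with $A \subset V_n$. Then any component of $\mcl V\mcl G \setminus V_n$ is contained in a component of $\mcl V\mcl G \setminus A$, so $C_n(\omega)$ and $C_n(\omega')$ lie in different components of $\mcl V\mcl G \setminus A$ (one contains infinitely many vertices of $P$, the other infinitely many of $P'$), hence $C_n(\omega) \neq C_n(\omega')$ and the two sequences differ.

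For surjectivity, let $\{C_n\}_{n\geq 1}$ be any nested sequence of connected components of $\mcl V\mcl G \setminus V_n$ with $C_n \subset C_{n-1}$. I would build a simple path $P$ with $P$ eventually in each $C_n$. Pick any vertex $v_1 \in C_1$. Inductively, given that a finite simple path has been built ending at a vertex $w$ of $C_n$, use that $C_n$ is connected and $C_{n+1} \subset C_n$ is nonempty to find a simple path inside $C_n$ from $w$ to some vertex of $C_{n+1}$, appending it (and trimming to keep the whole path simple — here local finiteness of $\mcl G$ is not even needed, only connectivity of each $C_n$, but the standing hypothesis gives it anyway). Concatenating yields an infinite simple path $P$ whose tail from the $n$-th stage onward lies in $C_n$. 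Then $P$ determines an end $\omega = [P]$, and by construction $C_n(\omega)$ is the component of $\mcl V\mcl G \setminus V_n$ containing the tail of $P$, namely $C_n$. So the map is onto, and the displayed characterization of $C_n(\omega)$ holds.

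The main obstacle — really the only point requiring care — is the surjectivity construction: one must produce a single infinite \emph{simple} path threading through all the $C_n$ simultaneously, which requires an inductive concatenation of finite paths within each successive component together with a loop-erasure/trimming step to maintain simplicity, plus checking that the tail genuinely enters and stays in each $C_n$. Everything else is bookkeeping about the lattice of connected components of the $\mcl V\mcl G \setminus V_n$ and the definition of $\sim$.
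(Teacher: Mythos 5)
The paper gives no proof of this lemma (it is introduced with ``easily verified from the definition of an end''), so this is a correctness review rather than a comparison. Your forward construction of $C_n(\omega)$, the well-definedness check via the definition of $\sim$ with $A = V_n$, the nesting $C_n \subset C_{n-1}$, and the injectivity argument are all correct.

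The surjectivity step, however, has a real gap in the ``trimming.'' When you append a path $Q \subset C_n$ to the simple path $P_n$, $Q$ may revisit vertices of $P_n$ other than its endpoint $w_n$: since the $C_m$ are nested, earlier vertices of $P_n$ (which were built while threading through $C_1, C_2, \ldots$) can perfectly well lie in $C_n$, and nothing prevents $Q$ from hitting them. Loop-erasing $P_n \cdot Q$ then replaces $P_n$ by a shorter prefix, so the finite simple paths are not nested as initial segments, ``concatenating yields an infinite simple path $P$'' does not follow, and the cut-back can also spoil the claim that the tail from stage $n$ lies in $C_n$. You identified this as the one point requiring care, but stage-wise trimming by itself does not resolve it. Two clean ways to close the gap: (a) strengthen the inductive invariant to ``$P_n$ is simple, ends at $w_n \in C_n$, and its \emph{non-terminal} vertices avoid $C_n$,'' and extend by a simple path $Q \subset C_n$ from $w_n$ stopped at its \emph{first} hit of $C_{n+1}$; then the interior of $Q$ lies in $C_n \setminus C_{n+1}$ and is disjoint from the non-terminal part of $P_n$, so $P_n \cdot Q$ is automatically simple, the invariant propagates with no trimming at all, and since $\bigcap_n C_n = \emptyset$ the endpoints $w_n$ escape every finite set, making $\bigcup_n P_n$ a genuine infinite simple path whose tail past $w_n$ lies in $C_n$; or (b) drop simplicity during the construction altogether: pick $v_n \in C_n$, join $v_n$ to $v_{n+1}$ by any walk in $C_n$, concatenate to get an infinite walk $\gamma$ that eventually stays in each $C_n$ and hence visits every vertex only finitely often, then loop-erase $\gamma$ once at the end; the strictly increasing loop-erasure indices guarantee the resulting infinite simple path still eventually lies in each $C_n$.
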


The main result of this subsection is the following proposition.
 
\begin{prop} \label{prop-rw-end}
Let $X : [0,\infty) \to \mcl V\mcl G\cup \{\infty\}$ be the continuous time random walk on $\mcl G$ reflected off of $\infty$ as in Theorem~\ref{thm-cont-time-walk}. Almost surely, for each $t\in [0,\infty)$ such that $X_t =\infty$, there is a unique end $\omega_t$ of $\mcl G$ such that the following is true. 
Let $\{V_n\}_{n\geq 1}$ be an increasing family of finite, non-empty subsets of $\mcl V\mcl G$ whose union is all of $\mcl V\mcl G$ and let $\{C_n(\omega_t)\}_{n\geq 1}$ be the sequence of connected components of $\mcl V\mcl G_n\setminus V_n$ as in Lemma~\ref{lem-end-comp}. Then for each $n\geq 1$, there exists $\ep > 0$ such that 
\eqbn
X_s \in C_n(\omega_t) \cup \{\infty\}  ,\quad \forall s \in (t-\ep , t+\ep ) .
\eqen
\end{prop}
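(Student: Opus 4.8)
The plan is to build the end $\omega_t$ as a decreasing sequence of connected components, one for each level $n$ of the exhaustion $\{V_n\}$, and then to check the two things that need checking: that the components nest, so that by Lemma~\ref{lem-end-comp} they determine a genuine end; and that this end does not depend on the choice of exhaustion. First I would fix the exhaustion $\{V_n\}$ and a time $t$ with $X_t = \infty$. For each $n$, I want to produce a connected component $C_n$ of $\mcl V\mcl G\setminus V_n$ and an $\ep_n>0$ such that $X_s\in C_n\cup\{\infty\}$ for all $s\in(t-\ep_n,t+\ep_n)$. To get this, apply Assertion~\ref{item-to-infty} of Lemma~\ref{lem-to-infty} with $\mcl A = V_n$: since $X_t=\infty$, there is $\delta>0$ with $X_s\notin V_n$ for $s\in[t-\delta,t]$, and by right continuity (Property~\eqref{item-cont}) together with the same Lemma applied slightly after $t$, we may also arrange $X_s\notin V_n$ for $s\in[t,t+\delta]$ after shrinking $\delta$. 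Now on the punctured interval $(t-\delta,t+\delta)$ the process never visits $V_n$, and by Lemma~\ref{lem-pt-defined} the set of times in this interval with $X_s\in\mcl V\mcl G$ has full measure, so we can pick such a time $s_0$; let $C_n$ be the connected component of $\mcl V\mcl G\setminus V_n$ containing $X_{s_0}$.

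The key step is to show that this $C_n$ does not depend on the choice of $s_0$, i.e.\ that $X$ cannot switch components of $\mcl V\mcl G\setminus V_n$ during $(t-\delta,t+\delta)$ without hitting $V_n$. This is exactly Lemma~\ref{lem-no-jump}: if $s_0<s_1$ are two times in the interval with $X_{s_0}, X_{s_1}\in\mcl V\mcl G$ lying in different components of $\mcl V\mcl G\setminus V_n$, then every finite path in $\mcl G$ from $X_{s_0}$ to $X_{s_1}$ visits $V_n$, so Lemma~\ref{lem-no-jump} forces some $u\in[s_0,s_1]$ with $X_u\in V_n$, contradicting the construction of $\delta$. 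Hence $C_n$ is well-defined and $X_s\in C_n\cup\{\infty\}$ for all $s\in(t-\delta,t+\delta)$; set $\ep_n = \delta$. Next, nesting: since $V_n\subset V_{n+1}$, the component $C_{n+1}$ of $\mcl V\mcl G\setminus V_{n+1}$ is contained in a unique component of $\mcl V\mcl G\setminus V_n$, and evaluating at a common time $s_0\in(t-\min(\ep_n,\ep_{n+1}),t+\min(\ep_n,\ep_{n+1}))$ with $X_{s_0}\in\mcl V\mcl G$ shows that component is $C_n$. So $\{C_n\}_{n\geq1}$ is a decreasing sequence of components, and by Lemma~\ref{lem-end-comp} it corresponds to a unique end $\omega_t$ with $C_n(\omega_t)=C_n$, which satisfies the displayed containment by construction.

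Finally, independence of the exhaustion: given a second exhaustion $\{V_n'\}$ with associated components $\{C_n'\}$ and $\ep_n'>0$, I would argue that the ends $\omega_t$ and $\omega_t'$ coincide. For each $n$, pick $m$ large with $V_n\subset V_m'$; then $C_m'$ is contained in some component of $\mcl V\mcl G\setminus V_n$, and evaluating $X$ at a full-measure time in $(t-\min(\ep_n,\ep_m'),t+\min(\ep_n,\ep_m'))$ where $X_s\in\mcl V\mcl G$ identifies that component as $C_n$; symmetrically each $C_n$ eventually contains some $C_m'$. By the characterization in Lemma~\ref{lem-end-comp} (or directly from Definition~\ref{def-end}, comparing representative rays), this means $\omega_t=\omega_t'$. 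The whole argument is carried out on the almost sure event where Lemmas~\ref{lem-to-infty} and~\ref{lem-no-jump} hold simultaneously for all finite vertex sets, which is a single almost sure event, so the conclusion holds almost surely for every $t$ with $X_t=\infty$. The main obstacle is bookkeeping the two-sided control near $t$ (Lemma~\ref{lem-to-infty} as stated gives control on $[t-\ep,t]$; extending to the right requires combining it with right continuity and with the same lemma applied at times just after $t$), but this is routine given the tools already in place.
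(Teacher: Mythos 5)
Your overall plan matches the paper's: build a nested sequence of components $C_n$ of $\mcl V\mcl G\setminus V_n$ and invoke Lemma~\ref{lem-end-comp}, with Lemma~\ref{lem-no-jump} ensuring that $C_n$ is well defined. The nesting argument and the independence-of-exhaustion argument are fine (the paper's proof does not explicitly address the latter). However, the step you dismiss at the end as routine --- extending the control $X_s\notin V_n$ from $[t-\delta,t]$ to $[t,t+\delta]$ --- is the crux of the proposition, and your stated justification does not work. Right continuity (Property~\eqref{item-cont}) only applies at times where $X$ is at a vertex, which fails at $t$ since $X_t=\infty$; and applying Assertion~\ref{item-to-infty} of Lemma~\ref{lem-to-infty} ``slightly after $t$'' would require a time $t'>t$ near $t$ with $X_{t'}=\infty$, which need not exist. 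So as written you have not ruled out that $X$ enters $V_n$ immediately after time $t$.

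The paper closes this gap by applying Assertion~\ref{item-to-infty} of Lemma~\ref{lem-to-infty} with $\mcl A = B_1 V_n$ rather than $V_n$, so $X[t-\ep_n,t]$ avoids all of $B_1 V_n$; then picking times $t_j\uparrow t$ at which $X$ is at a vertex outside $B_1 V_n$, setting $T_n=\inf\{s\geq t : X_s\in V_n\}$, and using Lemma~\ref{lem-no-jump} to force $X$ to pass through $B_1 V_n\setminus V_n$ at some time $s_j \in [t_j,T_n]$. Since $X$ avoids $B_1 V_n$ on $[t-\ep_n,t]$, necessarily $s_j>t$, so $T_n>t$. (This is where the local finiteness hypothesis on $\mcl G$ is used, to make $B_1 V_n$ finite.) An alternative route that avoids $B_1 V_n$ would be to use Assertion~\ref{item-finite-visit} of Lemma~\ref{lem-to-infty} directly: for any $T>t$, $X^{-1}(V_n)\cap[0,T)$ is a finite union of left-closed right-open intervals on which $X$ is constant, none of which can contain $t$ (else $X_t\in V_n$), so those lying to the right of $t$ have a smallest left endpoint $\tau_1>t$, and $X$ avoids $V_n$ on $(t,\tau_1)$. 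Either way, this step deserves an explicit argument rather than a passing appeal to right continuity.
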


It is natural to re-define $X_t := \omega_t$ whenever $X_t =\infty$, so that $X$ becomes a function as in~\eqref{eqn-end-function}.

\begin{proof}[Proof of Proposition~\ref{prop-rw-end}]
Let $\{V_n\}_{n\geq 1}$ be as in the lemma statement and let $t \in [0,\infty)$ such that $X_t = \infty$.
Since we are assuming that $\mcl G$ is locally finite, the set $B_1 V_1$ is finite. 
By Lemma~\ref{lem-to-infty}, for each $n\geq 1$ there exists $\ep_n > 0$ such that $X[t-\ep_n , t] \cap B_1 V_n = \emptyset$. 
In particular, there exists a sequence of times $\{t_j\}_{j\in\BB N}$ increasing to $t$ such that $X_{t_j} \in \mcl V\mcl G\setminus V_n$ for each $j$. 

Let $T_n := \inf\{s \geq t : X_s \in V_n\}$. By Lemma~\ref{lem-no-jump}, for each $j\in\BB N$, there exists a time $s_j \in[t_j, T_n]$ such that $X_{s_j} \in B_1 V_n\setminus V_n$. This time $s_j$ cannot be in $[t-\ep_n ,t]$, so must be in $[t,T_n]$. In particular, $T_n > t$.  
Hence, by possibly shrinking $\ep_n$, we can arrange that $X[t-\ep_n , t+\ep_n] \cap V_n = \emptyset$.  
 
By Lemma~\ref{lem-no-jump}, there exists a single connected component $C_n$ of $\mcl V\mcl G\setminus V_n$ which contains $X[t-\ep_n , t+\ep_n]  \setminus \{\infty\} $. 
Since $\{V_n\}_{n\geq 1}$ is increasing, the sequence of connected components $\{C_n\}_{n\geq 1}$ is nested. Hence Lemma~\ref{lem-end-comp} shows that this sequence of connected components corresponds to an end $\omega_t$ as in the lemma statement.  
\end{proof}

\subsection{Tutte embedding}
\label{sec-tutte}

Let $\mcl G$ be a planar map equipped with a conductance function $\frk c$, a boundary $\bdy\mcl G$, and marked vertices $\BB x\in\mcl G\setminus \bdy\mcl G$ and $\BB y \in \bdy\mcl G$, as in Section~\ref{sec-tutte0}. As explained in that section, we can define the Tutte embedding $H : \mcl V\mcl G\to \ol{\BB D}$.  

We can extend the definition of $H$ beyond the vertex set as follows: 
\begin{itemize}
\item For each edge $e = \{x,y\} \in \mcl E\mcl G$, we define $H(e)$ to be the line segment from $H(x)$ to $H(y)$ (if $H(x) = H(y)$, this line segment consists of a single point). 
\item For each face $f $ of $\mcl G$, we define $H(f)$ to be the closed region which is the union of the line segments $H(e)$ for edges $e$ on the boundary of $f$ and the region disconnected from $\infty$ by these line segments. 
\item For each end $\omega$ of $\mcl G$ (Definition~\ref{def-end}), we define $H(\omega)$ as follows. Let $\{V_n\}_{n\geq 1}$ be an increasing sequence of finite subsets of $\mcl V\mcl G$ whose union is all of $\mcl V\mcl G$. For $n \in \BB N$, let $C_n(\omega)$ be the connected component of $\mcl V\mcl G\setminus V_n$ corresponding to $\omega$, as in Lemma~\ref{lem-end-comp}. Let $F_n$ be the set of faces of $\mcl G$ whose vertices all belong to $C_n(\omega)$. We define
\eqb \label{eqn-tutte-end}
H(\omega) := \bigcap_{n=1}^\infty \bigcup_{f\in F_n} H(f) \subset \ol{\BB D} .
\eqe 
It is easy to see that this definition does not depend on the choice of $\{V_n\}_{n\geq 1}$. 
\end{itemize}

We now prove some further results about the Tutte embedding. 
We say $\mcl H$ is a \textbf{submap} of $\mcl G$ if $\mcl H$ is a subgraph of $\mcl G$, with the planar map structure inherited from $\mcl G$. Note that each face of $\mcl G$ is contained in a face of $\mcl H$, but faces of $\mcl H$ need not be faces of $\mcl G$.

\begin{lem} \label{lem-tutte-conv}
Let $\{\mcl G_n\}_{n\geq 1}$ be an increasing family of finite connected submaps of $\mcl G$ whose union is all of $\mcl G$. Assume that $\bdy\mcl G\cup\{\BB x\} \subset\mcl G_n$ for every $n\geq 1$ and set $\bdy \mcl G_n := \bdy\mcl G$. 
Then the Tutte embeddings of $(\mcl G_n , \BB x , \BB y)$ converge pointwise on $\mcl V\mcl G$ to the Tutte embedding of $(\mcl G,\BB x , \BB y)$ as $n\to\infty$.
\end{lem}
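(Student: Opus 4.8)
The plan is to deduce the lemma from Proposition~\ref{prop-harmonic-conv} together with a short linearity argument. Write $H$ for the Tutte embedding of $(\mcl G,\BB x,\BB y)$ and $H_n$ for that of $(\mcl G_n,\BB x,\BB y)$; I would use for each $\mcl G_n$ the same boundary set $\bdy\mcl G_n=\bdy\mcl G$ and the same enumeration $y_1,\dots,y_m$ of its vertices as in the definition of $H$ (note that $\bdy\mcl G$ is finite, since the external face of $\mcl G$ has finite degree). The proof then splits into two steps: first show that the boundary values converge, $H_n(y_k)\to H(y_k)$ for each $k$, and then show this forces $H_n(x)\to H(x)$ for every $x\in\mcl V\mcl G$.

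For the first step, I would observe that the harmonic measure $\op{hm}_{\bdy\mcl G}^{\BB x,\,\mcl G_n}(y_j)$ computed inside the finite graph $\mcl G_n$ --- which is what enters~\eqref{eqn-tutte-bdy} in the definition of $H_n$ --- is precisely the value at $\BB x$ of the $\mcl G_n$-discrete harmonic function with boundary data $\BB 1_{y_j}$ on $\bdy\mcl G$, while $\op{hm}_{\bdy\mcl G}^{\BB x}(y_j)$ is the value at $\BB x$ of the energy-minimizing function on $\mcl G$ with the same boundary data. Applying Proposition~\ref{prop-harmonic-conv} with $\mcl A=\bdy\mcl G$ and $\phi=\BB 1_{y_j}$ then gives $\op{hm}_{\bdy\mcl G}^{\BB x,\,\mcl G_n}(y_j)\to\op{hm}_{\bdy\mcl G}^{\BB x}(y_j)$ as $n\to\infty$, and substituting this into~\eqref{eqn-tutte-bdy} and using continuity of the finitely many partial sums and of $z\mapsto e^{2\pi i z}$ gives $H_n(y_k)\to H(y_k)$.

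For the second step, I would set $\phi:=H|_{\bdy\mcl G}$ and $\phi_n:=H_n|_{\bdy\mcl G}$, so that $\phi_n\to\phi$ pointwise and hence (since $\bdy\mcl G$ is finite) uniformly on $\bdy\mcl G$. Using linearity of the $\mcl G_n$-discrete harmonic extension in its boundary data --- which is Proposition~\ref{prop-min-harmonic} applied on the finite graph $\mcl G_n$ to real and imaginary parts --- I would write $H_n=\wt H_n+R_n$ on $\mcl V\mcl G_n$, where $\wt H_n$ is the $\mcl G_n$-discrete harmonic function with boundary data $\phi$ and $R_n$ the one with boundary data $\phi_n-\phi$. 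Then the maximum principle on $\mcl G_n$ gives
\eqbn
\sup_{x\in\mcl V\mcl G_n}|R_n(x)|\leq 2\max_{y\in\bdy\mcl G}|\phi_n(y)-\phi(y)|\longrightarrow 0 ,
\eqen
while Proposition~\ref{prop-harmonic-conv} (with $\mcl A=\bdy\mcl G$, applied to the real and imaginary parts of $\phi$) gives $\wt H_n(x)\to H(x)$ for each $x\in\mcl V\mcl G$, since $H$ is by definition the energy-minimizing function on $\mcl V\mcl G$ with $H|_{\bdy\mcl G}=\phi$. Adding the two limits yields $H_n(x)\to H(x)$.

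I do not expect a real obstacle here: the heart of the matter is Proposition~\ref{prop-harmonic-conv} together with the fact that harmonic measure is itself an energy-minimizing harmonic function. The one point that needs care is that Proposition~\ref{prop-harmonic-conv} only handles a \emph{fixed} boundary function, whereas the boundary data of $H_n$ moves with $n$; the linearity-plus-maximum-principle splitting in the third step is precisely what absorbs this. It is also worth recording at the outset the convention that the enumeration $y_1,\dots,y_m$ (hence the cumulative sums in~\eqref{eqn-tutte-bdy}) is taken to be the same for $\mcl G$ and every $\mcl G_n$, consistently with the stipulation $\bdy\mcl G_n:=\bdy\mcl G$.
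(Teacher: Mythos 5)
Your proof is correct and rests on the same key ingredient as the paper's, namely Proposition~\ref{prop-harmonic-conv}, combined with linearity of discrete harmonic extension. The paper organizes the argument slightly more compactly: it writes the Tutte embedding directly as the linear combination
\begin{equation*}
H(x)=\sum_{k=1}^m \exp\left( 2\pi i \sum_{j=1}^k h^j(\BB x) \right) h^k(x) = \sum_{k=1}^m H(y_k)\,h^k(x),
\end{equation*}
where $h^k$ is the energy-minimizing extension of $\BB 1_{y_k}$, and uses the analogous formula for $H_n$ with $h_n^k$ in place of $h^k$. Since the sum is over finitely many $k$, the convergence $h_n^k(x)\to h^k(x)$ (applied both at $x=\BB x$ inside the exponential and at the generic point $x$) immediately gives $H_n(x)\to H(x)$, with no need for a separate step to control the $n$-dependence of the boundary data. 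Your maximum-principle splitting $H_n=\wt H_n+R_n$ is a perfectly sound alternative way to absorb that $n$-dependence, but the paper's formula makes it unnecessary: the coefficients $H_n(y_k)$ are already explicit functions of the converging quantities $h_n^j(\BB x)$, so the result is a single application of Proposition~\ref{prop-harmonic-conv}.
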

\begin{proof}
Let $y_1,\dots,y_m$ be the vertices of $\bdy\mcl G$ in counterclockwise cyclic order, as in the definition of the Tutte embedding (Section~\ref{sec-tutte0}). For $k=1,\dots,m$ and $n\geq 1$, let $h_n^k$ be the function on $\mcl G_n$ which equals $\BB 1_{y_k}$ on $\bdy\mcl G$ and is $\mcl G_n$-discrete harmonic on $\mcl V\mcl G_n\setminus\bdy\mcl G$. Also let $h^k$ be the unique energy-minimizing function on $\mcl V\mcl G$ which equals $\BB 1_{y_k}$ on $\bdy \mcl G$ (Proposition~\ref{prop-min-harmonic}). By the definition of the Tutte embedding and Lemma~\ref{lem-hm-linear}, the Tutte embedding of $(\mcl G,\BB x,\BB y)$ is given by
\eqb \label{eqn-tutte-linear}
H(x) = \sum_{k=1}^m \exp\left( 2\pi i \sum_{j=1}^k h^j(\BB x) \right)  h^k(x) .
\eqe
Furthermore, for $n\in\BB N$, the Tutte embedding of $(\mcl G_n , \BB x , \BB y)$ is given by~\eqref{eqn-tutte-linear} with $h_n^j$ and $h_n^k$ in place of $h^j$ and $h^k$. By Proposition~\ref{prop-harmonic-conv}, for each $k=1,\dots,m$ we have $h_n^k(x) \to h^k(x)$ as $n\to\infty$. The lemma statement therefore follows from~\eqref{eqn-tutte-linear} and its analog for $\mcl G_n$.
\end{proof}

\begin{lem} \label{lem-tutte-convex}
Let $H$ be the Tutte embedding of $(\mcl G,\BB x,\BB y)$. 
For each non-external face $f$ of $\mcl G$, the region $H(f)$ is convex.  
Moreover, the regions $H(f)$ for different non-external faces $f$ have disjoint interiors. 
\end{lem}
\begin{proof}
Let $\{\mcl G_n\}_{n\geq 1}$ be as in Lemma~\ref{lem-tutte-conv}.
For $n\in\BB N$, let $H_n$ be the Tutte embedding of $(\mcl G,\BB x,\BB y)$.  It was proven by Tutte~\cite{tutte-embedding} that the image under $H_n$ of each non-external face of $\mcl G_n$ is a convex set and the images of different non-external faces have disjoint interiors. 
For each face $f$ of $\mcl G$, it holds for large enough $n\in\BB N$ that $f$ is also a face of $\mcl G_n$.
Sending $n\to\infty$ and using Lemma~\ref{lem-tutte-conv} therefore gives the lemma statement. 
\end{proof}

\subsection{Green's function}
\label{sec-green}

Recall the definition of the Green's function for random walk reflected off of $\infty$ and killed upon hitting $\mcl A$ (Definition~\ref{def-green}). In this section, we establish some basic properties of this function.

\begin{lem} \label{lem-green-finite}
For each $x,y\in \mcl V\mcl G$, we have
\eqb  \label{eqn-green-finite}
G_{\mcl A}(x,y)   < \infty .
\eqe 
Furthermore, for each fixed $y\in\mcl V\mcl G$, the function $x\mapsto G_{\mcl A}(x,y)$ is discrete harmonic on $\mcl V\mcl G\setminus (\mcl A \cup \{y\})$ and if $y\notin \mcl A$, then
\eqb \label{eqn-green-laplace}
 \sum_{x \sim y} \frk c(y,x) \left( G_{\mcl A}(x,y)  - G_{\mcl A}(y,y) \right)   =  - \pi(y) .
\eqe
\end{lem}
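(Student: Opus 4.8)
The plan is to reduce everything to the finite-graph case by the approximation results already established. Fix an increasing exhaustion $\{\mcl G_n\}$ of $\mcl G$ by finite connected subgraphs with $\mcl A\subset\mcl V\mcl G_n$, and recall the discrete-time Markov chain $Y^n$ on $B_1\mcl G_n$ from Section~\ref{sec-discrete-markov}, together with the coupling of Lemma~\ref{lem-rw-coupling} under which the ordered sequence of vertices of $\mcl G_n$ hit by $X$ agrees with $Y^n$. The key observation is that, for $y\in\mcl V\mcl G_n$, the number of distinct visits of $X$ to $y$ before time $\tau_{\mcl A}$ equals the number of visits of $Y^n$ to $y$ before its first hitting time of $\mcl A$; this is because each excursion of $X$ through $\infty$ corresponds to a block of steps of $Y^n$ spent outside $\mcl G_n$, during which $Y^n$ (hence $X$) does not visit any vertex of $\mcl G_n$, and in particular not $y$. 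So $N_{\mcl A}(y)$ under $X$ equals $N_{\mcl A}^n(y)$, the analogous count for the chain $Y^n$, once $n$ is large enough that $y\in\mcl V\mcl G_n$. But $Y^n$ is recurrent (Remark~\ref{remark-tilde-process}) and $\mcl A\subset\mcl V\mcl G_n$ is non-empty, so the chain hits $\mcl A$ in finitely many steps a.s., and the Green's function of a finite-state Markov chain killed on hitting a non-empty set is finite; this gives $G_{\mcl A}(x,y) = \BB E_x[N_{\mcl A}^n(y)] < \infty$, proving~\eqref{eqn-green-finite}.

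Next, harmonicity. Fix $y\in\mcl V\mcl G$ and $x\in\mcl V\mcl G\setminus(\mcl A\cup\{y\})$. Pick $n$ large enough that $x,y\in\mcl V\mcl G_n$. By the strong Markov property (Lemma~\ref{lem-wt-strong-markov}) applied at the first time $X$ leaves $x$: on $\{X_0 = x\}$, since $x\notin\mcl A$, the walk makes at least one step, $X$ jumps to a neighbour $z\sim x$ with probability $\frk c(x,z)/\pi(x)$, and then the count of visits to $y$ before $\tau_{\mcl A}$ is (the number of future visits to $y$ before hitting $\mcl A$) $+$ (an extra $\BB 1_{x=y}$, which vanishes since $x\ne y$). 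Taking expectations,
\eqbn
G_{\mcl A}(x,y) = \sum_{z\sim x} \frac{\frk c(x,z)}{\pi(x)} G_{\mcl A}(z,y) ,
\eqen
which is exactly discrete harmonicity at $x$ (the sum is absolutely convergent since each term is nonnegative and the total is the finite quantity $G_{\mcl A}(x,y)$). Note here we are using the Markov property~\eqref{item-markov}, Property~\eqref{item-rw} for the first step, and the fact that the first step of $X$ from $x\notin\mcl A$ is an ordinary random-walk step — which holds for a vertex of $\mcl G_n$ not in $\mcl A$; the excursions through $\infty$ cause no trouble because they happen only after the walk has exited $\mcl G_n$ through $B_1\mcl G_n\setminus\mcl G_n$, strictly after the first step.

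Finally, the Laplacian identity~\eqref{eqn-green-laplace} at $y\notin\mcl A$. Condition on $X_0 = y$ and apply the same one-step decomposition: $N_{\mcl A}(y)$ counts the initial visit to $y$ (contributing $1$) plus the number of later visits to $y$ before hitting $\mcl A$. After the first step $X$ is at a neighbour $z\sim y$ with probability $\frk c(y,z)/\pi(y)$, so
\eqbn
G_{\mcl A}(y,y) = 1 + \sum_{z\sim y} \frac{\frk c(y,z)}{\pi(y)} G_{\mcl A}(z,y) .
\eqen
Multiplying by $\pi(y)$ and rearranging gives $\sum_{z\sim y}\frk c(y,z)\bigl(G_{\mcl A}(z,y) - G_{\mcl A}(y,y)\bigr) = -\pi(y)$, as claimed. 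I expect the main subtlety to be the bookkeeping in the first paragraph — namely justifying carefully that visits to a fixed vertex $y\in\mcl V\mcl G_n$ before $\tau_{\mcl A}$ are unaffected by the excursions of $X$ through $\infty$, so that the count genuinely reduces to the finite-chain Green's function of $Y^n$; this uses~\eqref{eqn-xi-exists} and the structure of the coupling, but is otherwise routine. The harmonicity and Laplacian computations are then immediate one-step Markov arguments.
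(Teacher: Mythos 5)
Your proof is correct and follows essentially the same route as the paper: both reduce the finiteness of $G_{\mcl A}(x,y)$ to the finite-state chain $Y^n$ via the coupling and~\eqref{eqn-xi-exists}, and both derive harmonicity and the Laplacian identity from a one-step Markov decomposition. The only cosmetic difference is that you run the one-step argument directly on $X$ via Lemma~\ref{lem-wt-strong-markov} and Property~\eqref{item-rw}, while the paper phrases it through the Markov property and transition kernel~\eqref{eqn-transition-walk} of the discrete chain $Y^n$; under the coupling these are interchangeable.
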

\begin{proof}
Let $\{\mcl G_n\}_{n \geq 1}$ be an increasing family of finite subgraphs of $\mcl G$ whose union is all of $\mcl G$, as in Section~\ref{sec-discrete-markov}. 
Also let $\{Y^n\}_{n\geq 0}$ be the Markov chain on $B_1\mcl G_n$ as in~\eqref{eqn-transition-walk} and~\eqref{eqn-transition-comp}, coupled together as in Lemma~\ref{lem-rw-coupling}. 
Assume that $n \in \BB N$ is sufficiently large so that $\mcl A\subset\mcl V\mcl G_n$ and let
\eqbn
T^n_{\mcl A} := \min\left\{ j \in \BB N_0 : Y^n_j \in \mcl A \right\} .
\eqen

Recall the discrete time random walk reflected off of $\infty$, $\{Y_\xi\}_{x\in\Xi}$, from~\eqref{eqn-process-on-xi}. 
Let $N_{\mcl A}(y)$ be the number of visits of $X$ to $y$ before it hits $\mcl A$, as in~\eqref{eqn-number-of-visits} and let $\eta_{\mcl A}$ be the smallest $\xi\in\Xi$ such that $Y_\xi \in \mcl A$. Then
\eqb  \label{eqn-number-of-visits-xi}
N_{\mcl A}(y)  = \#\left\{\xi\in\Xi \,:\, \xi < \eta_{\mcl A} ,\,  Y_\xi = y\right\} .
\eqe  
If $x  , y \in\mcl V\mcl G_n$ and the processes all start at $x$, then for each equivalence class $\xi\in\Xi$ for which $Y_\xi = y$, there exists $(n,j) \in \xi$ such that $Y^n_j = y$. Hence, if $x,y\in\mcl V\mcl G_n$, then~\eqref{eqn-number-of-visits-xi} shows that under $\BB P_x$, 
\eqbn
N_{\mcl A}(y) = \#\left\{ j \in \BB N_0 \,:\, j < T_{\mcl A}^n ,\, Y^n_j = y\right\} .
\eqen
Each $Y^n$ is an irreducible Markov chain on a finite state space, so it follows from standard Markov chain theory that $\BB E_x[N_{\mcl A}(y)] < \infty$. That is,~\eqref{eqn-green-finite} holds.

By the Markov property of $Y^n$ and the transition probability formula~\eqref{eqn-transition-walk}, if $x\in\mcl V\mcl G_n\setminus \{y\}$, then 
\eqbn
G_{\mcl A}(x,y) = \sum_{z \sim x} \frac{\frk c(x,z)}{\pi(x)} G_{\mcl A}(z,y)   
\eqen
which says precisely that $z\mapsto G_{\mcl A}(z,y)$ is discrete harmonic at $x$. If $y\notin \mcl A$, we also have
\eqb  \label{eqn-green-diagonal0}
G_{\mcl A}(y,y) = 1 +  \sum_{x \sim y} \frac{\frk c(y,x)}{\pi(y)} G_{\mcl A}(x,y)   
\eqe 
which re-arranges to give~\eqref{eqn-green-laplace}. 
\end{proof}

As is the case for most objects considered in this paper, the Green's function $G_{\mcl A}$ arises as the limit of Green's functions on finite subgraphs of $\mcl G$.

\begin{lem} \label{lem-green-conv}  
Let $\mcl A\subset\mcl V\mcl G$ be finite and non-empty. Let $\{\mcl G_n\}_{n\geq 1}$ be an increasing family of connected subgraphs of $\mcl G$ whose union is all of $\mcl G$ such that $\mcl A\subset\mcl V\mcl G_n$ for every $n\in\BB N$. 
For $n\in\BB N$, let $G_{\mcl A}^n$ be the Green's function for random walk on $\mcl G_n$ killed when it first hits $\mcl A$. 
Then for each $x,y\in\mcl V\mcl G$, 
\eqb  \label{eqn-green-conv}
\lim_{n\to\infty} \mcl G_{\mcl A}^n(x,y) = \mcl G_{\mcl A}(x,y) .
\eqe 
\end{lem}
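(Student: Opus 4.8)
The plan is to write both $G_{\mcl A}(x,y)$ and $G^n_{\mcl A}(x,y)$ via the elementary last-exit decomposition of the Green's function, thereby reducing the claim to the convergence of two sequences of hitting probabilities, and then to extract that convergence from Lemma~\ref{lem-walk-tv}. Fix $x,y\in\mcl V\mcl G$ (and take the $\mcl G_n$ finite, as Lemma~\ref{lem-walk-tv} requires). If $y\in\mcl A$ both sides vanish, so assume $y\notin\mcl A$, fix $k$ with $\mcl A\cup\{x,y\}\subset\mcl V\mcl G_k$, and consider below only $n$ large enough that $\mcl A\cup\{x,y\}\subset\mcl V\mcl G_n$ and $B_1\mcl G_k\subset\mcl V\mcl G_n$.

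First I would record the two decompositions. For the random walk $Z^n$ on the finite graph $\mcl G_n$, writing $\sigma^n_v$ and $\sigma^{n,+}_v$ for the hitting time and first return time of a vertex $v$, the standard finite-Markov-chain identity gives
\[
G^n_{\mcl A}(x,y)=\frac{\BB P^n_x[\sigma^n_y<\sigma^n_{\mcl A}]}{\BB P^n_y[\sigma^n_{\mcl A}<\sigma^{n,+}_y]},
\]
with strictly positive denominator because $\mcl G_n$ is finite. For $X$, recall from the proof of Lemma~\ref{lem-green-finite} that $N_{\mcl A}(y)$ equals the number of visits to $y$, strictly before the hitting time of $\mcl A$, made by the finite-state chain $\wt Y^k$ of Remark~\ref{remark-tilde-process} (this uses $\{y\}\cup\mcl A\subset\mcl V\mcl G_k$, and that the path of $\wt Y^k$ is distributed as the ordered sequence of vertices of $\mcl G_k$ hit by $X$). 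Applying the same finite-Markov-chain identity to $\wt Y^k$ yields
\[
G_{\mcl A}(x,y)=\frac{\BB P_x[\tau_y<\tau_{\mcl A}]}{\BB P_y[\tau_{\mcl A}<\tau^+_y]},
\]
where $\tau_v,\tau^+_v$ denote the hitting and first return times of $v$ for $X$, equivalently for $\wt Y^k$. The denominator here is strictly positive: $X$ is recurrent and a.s.\ hits $\mcl A$ in finite time (as shown in the proof of Lemma~\ref{lem-green-finite}), so by the strong Markov property (Lemma~\ref{lem-wt-strong-markov}) it cannot a.s.\ return to $y$ before hitting $\mcl A$.

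It then remains to prove $\BB P^n_x[\sigma^n_y<\sigma^n_{\mcl A}]\to\BB P_x[\tau_y<\tau_{\mcl A}]$ and $\BB P^n_y[\sigma^n_{\mcl A}<\sigma^{n,+}_y]\to\BB P_y[\tau_{\mcl A}<\tau^+_y]$ as $n\to\infty$; since the limiting denominator is positive, dividing then gives~\eqref{eqn-green-conv}. Each of these four events is determined by the ordered sequence of vertices of $\mcl G_k$ hit by the relevant walk (in the sense of the paragraphs preceding Lemma~\ref{lem-walk-tv}), run only up to the first entry of that sequence lying in $\{y\}\cup\mcl A$ --- here one uses that between consecutive visits to $\mcl V\mcl G_k$ the walk visits neither $y$ nor any vertex of $\mcl A$, since $\{y\}\cup\mcl A\subset\mcl V\mcl G_k$. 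Since $X$ a.s.\ hits $\{y\}\cup\mcl A$ in finite time, for each $\ep>0$ there is $N$ such that, with probability at least $1-\ep$, the first entry of this sequence for $X$ lying in $\{y\}\cup\mcl A$ occurs among the first $N$ entries; Lemma~\ref{lem-walk-tv} then lets us couple the first $N$ entries of the ordered sequences for $Z^n$ and for $X$ to agree with probability tending to $1$, whence the four probabilities converge as claimed.

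The computations are routine; the one genuine point is that $N_{\mcl A}(y)$ is an unbounded functional, so one cannot directly pass from its distributional convergence (which Lemma~\ref{lem-walk-tv} supplies) to convergence of its mean --- the last-exit decomposition circumvents this by replacing $N_{\mcl A}(y)$ with bounded hitting-probability functionals. (Alternatively one could argue by uniform integrability, bounding $\sup_n\BB E[(\text{number of visits of }Z^n\text{ to }y\text{ before }\mcl A)^2]$ via a geometric tail estimate uniform in $n$, but this is more delicate than the last-exit route.) Finally, if $\mcl G$ is not assumed locally finite, Lemma~\ref{lem-walk-tv} as stated does not apply, but its proof carries over with $\wt Y^k$ and $\mcl V\mcl G_k$ in place of $Y^k$ and $B_1\mcl G_k$, the requisite convergence of transition probabilities still being provided by Proposition~\ref{prop-harmonic-conv}.
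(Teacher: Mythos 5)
Your proof is correct. Both you and the paper reduce the unbounded quantity $G_{\mcl A}(x,y)$ to the last-exit decomposition: the paper's final formula~\eqref{eqn-green-harmonic} is, after simplifying $1+\tfrac{p}{1-p}=\tfrac{1}{1-p}$ with $p=\frac{1}{\pi(y)}\sum_{z\sim y}\frk c(y,z)h^y(z)=\BB P_y[\tau^+_y<\tau_{\mcl A}]$, exactly your ratio $\BB P_x[\tau_y<\tau_{\mcl A}]/\BB P_y[\tau_{\mcl A}<\tau^+_y]$ (indeed $h^y(x)=\BB P_x[\tau_y<\tau_{\mcl A}]$ by Property~\eqref{item-harmonic}). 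The difference lies only in how the convergence of the two bounded quantities is deduced. The paper invokes Proposition~\ref{prop-harmonic-conv} directly to get $h^y_n(x)\to h^y(x)$ pointwise and then applies dominated convergence to $\sum_{z\sim y}\frk c(y,z)h^y_n(z)$, which has the advantage of going through verbatim even when $y$ has infinite degree, since $0\le h_n^y\le 1$ and $\pi(y)<\infty$. You instead route the convergence of the same two probabilities through the coupling estimate of Lemma~\ref{lem-walk-tv} (itself a consequence of Proposition~\ref{prop-harmonic-conv}), which is a valid but slightly longer path and as stated requires local finiteness; you correctly flag this and indicate how to adapt the argument using $\wt Y^k$ if that hypothesis is dropped. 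Your remark that distributional convergence of $N_{\mcl A}(y)$ alone would not suffice, and that the last-exit form circumvents this, is exactly the ``one possible proof (but not every possible proof) works'' caveat that the paper makes but leaves unexplained.
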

\begin{proof}
For $n\in\BB N$ and $y\in\mcl V\mcl G_n$, let $h^y_n$ be the function on $\mcl G_n$ which is equal to 0 on $\mcl A$, is equal to 1 at $y$, and is $\mcl G_n$-discrete harmonic on $\mcl V\mcl G_n\setminus (\mcl A \cup \{y\})$. Also let $h^y$ be the energy-minimizing function on $\mcl V\mcl G$ which is equal to 0 on $\mcl A$ and is equal to 1 at $y$, as in Proposition~\ref{prop-min-harmonic}. By Proposition~\ref{prop-harmonic-conv}, for each $x\in\mcl V\mcl G$, we have 
\eqb \label{eqn-hit-prob-conv}
\lim_{n\to\infty} h_y^n(x) = h^y(x)  .
\eqe

To deduce the lemma statement from~\eqref{eqn-hit-prob-conv}, we need to relate $G_{\mcl A}(\cdot,y)$ to $h^y$. For random walk on finite graphs, the analogous relation is standard (see, e.g.,~\cite[Proposition 2.1]{lyons-peres}). One possible proof of this relation (but not every possible proof) works in the case of random walk reflected at $\infty$. We give this proof now. 
 
For $x\in \mcl V\mcl G_n$, the strong Markov property (Lemma~\ref{lem-wt-strong-markov}) and the relationship between random walk and harmonic functions (Property~\eqref{item-harmonic} of Theorem~\ref{thm-cont-time-walk}) show that, with $N_{\mcl A}(y)$ as in~\eqref{eqn-number-of-visits},
\eqb \label{eqn-green-relation}
 G_{\mcl A} (x,y) 
 = \BB E_x\left[ N_{\mcl A} (y) \right]   
 = h^y(x)  \BB E_y\left[ N_{\mcl A} (y)  \right]  
 = h^y(x) G_{\mcl A}(y,y) .
\eqe 
By plugging into~\eqref{eqn-green-relation} the expression for $ G_{\mcl A}(y,y)$ from~\eqref{eqn-green-diagonal0}, we get 
\eqb \label{eqn-green-sum}
  G_{\mcl A}(x,y) 
= h^y(x) \left(  1 + \frac{1}{\pi(y)}  \sum_{z \sim y} \frk c(y,z) G_{\mcl A}(z,y)  \right)  .
\eqe
Multiplying both sides of~\eqref{eqn-green-sum} by $\frk c(y,x) / \pi(y)$ and summing over $x \sim y$ gives
\eqbn
\frac{1}{\pi(y)}  \sum_{x\sim y} \frk c(y,x) G_{\mcl A}(x,y) 
=  \left(  1 +  \frac{1}{\pi(y)}  \sum_{z \sim y} \frk c(y,z) G_{\mcl A}(z,y)  \right) \frac{1}{\pi(y)}  \sum_{x\sim y} \frk c(y,x) h^y(x) .
\eqen
Solving gives
\eqb \label{eqn-green-soln}
\frac{1}{\pi(y)}  \sum_{z \sim y} \frk c(y,z) G_{\mcl A}(z,y) 
=   \frac{\frac{1}{\pi(y)}  \sum_{z \sim y} \frk c(y,z) h^y(z) }{ 1 -  \frac{1}{\pi(y)}  \sum_{z \sim y} \frk c(y,z) h^y(z) } . 
\eqe
We note that the denominator is not equal to zero since $\mcl G$ is connected, so there is at least one $z\sim y$ for which random walk started from $z$ and reflected from $\infty$ has a positive chance to hit $\mcl A$ before $y$. 
Plugging~\eqref{eqn-green-soln} into~\eqref{eqn-green-relation} gives that for each $x\in\mcl V\mcl G_n$, 
\eqb \label{eqn-green-harmonic}
 G_{\mcl A}(x,y)   = 
 h^y(x) \times \left( 1 + \frac{\frac{1}{\pi(y)}  \sum_{z \sim y} \frk c(y,z) h^y(z) }{ 1 -  \frac{1}{\pi(y)}  \sum_{z \sim y} \frk c(y,z) h^y(z) } \right) . 
\eqe

By exactly the same argument, we also have the analog of~\eqref{eqn-green-harmonic} with $G_{\mcl A}^n(\cdot,y)$ and $h^y_n(\cdot)$ in place of $G_{\mcl A} (\cdot,y)$ and $h^y (\cdot)$. 
We have $h_n^y(z) \in [0,1]$ for each $n\in\BB N$ and $z \in \mcl V\mcl G$.  
By~\eqref{eqn-hit-prob-conv} and the dominated convergence theorem (to deal with the possibility that $y$ could have infinite degree), we therefore have
\eqbn
\lim_{n\to\infty} \sum_{\substack{ z\in\mcl V\mcl G_n \\ z \sim y }    } \frk c(y,z) h_n^y(z) =  \sum_{z \sim y} \frk c(y,z) h^y(z) .
\eqen
By combining this with~\eqref{eqn-green-harmonic} and its analog for $G_{\mcl A}^n(\cdot,y)$, we get~\eqref{eqn-green-conv}. 
\end{proof}

The following lemma justifies that $G_{\mcl A}(x,y)  \pi(y)$ is a valid covariance kernel for a Gaussian process, so Definition~\ref{def-gff} makes sense. 

\begin{lem} \label{lem-green-reversible} 
For each $x,y\in\mcl V\mcl G $, 
\eqb  \label{eqn-green-reversible}
\frac{  G_{\mcl A} (x,y) }{\pi(y)}  =  \frac{ G_{\mcl A}(y,x) }{\pi(x)}  .
\eqe 
Furthermore, for any function $f:\mcl V\mcl G \to \BB R$ which vanishes outside of a finite set,
\eqb \label{eqn-green-pos} 
\sum_{x,y\in\mcl V\mcl G} f(x) f(y) \frac{G_{\mcl A}(x,y)}{\pi(y)} \geq 0.
\eqe
\end{lem}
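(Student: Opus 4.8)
The plan is to deduce both the reversibility identity~\eqref{eqn-green-reversible} and the positive-semidefiniteness~\eqref{eqn-green-pos} from the corresponding statements for the Green's functions $G_{\mcl A}^n$ on the finite subgraphs $\mcl G_n$, via the pointwise convergence established in Lemma~\ref{lem-green-conv}. Fix an increasing family $\{\mcl G_n\}_{n\geq 1}$ of finite connected subgraphs of $\mcl G$ whose union is $\mcl G$ with $\mcl A\subset\mcl V\mcl G_n$ for all $n$, and let $G_{\mcl A}^n$ be the Green's function for random walk on $\mcl G_n$ killed at $\mcl A$. Both claims are standard for finite weighted graphs: for~\eqref{eqn-green-reversible}, the identity $G_{\mcl A}^n(x,y)/\pi_n(y) = G_{\mcl A}^n(y,x)/\pi_n(x)$ holds because random walk on $\mcl G_n$ with conductances is reversible with respect to $\pi_n$, and since $\mcl A\cup\{x,y\}\subset\mcl G_n$ for $n$ large, the restricted stationary measure agrees with $\pi$ on these vertices; see, e.g.,~\cite[Exercise 2.1]{lyons-peres}. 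For~\eqref{eqn-green-pos}, on a finite graph $G_{\mcl A}^n(x,y)/\pi_n(y)$ is (up to scaling) the covariance kernel of the discrete Gaussian free field on $\mcl G_n$ with zero boundary on $\mcl A$, hence positive semidefinite; alternatively one writes $G_{\mcl A}^n = (I - P_n)^{-1}$ restricted to $\mcl V\mcl G_n\setminus\mcl A$ where $P_n$ is the sub-stochastic transition matrix killed at $\mcl A$, so that $\pi_n$-symmetrized it is a positive operator.

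First I would invoke these finite-graph facts to write, for each fixed $x,y$ and all sufficiently large $n$,
\eqbn
\frac{G_{\mcl A}^n(x,y)}{\pi(y)} = \frac{G_{\mcl A}^n(y,x)}{\pi(x)} ,
\eqen
and then pass to the limit $n\to\infty$ using Lemma~\ref{lem-green-conv}, which gives~\eqref{eqn-green-reversible}. For~\eqref{eqn-green-pos}, let $f:\mcl V\mcl G\to\BB R$ vanish outside a finite set $S$. Choose $n$ large enough that $S\cup\mcl A\subset\mcl V\mcl G_n$. Then the finite sum
\eqbn
\sum_{x,y\in S} f(x) f(y) \frac{G_{\mcl A}^n(x,y)}{\pi(y)} \geq 0
\eqen
by the finite-graph positive-semidefiniteness, and since $S$ is finite and $G_{\mcl A}^n(x,y)\to G_{\mcl A}(x,y)$ pointwise by Lemma~\ref{lem-green-conv}, sending $n\to\infty$ yields~\eqref{eqn-green-pos}. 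The finiteness of $G_{\mcl A}(x,y)$ from Lemma~\ref{lem-green-finite} ensures the limiting sum is a genuine finite real number.

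The only mild subtlety — and the step I expect to require the most care — is the bookkeeping around the stationary measure: the walk on $\mcl G_n$ has stationary measure $\pi_n(x) = \sum_{y\sim_{\mcl G_n} x}\frk c(x,y)$, which can differ from $\pi(x)$ for vertices $x$ near $\bdy\mcl G_n$ that lose neighbors in the restriction. However, for any fixed $x$ we have $\pi_n(x) = \pi(x)$ once $n$ is large enough that all $\mcl G$-neighbors of $x$ lie in $\mcl G_n$, so this discrepancy is irrelevant in the limit. Writing out the finite-graph reversibility and positivity carefully with $\pi_n$ and then noting the eventual agreement with $\pi$ on any fixed finite vertex set is the one place where one must be slightly attentive; everything else is a direct application of Lemma~\ref{lem-green-conv} and Lemma~\ref{lem-green-finite}. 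Alternatively, one can avoid the finite-graph detour entirely for~\eqref{eqn-green-reversible} by using~\eqref{eqn-green-harmonic} together with the symmetry already built into the analogous finite statement, but the limiting argument above is cleaner and more transparent.
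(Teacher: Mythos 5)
Your approach matches the paper's own proof: deduce the finite-graph analogs of both the reversibility identity and the positive-semidefiniteness and then pass to the limit using Lemma~\ref{lem-green-conv}. The extra care you take about $\pi_n$ versus $\pi$ is a reasonable addition that the paper leaves implicit, though note that if a vertex has infinite degree (which the paper's standing assumptions permit) then $\pi_n(x) < \pi(x)$ for all $n$, so rather than eventual equality one should use the monotone convergence $\pi_n(x)\to\pi(x)$ when taking the limit of the finite-graph identity $G_{\mcl A}^n(x,y)/\pi_n(y) = G_{\mcl A}^n(y,x)/\pi_n(x)$.
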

\begin{proof}
As in Lemma~\ref{lem-green-conv}, let $\{\mcl G_n\}_{n\geq 1}$ be an increasing family of subgraphs of $\mcl G$ whose union is all of $\mcl G$ such that $\mcl A\subset\mcl V\mcl G_n$ for every $n\in\BB N$, and let $G_{\mcl A}^n$ be the Green's function for random walk on $\mcl G_n$ killed when it first hits $\mcl A$. 
The analog of~\eqref{eqn-green-reversible} for $  G_{\mcl A}^n$ is standard, see, e.g.,~\cite[Exercise 2.1]{lyons-peres}.
The analog of~\eqref{eqn-green-pos} for $G_{\mcl A}^n$ when $n$ is large enough so that the support of $f$ is contained in $\mcl V\mcl G_n$ follows from a discrete integration by parts calculation.
The lemma statement therefore follows from Lemma~\ref{lem-green-conv}.
\end{proof}

Finally, we give the proof of Kirkhoff's formula for the $\frk c$-FSF. 

\begin{proof}[Proof of Proposition~\ref{prop-fsf-edge}] 
Let $X$ be the random walk reflected off of $\infty$ on $\mcl G$ started at $X_0 = x$. 
Let $\tau_0 = 0$ and for $k\geq 1$, inductively let $\tau_k$ be the first time after $\tau_{k-1}$ at which $X$ jumps to $x$ from a vertex other than $x$. Let $K  $ be the smallest $k$ such that $y \in X([\tau_{k-1}, \tau_k])$. That is, $K = N_y(x)$ in the notation of~\eqref{eqn-number-of-visits}. By the strong Markov property (Lemma~\ref{lem-wt-strong-markov}), the increments $X|_{[\tau_{k-1} , \tau_k]}$ are i.i.d., so $K$ has a geometric distribution with success probability 
\eqbn
p := \BB P_x\left[ \text{$X$ visits $y$ before time $\tau_1$} \right] .
\eqen
By Definition~\ref{def-green}, we have 
\eqb \label{eqn-edge-green}
1/p = \BB E[K] = G_y(x,x) . 
\eqe

If we generate $\mcl T^{\op{FSF}}$ using the Aldous--Broder algorithm started from $x$ (Definition~\ref{def-rw-forest} and Theorem~\ref{thm-fsf}), we have $e \in \mcl T^{\op{FSF}}$ if and only if $a(y) =x$, equivalently, $y$ is the first vertex other than $x$ which $X$ visits after time $\tau_{K-1}$. Since the increments $X|_{[\tau_{k-1} , \tau_k]}$ are i.i.d., the probability that this is the case is equal to the conditional probability that $y$ is the first vertex other than $x$ which $X$ visits after time 0, given that $X$ visits $y$ before time $\tau_1$. By Property~\eqref{item-rw} of Theorem~\ref{thm-cont-time-walk}, the probability that $y$ is the first vertex other than $x$ visited after time 0 is $\frk c(x,y) / \pi(x)$. 
By combining this with~\eqref{eqn-edge-green}, we get that the aforementioned conditional probability is equal to the right side of~\eqref{eqn-edge-contain}.  
\end{proof}

\section{Conjectures for random planar maps in the supercritical LQG universality class}
\label{sec-supercritical-rpm}

The goal of this section is to state a number of  conjectures for random planar maps which are believed to converge to Liouville quantum gravity (LQG) with central charge $\cc \in (1,25)$, all of which are based on the theory of random walk reflected off of $\infty$. We provide some background on LQG (with references for further reading) in Section~\ref{sec-lqg}. We then define the precise family of random planar maps we will consider, following~\cite{ag-supercritical-cle4,bgs-supercritical-crt} in Section~\ref{sec-rpm}. In Section~\ref{sec-tutte-conj}, we state a scaling limit conjecture for these random planar maps under the Tutte embedding. In Section~\ref{sec-phase-transition}, we state conjectures about phase transitions (in the parameter $\cc$) for various stochastic processes on the maps. 

\subsection{Background on Liouville quantum gravity} 
\label{sec-lqg} 

Liouville quantum gravity (LQG) is a canonical one-parameter family of random fractal surfaces. Such surfaces were originally introduced by Polyakov~\cite{polyakov-qg1} in the context of bosonic string theory. They have deep connections to a variety of other topics in math and physics, including conformal field theory, random planar maps, Schramm--Loewner evolution, and random permutations, and (more speculatively) Yang--Mills theory. We refer to~\cite{gwynne-ams-survey,sheffield-icm} for overviews of the mathematical theory of LQG and~\cite{bp-lqg-notes} for a more comprehensive introduction. 

One can define LQG surfaces with the topology of any underlying Riemann surface, but in this paper we will only need to consider LQG surfaces with the topology of the unit disk $\BB D\subset \BB C$. Heuristically speaking, an LQG surface with the disk topology, with \textbf{central charge} $\cc > 1$, is the random 2d Riemannian manifold $(\BB D,g)$, where $g$ is sampled from the ``uniform measure on Riemmanian metric tensors on $\BB D$, weighted by $(\det \Delta_g)^{-(26-\cc)/2}$'', where $\Delta_g$ is the Laplace-Beltrami operator. 

Instead of the parameter $\cc$, it is common to instead use the equivalent parameters $Q > 0$ or $\gamma \in (0,2] \cup \{z\in\BB C  : |z| = 2\}$ which are related by
\eqb  \label{eqn-c-Q-gamma}
\cc = 1+6Q^2 ,\quad Q = \frac{2}{\gamma}  + \frac{\gamma}{2} .
\eqe 
The behavior of LQG undergoes a phase transition at $\cc = 25$ ($Q =\gamma=2$). We call the case when $\cc  > 25$ the \textbf{supercritical phase} and the case when $\cc \in (1,25)$ the \textbf{subcritical phase}.
Most works on LQG consider only the subcritical and critical cases, $\cc \geq 25$ (equivalently $\gamma\in (0,2]$). But, the supercritical phase is potentially more interesting from the perspective of string theory and Yang--Mills theory (see~\cite[Remark 1.4]{ag-supercritical-cle4} for some discussion of this). Furthermore, some recent mathematical works have begun to investigate supercritical LQG~\cite{ghpr-central-charge,dg-supercritical-lfpp,pfeffer-supercritical-lqg,dg-uniqueness,dg-confluence,ag-supercritical-cle4,apps-central-charge,bgs-supercritical-crt}. 

The above definition of LQG does not make literal sense, but it is possible to make rigorous sense of LQG as the random geometry obtained by exponentiating a generalized function $\Phi$ on $\BB D$ which locally looks like the Gaussian free field (GFF). In particular, for $\cc \geq 25$ one can define the volume form (area measure) $\mu_\Phi$ associated with LQG as a limit of regularized versions of $e^{\gamma \Phi} \,dx\,dy$~\cite{kahane,shef-kpz,rhodes-vargas-log-kpz,shef-deriv-mart,shef-renormalization}. There is no volume form for $\cc \in (1,25)$~\cite[Theorem 1.5]{bgs-supercritical-crt}. 

Furthermore, for all $\cc > 1$, one can define the distance function (metric) associated with LQG as a limit of regularized versions of the metric $D_\Phi(z,w) =  \inf_{P:z\to w} \int_0^1 e^{\xi P(t)} |P'(t)| \,dt$, where the inf is over all piecewise $C^1$ paths in $\BB D$ from $z$ to $w$~\cite{dddf-lfpp,gm-uniqueness,dg-supercritical-lfpp,dg-uniqueness}. Here, $\xi > 0$ is a $\cc$-dependent parameter which is not known explicitly. 

For $\cc \geq 25$, the LQG metric induces the same topology on $\BB D$ as the Euclidean metric. In contrast, for $\cc \in (1,25)$, the LQG metric does not induce the Euclidean topology. Rather, there is an uncountable, Euclidean-dense, zero-Lebesgue measure set of \textbf{singular points} $z \in \BB D$ which satisfy 
\eqb \label{eqn-singular-pt}
D_\Phi(z,w) = \infty,\quad \forall w\in\BB D\setminus\{z\} . 
\eqe
Roughly speaking, singular points correspond to $\alpha$-thick points of $\Phi$ with $\alpha  \in (Q,2]$~\cite[Proposition 1.11]{pfeffer-supercritical-lqg} (see Proposition~\ref{prop-singular-thick} below). 

For all $\cc  > 1$, LQG is expected (and in some cases proven) to describe the scaling limit of various types of random planar maps. For example, uniform random planar maps (including triangulations, quadrangulations, etc.) converge --- as the number of edges goes to $\infty$ and areas and distances are re-scaled appropriately --- to LQG with $\cc = 26$ ($\gamma=\sqrt{8/3}$)~\cite{legall-uniqueness,miermont-brownian-map,lqg-tbm1,lqg-tbm2,hs-cardy-embedding}. Random planar maps which are believed converge to LQG with central charge $\cc$ are said to belong to the \textbf{central charge-$\cc$ LQG universality class}. See Section~\ref{sec-tutte-conj} for some discussion about various possible topologies of convergence.

\subsection{Supercritical random planar maps} 
\label{sec-rpm}

In this paper, we will be interested in random planar maps in the central charge-$\cc$ LQG universality class for $\cc \in (1,25)$. Such planar maps are infinite (at least with high probability), with infinitely many ends (Definition~\ref{def-end}). Roughly speaking, the ends of the random planar map correspond to the singular points of the LQG metric (as defined in~\eqref{eqn-singular-pt}). Indeed, the ends of the map are precisely the ``points'' which lie at infinite graph distance from every other point. 

The first paper to construct combinatorially natural random planar maps in the central charge-$\cc$ LQG universality class was~\cite{ag-supercritical-cle4}. A slightly different class of random planar maps which are more combinatorially tractable was defined in~\cite{bgs-supercritical-crt}. Roughly speaking, these random planar maps are a variant of maps decorated by the $O(2)$ loop model, with the loops modified to force the ratio of their inner and outer boundary lengths to be approximately $\exp\left( \pm \pi \frac{\sqrt{ 4-Q^2} }{Q} \right)$. 

For concreteness, we recall the construction from~\cite{bgs-supercritical-crt} here. But, the details are not important for the rest of the paper. All of the discussion in this section should apply for any reasonable random planar map models which converge to supercritical LQG (see Remark~\ref{remark-other-rpm}). Consequently, the reader may wish to skip this section on a first read and take the definition of the maps $\mcl M_k$ as  black box. 

The construction of the random planar maps from~\cite{bgs-supercritical-crt} is is inspired by the gasket decomposition for random planar maps decorated by the $O(n)$ loop model~\cite{bbg-recursive-approach}, and is a discrete analog of the coupling of supercritical LQG and CLE$_4$ from~\cite{ag-supercritical-cle4}. The construction is based on iteratively gluing together two types of planar maps: \textbf{non-generic critical Boltzmann maps of type 2}, and \textbf{rings}. We now review the definitions of these two types of random planar maps. The following definitions are standard in the random planar maps literature, see, e.g.,~\cite{curien-peeling-notes}.

\begin{defn} \label{def-boltzmann}
For $k  \in \BB N$, let $\BB M^{(k)}$ be the set of finite bipartite planar maps $\frk m$ with a distinguished face (called the \textbf{external face}) of degree $2k$. 
For $\frk m \in \BB M^{(k)}$, we write $\mcl F \frk m$ for its set of non-external faces. 
Let $\mathbf{q} = \{q_i\}_{i \in \BB N}$ be a sequence of non-negative real numbers. We say that $\bf{q}$ is \textbf{admissible} if 
\eqbn
W_k(\mathbf{q}) := \sum_{\frk m \in \BB M^{(k)}} \prod_{f \in \mcl F \frk m}  q_{\deg(f) /2} < \infty , \quad \forall k \in \BB N .
\eqen
In this case, we define the \textbf{Boltzmann map} of perimeter $2k$ with weights $\bf q$ to be the random planar map $M $ such that for each $\frk m \in \BB M^{(k)}$, 
\eqbn
\BB P\left[ M = \frk m \right] =  \frac{1}{W_k(\bf{q})}  \prod_{f \in \mcl F \frk m}  q_{\deg(f) /2} .
\eqen
\end{defn}

Note that we do not require the boundary of a Boltzmann map to be simple. In particular, a Boltzmann map has a positive chance to have no faces except for the external face, in which case it is a tree with $k$ edges. 

\begin{defn} \label{def-critical} 
For an admissible weight sequence $\bf q$, define 
\eqbn
f_{\bf{q}}(x) :=\sum_{k=1}^\infty q_k \binom{2k-1}{k-1} x^{k-1} .
\eqen
If $\bf q$ is admissible, then there exists a smallest $Z_{\bf q} > 0$ such that $f_{\bf q}(Z_{\bf q}) = 1- 1/ Z_{\bf q}$. 
We say that $\bf q$ is \textbf{non-generic critical of type 2} if 
\eqbn
	q_k \binom{2k-1}{k-1} (Z_{\mathbf{q}})^{k-1} \sim k^{-5/2} \quad \text{as $k \to \infty$}.
\eqen
\end{defn}

Equivalent definitions of critical and non-generic critical weight sequences can be found in~\cite[Theorem 5.4 and Proposition 5.10]{curien-peeling-notes}. An example of a Boltzmann map with a non-generic critical weight sequence of type 2 is the gasket of  a certain type of a random planar map decorated by the $O(2)$ loop model (in particular, the model of~\cite{adh-volume} with $g = h/2$, see~\cite[Appendix B]{adh-volume}).\footnote{For some other types of random planar maps decorated by the $O(2)$ loop model, the gasket satisfies a more general version of non-generic criticality where we allow a slowly varying correction~\cite{adh-volume,kammerer-O2-gasket}.}

If $\bf q$ is a non-generic critical weight sequence of type 2, then the Boltzmann map with perimeter $2k$ and weights $\bf q$ should converge in the scaling limit to a version of the so-called \textbf{$3/2$-stable map} with respect to the Gromov--Hausdorff distance. See~\cite{cmr-large-faces} for a proof of this convergence for a slightly different definition of Boltzmann maps than the ones considered here (where we specify the number of vertices rather than the perimeter).

\begin{defn} \label{def-ring}
For $p \geq 1$ and $q\geq 0$, a \textbf{ring} with inner perimeter $p$ and outer perimeter $q$ is a planar map $R$ with two distinguished faces: an outer face of perimeter $2p$ and and an inner face of perimeter $2q$, with the following properties. The two distinguished faces do not share any vertices, and every face of $R$ other than the two distinguished faces is a quadrilateral which shares a vertex with each of the inner face and the outer face. 
\end{defn}

Rings will play a role analogous to $O(n)$ loops in the gasket decomposition of~\cite{bbg-recursive-approach}. 
We can now describe the construction of random planar maps in the supercritical LQG universality class. 
See Figure~\ref{fig-model} for an illustration. 
 
\begin{figure}
\centering
\includegraphics[width=0.8\linewidth]{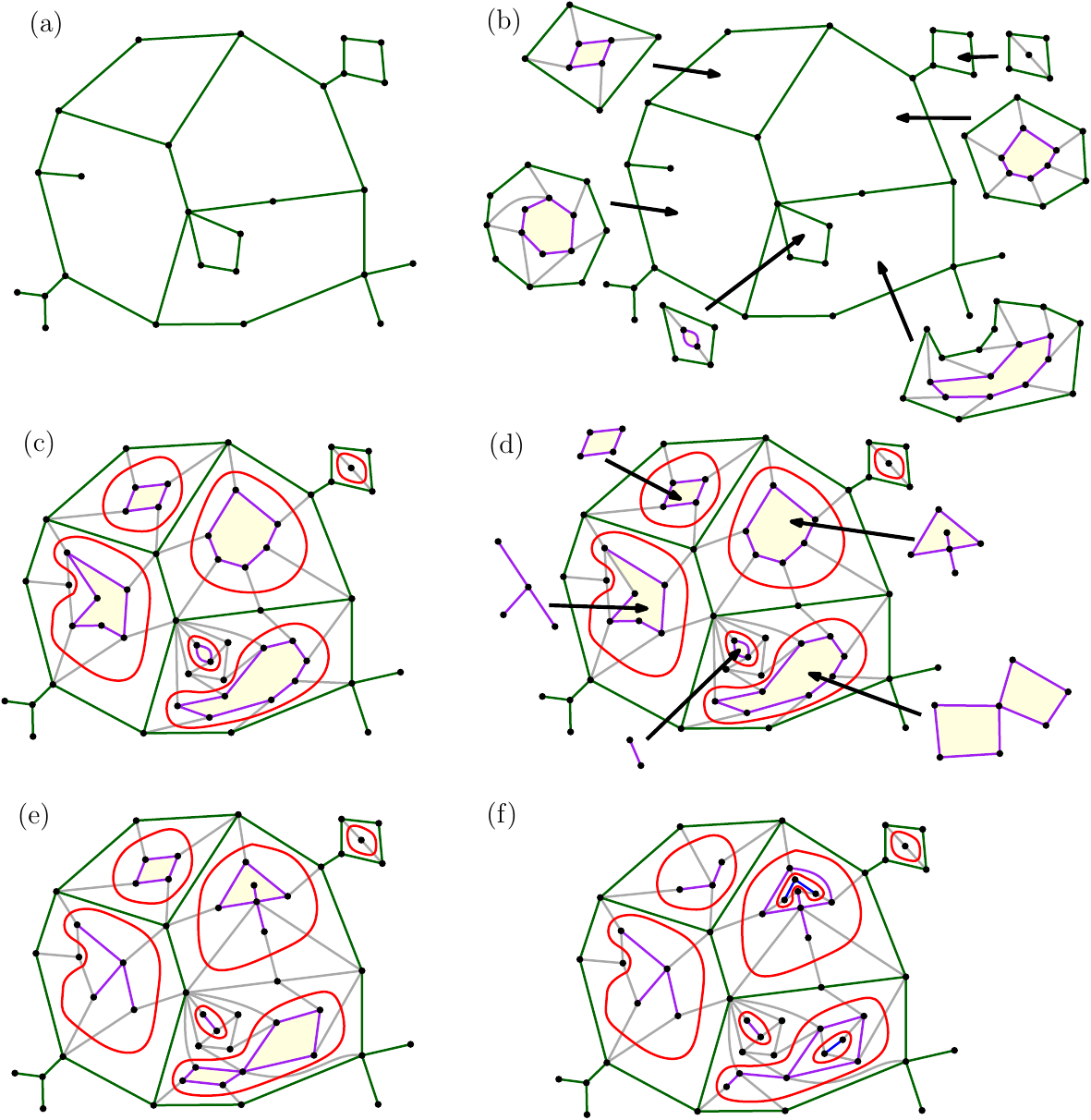}
\caption{\label{fig-model} An illustration of the iterative construction in Definition~\ref{def-supercritical-map} with $k=13$, taken from~\cite{bgs-supercritical-crt}. (a) We start with a non-generic critical Boltzmann map $\frk M_0$ of type 2 and perimeter $k$. (b) Given $\frk M_0$, we sample a ring $R_h$ for each hole (non-external face) of $\frk M_0$. The outer/inner boundaries of the rings are colored in green/purple, respectively. The ring in the top-right of the figure has inner perimeter zero. (c) The rings are attached to the corresponding faces, with the possible rotations chosen uniformly. The rings are identified with red loops that separate inner and outer boundaries. These loops are the discrete analogs of CLE$_4$ loops in the coupling of supercritical LQG disk with CLE$_4$ from~\cite{ag-supercritical-cle4}. (d) Given the previous figure, for each ring $R(f)$,  sample conditionally independent Boltzmann maps $M_h$ with perimeter equal to the inner perimeter of the ring. (e) The Boltzmann maps are glued to the inner boundaries of the rings, with the possible rotations chosen uniformly. This gives the map $\frk M_1$ The holes $h\in \mcl H_1$ are colored in yellow. (f) The map after another iteration. In this case, $\mcl H_2 = \emptyset$, so the construction terminates at this stage, giving us a finite map. However, when $k$ is large the construction has a high probability to go on forever (Proposition~\ref{prop-rpm-infinite}).
}
\end{figure}

\begin{defn}  \label{def-supercritical-map}
Fix $\cc \in (1,25)$, let $Q = \sqrt{\frac{\cc-1}{6}} \in (0,2)$, and let $\bf q$ be a non-generic critical weight sequence of type 2 (Definition~\ref{def-critical}). 
Also let $k\in\BB N$. 
We inductively define random planar maps $\{\frk M_j\}_{j\in\BB N}$, each of which is equipped with a set of \textbf{holes} $\mcl H_j \subset \mcl F \frk M_j$, as follows. 
\begin{itemize}
\item
Let $\frk M_0$ be a Boltzmann map with perimeter $2k$ and weights $\bf q$ (Definition~\ref{def-critical}) and let $\mcl H_0$ be the set of faces of $\frk M_0$ other than the external face. 
\item 
Inductively, assume that $j \geq 1$ and $(\frk M_{j-1}, \mcl H_{j-1})$ has been defined. 
Conditional on $(\frk M_{j-1} , \mcl H_{j-1})$, let $\{Y_h\}_{h \in \mcl H_{j-1}}$ be i.i.d.\ random variables with the uniform distribution on $\{-1,1\}$, indexed by the holes $ h\in \mcl H_{j-1}$. Conditional on $(\frk M_{j-1} , \mcl H_{j-1}, \{Y_h\}_{h\in\mcl H_{j-1}})$, for each $h\in\mcl H_{j-1}$, sample a ring $R_h$ from the uniform distribution on rings of outer perimeter $\deg(h)$ and inner perimeter\footnote{
The reason for the strange looking quantity $\exp\left( \pm \frac{\pi\sqrt{4-Q^2}}{Q}  \right)$ in~\eqref{eqn-perimeter-ratio} is that this quantity coincides with the ratio of the LQG lengths of a CLE$_4$ loop as measured from inside and outside the loop in the coupling of~\cite{ag-supercritical-cle4}, see~\cite[Proposition 2.16]{ag-supercritical-cle4}. 
}
\eqb  \label{eqn-perimeter-ratio}
2 \left\lfloor \exp\left( Y_h  \frac{\pi\sqrt{4-Q^2}}{Q}  \right) \frac{\deg(h)}{2} \right\rfloor  .
\eqe  
We take the rings $R_h$ to be conditionally independent given $(\frk M_{j-1} , \mcl H_{j-1}, \{Y_h\}_{h\in\mcl H_{j-1}})$. 
For each $h\in\mcl H_{j-1}$, we identify the edges of the outer boundary of $R_h$ to the edges of the boundary of $h$ in cyclic order. There are $\deg(h)$ ways to do this identification. We choose one way uniformly at random. 
\item 
Conditional on $(\frk M_{j-1} , \mcl H_{j-1}, \{R_h\}_{h\in\mcl H_{j-1}})$, for each $h\in\mcl H_{j-1}$ such that $R_h$ has non-zero inner perimeter, we sample a Boltzmann map $M_h$ of perimeter equal to the inner perimeter~\eqref{eqn-perimeter-ratio} of $R_h$ (we take the maps $M_h$ to be conditionally independent). We then identify the edges of the boundary of $M_h$ to the edges of the inner boundary of $R_h$ in cyclic order (as above, we choose one of the possible ways to do the identification uniformly at random). 
\item 
We define $\frk M_j$ to be the map which is the union of $\frk M_{j-1}$, $\{R_h\}_{h\in\mcl H_{j-1}}$, and $\{M_h\}_{h\in\mcl H_{j-1}}$, subject to the above identifications. We define $\mcl H_j$ to be the set of faces of $\frk M_j$ which are also faces of $M_h$ for some $h\in\mcl H_{j-1}$. 
\item We define $\mcl M = \mcl M_k$ to be the planar map which is the increasing union of the maps $\frk M_j$, and call it the \textbf{supercritical random planar map of central charge $\cc$ and perimeter $2k$}. We define $\mcl L_k$ to be the collection of simple loops on the dual of $\mcl M_k$ such that each loop traverses the faces of one of the rings in the above construction (these loops are shown in red in Figure~\ref{fig-model}). 
\end{itemize}
\end{defn}

We note that the only dependence on $\cc$ in Definition~\ref{def-supercritical-map} is the ratio between the inner and outer perimeters of the rings, as specified in~\eqref{eqn-perimeter-ratio}. 

The map $\mcl M_k$ of Definition~\ref{def-supercritical-map} can be finite or infinite, depending on whether the construction terminates (i.e., $\mcl H_j =\emptyset$) for some finite value of $j$. The following proposition follows from~\cite[Proposition 1.3]{bgs-supercritical-crt} and the Markovian nature of the construction in Definition~\ref{def-supercritical-map}. We recall the definition of ends from Definition~\ref{def-end}.  

\begin{prop}[\!\!{\cite[Proposition 1.3]{bgs-supercritical-crt}}] \label{prop-rpm-infinite}
There exists $\alpha = \alpha(\cc,\bf q) > 0$ such that as $k\to\infty$,
\eqbn
\BB P\left[ \text{$\mcl M_k$ is infinite, with infinitely many ends} \right] = 1 - e^{-\alpha  k + o(k) } .
\eqen
\end{prop}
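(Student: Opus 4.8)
The plan is to deduce Proposition~\ref{prop-rpm-infinite} from the cited result \cite[Proposition 1.3]{bgs-supercritical-crt} together with the Markovian structure of the construction in Definition~\ref{def-supercritical-map}. First I would recall that \cite[Proposition 1.3]{bgs-supercritical-crt} gives the statement that $\BB P[\mcl M_k \text{ is infinite}] = 1 - e^{-\alpha k + o(k)}$ for some $\alpha = \alpha(\cc,\bf q) > 0$; the only thing left to add is the clause about infinitely many ends. So the real content here is: on the event that $\mcl M_k$ is infinite, it a.s.\ has infinitely many ends.

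The key observation is that the construction in Definition~\ref{def-supercritical-map} is a branching-type process: at each stage $j$, every hole $h \in \mcl H_{j-1}$ of positive inner perimeter spawns a fresh Boltzmann map $M_h$, and the holes $\mcl H_j$ of the new map are the non-external faces of these $M_h$'s. Conditionally on the inner perimeters $\{2k_h\}_{h}$, the maps $M_h$ are independent Boltzmann maps, and each Boltzmann map of perimeter $\geq 2$ has a strictly positive probability of containing at least two faces of degree $\geq 4$ (hence contributing at least two holes to the next generation). Therefore, conditionally on the sequence of perimeters remaining positive, the number of holes $|\mcl H_j|$ is bounded below by a supercritical Galton--Watson process (after discarding holes of inner perimeter zero, whose proportion is controlled); the event $\{\mcl M_k \text{ is infinite}\}$ is, up to a null set, the event that this branching process survives. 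On survival, the branching process has infinitely many ``lineages'' going to infinity, and each such lineage corresponds to a nested decreasing sequence of holes whose intersection, by Lemma~\ref{lem-end-comp}, is an end of $\mcl M_k$. Distinct lineages that split at some finite generation give rise to distinct ends, because after the split the corresponding holes lie in different connected components of the complement of any finite set that contains the common ancestor's boundary. Since a surviving supercritical branching process a.s.\ has infinitely many distinct infinite lineages (indeed, infinitely many that split from one another), this yields infinitely many ends.

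Concretely, the steps I would carry out are: (1) set up the domination of $\{|\mcl H_j|\}_{j \geq 0}$ by a supercritical Galton--Watson process, using that a non-generic critical Boltzmann map of perimeter $2k$ with $k$ large has $\geq 2$ holes with probability bounded away from $0$, and that the perimeter-$\geq 2$ holes are preserved with good probability under~\eqref{eqn-perimeter-ratio} (note $\exp(\pm \pi\sqrt{4-Q^2}/Q)$ times a positive integer $\geq 1$ has floor $\geq 1$ except possibly for the sign $-1$ and small perimeters, which can be absorbed into constants); (2) identify $\{\mcl M_k \text{ infinite}\}$ with $\{\text{branching process survives}\} \cup N$ for a null set $N$, using the Markovian recursion; (3) on survival, extract infinitely many pairwise-splitting infinite lineages, and for each lineage build a nested sequence of holes $h_0 \supset h_1 \supset \cdots$ (as subsets of $\mcl V\mcl M_k$, say), so that $\bigcap_j \ol{h_j}$ determines an end via Lemma~\ref{lem-end-comp}; (4) check that lineages splitting at generation $j$ produce distinct ends by exhibiting a finite separating set (the boundary of the common ancestor hole). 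Combining (1)--(4) with \cite[Proposition 1.3]{bgs-supercritical-crt} gives the proposition.

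The main obstacle I anticipate is step (3)--(4): carefully matching the combinatorial ``lineages'' of the iterative construction with honest ends of the limiting planar map $\mcl M_k$ in the sense of Definition~\ref{def-end}, i.e.\ verifying that each decreasing sequence of holes really does exhaust a connected component of the complement of every finite vertex set and that genuinely different lineages cannot correspond to the same end. This requires being a little careful that the holes nested inside one another are connected by a single infinite simple path in $\mcl M_k$ and that two holes coming from an ancestor's distinct child-holes are separated by the ancestor's (finite) boundary, so that no single end ``sees'' both. Once this identification is made, the remaining probabilistic input --- that a surviving supercritical Galton--Watson tree has infinitely many infinite rays that pairwise diverge --- is standard.
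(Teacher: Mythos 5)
The paper does not actually give a proof of this proposition: it is stated as an immediate consequence of \cite[Proposition~1.3]{bgs-supercritical-crt} together with ``the Markovian nature of the construction in Definition~\ref{def-supercritical-map},'' and the details of the deduction are left to the reader. Your proposal is, in effect, the natural fleshing-out of that one-line remark, and it is in the same spirit as what the paper intends. You correctly isolate the only non-citation content --- that on the event $\{\mcl M_k \text{ infinite}\}$ the map a.s.\ has infinitely many ends --- and reduce it to the standard fact that the tree of surviving lineages of a supercritical branching process a.s.\ contains infinitely many infinite rays that pairwise split at finite generations. The dictionary from lineages of holes to ends, with distinct post-split lineages separated by the finite boundary $\bdy h$ of their common ancestor hole $h$, is the right way to match combinatorics with Definition~\ref{def-end} (via Lemma~\ref{lem-end-comp}). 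One point worth sharpening in a full write-up: your step (1) gives a stochastic lower bound on the \emph{size} of $|\mcl H_j|$, but what is really needed in steps (3)--(4) is control of the \emph{tree structure} of nested holes; this is cleanest to phrase by noting that every hole of perimeter above some fixed threshold $\ell_0$ has, uniformly, probability $\geq \epsilon>0$ of producing at least two children of perimeter $\geq \ell_0$ that each themselves have infinite descent, and that on survival infinitely many such holes appear, so a second-moment/Borel--Cantelli argument yields infinitely many pairwise-diverging lineages. Also, the identification of $\{\mcl M_k\text{ infinite}\}$ with $\{\text{branching survives}\}$ holds with no exceptional null set, since each $\frk M_j$ is a finite map, so $\mcl M_k$ is infinite iff $\mcl H_j\ne\emptyset$ for all $j$. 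Modulo these minor sharpenings, the approach is correct and matches the paper's (unelaborated) intent.
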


We will prove in future work that on the event that $\mcl M_k$ is infinite, a.s.\ the random walk on $\mcl M_k$ (with unit conductance) is transient. Hence, Theorem~\ref{thm-cont-time-walk} has non-trivial content for $\mcl M_k$. 

As explained in~\cite[Section 3.2]{ag-supercritical-cle4}, the supercritical random planar map of central charge $\cc$ is an exact discrete analog of the coupling of the central charge-$\cc$ LQG disk and CLE$_4$ introduced in~\cite{ag-supercritical-cle4}, with the loops $\mcl L_k$ playing the role of the CLE$_4$ loops. From this perspective, is is natural to expect that $(\mcl M_k,\mcl L_k)$ converges in the scaling limit to central charge-$\cc$ LQG decorated by CLE$_4$. The conjectural topology of convergence was not specified in~\cite{ag-supercritical-cle4}, since, as we discuss in Section~\ref{sec-tutte-conj}, it is not obvious how to define an appropriate topology. We will give a precise scaling limit conjecture for $(\mcl M_k,\mcl L_k)$ toward supercritical LQG under the Tutte embedding in Conjecture~\ref{conj-scaling-limit} just below. 

\begin{remark}[Other random planar map models] \label{remark-other-rpm}
In addition to the random planar maps from Definition~\ref{def-supercritical-map}, there are other random planar map models which are believed to converge to supercritical LQG. These include random planar maps given by variants of the construction in Definition~\ref{def-supercritical-map} (see~\cite{ag-supercritical-cle4,bgs-supercritical-crt}) and random planar maps constructed directly from the Gaussian free field. One class of random planar maps of the latter type are the dyadic tilings of $\BB C$ considered in~\cite{ghpr-central-charge}. These tilings consist of dyadic squares which all have approximately the same ``central charge-$\cc$ LQG size''. For $\cc \in (1,25)$, there are infinitely many points in $\BB C$ where the squares accumulate. For the sake of concreteness, we focus on the model of Definition~\ref{def-supercritical-map}, but we expect that the conjectures given in this section are all valid for these other random planar map models as well. 
\end{remark}

\subsection{Convergence under the Tutte embedding}
\label{sec-tutte-conj}

As noted in Section~\ref{sec-lqg}, for $\cc \geq 25$, there are a variety of random planar maps which are conjectured (and in a few cases proven) to converge in the scaling limit to LQG with central charge $\cc$. Moreover, various discrete models on such random planar maps are conjectured to converge to their continuum counterparts, often described in terms of SLE. One of the most natural topologies is convergence of the (possibly decorated) random planar map under a certain embedding into the plane.  

In the supercritical case $\cc \in (1,25)$, the random planar maps which are conjectured to converge to LQG are infinite, with infinitely many ends, with high probability (see, e.g., Proposition~\ref{prop-rpm-infinite}). Hence, it is not obvious how to define embeddings of such random planar maps. Additionally, supercritical LQG surfaces, equipped with the supercritical LQG metric, are not locally compact~\cite[Proposition 1.14]{pfeffer-supercritical-lqg}, so one cannot look at Gromov--Hausdorff convergence. Consequently, it is not a priori obvious how to define a topology under which random planar maps should convergence to LQG. 

As explained in Section~\ref{sec-tutte0}, the results of this paper allow us to define Tutte embeddings of random planar maps with infinitely many ends. In this section, we use such embeddings to formulate a precise scaling limit conjecture for random planar maps toward supercritical LQG. This answers~\cite[Problem 4.4]{ag-supercritical-cle4} in the case of the Tutte embedding. It is still of interest to find versions of other embeddings (e.g., circle packing) which make sense for random planar maps in the supercritical LQG universality class.

Fix $\cc \in (1,25)$ and let $Q = \sqrt{\frac{\cc - 1}{6}}$, as in~\eqref{eqn-c-Q-gamma}. For $k \in\BB N$, let $(\mcl M_k , \mcl L_k)$ be the loop-decorated supercritical random planar map with central charge $\cc$ of perimeter $2k$, as in Definition~\ref{def-supercritical-map}. 

Let $(\BB D , \Phi , \Gamma)$ be the unit boundary length LQG disk decorated by CLE$_4$, as in~\cite[Definitions 2.7 and 2.12]{ag-supercritical-cle4}. We emphasize that $\Gamma$ is neither independent from nor determined by $\Phi$~\cite[Theorem 2.9]{ag-supercritical-cle4}. We choose the embedding $(\BB D,\Phi,\Gamma)$ so that $1$ corresponds to a point sampled from the LQG length measure on $\bdy\BB D$ and 0 corresponds to a point sampled from the LQG measure on the outermost gasket of $\Gamma$, i.e., the closure of the union of the outermost CLE$_4$ loops. Given an arbitrary embedding $(\Phi,\Gamma)$, such an embedding can be obtained as follows. Conditional on $\Phi$, let $y$ be sampled from the central charge-$\cc$ LQG length measure $\nu_\Phi$ on $\bdy\BB D$~\cite[Equation (2.5)]{ag-supercritical-cle4}. Also let $\frk m_\Phi$ be the central charge-$\cc$ LQG measure on the outermost gasket of $\Gamma$,\footnote{A rigorous construction of this measure has not yet been written down, but it should arise, e.g., as a Gaussian multiplicative chaos with respect to the canonical Euclidean measure on $\op{gask}(\Gamma)$~\cite[Proposition 4.5]{ms-cle-measure}.}
 and, conditional on $(\Phi,\Gamma)$, let $x$ be sampled from this measure. We take $x$ and $y$ to be conditionally independent given $(\Phi,\Gamma)$. Let $f : \BB D \to \BB D$ be the unique conformal map which takes 0 to $x$ and 1 to $y$. Then consider the embedding
\eqbn
(\BB D , \Phi\circ f + Q \log|f'| , f^{-1}(\Gamma) ) .
\eqen

\begin{conj} \label{conj-scaling-limit}
Let $(\mcl M_k , \mcl L_k)$ and $(\Phi,\Gamma)$ be as above. 
For $k\in \BB N$, let $\BB y_k$ be sampled uniformly from $\bdy\mcl M_k$ and let $\BB x_k$ be a point sampled uniformly from the outermost gasket of $(\mcl M_k , \mcl L_k)$. Let $H_k : \mcl V\mcl M_k \to \ol{\BB D}$ be the Tutte embedding of $(\mcl M_k , \BB x_k , \BB y_k)$, as in Section~\ref{sec-tutte}. We have the following joint convergence in law as $k\to\infty$. 
\begin{itemize}
\item The counting measure on vertices of $\bdy\mcl M_k$, re-scaled by $1/(2k)$, converges weakly to the LQG length measure $\nu_\Phi$ on $\bdy\BB D$. 
\item The loop ensembles $H_k(\mcl L_k)$ converge to the CLE$_4$ $\Gamma$ with respect to, e.g., the Hausdorff distance on countable collections of loops in $\BB D$ viewed modulo time parametrization (see, e.g.,~\cite[Section 4.1]{shef-cle}). 
\item  There are deterministic scaling factors $\{\frk b_k\}_{k\geq 1}$ with $\frk b_k = k^{\xi Q / 2 + o(1)}$ such that the graph distance on $H(\mcl M_k)$, re-scaled by $\frk b_k^{-1}$, converges to the LQG metric $D_\Phi$ with respect to the topology on lower semicontinuous functions\footnote{In order to view the graph distance on $d_k : H_k(\mcl M_k)$ as a lower semincontinuous function $d_k : \ol{\BB D} \times \ol{\BB D} \to [0,\infty]$, instead of as a function $H_k(\mcl V\mcl G_k) \times H_k(\mcl V\mcl G_k) \to [0,\infty)$, we can, e.g., proceed as follows. 
Recall the definition of the set $H_k(f) \subset \ol{\BB D}$ for a face $f$ of $\mcl M_k$ from Section~\ref{sec-tutte}.
For $z,w\in \ol{\BB D}$, we define 
\[
d_k(z,w) = \inf_{f , f' : z \in H_k(f) , w \in H_k(f')} \inf_{x \in \bdy f , y \in \bdy f'} d_{\mcl M_k}(x,y) , 
\]
where $d_{\mcl M_k}$ denotes graph distance and the infimum is over all faces $f,f'$ of $\mcl M_k$ such that $z \in H_k(f)$ and $w \in H_k(f')$ and all vertices $x$ and $y$ lying on the boundaries of $f$ and $f'$, respectively. Note that the infimum is infinite if one of $z$ or $w$ does not lie in $H_k(f)$ for any face $f$ of $\mcl M_k$. 
} 
$\ol{\BB D} \times \ol{\BB D} \to [0,\infty]$ (see, e.g.,~\cite{beer-usc} or~\cite[Section 1.2]{dg-supercritical-lfpp}).  
\item Let $X^k$ be the continuous time random walk on $\mcl M_k$ reflected off of $\infty$ started from $\BB x_k$ and stopped when it first hits $\bdy \mcl M_k$ (with any choice of rate function satisfying the condition of Theorem~\ref{thm-cont-time-walk}). Then $H_k(X^k)$ converges with respect to the topology on curves viewed modulo time parametrization~\cite{ab-random-curves} to standard planar Brownian motion started from 0 sampled independently from $(\Phi,\Gamma)$ and stopped when it first hits $\bdy\BB D$.  
\item Let $\Psi_k$ be the discrete GFF on $\mcl M_k$ with zero boundary condition on $\bdy\mcl M_k$ and free boundary condition at $\infty$ (Definition~\ref{def-gff}). Then $\sum_{x\in\mcl V\mcl M_k} \Psi_k(x) \BB 1_{H_k(x)}(\cdot)$ converges in the distributional sense to a zero-boundary GFF on $\BB D$ sampled independently from $(\Phi, \Gamma)$. 
\end{itemize}
\end{conj}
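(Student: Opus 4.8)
The plan is to follow the strategy used to prove convergence of random planar maps to subcritical LQG under the Tutte embedding (see~\cite{gms-tutte}), adapted to the gasket decomposition of Definition~\ref{def-supercritical-map}, with random walk reflected off of $\infty$ playing the role that ordinary random walk plays in the subcritical case. The first ingredient, which we establish in a companion paper, is that on the event that $\mcl M_k$ is infinite the simple random walk on $\mcl M_k$ is transient, so that Theorem~\ref{thm-cont-time-walk} applies and $H_k$ is well-defined via Proposition~\ref{prop-min-harmonic} applied to the real and imaginary parts of the boundary data~\eqref{eqn-tutte-bdy}.

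First I would establish tightness of the embedded decorated objects $(H_k(\mcl M_k), H_k(\mcl L_k), H_k(X^k), \dots)$ in the relevant product topology. This requires quantitative control on the geometry of $\mcl M_k$ under the Tutte embedding --- in particular, estimates ruling out the possibility that a macroscopic portion of $\ol{\BB D}$ is covered by the image of a single face, or that the images of two faces which are far apart in the map are forced onto the same region. Such estimates should follow from Russo--Seymour--Welsh-type crossing bounds for the loop model underlying $\mcl M_k$, together with the convexity of face images (Lemma~\ref{lem-tutte-convex}) and the approximation of energy-minimizing discrete harmonic functions by finite harmonic functions (Proposition~\ref{prop-harmonic-conv} and Lemma~\ref{lem-tutte-conv}).

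The heart of the argument is to show that the embedding is asymptotically conformal, i.e., that $H_k(X^k)$ converges to a Brownian motion (the fourth bullet). Since the coordinates of $H_k$ are energy-minimizing discrete harmonic functions, this amounts to showing that discrete harmonic functions on the embedded map are close to continuum harmonic functions; the strategy of~\cite{gms-tutte} --- comparing the embedded walk to a martingale, using a martingale central limit theorem, and upgrading to full-trajectory convergence using the heat-kernel/crossing estimates above --- should carry over, with the extra subtlety that excursions of $X^k$ through $\infty$ (equivalently, through the uncountably many ends of $\mcl M_k$, c.f.\ Proposition~\ref{prop-rw-end}) must be shown to make a negligible contribution on macroscopic scales. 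Given the Brownian-motion convergence, Green's-function convergence is comparatively routine, and the last bullet (convergence of the discrete GFF with free boundary condition at $\infty$) follows because, by Definition~\ref{def-gff} and Lemma~\ref{lem-green-reversible}, the DGFF covariance is $G_{\bdy\mcl M_k}(\cdot,\cdot)/\pi(\cdot)$, which converges to the continuum Dirichlet Green's function once $H_k(X^k)$ converges to Brownian motion.

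Finally, one must identify the subsequential limit. Here I would match the gasket decomposition of $\mcl M_k$, which is a discrete analog of the nested CLE$_4$ exploration of the LQG disk from~\cite{ag-supercritical-cle4}, with the continuum construction: arguing inductively over the layers $\frk M_j$, one shows that the outermost loops $H_k(\mcl L_k)$ converge to the outermost loops of $\Gamma$ and that, given a loop, the conditional law of the embedded conformal structure inside it matches that of the corresponding supercritical LQG disk, after which a characterization theorem for LQG decorated by CLE$_4$ (of the type used in the subcritical case) pins the limit down. Convergence of the rescaled graph metric to $D_\Phi$ would be obtained separately, by combining the embedding convergence with the known convergence of Liouville first-passage percolation to the supercritical LQG metric~\cite{dg-supercritical-lfpp,dg-uniqueness}. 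The main obstacle, I expect, is the combination of the tightness/non-degeneracy step and the characterization of the limit: unlike in the subcritical case, supercritical LQG is not locally compact and carries no area measure, so both the crossing estimates and the characterization theorem must be developed in topologies (lower semicontinuous metrics, Hausdorff distance on loop ensembles, curves modulo parametrization) in which many of the standard tools are not yet available.
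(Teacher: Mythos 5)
This statement is Conjecture~\ref{conj-scaling-limit}, which the paper explicitly does \emph{not} prove: Section~\ref{sec-supercritical-rpm} is stated to contain ``no proofs, only conjectures,'' and the text surrounding the conjecture offers only heuristic motivation via the analogy between singular points of $D_\Phi$ and ends of $\mcl M_k$, together with the discrete/continuum correspondence from~\cite{ag-supercritical-cle4,bgs-supercritical-crt}. There is therefore no proof in the paper to compare your proposal against, and your proposal should not be evaluated as if there were one.

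That said, a few remarks on the proposal itself. What you have written is a research program, not a proof: it identifies the major steps (tightness, asymptotic conformality via random walk reflected off of $\infty$ converging to Brownian motion, identification of the limit via the gasket-decomposition/CLE$_4$ correspondence, and separate treatment of the metric via Liouville first-passage percolation) but correctly flags that almost every one of them is open. Your account is consistent with the motivation the authors provide, and you correctly use the ingredients from the paper in the places they would be needed (Proposition~\ref{prop-min-harmonic} and~\eqref{eqn-tutte-bdy} for well-definedness of $H_k$, Proposition~\ref{prop-rw-end} for the ends/excursions of $X^k$, Lemma~\ref{lem-tutte-convex} and Proposition~\ref{prop-harmonic-conv} for the geometry of the embedding, Definition~\ref{def-gff} and Lemma~\ref{lem-green-reversible} for the DGFF covariance). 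You also correctly locate the central obstruction: the lack of local compactness and area measure in the supercritical phase means the crossing estimates and characterization theorems of~\cite{gms-tutte} do not transfer directly, which is precisely why the statement remains a conjecture. The one caveat worth noting is that the claim that random walk on $\mcl M_k$ is transient is not established in this paper either --- the authors defer it to future work --- so your proposal inherits that dependency along with everything else it cites as ``to be done.''
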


Recall that the singular points~\eqref{eqn-singular-pt} of the LQG metric $D_\Phi$ are the continuum analog of the ends of $\mcl M_k$. The set of singular points of $D_\Phi$ is totally disconnected: indeed, this follows from the local finiteness property of $D_\Phi$, see~\cite[Axiom V]{dg-uniqueness}, and the fact that paths of finite $D_\Phi$-length cannot hit singular points. Consequently, it is natural to make the following conjecture.

\begin{conj} \label{conj-tutte-end}
For each end $\omega$ of $\mcl M_k$, define its image $H_k(\omega) \subset \ol{\BB D}$ under the Tutte embedding as in~\eqref{eqn-tutte-end}. Then a.s.\ $H_k(\omega)$ is a singleton for each end of $\mcl M_k$.
\end{conj}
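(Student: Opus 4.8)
The plan is to show that the image of every end has diameter zero, by combining the finite-Dirichlet-energy property of the Tutte embedding (Proposition~\ref{prop-min-harmonic}) with an effective-resistance estimate tailored to the ring structure of $\mcl M_k$ (Definitions~\ref{def-ring} and~\ref{def-supercritical-map}). First I would reduce the statement. Fix an exhaustion $\{V_n\}$ of $\mcl V\mcl M_k$ and, for an end $\omega$, let $C_m(\omega)$ be the associated nested connected components as in Lemma~\ref{lem-end-comp}. Since each face $f$ with $\bdy f \subseteq C_m(\omega)$ satisfies $H_k(f)\subseteq \op{conv}(H_k(\bdy f))\subseteq\op{conv}(H_k(C_m(\omega)))$ directly from the definition of $H_k$ on faces, we get $\bigcup_{f\in F_m}H_k(f)\subseteq\op{conv}(H_k(C_m(\omega)))$, whose diameter equals $\op{diam}(H_k(C_m(\omega)))$. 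The set $H_k(\omega)$ of~\eqref{eqn-tutte-end} is a decreasing intersection of nonempty compact sets, hence nonempty and compact, with $\op{diam}(H_k(\omega))\le\liminf_{m\to\infty}\op{diam}(H_k(C_m(\omega)))$. So it suffices to prove $\op{diam}(H_k(C_m(\omega)))\to 0$ a.s., for every end $\omega$.

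The core quantitative step is as follows. Write $u=\op{Re}H_k$ and $v=\op{Im}H_k$; by Proposition~\ref{prop-min-harmonic} both have finite Dirichlet energy, so if $\mcl E_m(\omega)$ denotes the energy of $u$ plus that of $v$ over edges with both endpoints in $C_m(\omega)$, then $\mcl E_m(\omega)\to 0$ as $m\to\infty$ (tail of a convergent sum; this is automatic once $\op{Energy}(H_k)<\infty$). For $x,x'\in C_m(\omega)$ and any unit flow $\theta$ from $x$ to $x'$ supported on edges inside $C_m(\omega)$, the pairing between flows and gradients in the Hilbert space $\Theta$ (the mechanism behind Lemma~\ref{lem-subspace}: cycles are orthogonal to gradients) gives $|u(x)-u(x')|=|\langle\theta,\nabla u\rangle_\Theta|\le\sqrt{\op{Energy}(\theta)}\,\sqrt{\op{Energy}_{C_m(\omega)}(u)}$ by Cauchy--Schwarz, and likewise for $v$. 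Optimizing over $\theta$ yields $|H_k(x)-H_k(x')|\le\sqrt{2\,\mcl E_m(\omega)\,\rho_m(\omega)}$, where $\rho_m(\omega)$ bounds the effective resistance, computed using only edges of $C_m(\omega)$, between any two of its vertices. It therefore remains to show that $\rho_m(\omega)$ can be taken finite with $\rho_m(\omega)\to 0$.

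This is the model-specific input. Along the end $\omega$ there is a nested sequence of rings $R^{(j)}$ whose inner boundaries $W_j$ are edge-disjoint cycles separating $\omega$ from $\bdy\mcl M_k$, of lengths $\ell_j:=|W_j|$. By Definition~\ref{def-ring} each $R^{(j)}$ is a union of quadrilaterals touching both its boundaries, so it contains $\asymp\min(\text{inner},\text{outer perimeter})$ nearly vertex-disjoint crossing paths of length $\le 3$; hence the effective resistance across $R^{(j)}$ is $\asymp\ell_j^{-1}$, and routing unit flows through these parallel crossings (together with the analogous crossings inside the side-branches hanging off $C_m(\omega)$) should give $\rho_m(\omega)\lesssim\sum_{j\ge m}\ell_j^{-1}$. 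The crucial fact is that $\ell_j$ grows geometrically along $\omega$: conditionally on the branch of the construction in Definition~\ref{def-supercritical-map} leading to $\omega$ surviving forever, $\log\ell_j$ behaves like a mean-zero random walk with increments $\pm\pi\sqrt{4-Q^2}/Q$ (plus Boltzmann-map perimeter fluctuations) conditioned to stay positive, which has positive speed; this should follow from the analysis in~\cite{bgs-supercritical-crt} and the Markovian structure of the construction, whence $\sum_j\ell_j^{-1}<\infty$ and $\rho_m(\omega)\to 0$. For the ``a.s.\ for every end'' form one cannot union-bound over the uncountably many ends, so I would instead show a.s.\ $\inf_\omega\liminf_j\tfrac1j\log\ell_j(\omega)>0$ by a first-moment and Borel--Cantelli estimate on the construction tree, in the spirit of the linear lower bound on the minimal displacement in a branching random walk; combined with $\mcl E_m(\omega)\to 0$ for every $\omega$, this gives the claim simultaneously for all ends.

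The hard part will be the two probabilistic inputs about the maps $\mcl M_k$ themselves rather than about the general theory of random walk reflected off of $\infty$: making the effective-resistance estimates across the rings (and across the side-branches) rigorous and quantitative, and establishing the \emph{uniform} geometric growth of the perimeters $\ell_j(\omega)$ over all ends at once. Everything else — the reduction to $\op{diam}(H_k(C_m(\omega)))\to 0$, the flow/energy inequality, and the tail decay $\mcl E_m(\omega)\to 0$ — follows from the finite-energy structure already in hand. This is presumably why the statement is recorded here only as a conjecture.
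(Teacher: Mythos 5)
There is no proof to compare against: the statement you were asked about is recorded in the paper only as a conjecture (Conjecture~\ref{conj-tutte-end}), with no proof given. Your write-up is therefore evaluated on its own merits, and you have correctly separated the parts of the argument that follow from the general theory developed in the paper from the model-specific probabilistic inputs that remain open.

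The deterministic core of your plan is sound and uses exactly the right tools. The energy-minimizing function $H_k$ has finite Dirichlet energy (Proposition~\ref{prop-min-harmonic}), so for every end $\omega$ simultaneously the restricted energy $\mcl E_m(\omega)\to 0$ as $m\to\infty$ (a deterministic tail statement once $\op{Energy}(H_k)<\infty$). The flow/Cauchy--Schwarz bound $|H_k(x)-H_k(x')|^2\lesssim\rho_m(\omega)\,\mcl E_m(\omega)$ is a correct use of the orthogonality structure behind Lemma~\ref{lem-subspace}, and the reduction to $\op{diam}(H_k(C_m(\omega)))\to 0$ via convexity of $H_k(f)$ is fine.

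Two points in the probabilistic half deserve scrutiny beyond the gaps you yourself flag. First, the claim that the perimeter walk conditioned to stay positive ``has positive speed'' is suspect as stated: the ring step $Y_h\,\pi\sqrt{4-Q^2}/Q$ is symmetric and mean zero, and a mean-zero random walk conditioned to stay positive is Bessel-3--like, with $\log\ell_j\asymp\sqrt{j}$ rather than $\asymp j$. Whether the Boltzmann-map contribution introduces a genuine negative drift (after which the Cram\'er-tilted conditioned walk would indeed have positive speed) requires a concrete check against~\cite{bgs-supercritical-crt}; your conclusion $\sum_j\ell_j^{-1}<\infty$ for a \emph{single} end survives diffusive growth, but the uniform statement $\inf_\omega\liminf_j j^{-1}\log\ell_j(\omega)>0$ is far stronger and should not be taken for granted. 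Second, $\rho_m(\omega)$ must bound effective resistance between \emph{every} pair $x,x'\in C_m(\omega)$, and $C_m(\omega)$ contains uncountably many sub-ends; a vertex $x$ deep in a sub-end $\omega'$ with slowly-growing perimeters has large resistance to the base of $C_m(\omega)$. So the controlling quantity is $\sup_{\omega'\subseteq C_m(\omega)}\sum_{j\geq m}\ell_j(\omega')^{-1}$ over sub-ends --- i.e., the \emph{slowest}-growing, not the typical, branch. This is a leftmost-particle question for a branching random walk with killing at the origin, where the correct growth exponent (linear, $\sqrt{n}$, or $n^{1/3}$) is a genuinely nontrivial issue. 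This, presumably, is why the statement is left as a conjecture; your identification of it as the crux is accurate.
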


\subsection{Phase transition conjectures}
\label{sec-phase-transition}

Let $(\mcl M_k, \mcl L_k)$ and $(\Phi,\Gamma)$ be as in Section~\ref{sec-tutte-conj}. 
In this section we make several conjectures for the existence of phase transitions for the qualitative behavior of various processes on the random planar map $\mcl M_k$, occurring at particular values of $\cc \in (1,25)$. Such processes include the random walk reflected off $\infty$, the free uniform spanning forest, and critical percolation. 

The basic idea behind the conjectures in this section is that existing results from the LQG literature (which we discuss just below) allow us to determine whether or not a random Borel set sampled independently from $\Phi$ intersects the set of singular points~\eqref{eqn-singular-pt} of $D_\Phi$. From this and the (heuristic) analogy between singular points and ends of $\mcl M_k$, we can make conjectures about whether or not various random subsets of the map $\mcl M_k$ accumulate at the ends of $\mcl M_k$ (i.e., whether or not these random subsets are infinite). This leads to conjectures about the qualitative properties of such sets.
 
It is shown in~\cite[Proposition 1.11]{pfeffer-supercritical-lqg} that the singular points can be described in terms of the so-called thick points of $\Phi$, whose definition we now recall. 
For $z\in\BB D$ and $r\in (0,1-|z|)$, let $\Phi_r(z)$ be the average of $\Phi$ on the circle of radius $r$ centered at $z$. See~\cite[Section 3.1]{shef-kpz} or~\cite[Section 1.12]{bp-lqg-notes} for the basic properties of circle averages. For $\alpha \in \BB R$, we define the set $T_{> \alpha}$ of \textbf{points of thickness greater than $\alpha$} and the set $T_{< \alpha}$ of \textbf{points of thickness less than $\alpha$}, respectively, by
\allb  \label{eqn-thick-pt-def}
T_{> \alpha} := \left\{ z\in \BB D : \limsup_{r \to 0} \frac{\Phi_r(z)}{\log r^{-1}} > \alpha \right\}  
\quad \text{and} \quad
T_{< \alpha} := \left\{ z\in \BB D : \limsup_{r \to 0} \frac{\Phi_r(z)}{\log r^{-1}} < \alpha \right\}  
\alle
We note that some literature use a limit or a liminf instead of a limsup in the definition of the $\alpha$-thick points. Then we have the following statement.

\begin{prop}[\!\!{\cite[Proposition 1.11]{pfeffer-supercritical-lqg}}] \label{prop-singular-thick}
Let $Q = \sqrt{\frac{\cc - 1}{6}} \in (0,2)$. Define $T_{>\alpha}$ and $T_{<\alpha}$ as in~\eqref{eqn-thick-pt-def}. Then each point in $T_{>Q}$ is a singular point for $D_\Phi$, and each point in $T_{<Q}$ is not a singular point for $D_\Phi$.
\end{prop}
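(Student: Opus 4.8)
The plan is to establish both inclusions through the ``barrier at every scale'' picture, combining the known structural properties of the supercritical LQG metric $D_\Phi$ with quantitative estimates for $D_\Phi$-distances across and around Euclidean annuli. First I would record a purely metric reduction: since $D_\Phi$ is a length metric on $\BB D$, a point $z$ is singular in the sense of~\eqref{eqn-singular-pt} if and only if $D_\Phi(z,\bdy B_r(z)) = \infty$ for every small $r > 0$ (if $w \neq z$ and $r < |z-w|$ then every path from $z$ to $w$ crosses $\bdy B_r(z)$, and the converse is immediate). Writing $L_n(z) := D_\Phi(\bdy B_{2^{-n-1}}(z),\bdy B_{2^{-n}}(z))$ for the crossing distance of the dyadic annulus $A_n(z) := B_{2^{-n}}(z)\setminus B_{2^{-n-1}}(z)$, any path from $z$ to $\bdy B_{2^{-n_0}}(z)$ must cross each of the disjoint annuli $A_n(z)$, $n \ge n_0$, so $D_\Phi(z,\bdy B_{2^{-n_0}}(z)) \ge \sum_{n \ge n_0} L_n(z)$; conversely, concatenating near-optimal crossing paths with short detours along the circles $\bdy B_{2^{-n}}(z)$ (whose cost is governed by the around-annulus distances, which have the same order as $L_n(z)$) gives a matching upper bound up to a tight random factor. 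Hence $z$ is singular if and only if $\sum_n L_n(z) = \infty$.

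The analytic core is the scaling relation for $L_n(z)$ in terms of the circle average $\Phi_{2^{-n}}(z)$. Using the Markov decomposition of the GFF --- $\Phi|_{B_{2^{-n}}(z)}$ is the sum of an independent zero-boundary GFF on $B_{2^{-n}}(z)$ and a harmonic function equal to $\Phi_{2^{-n}}(z) + O(1)$ on the inner half of $A_n(z)$ by Harnack --- together with the Weyl scaling rule $D_{\Phi + f}(z,w) = \inf_P \int_0^{\ell} e^{\xi f(P(t))}\,dt$ (the infimum over $D_\Phi$-unit-speed paths from $z$ to $w$ of $D_\Phi$-length $\ell$), the scale covariance of $D_\Phi$ under Euclidean dilations, and the supercritical Liouville first passage percolation normalization $\mathfrak{a}_\eps = \eps^{1 - \xi Q + o(1)}$, one obtains
\[
L_n(z) \asymp 2^{-n\xi Q}\, e^{\xi \Phi_{2^{-n}}(z)}\, W_n(z),
\]
where the random factors $W_n(z)$ satisfy $\BB E[W_n(z)^{\pm p}] \lesssim 1$ for every $p > 0$, uniformly in $z$ and $n$ up to a subpolynomial correction; the analogous statement holds for the around-annulus distances. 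These moment bounds, and indeed the construction and regularity of $D_\Phi$ for $\cc \in (1,25)$, are the substantial inputs I would import from the supercritical LQG/LFPP literature~\cite{dg-supercritical-lfpp,dg-uniqueness,pfeffer-supercritical-lqg}; it is through the normalization exponent $1 - \xi Q$ that the parameter $Q$ of the proposition (equivalently $\cc$, via $\cc = 1 + 6Q^2$) enters. The upshot is that if $\Phi_{2^{-n}}(z) \ge \alpha n \log 2$ then $L_n(z) \gtrsim 2^{n\xi(\alpha - Q)} W_n(z)$, which diverges precisely when $\alpha > Q$ and is summable when $\alpha < Q$.

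For the inclusion $T_{>Q} \subseteq \{\text{singular points}\}$: given $z \in T_{>Q}$, choose $\alpha \in (Q,2]$ and $n_k \to \infty$ with $\Phi_{2^{-n_k}}(z) \ge \alpha n_k \log 2$; then $L_{n_k}(z) \gtrsim 2^{n_k\xi(\alpha - Q)} W_{n_k}(z)$, and since the $W_{n_k}(z)$ at well-separated scales have negative moments of all orders, a Borel--Cantelli argument gives $W_{n_k}(z) \ge 2^{-\eps n_k}$ for all large $k$ for any fixed $\eps < \xi(\alpha - Q)$, whence $L_{n_k}(z) \to \infty$ and $\sum_n L_n(z) = \infty$, so $z$ is singular. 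To upgrade from ``for a fixed $z$, almost surely'' to ``almost surely, for every $z \in T_{>Q}$'', I would run this over a deterministic net of candidate thick points (dyadic square centres at each scale), using that the $\limsup$ in~\eqref{eqn-thick-pt-def} is realized, up to an arbitrarily small loss in $\alpha$, along dyadic scales and net points, and a union bound against the moment estimates.

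The reverse inclusion, $\{\text{singular points}\} \cap T_{<Q} = \emptyset$, is where I expect the main obstacle. Fix $z \in T_{<Q}$, so $\Phi_{2^{-n}}(z) \le \alpha n \log 2$ for all large $n$ with some $\alpha < Q$; the goal is an escape path from $z$ to $\bdy B_{2^{-n_0}}(z)$ of finite $D_\Phi$-length. The scheme is multiscale: at scale $2^{-n}$ the path should cross a ``good'' sub-annulus of $A_n(z)$ on which every circle average at scale $\asymp 2^{-n}$ is $\le \alpha' n \log 2$ for a fixed $\alpha' \in (\alpha, Q)$, at cost $\lesssim 2^{-n\xi(Q - \alpha')}$; since $Q - \alpha' > 0$ these costs are summable in $n$, so $D_\Phi(z,\bdy B_{2^{-n_0}}(z)) < \infty$. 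The difficulty is that the hypothesis controls the field only at the centre $z$, whereas the crossing cost must be controlled uniformly along a path, and the (nonempty, though thin) set of locally too-thick points in $A_n(z)$ could a priori either disconnect the annulus or force every crossing to be expensive. I expect this to be the hard step, to be handled by exploiting that near a thin point the harmonic part of $\Phi$ is uniformly small on a definite fraction of each annulus --- so the obstructing fluctuations come only from the independent zero-boundary pieces --- together with a union bound over a net of locations and scales, the Gaussian tail of circle averages, and the LFPP distance moment bounds, to show that a good crossing sub-annulus exists at every small scale with overwhelming probability.
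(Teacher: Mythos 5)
This proposition is stated in the paper as a direct citation to~\cite[Proposition~1.11]{pfeffer-supercritical-lqg}; the paper contains no proof of it, so there is no internal argument to compare your sketch against. That said, your outline is a plausible reconstruction of the LFPP-style argument one would give: the reduction of singularity to divergence of dyadic annulus crossing distances $L_n(z)$, the Weyl-scaling heuristic $L_n(z) \asymp 2^{-n\xi Q} e^{\xi\Phi_{2^{-n}}(z)} W_n(z)$ with tight positive and negative moments on $W_n$, and the resulting exponent $\xi(\alpha - Q)$ governing the summability dichotomy at $\alpha = Q$ are all the right ingredients, and you are appropriately honest that the moment bounds and the construction of $D_\Phi$ are imported rather than re-derived. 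Two cautions on the sketch itself. In the $T_{>Q}$ direction, the scales $n_k$ and the point $z$ are both field-dependent, so the Borel--Cantelli step is not directly available for a fixed random $z$; the net-and-union-bound repair you propose is the standard fix, but it needs a quantitative comparison between $L_n$ at a dyadic net point and $L_n$ at the nearby thick center, which is not purely cosmetic since shifting the center changes the crossing geometry. In the $T_{<Q}$ direction you correctly identify the substantive obstacle: the hypothesis controls $\Phi$ only at $z$, whereas the escape path must avoid locally thick regions at every scale, and showing that the bad set does not separate $z$ from the boundary of each annulus is the real work of the published proof. In short, the skeleton is sound, but since the paper only cites the result, there is nothing here for the attempt to be measured against, and a complete proof would have to flesh out precisely the two multiscale steps you flag.
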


If $\limsup_{r \to 0} \frac{\Phi_r(z)}{\log r^{-1}} = Q$, then $z$ could be a singular point or a non-singular point, depending on the second-order behavior of $\Phi_r(z)$ as $r\to 0$. 

Let $A\subset \BB D$ be a random Borel set sampled independently from $\Phi$ and let $\dim_{\mcl H} A$ denote its Hausdorff dimension with respect to the Euclidean metric.   
By combining Proposition~\ref{prop-singular-thick} with a result for the dimension of the intersection of $A$ with the $\alpha$-thick points~\cite[Theorem 4.1]{ghm-kpz}, we get that
\eqb \label{eqn-singular-dim-lower}
\text{On the event $\{\dim_{\mcl H} A > Q^2/2\}$, a.s.\ $A$ contains uncountably many singular points of $D_\Phi$.}
\eqe
We expect that also
\eqb \label{eqn-singular-dim-upper}
\text{On the event $\{\dim_{\mcl H} A < Q^2/2\}$, a.s.\ $A$ is disjoint from the set of singular points of $D_\Phi$.}
\eqe  
The statement~\eqref{eqn-singular-dim-upper} has not exactly been proven in the literature, but several closely related statements have been proven.\footnote{\cite[Theorem 4.1]{ghm-kpz} proves an analogous statement to~\eqref{eqn-singular-dim-upper} with singular points replaced by thick points, but with either a $\lim$ or a $\liminf$ instead of a $\limsup$ in the definition of thick points. The proof of~\cite[Theorem 1.15]{pfeffer-supercritical-lqg} gives the analog of~\eqref{eqn-singular-dim-upper} with packing dimension in place of Hausdorff dimension. We expect that~\eqref{eqn-singular-dim-upper} can be established by adapting the ideas in these proofs, but we will not carry this out here.}
 
In the case when $\dim_{\mcl H} A = Q^2/2$, more refined information about $A$ is needed to determine whether $A$ contains singular points of $D_\Phi$ or not. We will not address the case when $\dim_{\mcl H} A = Q^2/2$ in this paper.

\subsubsection{Range of the random walk reflected off of $\infty$}

As a first example, let $B$ be a standard planar Brownian motion started from 0, sampled independently from $\Phi$ and stopped at the first time $\tau$ when it hits $\bdy \BB D$. We say that $z\in B([0,\tau])$ is a \textbf{cut point} of $B$ if $B([0,\tau])\setminus \{z\}$ is not connected. It is shown in~\cite{lsw-bm-exponents2} that the Hausdorff dimension of the cut points of $B$ (with respect to the Euclidean metric) is a.s.\ equal to $3/4$. By~\eqref{eqn-singular-dim-lower} and~\eqref{eqn-singular-dim-upper}, the set of cut points of $B$ a.s.\ contains singular points of $D_\Phi$ if $Q <  \sqrt{3/2}$, and should a.s.\ not contain singular points if $Q > \sqrt{3/2}$ (the situation for $Q =\sqrt{3/2}$ is unclear). Equivalently, there is a path in $B([0,\tau])$  between any two points of $B([0,\tau])$ which avoids the singular points of $D_\Phi$ if $Q > \sqrt{3/2}$, but not if $Q < \sqrt{3/2}$.

Now let $X^k$ be the continuous time random walk reflected off of $\infty$ on $X^k$ stopped at the first time $\tau_k$ when it hits $\bdy \mcl M_k$. The range $X^k([0,\tau_k])\subset\mcl V\mcl M_k$ can potentially fail to be connected since $X^k$ can pass through $\infty$ on its way to $\bdy \mcl M_k$. 
From Conjecture~\ref{conj-scaling-limit}, we expect that under the Tutte embedding, $X^k|_{[0,\tau_k]}$ converges to $B|_{[0,\tau]}$ modulo time parametrization. In light of the preceding paragraph and the conjectural correspondence between singular points of $D_\Phi$ and ends of $\mcl M_k$, it is therefore natural to conjecture the following. 

\begin{conj} \label{conj-cut-pt}
The range of $X^k([0,\tau_k]) \subset \mcl V\mcl M_k$ of the continuous time random walk stopped when it hits $\bdy\mcl M_k$ is a.s.\ connected if $Q  > \sqrt{3/2}$ (equivalently, $\cc > 10$), and has positive probability to be non-connected if $Q < \sqrt{3/2}$ (equivalently, $\cc < 10$). 
\end{conj}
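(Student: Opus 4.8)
The plan is to deduce Conjecture~\ref{conj-cut-pt} from the scaling-limit Conjecture~\ref{conj-scaling-limit}, together with the dictionary between ends of $\mcl M_k$ and singular points of $D_\Phi$. Granting Conjecture~\ref{conj-scaling-limit}, the Tutte-embedded trajectory $H_k(X^k|_{[0,\tau_k]})$ converges, modulo time parametrization, to planar Brownian motion $B|_{[0,\tau]}$ started from $0$ and stopped on hitting $\bdy\BB D$, jointly with the convergence of $(\mcl M_k,\mcl L_k)$ to $(\BB D,\Phi,\Gamma)$ and (via Conjecture~\ref{conj-tutte-end}) with each end of $\mcl M_k$ being carried to a singleton which, in the limit, lies in the singular set of $D_\Phi$. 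The first step is to rephrase the event $\{X^k([0,\tau_k])\text{ is connected}\}$ intrinsically: since ends are not vertices, the vertex set $X^k([0,\tau_k])$ corresponds under the embedding to $B([0,\tau])$ with its singular points deleted, and it is disconnected exactly when the trajectory passes through some end $\omega$ that is a \textbf{cut end}, i.e.\ one whose removal separates the ``before'' and ``after'' portions of the range. The target is therefore to show that such a cut end exists with positive probability iff $\cc<10$, and a.s.\ does not exist if $\cc>10$.

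The second step is the continuum computation, which is largely an input. By~\cite{lsw-bm-exponents2}, the set of cut points of $B$ has Euclidean Hausdorff dimension $3/4$ almost surely, and these cut points are independent of $\Phi$. Since $\cc=1+6Q^2$, the threshold $3/4 = Q^2/2$ occurs precisely at $\cc=10$. If $Q>\sqrt{3/2}$ (i.e.\ $\cc>10$) then $3/4<Q^2/2$, so by~\eqref{eqn-singular-dim-upper} applied with $A$ the cut-point set, the cut points of $B$ are a.s.\ disjoint from the singular set of $D_\Phi$; hence every singular point on $B([0,\tau])$ is a non-cut point and can be bypassed, so $B([0,\tau])$ minus its singular points is connected. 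If instead $Q<\sqrt{3/2}$ (i.e.\ $\cc<10$) then $3/4>Q^2/2$, so by~\eqref{eqn-singular-dim-lower} the cut points of $B$ a.s.\ contain uncountably many singular points of $D_\Phi$, and removing any such cut-and-singular point disconnects $B([0,\tau])$. Conditioning on the event that $\mcl M_k$ is infinite, which has probability bounded below uniformly in large $k$ by Proposition~\ref{prop-rpm-infinite}, this should yield a cut end of $X^k$ with positive probability.

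The third and hardest step is the passage between the continuum and discrete topological statements, which is where new estimates --- beyond even Conjecture~\ref{conj-scaling-limit} --- are needed. For the direction $\cc>10$ one must show that the discrete range inherits connectivity: given two vertices of $X^k([0,\tau_k])$ whose images are near $z,w\in B([0,\tau])$, the continuum sub-path of $B$ joining them while avoiding singular points has to be promoted, scale by scale, to an honest path inside $X^k([0,\tau_k])$; the subtlety is upgrading ``avoids the (zero-measure, totally disconnected) singular set'' to ``avoids a definite neighbourhood of the ends of $\mcl M_k$'', which requires local-finiteness-type control on the Tutte-embedded map and on $D_\Phi$. For the direction $\cc<10$ one needs the converse robustness: a genuine cut point of $B$ at a singular point must, with positive probability, correspond to an end $\omega$ of $\mcl M_k$ through which $X^k$ passes and which genuinely separates the two portions of the range in $\mcl M_k$ --- the difficulty being to rule out an unforeseen macroscopic reconnection. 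Both directions, together with the auxiliary statement~\eqref{eqn-singular-dim-upper}, remain open, which is why Conjecture~\ref{conj-cut-pt} is stated as a conjecture rather than a theorem.
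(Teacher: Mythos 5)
Your proposal reproduces essentially the same heuristic argument that the paper gives in the discussion preceding Conjecture~\ref{conj-cut-pt}: convergence of $H_k(X^k|_{[0,\tau_k]})$ to planar Brownian motion via Conjecture~\ref{conj-scaling-limit}, the dimension $3/4$ of Brownian cut points from~\cite{lsw-bm-exponents2}, the threshold $3/4 = Q^2/2 \Leftrightarrow \cc = 10$ from~\eqref{eqn-singular-dim-lower}--\eqref{eqn-singular-dim-upper}, and the identification of ends of $\mcl M_k$ with singular points of $D_\Phi$. Since the statement is a conjecture with no proof in the paper, there is nothing more to compare; your third step correctly identifies the genuinely open discretization issues beyond Conjecture~\ref{conj-scaling-limit} that prevent this heuristic from being a proof.
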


\begin{remark}
In the setting of Conjecture~\ref{conj-cut-pt}, we do not have a conjecture for what happens at the critical parameter value $Q = \sqrt{3/2}$. Analogous statements apply for Conjectures~\ref{conj-fsf} and~\ref{conj-perc} below.
\end{remark}

\subsubsection{Connectedness of the free uniform spanning forest}

Consider the free uniform spanning forest (FUSF) $\mcl T_k^{\op{FSF}}$ on $\mcl M_k$ (Definition~\ref{def-fsf} with $\frk c \equiv 1$). By Lemma~\ref{lem-fsf-lerw}, the paths in $\mcl T_k^{\op{FSF}}$ between points of the same connected component of $\mcl T^{\op{FSF}}$ can be obtained by loop-erasing segments of the continuous time random walk $X^k$ reflected off of $\infty$. Since we expect that $X^k$ should converge to Brownian motion, modulo time parametrization, under the Tutte embedding (Conjecture~\ref{conj-scaling-limit}), it is natural to guess that loop erasures of segments of $X^k$ converge to SLE$_2$-type curves. One reason why this is natural is that it was shown in~\cite{yy-lerw} that for a large class of planar graphs, the convergence of random walk to Brownian motion implies that convergence of loop erased random walk to SLE$_2$. 

The Hausdorff dimension of SLE$_2$ with respect to the Euclidean metric is $5/4$~\cite{beffara-dim}. Hence,~\eqref{eqn-singular-dim-upper} implies that an SLE$_2$ curve sampled independently from $\Phi$ a.s.\ passes through uncountably many singular points of $D_\Phi$ if $Q < \sqrt{5/2}$, and~\eqref{eqn-singular-dim-lower} suggests that an SLE$_2$ curve should not pass through any singular points of $D_\Phi$ if $Q > \sqrt{5/2}$. By combining this with the preceding paragraph, it is natural to guess that one can find paths in the FUSF $\mcl T_k^{\op{FSF}}$ joining any two vertices of $\mcl V\mcl M_k$ if $Q > \sqrt{5/2}$, but not if $Q < \sqrt{5/2}$. 

\begin{conj} \label{conj-fsf}
 If $Q > \sqrt{5/2}$ ($\cc > 16$), a.s.\ the FUSF on $\mcl M_k$ is connected (i.e., it consists of a single tree).
 If $Q < \sqrt{5/2}$ ($\cc < 16$), then on the event $\{\#\mcl V\mcl M_k = \infty\}$, a.s.\ the FUSF on $\mcl M_k$ has infinitely many connected components. 
\end{conj}

By way of comparison, we note that Pemantle~\cite{pemantle-ust} showed that the FUSF on $\BB Z^d$ a.s.\ consists of a single tree if $d\leq 4$, but a.s.\ has infinitely many connected components if $d \geq 5$. 

In contrast, we expect that the wired uniform spanning forest (WUSF) on $\mcl M_k$ has infinitely many connected components regardless of the value of $\cc$. The reason is that the WUSF should have components rooted at infinitely many of the uncountably many ends of $\mcl M_k$.

\begin{conj} \label{conj-wsf}
For all $\cc \in (1,25)$, the WUSF on $\mcl M_k$ has infinitely many connected components a.s.\ on the event $\{\#\mcl V\mcl M_k = \infty\}$. 
\end{conj}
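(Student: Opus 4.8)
The plan is to show that, on the event $\{\#\mcl V\mcl M_k = \infty\}$, the WUSF on $\mcl M_k$ a.s.\ has at least $m$ connected components for every $m\in\BB N$; since by \cite[Theorem 9.4]{blps-usf} the number of components of the WUSF on any fixed graph is an a.s.\ constant, this forces that number to be infinite. This reduction makes the statement independent of $\cc$: the only features of $\mcl M_k$ that enter are that, on the infinite event, it has infinitely many ends (Proposition~\ref{prop-rpm-infinite}) and that the random walk on it is transient. The latter is not proved in this paper but, as noted after Proposition~\ref{prop-rpm-infinite}, will be established in forthcoming work; below I treat it as an input.

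The first step is structural. Work on $\{\#\mcl V\mcl M_k = \infty\}$ and fix $m$. Using Proposition~\ref{prop-rpm-infinite} together with the recursive construction in Definition~\ref{def-supercritical-map}, one extracts $m$ pairwise disjoint infinite \emph{ring-bounded submaps} $\mcl M_k^{(h_1)},\dots,\mcl M_k^{(h_m)}$: for a hole $h$ appearing at some stage of the construction, $\mcl M_k^{(h)}$ is the union of the ring $R_h$ attached to $h$, the Boltzmann map glued inside it, and everything recursively inside, so that $\mcl M_k^{(h)}$ is joined to the remainder of $\mcl M_k$ only along the finite boundary cycle $\bdy h$ of $R_h$. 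Since $\mcl M_k$ has infinitely many ends, one finds $m$ disjoint infinite connected pieces, and inside each such piece a hole $h_i$ with $\mcl M_k^{(h_i)}$ infinite and contained in that piece; this produces the required submaps. The consequence of transience to record is: for a suitable vertex $v_i\in\mcl V\mcl M_k^{(h_i)}$, the random walk on $\mcl M_k$ started from $v_i$ has positive probability never to hit the finite set $\bdy h_i$, equivalently to escape to an end of $\mcl M_k$ lying ``inside'' $\mcl M_k^{(h_i)}$, because $\bdy h_i$ is a finite cutset whose removal leaves $\mcl M_k^{(h_i)}$ as an infinite transient piece on the inside.

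The second step runs Wilson's algorithm rooted at infinity --- which samples the WUSF on the transient graph $\mcl M_k$ (see \cite{blps-usf,lyons-peres}) --- with a vertex enumeration beginning $v_1,\dots,v_m$ and then listing all remaining vertices. Let $A_i$ be the event that the loop-erased random walk started from $v_i$ in this algorithm is infinite and stays inside $\mcl M_k^{(h_i)}\setminus\bdy h_i$. Because the submaps are pairwise disjoint and meet the rest of $\mcl M_k$ only along the cutsets $\bdy h_j$, on $A_1\cap\dots\cap A_{i-1}$ the forest built before step $i$ lies in $\bigcup_{j<i}(\mcl V\mcl M_k^{(h_j)}\setminus\bdy h_j)$, hence is disjoint from $\mcl M_k^{(h_i)}$ and in particular does not already contain $v_i$; consequently $A_i$ is implied by the event that the walk from $v_i$ never hits $\bdy h_i$, whose conditional probability given the graph and the earlier steps is bounded below by the positive constant from the previous paragraph, uniformly in what happened before. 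Thus $\BB P[A_1\cap\dots\cap A_m\mid \mcl M_k] > 0$. On $A_1\cap\dots\cap A_m$ the trees generated from $v_1,\dots,v_m$ are pairwise disjoint infinite trees, so after $m$ steps Wilson's algorithm has produced at least $m$ distinct trees; since the number of trees is non-decreasing as the algorithm proceeds (each further step either extends an existing tree or creates a new one, never merging two), the resulting WUSF has at least $m$ components. Hence $\BB P[\,\text{WUSF has}\ \geq m\ \text{components}\mid \mcl M_k\,]>0$ for every $m$, and combined with a.s.-constancy of the number of components this finishes the argument.

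The main obstacle is the transience input: Wilson's algorithm rooted at infinity and the positive escape probabilities both rely on transience of the random walk on $\mcl M_k$ and, more delicately, on the fact that a ring-bounded submap $\mcl M_k^{(h)}$ remains transient with its internal ends reachable without crossing the outer cutset $\bdy h$. Everything else is soft. A minor point requiring care is the passage from ``infinitely many ends'' to $m$ disjoint ring-bounded \emph{infinite} submaps, but since only finitely many are needed for each fixed $m$, no uniformity is required.
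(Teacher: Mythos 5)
This statement is an explicitly open conjecture in the paper. Section~6 ``has no proofs, only conjectures,'' and the only support offered for Conjecture~\ref{conj-wsf} is a one-sentence heuristic: the WUSF ``should have components rooted at infinitely many of the uncountably many ends of $\mcl M_k$.'' Your proposal is a reasonable attempt to turn that heuristic into an argument via Wilson's algorithm rooted at infinity, and the soft parts are all correct: the reduction to showing $\geq m$ components for every $m$ via the BLPS a.s.-constancy of the number of WUSF components on a fixed graph, the extraction of $m$ pairwise incomparable holes $h_i$ with infinite ring-bounded submaps from the recursive construction, the observation that the number of trees produced by Wilson's algorithm is non-decreasing, and the multiplicative positive-probability bound for $A_1\cap\dots\cap A_m$ given the disjointness of the submaps.

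There is a genuine gap, which you flag but do not close. The forthcoming work cited after Proposition~\ref{prop-rpm-infinite} establishes transience of random walk on $\mcl M_k$; your argument needs something strictly stronger, namely that for each chosen submap $\mcl M_k^{(h_i)}$ the walk started at $v_i$ has positive probability of never hitting the finite cutset $\bdy h_i$, i.e.\ transience of $\mcl M_k^{(h_i)}$ with $\bdy h_i$ treated as the killing set. Transience of a graph does not in general imply transience of an infinite subgraph (compare $\BB Z \subset \BB Z^3$), so the word ``equivalently'' in your second paragraph is misleading: this is a separate hypothesis, not a consequence of the cited transience result. It is plausible that the (unseen) transience proof extends to the submaps because they carry essentially the same self-similar recursive law, but that has to be checked against the actual proof, which is not in this paper. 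A secondary, smaller point: you invoke ``infinitely many ends'' a.s.\ on the event $\{\#\mcl V\mcl M_k=\infty\}$, but Proposition~\ref{prop-rpm-infinite} only bounds the probability of the conjunction ``infinite with infinitely many ends.'' The implication you need is presumably a $0$-$1$ law for the Galton--Watson-type hole tree and is likely available in~\cite{bgs-supercritical-crt}, but it should be listed as an external input alongside transience rather than taken for granted.
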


\subsubsection{Percolation at criticality}

We next consider site percolation on $\mcl M_k$ with open boundary condition (we can make similar conjectures for bond percolation). We declare every vertex in $\bdy M_k$ to be open. For $p \in (0,1)$, we independently declare each vertex in $\mcl M_k\setminus \bdy\mcl M_k$ to be open with probability $p$ and closed with probability $1-p$. We call the connected components of the set of open vertices \textbf{open clusters}. We conjecture that there is a non-trivial phase transition.

\begin{conj} \label{conj-critical-prob}
For each $Q \in (0,2)$, there exists a critical probability $p_c = p_c(Q) \in (0,1)$ such that for each $p < p_c$ and each $k\in\BB N$, a.s.\ there are no infinite open clusters. 
On the other hand, for $p > p_c$, it holds for each $k\in\BB N$ that with positive probability, the open cluster containing $\bdy\mcl M_k$ is infinite.
\end{conj}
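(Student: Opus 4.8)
To prove Conjecture~\ref{conj-critical-prob}, the plan is to exploit the iterative gluing structure of Definition~\ref{def-supercritical-map}: conditionally on the (random, rooted) \emph{tree of holes} $\mcl T$ --- whose vertices are the holes $h \in \bigcup_j \mcl H_j$, with an edge from each $h$ to every hole one level below it, and each vertex labelled by the perimeter of the corresponding hole --- the rings $R_h$ and Boltzmann maps $M_h$ used to fill the holes are mutually independent. This lets one compare site percolation on $\mcl M_k$, in both directions, with an inhomogeneous percolation on $\mcl T$. Throughout, one assigns i.i.d.\ uniform $[0,1]$ labels to the vertices and declares a vertex open iff its label is $\le p$, so that the event that the open cluster of $\bdy\mcl M_k$ is infinite is increasing in $p$; this already shows that $p_c := \inf\{p : \BB P[\text{the cluster of }\bdy\mcl M_k\text{ is infinite}] > 0\}$ is a genuine threshold, so the real content is that $0 < p_c < 1$ and that $p_c$ does not depend on $k$. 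The $k$-independence follows from self-similarity: below any hole of perimeter $2k'$ the map together with its percolation is a copy of $\mcl M_{k'}$, and every $k'\ge 1$ is reached with positive probability, so the set of $p$ at which an infinite cluster occurs with positive probability is the same for all $k$; the same observation, combined with a union bound over holes, reduces the statement ``a.s.\ no infinite open cluster for $p<p_c$'' (which a priori concerns all clusters, not just that of $\bdy\mcl M_k$) to the boundary-cluster statement.

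For the subcritical phase ($p_c > 0$) I would run a first-moment argument. The key combinatorial input is a path-counting estimate: there is $\mu = \mu(\cc,\mathbf q) < \infty$, not depending on $k$, such that for every $k$ and every fixed vertex $v$ of $\mcl M_k$ the expected number of self-avoiding paths of length $n$ in $\mcl M_k$ started from $v$ is at most $C\mu^n$. This should follow by bounding, level by level, the expected number of ways a self-avoiding path continues into the rings and Boltzmann gaskets glued at the next level; the $\sim j^{-5/2}$ tail on face degrees of non-generic critical type-$2$ Boltzmann maps (Definition~\ref{def-critical}) gives finite expected face degree and hence a finite branching rate for this exploration. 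Granting it, since the percolation is independent of the map, $\BB P_p[|C_v| = \infty] \le p^{n+1}\,\BB E[\#\{\text{length-}n\text{ SAPs from }v\}] \le C(p\mu)^{n+1}\to 0$ for $p<1/\mu$, so every open cluster is finite a.s.\ and $p_c \ge 1/\mu > 0$.

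For the supercritical phase ($p_c < 1$) I would show that for $p$ close to $1$ the open cluster of $\bdy\mcl M_k$ percolates through $\mcl T$. For a hole $g$ let $B_g$ be the event, measurable with respect to the percolation inside the region $R_g \cup M_g$ (and, for the root, inside $\frk M_0$), that the outer boundary of $R_g$ --- which is glued to $\bdy g$ --- is joined by an open path inside $R_g \cup M_g$ to $\bdy h$ for at least a fixed positive fraction of the children $h$ of $g$. Conditionally on $\mcl T$ the events $\{B_g\}$ are independent, and if $\BB P[B_g \mid \mcl T] \ge q(p)$ with $q(p)\to 1$ as $p\to 1$ \emph{uniformly in} $\deg(g)$, then one may compare with a thinned percolation on $\mcl T$: using the analysis behind Proposition~\ref{prop-rpm-infinite} to see that, conditionally on being infinite, $\mcl T$ has branching number at least some $b>1$ with positive probability, it follows for $p$ close enough to $1$ that with positive probability there is an infinite ray $g_0,g_1,\dots$ in $\mcl T$ along which every $B_{g_i}$ holds and each $g_{i+1}$ is a ``good'' child of $g_i$; on this event the open cluster of $\bdy\mcl M_k$ contains $\bdy g_0 \cup \bdy g_1 \cup \cdots$ and hence is infinite. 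Combined with monotonicity (for $p \in (p_c,1]$) and the first-moment bound of the previous paragraph applied at the threshold produced here, this yields the stated dichotomy.

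The main obstacle is precisely the uniform crossing estimate needed for the supercritical phase: that site percolation at density $p$ near $1$ joins the outer boundary of a non-generic critical type-$2$ Boltzmann map to a macroscopic face with probability tending to $1$ as $p\to 1$ and, crucially, uniformly in the perimeter. For the rings this is elementary, since a ring below $g$ is a cyclic chain of order $\deg(g)$ quadrilaterals each touching both boundary faces, so the chance of no crossing is at most $(1-p^{4})^{c\,\deg(g)}\to 0$; but for the Boltzmann maps it amounts to a quantitative understanding of percolation on the $3/2$-stable map, which would likely require adapting peeling-process techniques from the literature on percolation on Boltzmann and stable maps. A softer route --- restricting the renormalization to holes of bounded perimeter, where uniformity is automatic --- would work only if the sub-tree of bounded-perimeter holes still has branching number $>1$ with positive probability, which is unclear: along each branch the perimeter performs a random walk in the logarithm (up by $\pm\pi\sqrt{4-Q^{2}}/Q$ at each ring, down by the random factor coming from selecting a sub-face of the Boltzmann map), and if this walk has a strong upward drift the bounded-perimeter sub-tree may be too sparse. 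Settling this tension is, I expect, the crux of a full proof.
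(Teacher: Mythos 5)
The statement you set out to prove is labelled a \emph{conjecture} in the paper: Section~\ref{sec-supercritical-rpm} contains no proofs whatsoever (the introduction says so explicitly), and the authors do not attempt to establish Conjecture~\ref{conj-critical-prob} or even sketch an argument for it. So there is no ``paper proof'' to compare your attempt against; you are proposing a route to an open problem that the authors deliberately leave unresolved. With that understood, here is an assessment of your proposal on its own terms.

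The overall architecture --- monotone coupling, self-similarity of the hole hierarchy to get $k$-independence, a first-moment bound for the subcritical phase, a renormalization onto the tree of holes for the supercritical phase --- is a natural and plausible line of attack, and you are commendably honest that the uniform-in-perimeter crossing estimate for non-generic critical type-$2$ Boltzmann maps (equivalently, quantitative percolation on the $3/2$-stable map) is the real crux. That assessment looks right to me. But there is a second gap, in the subcritical half, that you treat as routine when it is not. You assert a bound $\BB E[\#\{\text{length-}n\text{ SAPs from }v\}] \le C\mu^n$ uniformly in $k$, and justify it by ``the $\sim j^{-5/2}$ tail on face degrees \dots\ gives finite expected face degree and hence a finite branching rate.'' Finite expected face degree is not the relevant quantity: a self-avoiding-path exploration branches according to \emph{vertex} degrees, and in non-generic critical Boltzmann maps of type~2 the vertex-degree distribution inherits a polynomial tail from the large faces. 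Even granting finite mean degree, a finite annealed mean is far from an exponential bound on the number of self-avoiding paths when degrees have heavy tails --- the relevant quantity is closer to a log-moment-generating function of the environment, and one would need to show that the heavy-tailed degrees are not seen too often along a path. Moreover the gluing in Definition~\ref{def-supercritical-map} identifies boundary cycles of rings and Boltzmann maps, so a single glued vertex accumulates degree from several independent pieces; this is exactly the kind of effect that can push annealed SAP counts to infinity. So $p_c > 0$ is not a consequence of a one-line first-moment estimate and needs a genuine argument (or a different route, e.g.\ a direct renormalization showing that open clusters are cut off with high probability at each generation of rings when $p$ is small).

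Two smaller imprecisions. First, the self-similarity you invoke for $k$-independence is not exact: the region below a hole of perimeter $2k'$ consists of a ring glued to a Boltzmann map of some \emph{random} perimeter $\approx 2k'e^{\pm \pi\sqrt{4-Q^2}/Q}$, not of a copy of $\mcl M_{k'}$ itself; the conclusion you want still follows, but only after inserting the extra ring step and averaging over the random inner perimeter. Second, the statement you need for the first half of the conjecture (``a.s.\ no infinite open cluster, anywhere'') requires a union bound over countably many vertices together with translation of the boundary-cluster estimate to the cluster of an arbitrary vertex; your phrasing ``a union bound over holes'' gestures at this but the actual reduction (each cluster is the cluster of some vertex, each vertex sits in a region comparable in law to a boundary-rooted copy) should be spelled out. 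None of these smaller points changes the verdict: your strategy is a reasonable programme, but both halves of the dichotomy have substantive unresolved steps, and the conjecture remains open as far as the paper is concerned.
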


For site percolation on $\BB Z^d$, $d\geq 2$, it is notoriously difficult to determine what happens at the critical parameter value. For $d = 2$, at criticality a.s.\ there are no infinite open clusters. It is believed that the scaling limit of the interfaces between open and closed vertices is described by the conformal loop ensemble (CLE) with parameter $\kappa=6$. For the triangular lattice, this was proven by Smirnov~\cite{smirnov-cardy} and Camia and Newman~\cite{camia-newman-cle6}, but it remains open for the square lattice. In the higher-dimensional setting, it was proven by Fitzner and van der Hofstad~\cite{fv-d11} that a.s.\ there are no infinite clusters at criticality for $d\geq 11$, building on work by Hara and Slade~\cite{hs-high-dim-perc,hs-lace}. It is a famous open problem to determine whether there is percolation at criticality for $3\leq d\leq 10$. 

For percolation on the planar maps $\mcl M_k$, it is natural to guess (based on the results for the triangular lattice) that if we take $p = p_c(Q)$ to be the critical value as in Conjecture~\ref{conj-critical-prob}, then under the Tutte embedding the scaling limit of the percolation interfaces is a CLE$_6$ independent from the LQG field $\Phi$. In particular, the scaling limit of the open cluster containing $\bdy\mcl M_k$ should be the CLE$_6$ gasket, i.e., the fractal subset of $\ol{\BB D}$ obtained by removing the regions surrounded by the outermost CLE$_6$ loops. It is shown in~\cite{msw-gasket} that the Hausdorff dimension of the CLE$_6$ gasket is a.s.\ equal to $91/48$. Therefore, \eqref{eqn-singular-dim-lower} implies that if we sample a CLE$_6$ independently from $\Phi$, then for $Q < \sqrt{91/24}$, a.s.\ the CLE$_6$ gasket contains uncountably many singular points of $D_\Phi$. Furthermore~\eqref{eqn-singular-dim-upper} suggests that for $Q > \sqrt{91/24}$, the CLE$_6$ gasket should not contain any singular points of $D_\Phi$. This leads us to guess that the critical percolation cluster on $\mcl M_k$ which contains $\bdy\mcl M_k$ can reach $\infty$ if $Q < \sqrt{91/24}$, but not if $Q > \sqrt{91/24}$. That is, we should have a phase transition in the parameter $Q$ for whether or not there is percolation at criticality on $\mcl M_k$. 
 
\begin{conj} \label{conj-perc}
Consider critical ($p = p_c(Q)$) percolation on $\mcl M_k$ with open boundary condition. 
If $Q > \sqrt{91/24}$ ($\cc > 95/4$), then a.s.\ there is no infinite open cluster. 
If $Q < \sqrt{91/24}$ ($\cc < 95/4$), then with positive probability the open cluster containing $\bdy\mcl M_k$ is infinite and has uncountably many ends.   
\end{conj}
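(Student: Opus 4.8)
The displayed statement is a conjecture, so what follows is a proof \emph{program} rather than a proof; I will flag which ingredients are themselves open. The plan is to transport the dimension dichotomy \eqref{eqn-singular-dim-lower}--\eqref{eqn-singular-dim-upper} from continuum supercritical LQG to the discrete map $\mcl M_k$ by way of the Tutte embedding scaling limit.

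First I would take Conjecture~\ref{conj-scaling-limit} as input: under the Tutte embedding $H_k$, the loop-decorated map $(\mcl M_k,\mcl L_k)$ converges jointly to the central-charge-$\cc$ LQG disk decorated by $\mathrm{CLE}_4$, together with convergence of the rescaled graph metric to $D_\Phi$ in the lower-semicontinuous topology. Combined with Proposition~\ref{prop-singular-thick} and Conjecture~\ref{conj-tutte-end}, this identifies, up to routine bookkeeping, the images $H_k(\omega)$ of the ends of $\mcl M_k$ with the singular points of $D_\Phi$: an end lies at infinite graph distance from typical vertices, so its image lies at infinite $D_\Phi$-distance from Lebesgue-typical points, i.e.\ is singular; and conversely each singular point is approximated by images of vertices escaping to a common end. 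Under this dictionary, an infinite connected subgraph $S\subset\mcl V\mcl M_k$ accumulates at some end, whence $\overline{H_k(S)}$ contains a singular point of $D_\Phi$, and $S$ accumulates at uncountably many ends iff $\overline{H_k(S)}$ meets the singular set in an uncountable set.

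Next I would establish the percolation scaling limit: for $p=p_c(Q)$ the critical-percolation interfaces on $\mcl M_k$, pushed forward by $H_k$, converge to a $\mathrm{CLE}_6$ sampled independently of $\Phi$. Granting this, the open cluster of $\bdy\mcl M_k$ converges to the $\mathrm{CLE}_6$ gasket, and every other open cluster converges to the gasket of one of the nested $\mathrm{CLE}_6$'s; all of these are random Borel sets of Euclidean Hausdorff dimension $91/48$, independent of $\Phi$, by \cite{msw-gasket}. Now apply the dichotomy. If $Q>\sqrt{91/24}$, i.e.\ $\cc>95/4$, then $91/48<Q^2/2$, so by \eqref{eqn-singular-dim-upper} each gasket is a.s.\ disjoint from the singular set; transporting back and taking a union bound over the countably many open clusters, a.s.\ no open cluster accumulates at an end, hence all are finite. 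If $Q<\sqrt{91/24}$, i.e.\ $\cc<95/4$, then $91/48>Q^2/2$, so by \eqref{eqn-singular-dim-lower} --- which, unlike \eqref{eqn-singular-dim-upper}, is a theorem, via \cite{ghm-kpz} --- the $\mathrm{CLE}_6$ gasket a.s.\ contains uncountably many singular points; transporting back, on the positive-probability event that $\mcl M_k$ is infinite (Proposition~\ref{prop-rpm-infinite}) the open cluster of $\bdy\mcl M_k$ accumulates at uncountably many ends, so it is infinite with uncountably many ends. The arithmetic $\cc=1+6Q^2$ turns $Q\lessgtr\sqrt{91/24}$ into $\cc\lessgtr 95/4$.

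The arithmetic and the transfer in the previous paragraph are routine. The real obstacles are the inputs. The convergence of critical percolation to $\mathrm{CLE}_6$ under the Tutte embedding, with independence from $\Phi$, is open even in the Euclidean square-lattice setting and is harder here because the map is multi-ended and because one needs a robust ``conformal stability under embedding'' statement in the spirit of \cite{yy-lerw}; this is the step I expect to be the main difficulty. Close behind are Conjecture~\ref{conj-scaling-limit} itself and the precise identification of ends with singular points, and the upgrade of \eqref{eqn-singular-dim-upper}, which has so far been proven only in related forms (with $\lim$- or $\liminf$-thick points in \cite{ghm-kpz}, or with packing dimension in \cite{pfeffer-supercritical-lqg}). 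Finally, even a complete treatment would leave open the borderline case $Q=\sqrt{91/24}$ ($\cc=95/4$), where the outcome depends on finer multifractal data for the $\mathrm{CLE}_6$ gasket and for $\Phi$ at $Q$-thick points, and which the conjecture deliberately does not address.
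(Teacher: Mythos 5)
Your proposal reproduces, step for step, the paper's own heuristic justification for Conjecture~\ref{conj-perc} in Section~\ref{sec-phase-transition}: the conjectured convergence of critical percolation interfaces to $\mathrm{CLE}_6$ independent of $\Phi$ under the Tutte embedding, the Hausdorff dimension $91/48$ of the $\mathrm{CLE}_6$ gasket from~\cite{msw-gasket}, the dimension dichotomy~\eqref{eqn-singular-dim-lower}--\eqref{eqn-singular-dim-upper} at threshold $Q^2/2$, and the identification of ends of $\mcl M_k$ with singular points of $D_\Phi$. Your additional remarks (the union bound over clusters, the open status of each ingredient, the borderline case $Q=\sqrt{91/24}$) are all consistent with, though somewhat more explicit than, what the paper says.
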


\subsubsection{Other models}

Using similar arguments as above, we can also make similar conjectures for other models on $\mcl M_k$. For example, critical percolation interfaces should have finite length if $Q > \sqrt{7/2}$ ($\cc > 22$), but not if $Q < \sqrt{7/2}$ ($\cc < 22$). Level lines of the discrete Gaussian free field (Definition~\ref{def-gff}) should be finite a.s.\ if $Q > \sqrt 3$ ($\cc > 19$), but not if $Q < \sqrt 3$ ($\cc < 19$). Level sets of the Green's function (Definition~\ref{def-green}) at fixed levels should be finite if $Q < \sqrt 2$ ($\cc < 13$), but not if $Q > \sqrt 2$ ($\cc > 13$).

\bibliography{cibib}
\bibliographystyle{alphaarxiv}

\end{document}